\documentclass[11pt]{article}

\usepackage[margin=1in]{geometry}
\usepackage{amsmath,amssymb,amsthm,mathtools}
\usepackage[T1]{fontenc}
\usepackage{bm}
\usepackage[round,authoryear]{natbib}
\usepackage{authblk}
\usepackage{hyperref}
\usepackage{xcolor}
\usepackage{enumitem}
\usepackage{array,booktabs}

\hypersetup{colorlinks=true,linkcolor=blue,citecolor=blue,urlcolor=blue}

\newtheorem{theorem}{Theorem}[section]
\newtheorem{proposition}[theorem]{Proposition}
\newtheorem{corollary}[theorem]{Corollary}
\newtheorem{lemma}[theorem]{Lemma}
\newtheorem{remark}[theorem]{Remark}
\newtheorem{definition}[theorem]{Definition}
\newtheorem{example}[theorem]{Example}

\newcommand{\R}{\mathbb{R}}
\newcommand{\E}{\mathbb{E}}
\newcommand{\Pbb}{\mathbb{P}}
\newcommand{\norm}[1]{\left\lVert #1\right\rVert}
\newcommand{\abs}[1]{\left\lvert #1\right\rvert}
\newcommand{\ip}[2]{\left\langle #1,#2\right\rangle}
\newcommand{\supp}{\operatorname{supp}}
\newcommand{\prog}{\operatorname{prog}}
\newcommand{\Ortho}{\operatorname{Ortho}}
\newcommand{\one}{\mathbf{1}}
\newcommand{\gap}{\Delta}

\title{Sharp First-Order Lower Bounds under \(\alpha\)-Polyak-\L{}ojasiewicz Conditions}
\author[1]{Saeed Masiha}
\author[1]{Negar Kiyavash}
\author[2]{Patrick Thiran}
\affil[1]{EPFL School of Management of Technology}
\affil[2]{EPFL Department of Computer and Communications Sciences}
\date{}

\begin{document}
\maketitle

\begin{abstract}
We study the optimal complexity of first-order methods under the \(\alpha\)-P\L{} condition with
\(\alpha\in[1,2)\). The condition requires the suboptimality gap to be bounded by the gradient norm raised to the
power \(\alpha\); the endpoint \(\alpha=2\) recovers the classical Polyak--\L ojasiewicz inequality, \(\alpha=1\)
is the H\"older error-bound regime, and intermediate values \(\alpha\in(1,2)\) arise
near degenerate critical points of analytic objectives. We begin with a
structural impossibility: requiring both global \(L\)-smoothness and global \(\alpha\)-P\L{} on \(\R^d\) forces the
function to be constant for every \(\alpha<2\). To recover a nontrivial class, we keep global smoothness but require the
\(\alpha\)-P\L{} inequality only on the initial sublevel set, the set of points at which the objective is no larger
than its initial value, and which contains every iterate of monotone descent methods.

On this \emph{sublevel-\(\alpha\)-P\L{}} class, we prove sharp first-order minimax lower bounds for every
\(\alpha\in[1,2)\). In the deterministic oracle, any first-order method needs
\(\Omega\bigl(L\tau^{2/\alpha}\varepsilon^{-(2-\alpha)/\alpha}\bigr)\) queries, matched exactly by plain gradient
descent. In the bounded-variance stochastic-gradient oracle, any stochastic first-order method needs $\Omega\left(
L\tau^{2/\alpha}\varepsilon^{-(2-\alpha)/\alpha}
 + L\sigma^2\tau^{4/\alpha}\varepsilon^{-(4-\alpha)/\alpha}
\right)$ queries, matching the corresponding SGD upper
rates under standard trajectory containment \citep{FatkhullinEtesamiHeKiyavash2022}.

At the classical P\L{} endpoint \(\alpha=2\), we further prove that every bounded-variance stochastic
first-order method requires \(\Omega(L\tau^2\sigma^2/\varepsilon)\) oracle calls even when the objective is globally
smooth and satisfies the P\L{} inequality globally. This matches the variance-dependent term in known SGD upper bounds
under global P\L{} geometry~\citep{FatkhullinEtesamiHeKiyavash2022}. For comparison, a \(\mu\)-strongly convex
objective satisfies our P\L{} inequality with \(\tau=(2\mu)^{-1}\). Its sharp variance-dependent complexity is
therefore \(\Theta(\tau\sigma^2/\varepsilon)\), whereas the worst case over the global-P\L{} class is larger by a
factor \(L\tau=L/(2\mu)\).
\end{abstract}
\section{Introduction}

The Polyak--\L ojasiewicz (P\L{}) inequality~\citep{Polyak1963,Lojasiewicz1963} is a geometric
condition that allows one to prove fast convergence of first-order methods in nonconvex problems. For a differentiable objective
$f:\R^d\to\R$ with finite infimum $f_\star:=\inf_{x\in\R^d} f(x)$, it requires
\begin{equation*}
f(x)-f_\star\;\le\;\tau\,\norm{\nabla f(x)}^2
\end{equation*}
for some constant $\tau>0$ and every $x\in\R^d$. In the absence of convexity, this inequality suffices for a
stationary point of $f$ to be a global minimizer and yields a linear convergence rate for gradient descent (GD) methods \citep{Polyak1963,KarimiNutiniSchmidt2016}. We study the \emph{$\alpha$-P\L{}
condition}, an exponent-parametrized generalization of the P\L{} inequality:
\begin{equation}
\label{eq:alphaPL-intro}
f(x)-f_\star\;\le\;\tau\,\norm{\nabla f(x)}^\alpha,\qquad \alpha\in[1,2],
\end{equation}
which recovers the classical P\L{} inequality at $\alpha=2$ and reduces to a H\"older error-bound condition at
$\alpha=1$. By Kurdyka--\L ojasiewicz
theory~\citep{Lojasiewicz1963,Kurdyka1998,BolteDaniilidisLewis2007,AttouchBolteRedontSoubeyran2010}, intermediate
exponents $\alpha\in(1,2)$ arise naturally near degenerate local minima of analytic or \emph{tame}\footnote{Here
``tame'' refers to functions definable in an o-minimal structure; this includes semialgebraic and globally subanalytic
functions. Such classes rule out pathological oscillations and satisfy the Kurdyka--\L ojasiewicz property under mild
regularity assumptions.} objectives. In this case, $\alpha$ measures the local flatness of the objective around the
minimum.

\paragraph{Where intermediate exponents arise.}
The connection with K\L{} theory is local and is stated relative to the relevant critical value. Near a critical point
\(\bar x\), the K\L{} gradient inequality gives constants \(c>0\), \(\theta\in(0,1)\), and a neighborhood
\(\mathcal U\) such that $\norm{\nabla f(x)} \ge c\, |f(x)-f(\bar x)|^\theta,$ for every $x\in\mathcal U$.
When \(\bar x\) is a local minimizer, this inequality can be rewritten on \(\mathcal U\) as $f(x)-f(\bar x) \le c^{-1/\theta}\norm{\nabla f(x)}^{1/\theta}.$
Thus K\L{} geometry near a minimizer gives a local \(\alpha\)-P\L{} inequality with exponent \(\alpha=1/\theta\). The
classical P\L{} case \(\alpha=2\) corresponds to \(\theta=1/2\), as at nondegenerate Morse minima. Flatter degenerate
minima can have \(\theta>1/2\), and hence \(\alpha\in(1,2)\). For example, if the first nonzero Taylor term of
\(f(x)-f(\bar x)\) has order \(p>2\), then the corresponding local exponent is \(\alpha=p/(p-1)\).

This mechanism applies broadly: real-analytic functions and, more generally, tame objectives such as semialgebraic,
subanalytic, or definable functions satisfy K\L{} inequalities locally
\citep{Lojasiewicz1963,Kurdyka1998,BolteDaniilidisLewis2007,AttouchBolteRedontSoubeyran2010,LiPong2018}. These classes
include many standard semialgebraic or definable machine-learning objectives, such as quadratic and logistic
regression, ReLU networks with \(\ell_1/\ell_2\) regularization, matrix factorization, and deep linear
models~\citep{BolteDaniilidisLewis2007,AttouchBolteRedontSoubeyran2010,LiuZhuBelkin2022}. One concrete example,
analyzed by \citet{MasihaSalehkaleybarHeKiyavashThiran2026}, is the squared loss of a depth-\(d\) one-dimensional
linear network, \(f(x)=\bigl((\prod_{i=1}^d x_i)-1\bigr)^2\), which satisfies a local \(\alpha\)-P\L{} inequality with
\(\alpha=2d/(2d-1)\in(1,2)\). 

\paragraph{The minimax question and the missing lower bounds.}
For a target accuracy $\varepsilon$, the complexity question of interest is to determine, in the
worst case, how many first-order oracle calls are required to find an output $x$ with \(f(x)-f_\star\le\varepsilon\). The relevant parameters are the smoothness
constant $L$, the $\alpha$-P\L{} constant $\tau$, the initial gap $\Delta_0:=f(x_0)-f_\star$, and, in the
bounded-variance stochastic-gradient model, the variance bound $\sigma^2$.

At the P\L{} endpoint $\alpha=2$, the deterministic minimax picture is sharp:
$\Theta\left(L\tau\log(\Delta_0/\varepsilon)\right)$ queries are necessary and sufficient for deterministic
first-order methods on smooth P\L{} functions
\citep{KarimiNutiniSchmidt2016,YueFangLin2023}. In the bounded-variance stochastic setting, known SGD analyses under a
global P\L{} condition, or under a guarantee that the stochastic trajectory remains in the P\L{} region, give
\[
O\left(
L\tau\log\frac{\Delta_0}{\varepsilon}
+\frac{L\tau^2\sigma^2}{\varepsilon}
\right)
\]
oracle calls~\citep{FatkhullinEtesamiHeKiyavash2022}. This differs from the classical strongly convex complexity.
Indeed, if $f$ is $L$-smooth and $\mu$-strongly convex and $\kappa:=L/\mu$, optimal stochastic first-order methods
achieve
\[
O\left(
\sqrt{\kappa}\log\frac{\Delta_0}{\varepsilon}
+\frac{\sigma^2}{\mu\varepsilon}
\right),
\]
whereas plain SGD has $\kappa$ rather than $\sqrt{\kappa}$ in the logarithmic term; both have the sharp noise
dependence $\Theta(\sigma^2/(\mu\varepsilon))$
\citep{GhadimiLan2013,NemirovskiJuditskyLanShapiro2009,AgarwalBartlettRavikumarWainwright2012}.
Since a differentiable $\mu$-strongly convex function satisfies
$\frac12\norm{\nabla f(x)}^2\ge\mu\bigl(f(x)-f_\star\bigr)$, the corresponding P\L{} parameter in our notation is
$\tau=(2\mu)^{-1}$. Thus the strongly convex
noise term is $\Theta(\tau\sigma^2/\varepsilon)$, while the general P\L{} upper bound contains the additional factor
$L\tau=\Theta(\kappa)$. We prove that this factor is unavoidable even within the globally smooth,
globally P\L{} class by establishing the variance-dependent lower bound
$\Omega(L\tau^2\sigma^2/\varepsilon)$, equivalently
$\Omega(L\sigma^2/(\mu^2\varepsilon))$ up to numerical constants. The result concerns only the variance-dependent term;
it does not establish the logarithmic exact-gradient contribution for randomized methods.

For $\alpha\in[1,2)$, known upper bounds predict polynomial rather than logarithmic complexity. In the deterministic
case, the standard descent analysis for gradient descent gives
$O\left(L\tau^{2/\alpha}\varepsilon^{-(2-\alpha)/\alpha}\right)$
\citep{AttouchBolteRedontSoubeyran2010,BolteSabachTeboulle2014},
and in the bounded-variance stochastic-gradient setting, existing local or trajectory-contained SGD analyses achieve
the sum
\[
O\left(
L\tau^{2/\alpha}\varepsilon^{-(2-\alpha)/\alpha}
 + L\sigma^2\tau^{4/\alpha}\varepsilon^{-(4-\alpha)/\alpha}
\right)
\]
under $\alpha$-P\L{} or K\L{} assumptions
\citep{FatkhullinEtesamiHeKiyavash2022}. However, the matching
oracle lower bound had remained elusive: it was not known whether the aforementioned dependences are unavoidable for
the whole function class. This paper provides \textit{matching} lower
bounds for both deterministic and bounded-variance stochastic-gradient oracle models.
This requires specifying the function class. The obvious choice, globally smooth and globally $\alpha$-P\L{} on $\mathbb{R}^d$, turns out to be vacuous for $\alpha < 2$, as the next paragraph discusses.


\paragraph{The function class: global smoothness with localized $\alpha$-P\L{}.}
We consider objective functions that are globally $L$-smooth (i.e., differentiable everywhere with globally Lipschitz
gradient), but require the $\alpha$-P\L{} inequality only on the \emph{initial sublevel set}
\[
S_0(f):=\{x\in\R^d:\ f(x)\le f(x_0)\}.
\]
The reason for this choice as opposed to imposing the $\alpha$-P\L{}
inequality~\eqref{eq:alphaPL-intro} globally is that for $\alpha<2$, together with global smoothness, the class of objectives does not admit any
nontrivial functions. More precisely, we will prove
(see Theorem~\ref{thm:global-trivial}) that
if $f:\R^d\to\R$ is continuously differentiable, globally $L$-smooth, bounded below by $f_\star>-\infty$, and
satisfies $f(x)-f_\star\le\tau\,\norm{\nabla f(x)}^\alpha$ at every $x\in\R^d$ for some $\alpha\in[1,2)$ and
$\tau<\infty$, then $f\equiv f_\star$.

Returning to our local condition, note that for any deterministic descent method whose iterates satisfy \(f(x_{k+1})\le f(x_k)\), all iterates remain
inside \(S_0(f)\) when the algorithm is initialized at \(x_0\). Hence the \(\alpha\)-P\L{} inequality only needs to
hold on \(S_0(f)\) for the standard descent analysis. For stochastic gradient methods, existing upper
bounds use the same local viewpoint together with a trajectory-containment condition ensuring that the iterates remain
in the region where the K\L/\(\alpha\)-P\L{} inequality is valid. The resulting class is rich: it contains the classical
P\L{} endpoint $\alpha=2$, smooth convex minimization over a bounded sublevel set as an $\alpha=1$ subclass, and local
K\L\ landscapes from optimization, statistics, and
learning~\citep{KarimiNutiniSchmidt2016,FatkhullinEtesamiHeKiyavash2022,MasihaSalehkaleybarHeKiyavashThiran2026,LiuZhuBelkin2022}.
We refer to this class as the \emph{globally smooth, sublevel-$\alpha$-P\L{} class} (see Definition~\ref{def:function-class}). The question this paper addresses is:
\begin{quote}\itshape
Are the deterministic and bounded-variance stochastic upper rates displayed above minimax unavoidable on the globally
smooth, sublevel-\(\alpha\)-P\L{} class, or can some first-order method improve either dependence uniformly over the
class?
\end{quote}
We prove matching deterministic and
bounded-variance stochastic lower bounds, which settle the above question. Our precise contributions are summarized below.

\paragraph{Contributions.}
Throughout, \(T_\varepsilon\) denotes the minimum number of deterministic first-order oracle calls needed to guarantee
objective error at most \(\varepsilon\) within the entire class, and \(T_\varepsilon^{\mathrm{BV}}\) denotes the minimum
number of bounded-variance stochastic-gradient oracle calls needed to guarantee expected objective error at most
\(\varepsilon\). The implicit constants depend only on \(\alpha\). The paper makes four contributions.

\begin{enumerate}[label={\bf (\arabic*)},leftmargin=2.3em,itemsep=0.5em,topsep=0.3em]

\item \emph{Structural impossibility} (Theorem~\ref{thm:global-trivial}). For every $\alpha\in[1,2)$, the only
$C^1$ functions on $\R^d$ that are simultaneously globally $L$-smooth, bounded below, and globally $\alpha$-P\L{} are
constants. This motivates the sublevel localization adopted in
Definition~\ref{def:function-class}.

\item \emph{Sharp deterministic lower bound} (Theorem~\ref{thm:matching}). Within the class of globally smooth,
sublevel-$\alpha$-P\L{} class with initial gap at most $\Delta_0$, every deterministic first-order method must make
\[
T_\varepsilon
\;\ge\;
\Omega\bigl(L\,\tau^{2/\alpha}\,\varepsilon^{-(2-\alpha)/\alpha}\bigr),
\qquad \alpha\in[1,2),
\] deterministic first-order oracle calls.
This matches the standard gradient-descent upper bound for $\alpha$-P\L{}/K\L\ geometry. The lower bound is proved via a
rescaled version of the zero-chain hard instance in \citet{YueFangLin2023} which simultaneously achieves the target objective gap and preserves the target sublevel
$\alpha$-P\L{} geometry.

\item \emph{Sharp stochastic lower bound} (Theorem~\ref{thm:stochastic-matching}). In the bounded-variance (BV)
stochastic-gradient model, each query returns an unbiased stochastic gradient whose variance is at most
\(\sigma^2\); see Definition~\ref{def:stochastic-model}. We prove the following lower bound on the number of oracle
calls, in the nontrivial accuracy range
\(\varepsilon\lesssim\min\{\Delta_0,L^{\alpha/(2-\alpha)}\tau^{2/(2-\alpha)}\}\):
\[
T_\varepsilon^{\mathrm{BV}}
\;\ge\;
\Omega\Bigl(
L\tau^{2/\alpha}\varepsilon^{-(2-\alpha)/\alpha}
 + L\,\sigma^2\,\tau^{4/\alpha}\,\varepsilon^{-(4-\alpha)/\alpha}
\Bigr),
\qquad \alpha\in[1,2).
\]
The second term is the stochastic-noise contribution; it dominates the deterministic term when
\(\varepsilon\lesssim\tau\sigma^\alpha\). The bound matches the corresponding SGD upper rate of
\citet{FatkhullinEtesamiHeKiyavash2022} whenever the whole SGD trajectory remains in the region where the
$\alpha$-P\L{}/K\L\ inequality is valid. Our construction uses a
probabilistic zero-chain in the style of
\citet{ArjevaniEtAl2019}, in which the next hidden coordinate is revealed with probability tied to the variance
budget; this yields the sharp exponent $(4-\alpha)/\alpha$.

\item
\emph{A sharp variance-dependent lower bound at the P\L{} endpoint}
(Theorem~\ref{thm:stochastic-pl-endpoint}). At the classical endpoint $\alpha=2$, every randomized first-order method
accessing only unbiased gradients with variance at most $\sigma^2$ requires
\[
T_\varepsilon^{\mathrm{BV}}
\;\ge\;
\Omega\left(L\tau^2\sigma^2\varepsilon^{-1}\right)
\]
oracle calls even on the globally smooth, globally P\L{} class. This matches the variance-dependent term in known SGD
upper bounds under global P\L{} geometry~\citep{FatkhullinEtesamiHeKiyavash2022}. In contrast, smooth strongly convex
optimization has sharp variance-dependent complexity $\Theta(\tau\sigma^2/\varepsilon)$, so the global-P\L{} class
can be harder by a factor $L\tau$. The proof uses the same sequential coordinate-hiding idea as for $\alpha<2$, but
rescales the hard instance differently at $\alpha=2$.

\end{enumerate}
\noindent
Table~\ref{tab:complexity-summary} summarizes the lower bounds proved in this paper as well as the corresponding known upper
bounds.

\begin{table}[t]
\centering
\small
\begin{tabular}{@{}
>{\raggedright\arraybackslash}p{0.25\textwidth}
>{\raggedright\arraybackslash}p{0.32\textwidth}
>{\raggedright\arraybackslash}p{0.33\textwidth}
@{}}
\toprule
Model & Lower bound & Upper bound \\
\midrule
Deterministic first-order, sublevel-$\alpha$-P\L{}, $\alpha\in[1,2)$
&
{\boldmath$\Omega\bigl(L\tau^{2/\alpha}\varepsilon^{-(2-\alpha)/\alpha}\bigr)$}
\par\smallskip
{\small This paper, Thm.~\ref{thm:matching}}
&
$O\bigl(L\tau^{2/\alpha}\varepsilon^{-(2-\alpha)/\alpha}\bigr)$
\par\smallskip
{\small Gradient descent; \citet{AttouchBolteRedontSoubeyran2010}}
\\
\addlinespace[0.5em]
Stochastic-gradient, BV oracle, sublevel-$\alpha$-P\L{}, $\alpha\in[1,2)$
&
{\boldmath$\Omega\bigl(L\tau^{2/\alpha}\varepsilon^{-(2-\alpha)/\alpha}
 + L\sigma^2\tau^{4/\alpha}\varepsilon^{-(4-\alpha)/\alpha}\bigr)$}
\par\smallskip
{\small This paper, Thm.~\ref{thm:stochastic-matching}}
&
$O\bigl(L\tau^{2/\alpha}\varepsilon^{-(2-\alpha)/\alpha}
 + L\sigma^2\tau^{4/\alpha}\varepsilon^{-(4-\alpha)/\alpha}\bigr)$
\par\smallskip
{\small SGD under trajectory containment; \citet{FatkhullinEtesamiHeKiyavash2022}}
\\
\addlinespace[0.5em]
Stochastic-gradient, BV oracle, global P\L{}, $\alpha=2$
&
\boldmath$\Omega\bigl(L\tau^2\sigma^2/\varepsilon\bigr)$
\par\smallskip
\small Noise contribution only; this paper, Thm.~\ref{thm:stochastic-pl-endpoint}
&
$O\bigl(L\tau\log(\Delta_0/\varepsilon)+L\tau^2\sigma^2/\varepsilon\bigr)$
\par\smallskip
\small SGD under global P\L{}; \citet{FatkhullinEtesamiHeKiyavash2022}
\\
\bottomrule
\end{tabular}
\caption{Lower and upper rates on the globally smooth, sublevel-$\alpha$-P\L{} class. The first two
rows concern $\alpha\in[1,2)$ and are stated for
\(\varepsilon\lesssim\min\{\Delta_0,L^{\alpha/(2-\alpha)}\tau^{2/(2-\alpha)}\}\). In the second row, the stochastic
term dominates when \(\varepsilon\lesssim\tau\sigma^\alpha\). The last row concerns the globally
smooth, globally P\L{} subclass and records only the variance-dependent lower bound at $\alpha=2$; the paper does not
prove the full randomized endpoint complexity.}
\label{tab:complexity-summary}
\end{table}


\subsection{Related work}
\label{sec:related-work}

Our results sit at the intersection of three threads: the geometry of P\L{} and Kurdyka--\L ojasiewicz (K\L)
landscapes, sharp first-order complexity theory, and stochastic optimization for structured nonconvex problems. We
organize the comparison around five themes.

\paragraph{P\L{}, K\L, gradient dominance, and applications.}
The P\L{} inequality was introduced by \citet{Polyak1963} as a simple condition under which gradient descent converges
linearly even without convexity. It is one member of a larger family of \emph{gradient-dominance} conditions, which are  inequalities that control the objective gap \(f(x)-f_\star\) by a power of the gradient norm. The case
\(\alpha=2\) in~\eqref{eq:alphaPL-intro} is the classical P\L{} condition, while \(\alpha<2\) corresponds to weaker
H\"older-type gradient dominance. This same structure appears in the \L ojasiewicz and K\L\ gradient inequalities. Indeed, a K\L\ inequality
with exponent \(\theta\in[0,1)\) has the local form
\(\norm{\nabla f(x)}\ge c(f(x)-f_\star)^\theta\); when \(\theta=1/\alpha\), it is equivalent, up to constants, to the
local \(\alpha\)-P\L{} inequality used in this paper. The analytic foundations go back to
\citet{Lojasiewicz1963} and \citet{Kurdyka1998}; nonsmooth and algorithmic consequences were developed by
\citet{BolteDaniilidisLewis2007}, \citet{AttouchBolteRedontSoubeyran2010}, and
\citet{BolteSabachTeboulle2014}; convergence-rate refinements appear in
\citet{FrankelGarrigosPeypouquet2015}; and the calculus of K\L\ exponents under sums, restrictions, and compositions
was studied by \citet{LiPong2018}. The relationships between P\L{}, error bounds, quadratic growth, and related
gradient-dominance conditions are also central in \citet{KarimiNutiniSchmidt2016} and
\citet{DrusvyatskyLewis2018}.  Adapting a local viewpoint akin to this paper, \citet{RebjockBoumal2024} show that, for
\(C^2\) functions near non-isolated local minima, local P\L{}, quadratic growth, and error-bound conditions are
equivalent, and are also equivalent to the Morse--Bott property.
Complementing these local equivalence results, \citet{BoumalCriscitielloRebjock2026} study the global endpoint
\(\alpha=2\): they show that \(C^\infty\) globally P\L{} functions on contractible manifolds (such as \(\mathbb{R}^d\)) have a rigid nonlinear
least-squares structure. Our Theorem~\ref{thm:global-trivial} shows that, below this endpoint, the globally smooth and globally \(\alpha\)-P\L{}
model collapses to constant functions.
Related exponent-threshold obstructions were studied by \citet{AbbaszadehpeivastiDeKlerkZamani2023} and
\citet[Remark~2.21]{RebjockBoumal2024}: in the \L{}ojasiewicz convention, both identify restrictions on nonconstant
smooth functions satisfying inequalities with exponent \(\theta<1/2\), which corresponds to \(\alpha>2\) in our
notation.

Local P\L{}/K\L{}-type assumptions are important because they hold far beyond strongly convex objectives. Analytic, semialgebraic,
subanalytic, and more generally definable functions satisfy K\L{}-type inequalities locally around critical
points~\citep{Lojasiewicz1963,Kurdyka1998,BolteDaniilidisLewis2007}. This includes many optimization models used in
statistics and machine learning, such as regularized empirical-risk objectives, matrix factorization, deep linear
networks, and broad classes of neural-network losses with tame activation and regularization structure; see, for
example, \citet{LiuZhuBelkin2022} and the references above. In overparameterized interpolation regimes, related
gradient-dominance and interpolation conditions also explain fast behavior of stochastic gradient methods
\citep{BassilyBelkinMa2018,VaswaniBachSchmidt2019}. 

\paragraph{Deterministic first-order complexity under convexity and P\L{}.}
The classical first-order lower-bound theory for convex optimization originated
with~\citet{NemirovskyYudin1983}; see~\citet{Nesterov2004,Bubeck2015} for modern textbook accounts. 
Sharp rates on the smooth convex subclasses are known: for smooth convex minimization over a bounded domain of radius \(D\), accelerated gradient methods achieve the optimal complexity
\(\Theta(\sqrt{LD^2/\varepsilon})\), and under strong convexity with parameter \(\mu\) achieve
\(\Theta(\sqrt{L/\mu}\log(\Delta_0/\varepsilon))\). These rates are useful reference points for the endpoints \(\alpha=1\) and \(\alpha=2\), but they are not the minimax rates for the sublevel-\(\alpha\)-P\L{} class.


Under P\L{} and K\L{} inequalities, the deterministic upper bounds in the literature are achieved by descent methods
rather than accelerated ones. For the P\L{} case, \citet{Polyak1963} showed that gradient descent converges
linearly; \citet{KarimiNutiniSchmidt2016} unified P\L{} with closely related error-bound and gradient-dominance
conditions. Convergence rates under K\L{} inequalities with arbitrary desingularization exponent, for descent,
proximal, and splitting methods, were obtained by
\citet{AttouchBolteRedontSoubeyran2010,BolteSabachTeboulle2014,FrankelGarrigosPeypouquet2015}. In local versions, one assumes that the iterates remain in a neighborhood where the K\L{} inequality is valid; this is the ``stay-in-neighborhood'' condition.
The elementary descent recursion yields the gradient-descent upper
bound
$
O\left(L\tau^{2/\alpha}\varepsilon^{-(2-\alpha)/\alpha}\right)
$
for the whole regime \(\alpha\in[1,2)\).

The lower bound is only known at the P\L{} endpoint. For smooth P\L{} functions
(\(\alpha=2\)), gradient descent has complexity \(\Theta(L\tau\log(\Delta_0/\varepsilon))\), and
\citet{YueFangLin2023} proved that this dependence is unimprovable by deterministic first-order methods. For $\alpha < 2$ no matching lower bound was known. Our Theorem~3.1 establishes the matching lower bound $\Omega\bigl(L\tau^{2/\alpha}\varepsilon^{-(2-\alpha)/\alpha}\bigr)$ on the sublevel-$\alpha$-P\L{} class for every $\alpha \in [1,2)$. Gradient descent's dependence on $L$, $\tau$, and $\varepsilon$ is therefore intrinsic to the class, not an artifact of the algorithm.

\paragraph{Stochastic first-order complexity under P\L{} and K\L.}
Classical lower bounds for stochastic convex optimization were obtained by
\citet{NemirovskiJuditskyLanShapiro2009} and \citet{AgarwalBartlettRavikumarWainwright2012}. Under
$\mu$-strong convexity, with $\tau=(2\mu)^{-1}$ in our convention, bounded-variance stochastic optimization has the
sharp noise dependence $\Theta(\tau\sigma^2/\varepsilon)$. Because strongly convex functions form
only a subclass of the globally smooth, globally P\L{} class, this lower bound does not determine whether general
P\L{} objectives incur an additional condition-number factor. At the other endpoint, smooth convex minimization over a bounded sublevel set of radius $D$
has the sharp stochastic dependence $\Theta(D^2\sigma^2/\varepsilon^2)$.

Most relevant to our setting, \citet{FatkhullinEtesamiHeKiyavash2022} obtained SGD-type bounded-variance upper bounds
under global \(\alpha\)-P\L{} inequalities. In the present notation, the rate contains the deterministic descent term $O\bigl(L\tau^{2/\alpha}\varepsilon^{-(2-\alpha)/\alpha}\bigr)$
and the stochastic noise term $O\bigl(L\sigma^2\tau^{4/\alpha}\varepsilon^{-(4-\alpha)/\alpha}\bigr),$
the latter being the dominant contribution when \(\varepsilon\lesssim\tau\sigma^\alpha\).
\citet{KhaledRichtarik2023} gave a general analysis of SGD under P\L{}-type
conditions with bounded variance. \citet{MasihaSalehkaleybarHeKiyavashThiran2026} established matching upper and
lower bounds $\Theta(\varepsilon^{-2/\alpha})$ in the \emph{local} $\alpha$-P\L{} setting via a SARAH-type variance-reduced scheme with a stay-in-neighborhood guarantee, under a \emph{batch-smooth} stochastic first-order
oracle (mean-square smooth across query points, in addition to bounded variance); a stronger oracle model than the gradient-only bounded-variance oracle used in this paper.
At $\alpha=2$, the variance-dependent term in the SGD bound of
\citet{FatkhullinEtesamiHeKiyavash2022} is $L\tau^2\sigma^2/\varepsilon$. In a related distributed stochastic setting,
\citet{FarhadkhaniEtAl2026} note that lower bounds inherited from strongly convex quadratics do not capture this P\L{}
condition-number dependence and conjecture that the additional dependence is intrinsic.
Theorem~\ref{thm:stochastic-pl-endpoint} proves the matching variance-dependent lower bound within the globally smooth,
globally P\L{} class. Thus the condition-number factor is not caused by localizing the P\L{} inequality to an initial
sublevel set.
Our contribution complements the aforementioned results: we establish the sharp \emph{lower} bound
\[
\Omega\left(
L\tau^{2/\alpha}\varepsilon^{-(2-\alpha)/\alpha}
 + L\sigma^2\tau^{4/\alpha}\varepsilon^{-(4-\alpha)/\alpha}
\right)
\]
on the globally smooth, sublevel-$\alpha$-P\L{} class in the gradient-only bounded-variance oracle model. The
dependence on \(L\), \(\tau\), \(\sigma^2\), and \(\varepsilon\) matches the SGD upper rate of
\citet{FatkhullinEtesamiHeKiyavash2022}.

\paragraph{Lower-bound techniques.}
Our deterministic lower bound uses a zero-chain hard instance, a standard device in oracle complexity going back to
\citet{NemirovskyYudin1983} and \citet{Nesterov2004}. 
A zero chain is designed so that coordinate \(i+1\) is invisible
until coordinate \(i\) has already been activated. Starting from the origin, the gradient can therefore expose at most
one new coordinate direction per oracle call: first coordinate \(1\), then coordinate \(2\), and so on.
A method is called \emph{zero-respecting} if it can put nonzero mass only on coordinates
that have already appeared in previous oracle responses. On a zero chain, such a method can activate at most one new
coordinate per query. The P\L{} lower bound of \citet{YueFangLin2023} uses this idea at the endpoint \(\alpha=2\).
To extend the zero-respecting lower bound to arbitrary deterministic first-order methods, we invoke the
resisting-oracle reduction of \citet[Proposition~1]{CarmonDuchiHinderSidford2021}. For any fixed deterministic method
and query budget, the reduction constructs an orthogonal embedding of the zero chain such that every unrevealed chain
direction is orthogonal to the span of the preceding queries. Consequently, the method can reveal at most one new chain
coordinate per oracle call, just as in the zero-respecting model.

The stochastic lower bound requires an additional mechanism. In a bounded-variance stochastic-gradient oracle, the
method sees noisy unbiased gradient samples, so a deterministic zero-chain obstruction is not enough: the oracle could,
in principle, leak information about hidden coordinates through rare noisy samples. We use the probabilistic zero-chain
framework and random-rotation progress lemma of \citet{ArjevaniEtAl2019}, originally developed for bounded-variance
stochastic stationary-point lower bounds. In this construction the next hidden coordinate is revealed only on a
Bernoulli event of probability \(q\). Smaller \(q\) makes the next hidden coordinate harder to discover, but
increases the variance of the unbiased estimator. We choose \(q\) to exactly meet the variance budget.

The new technical ingredients are specific to sublevel-\(\alpha\)-P\L{} complexity. In the deterministic construction,
the zero chain is rescaled by an amplitude \(a_T\). If \(a_T\) is too small, the function satisfies the desired
\(\alpha\)-P\L{} inequality but the unrevealed coordinates left after \(T\) queries contribute too little objective gap to
give a sharp lower bound. If \(a_T\) is too large, those unrevealed coordinates contribute enough objective gap, but the
function no longer satisfies the sublevel-\(\alpha\)-P\L{} inequality. The proof chooses the largest admissible amplitude,
so the objective gap left in the unrevealed coordinates and the \(\alpha\)-P\L{} geometry are balanced. This balance is what produces the deterministic rate
\(T^{-\alpha/(2-\alpha)}\).

The stochastic construction adds a second balance. After the amplitude \(a_m\) is fixed, the oracle reveals the next
hidden coordinate only with probability \(q\). This probability cannot be chosen independently of \(a_m\): thinning a
gradient coordinate of size \(a_m\) by a Bernoulli factor of probability \(q\) creates variance of order \(a_m^2/q\).
Thus the bounded-variance constraint forces \(q\) to be proportional to \(a_m^2\), up to the variance scale. 
Since the deterministic scaling gives
\(a_m^\alpha\asymp\varepsilon\), the reveal probability has the accuracy dependence
\(q\propto\varepsilon^{2/\alpha}\). The method needs about \(m\) successful reveals, each occurring with probability
\(q\), so the sample complexity scales with \(m/q\). Combining this reveal-time calculation with
\(m\asymp\varepsilon^{-(2-\alpha)/\alpha}\) yields the stochastic noise exponent \((4-\alpha)/\alpha\). This is the
additional noise-dominated contribution, relevant when \(\varepsilon\lesssim\tau\sigma^\alpha\); the full stochastic
lower bound also contains the deterministic term.

\paragraph{Notation.}
Let \(\norm{\cdot}\) and \(\ip{\cdot}{\cdot}\) denote the Euclidean norm and inner product, respectively. Write \(\one_m\) for
the all-ones vector in \(\R^m\), and write \(\mathbf 1\{\mathcal E\}\) for the indicator of an event \(\mathcal E\).
For a set \(S\), let \(\operatorname{dist}(x,S):=\inf_{y\in S}\norm{x-y}\). For a differentiable map \(\phi\),
\(J\phi(x)\) denotes its Jacobian at \(x\). For \(x\in\R^m\) and \(\delta\ge0\), define the progress index
\[
\prog_\delta(x):=\max\bigl(\{i\in\{1,\ldots,m\}:\ |x_i|>\delta\}\cup\{0\}\bigr).
\]
For a function \(f:\R^d\to\R\), define
\[
f_\star:=\inf_{x\in\R^d}f(x),
\qquad
X_\star:=\arg\min f
\]
when the minimum is attained. The initial point is fixed to be \(x_0=0\), the initial gap is bounded by
\(\Delta_0\ge f(0)-f_\star\), and
\[
S_0(f):=\{x\in\R^d:\ f(x)\le f(0)\}
\]
denotes the initial sublevel set. For an integer \(k\ge 1\), \(C^k\) denotes the class of functions with \(k\)
continuous derivatives, and \(C^\infty\) denotes the class of smooth functions. The parameters \(L\), \(\tau\),
\(\alpha\), \(\sigma^2\), and \(\varepsilon\) denote the smoothness constant, the sublevel-\(\alpha\)-P\L{} constant, the P\L{} exponent, the stochastic-gradient
variance bound, and the target accuracy, 
respectively.  \(T_\varepsilon\) and \(T_\varepsilon^{\mathrm{BV}}\) denote deterministic first-order and
bounded-variance stochastic-gradient oracle complexities, respectively. Expectations and
probabilities are denoted by \(\E\) and \(\Pbb\). Unless explicitly stated otherwise, \(A=O(B)\) means
\(A\le cB\), \(A=\Omega(B)\) means \(A\ge cB\), and \(A=\Theta(B)\) means both inequalities hold, for positive
constants \(c\) independent of the problem parameters under discussion. We also write \(A\lesssim B\) for
\(A\le cB\) and \(A\asymp B\) for \(A\lesssim B\lesssim A\).

\paragraph{Organization.}
Section~\ref{sec:preliminaries} formalizes the oracle models, proves the structural impossibility result that
globally smooth, globally $\alpha$-P\L{} functions on $\R^d$ are trivial for every $\alpha<2$
(Theorem~\ref{thm:global-trivial}), and defines the globally smooth, sublevel-$\alpha$-P\L{} class.
Section~\ref{sec:det} states the deterministic lower bound and recalls the matching gradient-descent upper bound.
Section~\ref{sec:sto} states the bounded-variance stochastic-gradient lower bound for
$\alpha\in[1,2)$ and a separate variance-dependent lower bound under global P\L{} geometry at the classical endpoint.
Section~\ref{sec:conclusion}
concludes. Detailed proofs appear in Appendix~\ref{app:deterministic} (deterministic) and
Appendix~\ref{app:stochastic} (stochastic).

\section{Preliminaries}
\label{sec:preliminaries}

We first prove a structural impossibility result: the
globally smooth, globally $\alpha$-P\L{} model is trivial for every $\alpha<2$. This motivates the function class
studied throughout the paper, namely globally smooth functions that satisfy the
$\alpha$-P\L{} inequality only on the initial sublevel set $S_0(f)$. Subsequently, we introduce the deterministic and
bounded-variance stochastic-gradient oracle models and their associated minimax complexities. 

\subsection{\texorpdfstring{Triviality of globally smooth and $\alpha$-P\L{} function class for $\alpha<2$}{Global smoothness plus global alpha-PL is trivial for alpha<2}}

The following theorem explains why we cannot simply assume both global smoothness and a global \(\alpha\)-P\L{}
inequality when \(\alpha<2\). 

\begin{theorem}[Global $\alpha$-P\L{} plus global smoothness yields a constant function]
\label{thm:global-trivial}
Let $f:\R^d\to\R$ be continuously differentiable and globally $L$-smooth, i.e.,
\[
\norm{\nabla f(x)-\nabla f(y)}\le L\norm{x-y}
\qquad (\forall x,y\in\R^d).
\]
Assume that $f$ has a finite infimum $f_\star:=\inf_{x\in\R^d}f(x)>-\infty,$ and that there exist $\tau>0$ and $\alpha\in[1,2)$ such that
\[
f(x)-f_\star\le \tau \norm{\nabla f(x)}^\alpha
\qquad (\forall x\in\R^d).
\]
Then $f$ is constant on $\R^d$.
\end{theorem}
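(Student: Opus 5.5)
The plan is to show first that the two assumptions together force $f$ to be bounded above. Then a gradient-ascent argument shows that any point with $f(x)>f_\star$ would let $f$ increase without bound. The restriction $\alpha<2$ enters only in the first step.

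\emph{Step 1: a lower bound on the gap from smoothness.} For any $x$, apply the descent lemma at the point $x-\tfrac1L\nabla f(x)$:
\[
f_\star\le f\bigl(x-\tfrac1L\nabla f(x)\bigr)\le f(x)-\tfrac{1}{2L}\norm{\nabla f(x)}^2 .
\]
This gives $\norm{\nabla f(x)}^2\le 2L\,(f(x)-f_\star)$ for every $x\in\R^d$.

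\emph{Step 2: the gradient and the gap are uniformly bounded.} Combining Step 1 with the global $\alpha$-P\L{} inequality gives $\norm{\nabla f(x)}^2\le 2L\tau\norm{\nabla f(x)}^\alpha$. Since $2-\alpha>0$, this yields
\[
\norm{\nabla f(x)}\le G:=(2L\tau)^{1/(2-\alpha)} \qquad\text{for all } x.
\]
Feeding this back into the $\alpha$-P\L{} inequality gives $f(x)-f_\star\le \tau G^\alpha=:M<\infty$ uniformly on $\R^d$. This is the only place where $\alpha<2$ is used; at $\alpha=2$ the inequality above is vacuous, which is consistent with nonconstant P\L{} examples existing.

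\emph{Step 3: gradient ascent contradicts boundedness.} Suppose some $x^0$ has $\delta:=f(x^0)-f_\star>0$. Run gradient ascent $x^{k+1}=x^k+\tfrac1L\nabla f(x^k)$. The upper quadratic bound from $L$-smoothness, applied with the sign reversed, gives
\[
f(x^{k+1})\ge f(x^k)+\tfrac{1}{2L}\norm{\nabla f(x^k)}^2 .
\]
Hence $f(x^k)$ is nondecreasing, so $f(x^k)-f_\star\ge\delta$ for all $k$. The $\alpha$-P\L{} inequality then gives $\norm{\nabla f(x^k)}\ge(\delta/\tau)^{1/\alpha}$, so each step increases $f$ by at least $c:=\tfrac{1}{2L}(\delta/\tau)^{2/\alpha}>0$. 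Therefore $f(x^k)-f_\star\ge \delta+kc\to\infty$, contradicting the bound $M$ from Step 2. So $f(x)=f_\star$ for every $x$, and $f$ is constant.

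I do not expect a serious obstacle; the whole argument uses only the two-sided quadratic bounds implied by $L$-smoothness. The one point to state carefully is the key observation in Step 2, that for $\alpha<2$ the two inequalities together bound the gradient. A continuous version using the ascent flow $\dot x=\nabla f(x)$, which exists globally because $\nabla f$ is Lipschitz, would also work, but the discrete ascent avoids any ODE technicalities.
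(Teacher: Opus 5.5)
Your proof is correct, and it takes a genuinely different route from the paper's.

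The paper runs the continuous ascent flow $\dot x=\nabla f(x)$, which exists for all $t\ge0$ because the gradient is Lipschitz. Along it, the gap $v(t)=f(x(t))-f_\star$ satisfies $v'\ge\tau^{-2/\alpha}v^{2/\alpha}$. Since $2/\alpha>1$, this superlinear inequality forces blow-up in finite time, which contradicts global existence. So in the paper, $\alpha<2$ enters through the superlinear exponent, and smoothness is used only to guarantee a globally defined flow.

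In your argument, $\alpha<2$ enters instead through a pointwise comparison. You combine the P\L{} inequality with the smoothness consequence $\norm{\nabla f(x)}^2\le 2L(f(x)-f_\star)$, which caps the gradient and hence the gap. After that, a discrete ascent with merely linear growth is enough, so you need neither an ODE nor superlinearity.

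Each route has its advantage. The paper's proof uses smoothness only to prevent finite-time escape of the flow. It would therefore survive weaker hypotheses, for example a locally Lipschitz gradient with at most linear growth. Your Step~1 needs the full global descent lemma.

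Your Step~2, on the other hand, is pointwise and quantitative. At any point where the $\alpha$-P\L{} inequality holds, it gives $f(x)-f_\star\le(2L)^{\alpha/(2-\alpha)}\tau^{2/(2-\alpha)}$. Applied at $x=0$ in the sublevel class, this shows every instance has initial gap at most a constant multiple of $L^{\alpha/(2-\alpha)}\tau^{2/(2-\alpha)}$. That is exactly the accuracy ceiling in Theorems~\ref{thm:matching} and~\ref{thm:stochastic-matching}, which the paper states only in passing, without proof, in the proof sketch of Theorem~\ref{thm:matching}.
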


\begin{proof}
Define $u(x):=f(x)-f_\star$. Note that $u\in C^1$, $u(x)\ge 0$, and $\nabla u(x)=\nabla f(x)$, hence
 $\nabla u$ is globally $L$-Lipschitz:
\[
\norm{\nabla u(x)-\nabla u(y)}\le L\norm{x-y}
\qquad (\forall x,y\in\R^d)
\]
and satisfies
\[
u(x)\le \tau \norm{\nabla u(x)}^\alpha
\qquad (\forall x\in\R^d).
\]

Fix any $x_0\in\R^d$ and consider the gradient-flow ODE
\[
\dot x(t)=\nabla u(x(t)),
\qquad
x(0)=x_0.
\]
Because $\nabla u$ is globally Lipschitz, this ODE has a unique global solution for all $t\ge 0$ \citep[see, e.g., Chapter~1]{CoddingtonLevinson1955}.

Set
\[
v(t):=u(x(t)).
\]
From the chain rule:
\[
v'(t)
=
\ip{\nabla u(x(t))}{\dot x(t)}
=
\norm{\nabla u(x(t))}^2
\ge 0.
\]
The global $\alpha$-P\L{} inequality along the trajectory implies
\[
\norm{\nabla u(x(t))}\ge \left(\frac{v(t)}{\tau}\right)^{1/\alpha},
\]
and therefore
\begin{equation}
\label{eq:ascent-superlinear-gap}
v'(t)\ge \tau^{-2/\alpha}v(t)^{2/\alpha}.
\end{equation}
Assume for contradiction that $v(0)>0$.
Since $v'(t)\ge 0$, we have $v(t)\ge v(0)>0$ for all $t\ge 0$.
Let $r:={2}/{\alpha}>1$.
Since $1-r<0$, multiplying both sides of $v'(t)\ge \tau^{-2/\alpha}v(t)^r$ by $(1-r)v(t)^{-r}$ reverses the inequality and yields
\[
\frac{d}{dt}\bigl(v(t)^{1-r}\bigr)
=(1-r)v(t)^{-r}v'(t)
\le
(1-r)\tau^{-2/\alpha}.
\]
Integrating from $0$ to $t$ yields
\[
v(t)^{1-r}
\le
v(0)^{1-r}-(r-1)\tau^{-2/\alpha}t.
\]
The right-hand side becomes negative for sufficiently large $t$, which is impossible because $v(t)>0$ and $1-r<0$ imply $v(t)^{1-r}>0$.
Therefore $v(0)=0$, so $u(x_0)=0$.
Since $x_0$ was arbitrary, $u\equiv 0$ on $\R^d$.
Hence $f\equiv f_\star$.
\end{proof}

\begin{remark}[Relation to local \L{}ojasiewicz obstructions]
\label{rem:rebjock-boumal-comparison}
Theorem~\ref{thm:global-trivial} is complementary to the local exponent obstructions discussed by
\citet[Remark~2.21]{RebjockBoumal2024}. Their paper uses the \L{}ojasiewicz convention
\[
|f(x)-f_S|^{2\theta}\le \frac{1}{2\mu}\norm{\nabla f(x)}^2
\]
near a local minimizer set \(S\), where \(f_S\) is the common local minimum value. In our notation this corresponds to
\(\alpha=1/\theta\), so the classical P\L{} case is \(\theta=1/2\), equivalently \(\alpha=2\).

Under locally Lipschitz gradients, \citet[Remark~2.21]{RebjockBoumal2024} rule out nonconstant local examples when
\(\theta<1/2\), equivalently \(\alpha>2\). A related sublevel-set obstruction for smooth functions with bounded
curvature was proved by \citet[Theorem~4]{AbbaszadehpeivastiDeKlerkZamani2023}. The mechanism in
\citet{RebjockBoumal2024} is local: the negative gradient flow and the
\L{}ojasiewicz inequality give a finite-length argument, which yields a growth lower bound of the form
\[
f(x)-f_S \gtrsim \operatorname{dist}(x,S)^{1/(1-\theta)}.
\]
For \(\theta<1/2\), this growth is incompatible with the quadratic upper growth implied by locally Lipschitz
gradients, unless the function is locally constant. Theorem~\ref{thm:global-trivial} concerns the opposite side of the P\L{} threshold: \(\alpha<2\), equivalently
\(\theta>1/2\). For \(\alpha<2\), there is no analogous local contradiction; for instance,
\(f(x)=|x|^p/p\) near \(0\), with \(p=\alpha/(\alpha-1)>2\), is a nonconstant \(C^1\) function with locally Lipschitz gradient satisfying a local
\(\alpha\)-P\L{} inequality. The obstruction in Theorem~\ref{thm:global-trivial} is global. We follow the \emph{positive}
gradient flow, along which the global \(\alpha\)-P\L{} inequality forces the objective gap to satisfy the superlinear
differential inequality~\eqref{eq:ascent-superlinear-gap}. Global smoothness prevents finite-time escape, and this gives the contradiction.
\end{remark}

\subsection{\texorpdfstring{The globally smooth, sublevel-$\alpha$-P\L{} class}{The globally smooth, sublevel-alpha-PL class}}

We keep
global smoothness as the ambient oracle regularity condition: every query is a query to a globally \(L\)-smooth
objective. We localize only the $\alpha$-P\L{} inequality, requiring it on the initial sublevel set from the prescribed
initial point \(0\).

\begin{definition}[Globally smooth, sublevel-$\alpha$-P\L{} class]
\label{def:function-class}
For each dimension $d\ge 1$ and parameters $L,\tau,\Delta_0>0$, define
\[
\mathcal F^{(d)}_{\alpha,L,\tau,\Delta_0}(0)
:=
\left\{
f:\R^d\to\R \ \middle|\
\begin{array}{l}
f \text{ is differentiable},\ f_\star>-\infty,\ f(0)-f_\star\le \Delta_0,\\[0.3em]
\nabla f \text{ is globally } L\text{-Lipschitz on } \R^d,\\[0.3em]
f(x)-f_\star\le \tau \norm{\nabla f(x)}^\alpha \quad (\forall x\in S_0(f))
\end{array}
\right\},
\]
where
\[
S_0(f):=\{x\in\R^d:\ f(x)\le f(0)\}
\]
is the initial sublevel set.
We also define the dimension-free union
\[
\mathcal F_{\alpha,L,\tau,\Delta_0}(0)
:=
\bigcup_{d\ge 1}\mathcal F^{(d)}_{\alpha,L,\tau,\Delta_0}(0).
\]
\end{definition}


Before providing a few examples, let us discuss two structural consequences of Definition~\ref{def:function-class}.

\paragraph{Implicit basin condition.}
The definition uses \(f_\star=\inf f\), the \emph{global} infimum, and requires that the sublevel inequality to hold at every
\(x\in S_0(f)\). In particular it must hold at every stationary point of \(f\) inside \(S_0(f)\). At any such
stationary point \(x_c\), the right-hand side \(\tau\norm{\nabla f(x_c)}^\alpha\) is zero, so the inequality forces
\(f(x_c)=f_\star\). Therefore, for any member of \(\mathcal F_{\alpha,L,\tau,\Delta_0}(0)\) 
\begin{equation}
\label{eq:basin-condition}
\text{every stationary point of \(f\) inside \(S_0(f)\) attains the global infimum \(f_\star\).}
\end{equation}
Thus, within \(S_0(f)\), stationarity certifies global optimality. This is the precise basin-type content of the
assumption: the initialized sublevel set may be nonconvex, but it cannot contain a stationary point with objective value
strictly above \(f_\star\).


\paragraph{Connection to local convergence.}
Definition~\ref{def:function-class} is stated using the global infimum \(f_\star\), because this gives a clean
worst-case oracle-complexity model. Many applications, however, use P\L{}/K\L{} geometry only near a target basin of
local minimizers. In that setting, one can fix a connected component \(\mathcal M\) of local minimizers, writes
\(\ell(\mathcal M)\) for its objective value, and measures error by the local gap \(f(x)-\ell(\mathcal M)\). If the
\(\alpha\)-P\L{} inequality holds in a neighborhood of \(\mathcal M\) and the iterates remain in that neighborhood, then
the same \(\alpha\)-dependent exponents govern convergence to \(\mathcal M\). This is the local-convergence viewpoint
studied by \citet{MasihaSalehkaleybarHeKiyavashThiran2026}.

\begin{example}[The endpoint \(\alpha=1\): smooth convex functions on bounded sublevel sets]
\label{ex:alpha1-convex}
Smooth convex problems with a bounded initial sublevel set satisfy the sublevel \(\alpha\)-P\L{} condition with
\(\alpha=1\). Indeed, assume
\(f\) is convex and \(L\)-smooth, \(f_\star\) is attained, and
\[
D_0:=\sup_{x\in S_0(f)} \operatorname{dist}(x,X_\star)<\infty,
\qquad
X_\star:=\arg\min f .
\]
Then, for any \(x\in S_0(f)\) and any projection \(x_\star\in X_\star\),
\[
f(x)-f_\star
\le
\ip{\nabla f(x)}{x-x_\star}
\le
D_0\norm{\nabla f(x)}.
\]
Thus \(f\in\mathcal F_{1,L,D_0,\Delta_0}(0)\) whenever \(f(0)-f_\star\le \Delta_0\). This shows that bounded-sublevel
smooth convex minimization is contained in our \(\alpha=1\) model. The containment is strict: Appendix~\ref{app:nonconvex-alpha1}
gives a globally smooth nonconvex function satisfying the sublevel \(\alpha=1\) condition.
\end{example}

\begin{example}[Intermediate exponents from local K\L{} geometry]
\label{ex:intermediate-kl}
Local K\L{} theory provides a standard source of intermediate \(\alpha\)-P\L{} exponents. Around critical points,
real-analytic functions and, more generally, semialgebraic, subanalytic, or definable objectives satisfy local K\L{}
gradient inequalities
\citep{Lojasiewicz1963,Kurdyka1998,BolteDaniilidisLewis2007,AttouchBolteRedontSoubeyran2010,LiPong2018}. Assume
\(f:\R^d\to\R\) is subanalytic and globally \(L\)-smooth, and fix a compact connected component
\(\mathcal M^\star\) of global minimizers of \(f\). The pointwise power-form K\L{} inequalities and a finite-cover
argument on the compact set \(\mathcal M^\star\), whose points have the common critical value \(f_\star\), give a
neighborhood \(\mathcal U_0\) of \(\mathcal M^\star\) and common constants \(c_0>0\) and
\(\theta_0\in[0,1)\) such that
\[
\norm{\nabla f(x)}
\ge
c_0\bigl(f(x)-f_\star\bigr)^{\theta_0}
\qquad
\bigl(x\in\mathcal U_0,\ f_\star<f(x)\bigr);
\]
see, for example, the compact uniformization principle in \citet[Lemma~6]{BolteSabachTeboulle2014}. The exponent
\(\theta_0\) need not exceed \(1/2\), whereas the intermediate range \(\alpha\in(1,2)\) corresponds, through
\(\alpha=1/\theta\), to \(\theta\in(1/2,1)\). To obtain an exponent in this range, define the smaller neighborhood
\[
\mathcal U:=\mathcal U_0\cap\{x:\ f(x)-f_\star<1\}.
\]
Because \(f=f_\star\) on \(\mathcal M^\star\), continuity of \(f\) ensures that \(\mathcal U\) is still a neighborhood
of \(\mathcal M^\star\). Choose \(\theta\in(1/2,1)\) such that \(\theta\ge\theta_0\); if
\(\theta_0>1/2\), one may simply take \(\theta=\theta_0\). For \(x\in\mathcal U\) with \(f(x)>f_\star\), let
\(s:=f(x)-f_\star\in(0,1)\). Then \(s^\theta\le s^{\theta_0}\), and, since
\(\mathcal U\subseteq\mathcal U_0\), the preceding K\L{} inequality implies
\[
\norm{\nabla f(x)}\ge c_0\bigl(f(x)-f_\star\bigr)^\theta
\qquad
\bigl(x\in\mathcal U,\ f_\star<f(x)\bigr).
\]
Consequently,
\[
f(x)-f_\star\le c_0^{-1/\theta}\norm{\nabla f(x)}^{1/\theta},
\]
so the local \(\alpha\)-P\L{} inequality holds with $\alpha=1/\theta\in(1,2)$ and
$\tau=c_0^{-1/\theta}$. Therefore, if the initial sublevel set satisfies \(S_0(f)\subset\mathcal U\), then
\(f\in\mathcal F_{\alpha,L,\tau,\Delta_0}(0)\) for every \(\Delta_0\ge f(0)-f_\star\). Thus the sublevel
\(\alpha\)-P\L{} class includes functions whose entire initial sublevel set lies inside a local
K\L{}/\(\alpha\)-P\L{} basin around a connected component of global minimizers.
Appendix~\ref{app:explicit-intermediate} provides concrete examples in
$\mathcal F_{\alpha,L,\tau,\Delta_0}(0)$ for every \(\alpha\in(1,2)\).
\end{example}

\subsection{Deterministic oracle complexity}

\begin{definition}[Deterministic first-order method and minimax error]
\label{def:minimax}
A deterministic first-order method is a family \(A=(A_d)_{d\ge 1}\), one rule for each ambient dimension. On input
\(f:\R^d\to\R\), the method starts from \(x_0=0\), and each query \(x_t\) is a deterministic function of the previous
oracle transcript
\[
\bigl\{x_s,f(x_s),\nabla f(x_s):0\le s<t\bigr\}.
\]
For $f\in \mathcal F^{(d)}_{\alpha,L,\tau,\Delta_0}(0)$, we write
\[
x_0^A(f)=0,\qquad x_1^A(f),x_2^A(f),\ldots\in\R^d
\]
for the iterate sequence generated by $A_d$ on $f$.
The fixed-budget minimax error is
\[
\mathcal E_T\!\left(\mathcal F_{\alpha,L,\tau,\Delta_0}(0)\right)
:=
\inf_A\sup_{d\ge 1}\sup_{f\in\mathcal F^{(d)}_{\alpha,L,\tau,\Delta_0}(0)}
\bigl(f(x_T^A(f))-f_\star\bigr).
\]
The corresponding deterministic oracle complexity is
\[
T_\varepsilon\!\left(\mathcal F_{\alpha,L,\tau,\Delta_0}(0)\right)
:=
\inf\Bigl\{T\ge 0:\
\mathcal E_T\!\left(\mathcal F_{\alpha,L,\tau,\Delta_0}(0)\right)\le \varepsilon
\Bigr\}.
\]
When the class is clear from context, we write this quantity simply as \(T_\varepsilon\).
\end{definition}

\subsection{Bounded-variance stochastic-gradient oracle complexity}

\begin{definition}[Bounded-variance stochastic-gradient oracle]
\label{def:bv-oracle}
For a differentiable $f:\R^d\to\R$ and $\sigma^2\ge 0$, let $\mathfrak O_{\sigma^2}(f)$ be the set of stochastic
gradient oracles \(G\) such that each oracle query at a point \(x\) returns a fresh sample \(G(x,\xi)\) satisfying
\[
\E[G(x,\xi)\mid x]=\nabla f(x),
\qquad
\E\bigl[\norm{G(x,\xi)-\nabla f(x)}^2\mid x\bigr]\le \sigma^2
\qquad (\forall x\in\R^d).
\]
This is a gradient-only stochastic oracle: the algorithm receives samples of \(G(x,\xi)\), not exact function values.
The objective \(f\) is latent; it is used only to specify unbiasedness and variance, and to measure the final error.
\end{definition}

\begin{definition}[Stochastic first-order method and minimax error]
\label{def:stochastic-model}
A randomized stochastic first-order method is a family \(A=(A_d)_{d\ge 1}\). On an objective \(f:\R^d\to\R\) and an
oracle \(G\in\mathfrak O_{\sigma^2}(f)\), the method starts from \(x_0=0\), and each query \(x_t\) is measurable with
respect to the past stochastic-gradient transcript
\[
\{G(x_s,\xi_s):0\le s<t\}
\]
and the method's internal randomness. The algorithm has no oracle access to \(f(x)\). We write \(x_T^A(f,G)\) for the
point returned after \(T\) oracle calls.

The fixed-budget bounded-variance stochastic minimax error over the function class
\(\mathcal F_{\alpha,L,\tau,\Delta_0}(0)\) is
\[
\mathcal E_T^{\mathrm{BV}}\!\left(\mathcal F_{\alpha,L,\tau,\Delta_0}(0),\sigma^2\right)
:=
\inf_A
\sup_{d\ge 1}
\sup_{f\in\mathcal F^{(d)}_{\alpha,L,\tau,\Delta_0}(0)}
\sup_{G\in\mathfrak O_{\sigma^2}(f)}
\E\bigl[f(x_T^A(f,G))-f_\star\bigr],
\]
where the superscript $\mathrm{BV}$ stands for ``bounded variance,'' and the expectation is over both the oracle
randomness and the algorithm's internal randomness. The two suprema have different meanings: first we choose the worst
objective in the function class, and then, for that objective, the worst admissible unbiased bounded-variance gradient
oracle.
The corresponding complexity is
\[
T_\varepsilon^{\mathrm{BV}}\!\left(\mathcal F_{\alpha,L,\tau,\Delta_0}(0),\sigma^2\right)
:=
\inf\Bigl\{T\ge 0:\
\mathcal E_T^{\mathrm{BV}}\!\left(\mathcal F_{\alpha,L,\tau,\Delta_0}(0),\sigma^2\right)\le \varepsilon
\Bigr\}.
\]
We also abbreviate
\[
T_\varepsilon^{\mathrm{BV}}(\alpha,L,\tau,\Delta_0,\sigma^2)
:=
T_\varepsilon^{\mathrm{BV}}\!\left(\mathcal F_{\alpha,L,\tau,\Delta_0}(0),\sigma^2\right).
\]
\end{definition}

\section{\texorpdfstring{Deterministic first-order complexity under $\alpha$-P\L{}}{Deterministic first-order complexity under alpha-PL}}
\label{sec:det}

This section derives the deterministic lower bound, which shows no deterministic first-order
method can improve the standard gradient-descent dependence on \(L,\tau\), and \(\varepsilon\) over the whole globally
smooth, sublevel-\(\alpha\)-P\L{} class. 
A proof sketch follows the theorem statement; the full argument is given in
Appendix~\ref{app:deterministic}.

\begin{theorem}[Sharp deterministic lower bound under sublevel $\alpha$-P\L{} geometry]
\label{thm:matching}
For every $\alpha\in[1,2)$ there exists a constant \(c_\alpha^{\mathrm{det}}>0\) such that, for every
$L,\tau,\Delta_0>0$ and every target accuracy
\[
\varepsilon\in\bigl(0,c_\alpha^{\mathrm{det}}\min\{\Delta_0,L^{\alpha/(2-\alpha)}\tau^{2/(2-\alpha)}\}\bigr],
\]
\[
T_\varepsilon\!\left(\mathcal F_{\alpha,L,\tau,\Delta_0}(0)\right)
\ge
c_\alpha^{\mathrm{det}} L\tau^{2/\alpha}\varepsilon^{-(2-\alpha)/\alpha}.
\]
\end{theorem}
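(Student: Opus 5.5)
We plan to prove the bound by a rescaled zero-chain construction combined with the resisting-oracle reduction of \citet[Proposition~1]{CarmonDuchiHinderSidford2021}. Fix a budget \(T\) and a chain length \(m\asymp T\), say \(m=2T+1\). We take the hard function of \citet{YueFangLin2023} on \(\R^m\), built from a quadratic chain plus a bounded nonconvex link function, which is \(\ell_0\)-smooth and zero-chain. Its defining property is that any point with \(\prog_0(x)\le T\) has gap at least a constant times the gap at the origin. We will use it in the rescaled form
\[
f_{a}(x)=\frac{L}{\ell_0}\,a^2\,\bar f\bigl(x/a\bigr).
\]
This keeps global \(L\)-smoothness for every amplitude \(a>0\). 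The gap at the origin then scales like \(La^2 m\), or like \(La^2\) per unrevealed coordinate, depending on the variant; we will arrange that it is of order \(La^2\,(\text{number of unrevealed coordinates})\). We then compose with an orthogonal embedding \(U\in\Ortho(d,m)\) supplied by the resisting-oracle argument. For any deterministic method, after \(T\) queries the iterate satisfies \(\prog(U^\top x_T)\le T\), so
\[
f(x_T)-f_\star\gtrsim L a^2 (m-T)\gtrsim L a^2 T.
\]

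Next we verify the sublevel \(\alpha\)-P\L{} inequality and choose the amplitude. The base instance satisfies a P\L{} inequality (\(\alpha=2\)) with constant of order \(m\) at unit scale; this is exactly the \citet{YueFangLin2023} estimate \(\bar f-\bar f_\star\lesssim m\norm{\nabla\bar f}^2\). After rescaling, on \(S_0(f_a)\) we have gap \(g:=f_a-f_\star\le\Delta:=f_a(0)-f_\star\asymp La^2m\) and
\[
g\lesssim \frac{m}{L}\norm{\nabla f_a}^2 .
\]
To convert to exponent \(\alpha<2\), we interpolate: \(g=g^{\alpha/2}g^{1-\alpha/2}\le (m/L)^{\alpha/2}\norm{\nabla f_a}^\alpha\,\Delta^{1-\alpha/2}\). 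Hence the \(\alpha\)-P\L{} constant is at most
\[
c\,(m/L)^{\alpha/2}(La^2m)^{(2-\alpha)/2}=c\,m\,L^{1-\alpha}a^{2-\alpha}.
\]
The restriction to \(S_0\) is used precisely here, since it bounds \(g\) by \(\Delta\); this is why global \(\alpha\)-P\L{} fails (Theorem~\ref{thm:global-trivial}) while sublevel \(\alpha\)-P\L{} holds. We set this constant equal to \(\tau\), which gives the largest admissible amplitude
\[
a^{2-\alpha}\asymp \tau L^{\alpha-1}/m .
\]
The resulting final error is \(\gtrsim La^2m\asymp L\,(\tau L^{\alpha-1}/m)^{2/(2-\alpha)}m\asymp L^{\alpha/(2-\alpha)}\tau^{2/(2-\alpha)}m^{-\alpha/(2-\alpha)}\).

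To conclude, we require this error to exceed \(\varepsilon\) and solve for \(T\asymp m\). The condition becomes
\[
m\lesssim L\tau^{2/\alpha}\varepsilon^{-(2-\alpha)/\alpha},
\]
so any \(T\) below \(c\,L\tau^{2/\alpha}\varepsilon^{-(2-\alpha)/\alpha}\) leaves error above \(\varepsilon\). Two side constraints remain, and both follow from the range of \(\varepsilon\) in Theorem~\ref{thm:matching}. First, we need \(m\ge1\), which holds because \(\varepsilon\le c L^{\alpha/(2-\alpha)}\tau^{2/(2-\alpha)}\) gives \(L\tau^{2/\alpha}\varepsilon^{-(2-\alpha)/\alpha}\gtrsim1\). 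Second, the initial gap \(\asymp La^2m\asymp\varepsilon\) must not exceed \(\Delta_0\), which holds because \(\varepsilon\le c\Delta_0\). If needed, we also shrink constants so that \(f(0)-f_\star\le\Delta_0\) exactly.

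The main obstacle is the P\L{} step for the base chain: we need a sharp bound \(\bar f-\bar f_\star\lesssim m\norm{\nabla\bar f}^2\) on the whole sublevel set with constant linear in \(m\), not \(m^2\). We also need the tail gap to be proportional to \(a^2\) times the number of unrevealed coordinates, so that the two scalings balance. If the \citet{YueFangLin2023} instance only gives P\L{} globally with a different gap profile, the first thing to try is to redo the interpolation with the actual gap \(\Delta\) and constant \(\kappa\). The balance \(\tau\asymp \kappa^{\alpha/2}L^{-\alpha/2}\Delta^{1-\alpha/2}\) together with error \(\asymp\Delta\) should still reproduce the stated rate, provided \(\kappa\asymp m\) and the unrevealed gap is a constant fraction of \(\Delta\).
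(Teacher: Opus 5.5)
Your proposal is correct and follows essentially the paper's route: the rescaled Yue--Fang--Lin zero chain with the amplitude pushed to the largest value allowed by the $O(m)$ quadratic-P\L{} constant, the sublevel bound $f-f_\star\le f(0)-f_\star$ to handle the large-gradient regime, and the orthogonal-invariance/resisting-oracle reduction of \citet{CarmonDuchiHinderSidford2021}. Your interpolation $g\le(\kappa\norm{\nabla f}^2)^{\alpha/2}\Delta^{1-\alpha/2}$ is a one-line version of the paper's case split at $\norm{\nabla\Phi_T}\lessgtr a_T$. The one bookkeeping point to make explicit is the initial gap when $T$ is well below the threshold. With $m=2T+1$, that gap can exceed $\Delta_0$. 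You should therefore either fix $m$ from $\varepsilon$ and reuse that single instance for all smaller $T$, or cap the amplitude, as the paper does with $\lambda_T=\min\{\Delta_0/\Delta_T,\cdot\}$.
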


\begin{proof}[Proof sketch]
For each \(\alpha\in[1,2)\) and budget \(T\ge 1\), we construct a function \(\Phi_T\) on \(\R^{2T+1}\) which lies in \(\mathcal F_{\alpha,L,\tau,\Delta_0}(0)\) and for which no zero-respecting
deterministic first-order method can achieve objective gap better than the claimed rate. The construction is built on
the Yue--Fang--Lin (YFL) zero chain~\citep{YueFangLin2023}; the technical content is in choosing the chain's scale so
that two competing constraints, the \emph{rate} forced by the budget and the \emph{geometry} of the function class, are met at the same time.

\paragraph{Base zero chain.}
On \(\R^m\), set
\[
g_m(u):=q_m(u)+\sum_{i=1}^m v(u_i),
\qquad
q_m(u):=\tfrac12 u_1^2+\tfrac12\sum_{i=1}^{m-1}(u_{i+1}-u_i)^2,
\]
where \(v:\R\to\R\) is an explicit piecewise-quadratic bump with \(v(1)>0\) and \(v'(1)=0\). The construction relies
on two properties of \(g_m\):
\begin{enumerate}[label=(P\arabic*),leftmargin=2.4em,itemsep=0.2em]
\item \emph{Zero-chain property.} The centered function \(g_m(\one_m-\,\cdot\,)\) is a first-order zero-chain in
the variable \(x\): the iterate support of a zero-respecting method queried from \(x_0=0\), after \(T\) steps, is contained
in \(\{1,\ldots,T\}\).
\item \emph{Quadratic-P\L{} bound.} \(g_m(u)-g_{m,\star}\le C_{\mathrm{PL}}\,m\,\norm{\nabla g_m(u)}^2\) for a universal
constant \(C_{\mathrm{PL}}\ge 1\) (the \(t=1\) case of \citep[Lemma~2]{YueFangLin2023}).
\end{enumerate}

\paragraph{The rescaled hard instance.}
Set \(m_T:=2T+1\) and, for a scale \(a_T>0\) to be chosen,
\[
\Phi_T(x):=a_T^2\,g_{m_T}\!\Bigl(\one_{m_T}-\frac{x}{a_T}\Bigr),
\qquad x\in\R^{m_T}.
\]
Since scaling preserves coordinate supports, \(\Phi_T\) inherits the zero-chain property from \(g_{m_T}\). By (P1), for 
iterate \(x_T\), \((x_T)_i=0\) for every \(i>T\). Additionally, the corresponding chain variables satisfy \(u_i=1\) on
this hidden tail. By \(v(1)>0\), the tail contributes positive objective value,
\begin{equation}
\label{eq:tail-gap-sketch}
\Phi_T(x_T)-\Phi_{T,\star}\;\gtrsim\;T\,a_T^2,
\end{equation}
while by \(v'(1)=0\) no gradient signal at those coordinates is ever produced. The hidden tail persists in the
objective and cannot be eliminated by further first-order queries.

\paragraph{Two constraints, one scale.}
For the construction to deliver a sharp lower bound at accuracy \(\varepsilon\), the scale \(a_T\) and the budget
\(T\) must jointly satisfy two equations -- one pertaining to the budget, and the other to the function class.

\smallskip
\noindent\emph{(a) Rate constraint.} The hidden-tail gap~\eqref{eq:tail-gap-sketch} must match the target accuracy:
\begin{equation}
\label{eq:rate-constraint}
T\,a_T^2\;\asymp\;\varepsilon.
\end{equation}

\smallskip
\noindent\emph{(b) Geometry constraint.} The rescaled \(\Phi_T\) must lie in
\(\mathcal F_{\alpha,L,\tau,\Delta_0}(0)\), i.e.\ satisfy the sublevel-\(\alpha\)-P\L{} inequality on
\(S_0(\Phi_T)\). The chain rule gives \(\norm{\nabla g_{m_T}}=\norm{\nabla\Phi_T}/a_T\). Substituting this into~(P2)
yields the quadratic-P\L{} bound
\begin{equation}
\label{eq:rescaled-qPL}
\Phi_T(x)-\Phi_{T,\star}\;\le\;C_{\mathrm{PL}}\,m_T\,\norm{\nabla\Phi_T(x)}^2
\qquad(x\in S_0(\Phi_T)),
\end{equation}
whose right-hand side is in terms of \(\norm{\nabla\Phi_T}^2\), not \(\norm{\nabla\Phi_T}^\alpha\). Converting to the
sublevel-\(\alpha\)-P\L{} form in the small-gradient regime \(\norm{\nabla\Phi_T}\le a_T\) requires
\[
m_Ta_T^2\lesssim a_T^\alpha.
\]
Smaller \(a_T\) would still preserve the geometry but would also shrink the hidden-tail gap. The sharp construction
therefore uses the largest admissible scale, i.e.:
\begin{equation}
\label{eq:matching}
\boxed{\;m_T\,a_T^{2}\;=\;a_T^{\alpha}\;}
\end{equation}
(a complementary global gap bound on \(S_0(\Phi_T)\) handles the large-gradient regime
\(\norm{\nabla\Phi_T}>a_T\)).

\smallskip
\noindent\emph{Solving the system.} With \(m_T=2T+1\), equations~\eqref{eq:rate-constraint}
and~\eqref{eq:matching} form a system of equations in the two unknowns \((a_T,T)\) given the target accuracy
\(\varepsilon\). From~\eqref{eq:matching},
\[
a_T\;\asymp\;T^{-1/(2-\alpha)}.
\]
Substituting into~\eqref{eq:rate-constraint},
\[
T\,a_T^2\;\asymp\;T\cdot T^{-2/(2-\alpha)}\;=\;T^{-\alpha/(2-\alpha)}\;\asymp\;\varepsilon,
\qquad\text{equivalently,}\qquad
T\;\asymp\;\varepsilon^{-(2-\alpha)/\alpha}.
\]
Any zero-respecting deterministic method therefore needs
\(T\gtrsim\varepsilon^{-(2-\alpha)/\alpha}\) queries to reach objective gap \(\varepsilon\) on \(\Phi_T\).
The fact that the two constraints admit a common solution is non-trivial: it relies on the YFL chain's quadratic-P\L{}
constant scaling linearly with \(m_T\) in~(P2), which allows for the
identity~\eqref{eq:matching} to be consistent with the rate constraint~\eqref{eq:rate-constraint}.

\paragraph{From zero-respecting to all deterministic methods.}
A final affine rescaling in domain and objective places \(\Phi_T\) in \(\mathcal F_{\alpha,L,\tau,\Delta_0}(0)\) with
the prescribed parameters and yields the fixed-budget lower bound
\[
\mathcal E_T\!\left(\mathcal F_{\alpha,L,\tau,\Delta_0}(0)\right)
\ge
c_\alpha^{\mathrm{det}}\,
\min\Bigl\{\Delta_0,\,\bigl(L\tau^{2/\alpha}/T\bigr)^{\alpha/(2-\alpha)}\Bigr\}.
\]
The accuracy restriction \(\varepsilon\lesssim L^{\alpha/(2-\alpha)}\tau^{2/(2-\alpha)}\) is the natural scale of this rescaled family:
above that scale, the class itself cannot sustain a nonzero sublevel gap up to constants depending only on \(\alpha\).
The class is orthogonally invariant, so the standard resisting-oracle rotation of
\citet[Proposition~1]{CarmonDuchiHinderSidford2021} lifts the bound to all deterministic first-order methods.
Rearranging yields the iteration-complexity statement of Theorem~\ref{thm:matching}. The full argument appears in
Appendix~\ref{app:deterministic}.
\end{proof}

\begin{proposition}[Gradient-descent benchmark]
\label{prop:gd-main}
For every \(\alpha\in[1,2)\), there is a constant \(C_\alpha>0\) such that gradient descent with step size \(1/L\)
achieves
\[
T_\varepsilon\!\left(\mathcal F_{\alpha,L,\tau,\Delta_0}(0)\right)
\le
C_\alpha L\tau^{2/\alpha}\varepsilon^{-(2-\alpha)/\alpha}
\qquad
(\varepsilon\in(0,\Delta_0]).
\]
\end{proposition}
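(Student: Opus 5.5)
We run gradient descent \(x_{k+1}=x_k-\tfrac1L\nabla f(x_k)\) from \(x_0=0\) on an arbitrary \(f\in\mathcal F^{(d)}_{\alpha,L,\tau,\Delta_0}(0)\) and apply the standard descent-lemma recursion. There are three steps.

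\textbf{Step 1: descent and containment.} Global \(L\)-smoothness gives the descent lemma
\[
f(x_{k+1})\le f(x_k)-\tfrac{1}{2L}\norm{\nabla f(x_k)}^2 .
\]
So the sequence \(f(x_k)\) is nonincreasing, and every iterate lies in \(S_0(f)\). This is where the sublevel localization of Definition~\ref{def:function-class} is exactly what is needed. The \(\alpha\)-P\L{} inequality therefore holds at every \(x_k\), which gives \(\norm{\nabla f(x_k)}\ge (\delta_k/\tau)^{1/\alpha}\) with \(\delta_k:=f(x_k)-f_\star\). Substituting yields the scalar recursion
\[
\delta_{k+1}\le \delta_k-\tfrac{1}{2L}\tau^{-2/\alpha}\delta_k^{r},\qquad r:=2/\alpha\in(1,2].
\]

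\textbf{Step 2: solve the recursion.} Let \(c:=(2L)^{-1}\tau^{-2/\alpha}\). We track \(\delta_k^{1-r}\). If \(\delta_{k+1}=0\) we are done, so assume \(\delta_{k+1}>0\). By convexity of \(s\mapsto s^{1-r}\) and monotonicity \(\delta_{k+1}\le\delta_k\),
\[
\delta_{k+1}^{1-r}-\delta_k^{1-r}\ge (r-1)\,\delta_k^{-r}(\delta_k-\delta_{k+1})\ge (r-1)c .
\]
This uses the mean value theorem with the derivative evaluated at the larger point \(\delta_k\), where \(|(s^{1-r})'|=(r-1)s^{-r}\) is smallest. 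That is the correct direction of the estimate.

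Summing over \(k\) gives \(\delta_T^{1-r}\ge (r-1)cT\), i.e.
\[
\delta_T\le \bigl((r-1)cT\bigr)^{-1/(r-1)} .
\]
Here \(1/(r-1)=\alpha/(2-\alpha)\) and \(r-1=(2-\alpha)/\alpha\). Requiring \(\delta_T\le\varepsilon\) therefore needs
\[
T\ge \frac{\varepsilon^{-(2-\alpha)/\alpha}}{(r-1)c}=\frac{2\alpha}{2-\alpha}\,L\tau^{2/\alpha}\varepsilon^{-(2-\alpha)/\alpha}.
\]
This gives \(C_\alpha=2\alpha/(2-\alpha)\), up to rounding to an integer. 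We can absorb the rounding because \(\varepsilon\le\Delta_0\) and we may use \(\lceil\cdot\rceil\le 2(\cdot)\) when the quantity is at least \(1\). If the quantity is below \(1\), one step suffices, since already \(\delta_1\le\varepsilon\) follows from the same bound.

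\textbf{Step 3: pass to the minimax quantity.} The bound is uniform over \(d\) and over \(f\) in the class, and gradient descent is a deterministic first-order method in the sense of Definition~\ref{def:minimax}. Hence \(\mathcal E_T\le\varepsilon\) for the stated \(T\), which gives the bound on \(T_\varepsilon\).

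The only delicate point is Step 2's convexity inequality, in particular getting the mean-value direction right and handling a possible exact hit \(\delta_k=0\), which is harmless. The restriction \(\varepsilon\le\Delta_0\) is needed only so that the constant can absorb the ceiling. The estimate is otherwise routine.
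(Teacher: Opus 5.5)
Your main argument is the paper's proof (Proposition~\ref{prop:gd-recursion} and Corollaries~\ref{cor:upper}--\ref{cor:upper-complexity}). The descent lemma keeps all iterates in \(S_0(f)\), the sublevel \(\alpha\)-P\L{} inequality gives \(\delta_{k+1}\le\delta_k-c\,\delta_k^{2/\alpha}\), and the tangent-line inequality for \(s\mapsto s^{1-r}\) at \(\delta_k\) telescopes. The only difference is that you discard \(\delta_0^{1-r}\), whereas the paper keeps it to get \(\varepsilon^{-(2-\alpha)/\alpha}-\Delta_0^{-(2-\alpha)/\alpha}\); this is harmless.

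The step that does not hold as written is your rounding remark. Once \(\delta_0^{1-r}\) is dropped, \(\varepsilon\le\Delta_0\) plays no role. It also does not force \(Q:=\frac{2\alpha}{2-\alpha}L\tau^{2/\alpha}\varepsilon^{-(2-\alpha)/\alpha}\ge1\), because \(\Delta_0\) is an independent parameter. In the case \(Q<1\), ``one step suffices'' gives only \(T_\varepsilon\le1\). That is not the claimed bound when \(C_\alpha L\tau^{2/\alpha}\varepsilon^{-(2-\alpha)/\alpha}<1\).

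The missing fact is a structural bound on the class. Smoothness and \(f_\star>-\infty\) give \(\norm{\nabla f(0)}^2\le2L(f(0)-f_\star)\). Combining this with the \(\alpha\)-P\L{} inequality at \(0\in S_0(f)\) yields \(f(0)-f_\star\le(2L)^{\alpha/(2-\alpha)}\tau^{2/(2-\alpha)}\) for every member. So if \(L\tau^{2/\alpha}\varepsilon^{-(2-\alpha)/\alpha}\le1/2\), then \(\mathcal E_0\le\varepsilon\) and \(T_\varepsilon=0\). Otherwise \(Q>\alpha/(2-\alpha)\ge1\), hence \(T_\varepsilon\le\lceil Q\rceil\le2Q\), and \(C_\alpha=4\alpha/(2-\alpha)\) works for every \(\varepsilon>0\).

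The paper's corollary also ignores integrality. So this repairs a point both arguments leave loose, not your core estimate.
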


\begin{proof}[Proof idea]
The \(1/L\) gradient-descent step is monotone by global smoothness, so all iterates stay in \(S_0(f)\). On this set,
the sublevel \(\alpha\)-P\L{} inequality converts the descent lemma into the scalar recursion
\[
\Delta_{k+1}
\le
\Delta_k-\frac{1}{2L\tau^{2/\alpha}}\Delta_k^{2/\alpha},
\qquad
\Delta_k:=f(x_k)-f_\star.
\]
Solving the recursion gives the stated complexity. The full calculation is presented in
Corollary~\ref{cor:upper-complexity}.
\end{proof}

\begin{remark}[Minimax optimality]
From Proposition~\ref{prop:gd-main} and Theorem~\ref{thm:matching} together, we have
\[
T_\varepsilon\!\left(\mathcal F_{\alpha,L,\tau,\Delta_0}(0)\right)
=
\Theta\left(L\tau^{2/\alpha}\varepsilon^{-(2-\alpha)/\alpha}\right)
\qquad
(\alpha\in[1,2),\ \varepsilon\le c_\alpha^{\mathrm{det}}\min\{\Delta_0,L^{\alpha/(2-\alpha)}\tau^{2/(2-\alpha)}\}).
\]
Therefore, our zero-chain lower bound proves that the standard gradient-descent rate cannot be improved by any
deterministic first-order method in this class.
\end{remark}

\begin{remark}[The endpoint $\alpha=1$ and the convex bounded-domain subclass]
\label{rem:alpha1-convex}
At $\alpha=1$, Proposition~\ref{prop:gd-main} and Theorem~\ref{thm:matching} give
\[
T_\varepsilon=\Theta\!\left(\frac{L\tau^2}{\varepsilon}\right)
\]
on the full sublevel-\(\alpha=1\) class. As discussed in Example~\ref{ex:alpha1-convex}, smooth convex minimization
with a bounded initial sublevel set is contained in this class with \(\tau\) equal to the sublevel radius \(D_0\), but
it is only a subclass. Therefore the lower bound above does not contradict the accelerated convex rate
\[
T_\varepsilon=\Theta\!\left(\sqrt{\frac{LD^2}{\varepsilon}}\right)
\]
for smooth convex minimization over a bounded domain~\citep{Nesterov2004}. Acceleration uses convexity, while the
sublevel-\(\alpha=1\) model does not impose convexity. For an example of a globally
smooth nonconvex  \(\alpha=1\) class function see Appendix~\ref{app:nonconvex-alpha1}.
\end{remark}

\section{\texorpdfstring{Stochastic lower bound under $\alpha$-P\L{}}{Stochastic lower bound under alpha-PL}}
\label{sec:sto}

This section presents the stochastic lower bounds in two parts. We first consider \(1\le\alpha<2\) and prove the bounded-variance lower bound, including both the contribution that is already present with exact gradients and the
additional contribution caused by stochastic-gradient noise. We then consider the classical P\L{} endpoint
\(\alpha=2\), for which we prove a variance-dependent lower bound even under a global P\L{}
inequality. The two results use the same
probabilistic zero-chain mechanism, but their parameter choices are different.

\subsection{The regime \texorpdfstring{\(1\le\alpha<2\)}{1 <= alpha < 2}}
\label{sec:stochastic-alpha-lt-two}

Section~\ref{sec:det} considered deterministic algorithms receiving exact gradients. We now allow the algorithm to be
randomized, and each oracle call returns an unbiased stochastic gradient whose conditional variance is at most
\(\sigma^2\), as specified in Definition~\ref{def:stochastic-model}.

Two quantities describe the resulting lower bound. Define
\[
D_\varepsilon
:=
L\tau^{2/\alpha}\varepsilon^{-(2-\alpha)/\alpha},
\qquad
S_\varepsilon
:=
L\sigma^2\tau^{4/\alpha}\varepsilon^{-(4-\alpha)/\alpha}.
\]
Here \(D_\varepsilon\) is the exact-gradient difficulty that remains when the oracle has no noise, whereas
\(S_\varepsilon\) is the additional difficulty created by the variance allowance \(\sigma^2\). The next theorem shows
that their sum, up to an \(\alpha\)-dependent constant, is unavoidable in the bounded-variance model.

\begin{theorem}[Sharp stochastic lower bound]
\label{thm:stochastic-matching}
For every $\alpha\in[1,2)$ there exists a constant $c_\alpha^{\mathrm{sto}}>0$ such that for every
$L,\tau,\Delta_0,\sigma^2>0$ and every target accuracy
\[
\varepsilon\in\bigl(0,c_\alpha^{\mathrm{sto}}\min\{\Delta_0,L^{\alpha/(2-\alpha)}\tau^{2/(2-\alpha)}\}\bigr],
\]
\[
T_\varepsilon^{\mathrm{BV}}(\alpha,L,\tau,\Delta_0,\sigma^2)
\ge
c_\alpha^{\mathrm{sto}}\left(
L\tau^{2/\alpha}\varepsilon^{-(2-\alpha)/\alpha}
 + L\sigma^2\tau^{4/\alpha}\varepsilon^{-(4-\alpha)/\alpha}
\right).
\]
\end{theorem}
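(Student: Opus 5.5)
Since the stated bound is a sum of two terms, it suffices (up to halving the constant) to prove each separately: $T_\varepsilon^{\mathrm{BV}}\gtrsim D_\varepsilon$ and $T_\varepsilon^{\mathrm{BV}}\gtrsim S_\varepsilon$.

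\textbf{Deterministic term.} For the first bound I would reuse the rescaled YFL chain $\Phi_T$ from the proof of Theorem~\ref{thm:matching}. The oracle is taken to be noiseless, $G(x,\xi)=\nabla\Phi_T(x)$, which is admissible for every $\sigma^2\ge0$. Against a randomized method, the deterministic resisting-oracle argument has to be replaced by the random-rotation argument of \citet{ArjevaniEtAl2019}. Embed $\Phi_T$ as $x\mapsto\Phi_T(U^\top x)$, with $U$ a uniformly random orthogonal embedding into a much larger dimension. With high probability, every query of a randomized method with budget $T$ has negligible correlation with the unrevealed columns of $U$. Orthogonal invariance of the class (already used in Theorem~\ref{thm:matching}) keeps the instance in $\mathcal F_{\alpha,L,\tau,\Delta_0}(0)$. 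The hidden-tail gap $\gtrsim Ta_T^2$ then holds with constant probability, which gives an expected-error lower bound and hence $T_\varepsilon^{\mathrm{BV}}\gtrsim D_\varepsilon$.

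\textbf{Noise term, probabilistic zero chain.} For the second bound I keep the same hard function, $\Phi(x)=a^2 g_m(\one_m-x/a)$ in suitably rescaled units, but replace the oracle by a thinned one in the style of \citet{ArjevaniEtAl2019}. At a query $x$ with $\prog_{\delta}(x)=i$ (with $\delta$ a fraction of $a$, taking care with the chain's centering at $\one_m$), the oracle returns the exact gradient except in the coordinate $i+1$. That coordinate is multiplied by $\zeta/q$ with $\zeta\sim\mathrm{Bernoulli}(q)$, so the oracle stays unbiased. Only the coupling term $(u_{i+1}-u_i)^2$ produces a gradient in coordinate $i+1$, and on the relevant region this gradient has size $O(a)$. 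The variance is therefore at most $C a^2(1-q)/q\le\sigma^2$ once we set
\[
q\asymp\min\{1,a^2/\sigma^2\}.
\]
I expect the delicate point here to be that $\Phi$ must be modified so that the thinned coordinate's gradient is bounded uniformly in $x$ and not only near the chain. The standard fix is to make the coordinate-$(i+1)$ gradient depend on $x_{i+1}$ only through a bounded, smoothed activation, as in \citeauthor{ArjevaniEtAl2019}'s construction. This modification must be checked not to break property (P2) or the sublevel $\alpha$-P\L{} inequality. I would first try to show that the YFL chain is already suitable, by proving $\norm{\nabla\Phi}\lesssim a$ on a region containing every point reachable with progress $\le i$, and then clipping gradients outside $S_0(\Phi)$, where the P\L{} condition is not required.

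\textbf{Counting reveals and calibrating.} Under this oracle, progress increases by at most one per query, and only with probability $q$. A Chernoff bound, combined with the random-rotation lemma, shows that after $T\le m/(2q)$ queries the progress stays below $m/2$ with probability at least $1/2$. As in the deterministic proof, the unrevealed tail then leaves an expected gap $\gtrsim m a^2$. The geometry constraint $m a^{2}=a^\alpha$ from Theorem~\ref{thm:matching} and the rate constraint $m a^2\asymp\varepsilon$ are unchanged, since the latent function is the same. Restoring $L,\tau$ through the same affine rescaling, these give, in original units, $a^\alpha\asymp\varepsilon$ and $m\asymp L\tau^{2/\alpha}\varepsilon^{-(2-\alpha)/\alpha}$. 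The amplitude of the thinned gradient coordinate in original units is $\asymp(\varepsilon/\tau)^{1/\alpha}$, so
\[
q\asymp\varepsilon^{2/\alpha}\tau^{-2/\alpha}\sigma^{-2}.
\]
This yields
\[
T\gtrsim m/q\asymp L\sigma^2\tau^{4/\alpha}\varepsilon^{-(4-\alpha)/\alpha}=S_\varepsilon,
\]
valid whenever $q\le1$, i.e.\ $\varepsilon\lesssim\tau\sigma^\alpha$. Otherwise $S_\varepsilon\lesssim D_\varepsilon$ and the first part already suffices. The accuracy range in the theorem ensures, as in Theorem~\ref{thm:matching}, that $m\ge1$ and that $f(0)-f_\star\le\Delta_0$.

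The main obstacle is the uniform bound on the thinned coordinate's gradient while preserving the sublevel-$\alpha$-P\L{} inequality. The reveal counting itself is standard.
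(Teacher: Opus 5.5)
Your decomposition (a max of an exact-gradient bound and a thinned-oracle bound) matches the paper, and so does your calibration $q\asymp(\varepsilon/\tau)^{2/\alpha}\sigma^{-2}$, $m/q\asymp S_\varepsilon$. The proof fails at the choice of base chain: the YFL chain cannot support either part.

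\textbf{The thinned oracle fails on YFL.} YFL is a zero chain only for exact supports, because hidden coordinates are coupled through $(u_{i+1}-u_i)^2$. Concretely, if $x_{i+1}=x_{i+2}=0$ then $\nabla_{i+1}\Phi(x)=-x_i$, which is unbounded. Even on $S_0(\Phi)$ it reaches order $\sqrt m\,a$: take $i\asymp m$, $u_j=0$ for $j<i$, $u_i=-c$ with $c\asymp\sqrt m$, and $u_j=1$ for $j>i$. The BV oracle must meet the variance bound at every query point, so your thinned oracle has variance of order $ma^2/q$. Saturating the budget then gives $m/q\asymp\sigma^2/a^2$, which loses exactly the factor $m$ that produces the exponent $(4-\alpha)/\alpha$. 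Neither of your patches helps:
\begin{itemize}
\item you cannot restrict to ``reachable'' points when queries are arbitrary;
\item clipping the estimator outside $S_0(\Phi)$ destroys unbiasedness;
\item switching the thinning off there lets a method read hidden coordinates by querying far away.
\end{itemize}
The same non-robustness also unravels the chain. With progress measured by $\prog_\delta$ and only coordinate $i+1$ thinned, a small nonzero $x_{i+1}$ exposes coordinate $i+2$ exactly, so a method reveals the whole chain without waiting for Bernoulli events.

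\textbf{The random-rotation step fails on YFL.} You use this argument for both terms. A query with tiny projections onto unrevealed columns $u_j$ receives gradient components along those $u_j$, through the couplings and through $v'$, whose slopes near $1$ are $-31$ and $33$. The transcript therefore depends on unrevealed columns, and the conditional-uniformity step breaks. That lemma also needs bounded queries, which you never arrange.

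\textbf{What the paper does instead, and what you are missing.} The gated chain of \citet{ArjevaniEtAl2019}, which you mention only in passing, is what the paper uses for both terms. The exact-gradient term is the $q=1$ member of the same rotated family, not YFL. That chain satisfies $\prog_0(\nabla F_N(x))\le\prog_{1/4}(x)+1$ and $|\nabla_{k+1}F_N|\le e^{-1/2}$ everywhere, and the thinning is applied to all coordinates beyond $\prog_{1/4}(x)$.

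Switching chains costs you (P2). The missing ingredient is then a fresh proof of a sublevel quadratic-P\L{} inequality with constant $O(m)$ for the rotated, clipped instance. This is exactly what turns $ma_m^2=a_m^\alpha$ into the $\alpha$-P\L{} inequality, and your proposal leaves it as ``must be checked.'' The paper proves it as follows:
\begin{itemize}
\item It adds $\frac{\eta}{2}\norm{x}^2$ to form $\Gamma_N$.
\item It shows $\norm{\nabla\Gamma_N}\ge 1/20$ outside the box $[7/8,9/8]^N$, and $(1/2+\eta)$-strong convexity inside the box, where the minimizer lies. Together with $\Gamma_N(0)\le 3N$, this gives the constant $1200N$.
\item It composes with a radial clipping map whose Jacobian is $O(1/R_m)$-Lipschitz. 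This bounds queries while keeping global smoothness and the variance bound.
\item It checks $S_0(\widehat F_{m,U})\subseteq B(0,R_m)$, so clipping is invisible on the sublevel set. The regularizer controls the directions orthogonal to $\mathrm{range}(U)$.
\end{itemize}
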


\begin{proof}[Proof sketch]
\emph{The zero-chain backbone.}
As in Section~\ref{sec:det}, the hard objective stores useful information in coordinates that must be discovered in
order. Such an objective is called a zero chain: before coordinate \(i\) has been activated, the gradient contains no
information in coordinate \(i+1\). We use a chain of length \(2m\). If the method has activated fewer than \(m\)
coordinates, then the final \(m\) coordinates remain hidden and contribute a nonnegligible objective gap.

For concreteness, the normalized chain used in the proof is
\[
F_N(x)
=
\varphi(x_1)
+\sum_{i=1}^{N-1}\Theta(x_i)\varphi(x_{i+1})
+\sum_{i=1}^N B(x_i).
\]
Here \(B\) creates a well in each coordinate, \(\varphi(x_{i+1})\) contributes the error associated with coordinate
\(i+1\), and the gate \(\Theta(x_i)\) prevents the gradient from exposing that contribution before coordinate \(i\)
has been activated. Appendix~\ref{app:stochastic} gives the scalar functions and verifies the zero-chain property.

\emph{Making coordinate revelation stochastic.}
At a query \(x\), let
\[
k:=\prog_{1/4}(x)
:=
\max\bigl(\{0\}\cup\{i:\abs{x_i}>1/4\}\bigr)
\]
denote the last activated coordinate. The reveal probability \(q\in(0,1]\) is the probability that one oracle call
exposes the next available coordinate. To preserve unbiasedness, the oracle returns the true gradient on the activated
coordinates and multiplies the hidden part by \(z/q\), where \(z\sim\mathrm{Bernoulli}(q)\):
\[
[g_{m,q}(x,z)]_i
=
\nabla_iF_{2m}(x)
\left(
1+\mathbf 1\{i>k\}\left(\frac{z}{q}-1\right)
\right).
\]
Because \(\E[z/q]=1\), this is an unbiased gradient estimator. A new coordinate is visible only when \(z=1\), however,
so decreasing \(q\) delays progress. The same factor \(1/q\) also increases the variance, which prevents us from taking
\(q\) arbitrarily small.

\emph{Choosing the amplitude.}
We next introduce an amplitude \(a_m>0\). Scaling the chain by \(a_m\) controls two quantities simultaneously: the
objective gap carried by the hidden coordinates and the constant in the sublevel-\(\alpha\)-P\L{} inequality. The
choice of \(m\), \(a_m\), and \(q\) is determined by three constraints.

\smallskip
\noindent\emph{(a) Error constraint.}
The final \(m\) coordinates contribute gap of order \(m\) before scaling and therefore gap of order \(ma_m^2\) after
scaling. To keep the objective error above the target accuracy, we impose
\begin{equation}
\label{eq:sto-rate-constraint}
ma_m^2\asymp\varepsilon.
\end{equation}
Thus this constraint fixes the amount of objective mass that must remain hidden.

\smallskip
\noindent\emph{(b) Geometry constraint.}
The unscaled chain satisfies a quadratic-P\L{} inequality with a constant proportional to \(m\). After scaling, the
target sublevel-\(\alpha\)-P\L{} inequality requires
\(ma_m^2\lesssim a_m^\alpha\). A smaller amplitude would satisfy this requirement but would weaken the hidden
objective gap. We therefore use the largest admissible amplitude and set
\begin{equation}
\label{eq:sto-geometry-constraint}
ma_m^2=a_m^\alpha.
\end{equation}
Solving this equality gives
\[
a_m=m^{-1/(2-\alpha)},
\qquad
ma_m^2=a_m^\alpha=m^{-\alpha/(2-\alpha)}.
\]
This identity matches the objective scale of the hidden coordinates to the required
sublevel-\(\alpha\)-P\L{} geometry.

\smallskip
\noindent\emph{(c) Variance constraint.}
Let \(s^2\) denote the variance budget in the normalized problem. After amplitude scaling, Bernoulli thinning gives
variance of order \(a_m^2/q\). In the range where the resulting probability is at most one, we saturate the variance
budget:
\begin{equation}
\label{eq:sto-variance-constraint}
\frac{a_m^2}{q}\asymp s^2,
\qquad\text{and hence}\qquad
q\asymp\frac{a_m^2}{s^2}.
\end{equation}
This is the smallest reveal probability allowed by the variance constraint.

\emph{From reveal probability to oracle calls.}
The method must reveal order \(m\) coordinates before it reaches the last half of the chain. Each query produces at
most one such reveal, with probability at most \(q\). Consequently, order \(m/q\) oracle calls are required.

\emph{Hiding the coordinate system.}
The preceding coordinate argument directly constrains methods that respect the visible chain coordinates. To cover
randomized, adaptive methods that may query arbitrary points, we hide the chain basis with a random orthogonal map
\(U\). A radial clipping map \(\rho_m\) keeps the chain input bounded even when the algorithm submits a large query, and
a small quadratic term with a fixed coefficient \(\eta>0\) controls the initial sublevel set.
These operations produce the function-oracle pair
\begin{align}
\widehat F_{m,U}(x)
&:=
F_{2m}(U^\top\rho_m(x))+\frac{\eta}{2}\norm{x}^2,
\label{eq:sto-clipped-objective-main}\\
\widehat g_{m,q,U}(x,z)
&:=
J\rho_m(x)^\top U g_{m,q}(U^\top\rho_m(x),z)+\eta x,
\label{eq:sto-clipped-oracle-main}
\end{align}
where \(J\rho_m(x)\) is the Jacobian of the clipping map. The second formula applies the chain rule to pull the
stochastic zero-chain gradient back to the original query space. Random rotation prevents an adaptive query from
having a large component in an unrevealed chain direction, while clipping preserves global smoothness and the
bounded-variance guarantee. Appendix~\ref{app:stochastic} proves that the \(m/q\) progress barrier therefore holds for
randomized, adaptive, arbitrary-query algorithms.

The amplitude-scaled function-oracle pair is
\[
\Psi_{m,q,U}(x)
:=
a_m^2\widehat F_{m,U}(x/a_m),
\qquad
G_{m,q,U}(x,z)
:=
a_m\,\widehat g_{m,q,U}(x/a_m,z).
\]
This formula implements the amplitude balance described above; the algorithm receives only samples of
\(G_{m,q,U}\).

\emph{Solving the normalized constraints.}
Equations~\eqref{eq:sto-rate-constraint} and~\eqref{eq:sto-geometry-constraint} give
\(a_m^\alpha\asymp\varepsilon\), and hence
\[
m\asymp\varepsilon^{-(2-\alpha)/\alpha},
\qquad
a_m^2\asymp\varepsilon^{2/\alpha}.
\]
Equation~\eqref{eq:sto-variance-constraint} then gives
\[
q\asymp\frac{\varepsilon^{2/\alpha}}{s^2}.
\]
Substitution into the \(m/q\) progress barrier yields the normalized stochastic-noise lower bound
\[
\frac{m}{q}
\asymp
s^2\varepsilon^{-(4-\alpha)/\alpha}.
\]
Thus the exponent \((4-\alpha)/\alpha\) comes from combining the number of coordinates that must be revealed with the
variance-limited probability of each reveal.

\emph{Restoring the problem parameters.}
An objective-space and domain-space scaling transfers the normalized family to the prescribed
\((L,\tau,\sigma^2)\). It turns the preceding display into
\[
T_\varepsilon^{\mathrm{BV}}
\ge
\Omega\left(
L\sigma^2\tau^{4/\alpha}\varepsilon^{-(4-\alpha)/\alpha}
\right)
\]
whenever
\[
\varepsilon
\lesssim
\min\left\{
\Delta_0,\,
L^{\alpha/(2-\alpha)}\tau^{2/(2-\alpha)},\,
\tau\sigma^\alpha
\right\}.
\]
The first restriction places the hard instance within the initial-gap budget, and the second makes the normalized
target accuracy small enough for the construction. The third restriction is used only by the probabilistic
stochastic-noise construction.

\emph{Combining the exact-gradient and stochastic-noise contributions.}
The quantities \(D_\varepsilon\) and \(S_\varepsilon\), defined before the theorem, satisfy
\[
\frac{S_\varepsilon}{D_\varepsilon}
=
\left(\frac{\tau\sigma^\alpha}{\varepsilon}\right)^{2/\alpha}.
\]
Therefore \(S_\varepsilon\) is at least a constant multiple of \(D_\varepsilon\) when
\(\varepsilon\lesssim\tau\sigma^\alpha\). In that range, the probabilistic construction above supplies the
stochastic-noise lower bound. The exact-gradient lower bound is obtained from the same randomly rotated family by
setting the reveal probability to \(q=1\): then \(z=1\) almost surely, so the oracle returns the exact gradient and has
zero variance.

If the two constructions give lower bounds \(c_DD_\varepsilon\) and \(c_SS_\varepsilon\), respectively, then
\[
\max\{c_DD_\varepsilon,c_SS_\varepsilon\}
\ge
\frac{\min\{c_D,c_S\}}{2}
\bigl(D_\varepsilon+S_\varepsilon\bigr).
\]
Hence the two contributions give their sum up to a constant factor. In the complementary range
\(\varepsilon\gtrsim\tau\sigma^\alpha\), the ratio above gives \(S_\varepsilon\lesssim D_\varepsilon\), so the
\(q=1\) exact-gradient construction alone controls the sum. The crossover \(\tau\sigma^\alpha\) thus selects the active
part of the proof but does not restrict the theorem's final accuracy range. Appendix~\ref{app:stochastic} contains the
complete variance, random-rotation, scaling, and accuracy calculations.
\end{proof}

\begin{remark}[Matching SGD upper rates and what remains open]
\label{rem:stochastic-upper-main}
Theorem~\ref{thm:stochastic-matching} is a lower bound for the gradient-only bounded-variance oracle model. Its rate
matches existing SGD upper bounds under \(\alpha\)-P\L{} geometry:
\citet{FatkhullinEtesamiHeKiyavash2022} obtain $O\left(
L\tau^{2/\alpha}\varepsilon^{-(2-\alpha)/\alpha}
 +
L\sigma^2\tau^{4/\alpha}\varepsilon^{-(4-\alpha)/\alpha}
\right)$ for SGD under the assumption that the iterates remain in the region where the \(\alpha\)-P\L{}
inequality is valid; see also the local stochastic first-order analyses of
\citet{MasihaSalehkaleybarHeKiyavash2022,MasihaSalehkaleybarHeKiyavashThiran2026}.
Theorem~\ref{thm:stochastic-matching} shows that this dependence on
\((L,\tau,\sigma^2,\varepsilon)\) is unavoidable; in the regime
\(\varepsilon\lesssim\tau\sigma^\alpha\), the second term is the noise-dominated contribution with exponent
\((4-\alpha)/\alpha\). What remains open is a fully self-contained upper-bound theory for
gradient-only SGD on the present sublevel model: one needs conditions ensuring, with high probability, that the SGD trajectory stays inside the region where the sublevel \(\alpha\)-P\L{} geometry is valid.
\end{remark}

\subsection{The classical P\L{} endpoint \texorpdfstring{\(\alpha=2\)}{alpha = 2}}
\label{sec:sto-endpoint}

The proof for \(1\le\alpha<2\) chose the amplitude
\[
a_m=m^{-1/(2-\alpha)}.
\]
This expression is undefined at \(\alpha=2\), so the endpoint result cannot be obtained by substituting
\(\alpha=2\) into Theorem~\ref{thm:stochastic-matching}. We instead retain the same probabilistic zero-chain
construction and choose new objective and domain scalings for the endpoint.

At \(\alpha=2\), we use the same hard pair \((\widehat F_{m,U},\widehat g_{m,q,U})\) as above. Its oracle reveals
the chain coordinates sequentially, each with probability \(q\); random rotation hides unrevealed directions; and
clipping with quadratic regularization ensures global smoothness and uniformly bounded variance. The new geometric
ingredient is a quantitative estimate proved in Appendix~\ref{app:sto-endpoint-proof}: the objective
\(\widehat F_{m,U}\) satisfies the P\L{} inequality on all of \(\R^d\), with a constant proportional to the chain
length \(m\). This global estimate allows the endpoint lower bound to hold in the global-P\L{} subclass.

To state the stronger result, define the globally P\L{} subclass
\[
\mathcal F^{\mathrm{glob}}_{2,L,\tau,\Delta_0}(0)
:=
\left\{
f\in\mathcal F_{2,L,\tau,\Delta_0}(0):
f(x)-f_\star\le\tau\norm{\nabla f(x)}^2
\ \text{for every }x\text{ in the domain of }f
\right\}.
\]
Thus every member is globally \(L\)-smooth and satisfies the P\L{} inequality globally, not only on its initial
sublevel set. Using the minimax definition from Definition~\ref{def:stochastic-model} with this function
class, write
\[
T_\varepsilon^{\mathrm{BV,glob}}(L,\tau,\Delta_0,\sigma^2)
:=
T_\varepsilon^{\mathrm{BV}}
\left(\mathcal F^{\mathrm{glob}}_{2,L,\tau,\Delta_0}(0),\sigma^2\right).
\]

\begin{theorem}[Variance-dependent lower bound under global P\L{} geometry]
\label{thm:stochastic-pl-endpoint}
There exist universal numerical constants $c_{\mathrm{end}},c_{\mathrm{acc}}^{\mathrm{end}}>0$ such that, for every
$L,\tau,\Delta_0,\sigma^2>0$ satisfying $L\tau\ge1/2$ and every
\[
0<\varepsilon\le c_{\mathrm{acc}}^{\mathrm{end}}\Delta_0,
\]
the gradient-only bounded-variance oracle complexity over the globally smooth, globally P\L{} class satisfies
\[
T_\varepsilon^{\mathrm{BV,glob}}(L,\tau,\Delta_0,\sigma^2)
\ge
c_{\mathrm{end}}\frac{L\tau^2\sigma^2}{\varepsilon}.
\]
One may take $c_{\mathrm{acc}}^{\mathrm{end}}=1/10$.
\end{theorem}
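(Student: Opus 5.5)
We reuse the probabilistic zero-chain pair \((\widehat F_{m,U},\widehat g_{m,q,U})\) from the proof of Theorem~\ref{thm:stochastic-matching}, but with a new endpoint scaling. The normalized objects have the following properties. The function \(\widehat F_{m,U}\) is globally \(\ell_0\)-smooth for a numerical constant \(\ell_0\). Its oracle has variance at most \(c_1/q\). Any randomized method that makes fewer than about \(m/q\) calls leaves, with constant probability, at least \(m\) unrevealed chain coordinates, and hence gap at least \(c_2 m\). These facts follow from the random-rotation progress lemma of \citet{ArjevaniEtAl2019}, exactly as in the \(\alpha<2\) case.

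The new ingredient is a \emph{global} P\L{} estimate \(\widehat F_{m,U}(x)-\widehat F_\star\le C m\norm{\nabla\widehat F_{m,U}(x)}^2\) valid for all \(x\). I would prove it in two regions. On the clipped region the chain input is bounded, and the gated-chain P\L{} constant is linear in \(m\), in the spirit of property (P2). Outside the clipping ball, \(\rho_m\) is radial and the gradient of the chain term is attenuated, but the quadratic \(\tfrac\eta2\norm{x}^2\) supplies gradient \(\eta\norm{x}\). The gap there is at most \(O(m)+\tfrac\eta2\norm{x}^2\). With the clipping radius of order \(\sqrt m\), this gives a gap-to-squared-gradient ratio of \(O(m)\) once \(\eta\) is a fixed constant. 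I expect this step to be the main obstacle: controlling the transition zone where \(J\rho_m\) and \(\eta x\) partially cancel, and the stationary points created by the regularizer. I would first try to show that \(\langle\nabla\widehat F,x\rangle\) is bounded below by a positive multiple of \(\norm{x}^2\) beyond the radius.

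Next comes the rescaling. Set \(f(x)=\lambda\widehat F_{m,U}(x/\beta)\) with oracle \(G(x)=(\lambda/\beta)\widehat g(x/\beta)\). This function has:
\begin{itemize}
\item smoothness \(\lambda\ell_0/\beta^2\), which we set equal to \(L\);
\item P\L{} constant \(Cm\beta^2/\lambda\), which we set equal to \(\tau\), i.e.\ \(Cm\ell_0/L=\tau\), giving \(m\asymp L\tau\) (this is where \(L\tau\ge1/2\) makes \(m\ge1\) admissible);
\item hidden gap \(\lambda c_2 m\), which we set to \(\asymp\varepsilon\), so \(\lambda\asymp\varepsilon/(L\tau)\), within \(\Delta_0\) since \(\varepsilon\le\Delta_0/10\);
\item variance \((\lambda/\beta)^2c_1/q=\lambda L c_1/(\ell_0 q)\), which we set to \(\sigma^2\), giving \(q\asymp \lambda L/\sigma^2\asymp \varepsilon/(\tau\sigma^2)\).
\end{itemize}
The lower bound is then \(m/q\asymp L\tau\cdot\tau\sigma^2/\varepsilon=L\tau^2\sigma^2/\varepsilon\), as claimed.

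If \(q\) would exceed \(1\), i.e.\ \(\varepsilon\gtrsim\tau\sigma^2\), the claimed bound is at most \(O(L\tau)\). In that case we take \(q=1\), which gives an \(m\asymp L\tau\) deterministic-chain lower bound (zero variance is admissible). Finally, we check that the initial gap is \(f(0)-f_\star\le\lambda\cdot O(m)\le\Delta_0\), and adjust the constants so that the expected error exceeds \(\varepsilon\) whenever \(T\) is below \(c_{\mathrm{end}}L\tau^2\sigma^2/\varepsilon\).
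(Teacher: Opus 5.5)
Your large-$L\tau$ scaling is the paper's: chain length $m\asymp L\tau$, vertical scale $\lambda\asymp\varepsilon/m$, $q$ chosen to saturate the variance budget, the $q=1$ fallback, and then $\sup_U\ge\E_U$. The argument nonetheless has a genuine gap: it does not cover bounded $L\tau$.

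After rescaling, the certified constants satisfy $L_f\tau_f\le(\lambda\ell_0/\beta^2)(Cm\beta^2/\lambda)=C\ell_0 m$, whatever $\lambda$ and $\beta$ are. Membership in the class therefore needs an integer $m\le L\tau/(C\ell_0)$. The random-rotation progress barrier additionally needs $m\ge m_0$; the paper uses $m_0=5$. Since $C\ell_0$ is a huge numerical constant ($C_{\mathrm{gPL}}L_{\rm b}\approx3.3\times10^6$ in the paper), no admissible chain exists anywhere in the range $1/2\le L\tau\le\kappa_0:=C_{\mathrm{gPL}}L_{\rm b}m_0$.

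Consequences:
\begin{itemize}
\item Your remark that $L\tau\ge1/2$ makes $m\ge1$ admissible is incorrect.
\item ``Adjusting the constants'' cannot repair this. The obstruction is the scale-invariant product $L_f\tau_f$, not a prefactor in the final bound.
\item The $q=1$ fallback uses the same chain, so it is unavailable in this range too.
\end{itemize}

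What is missing is a separate, purely statistical lower bound for bounded $L\tau$. There the target $L\tau^2\sigma^2/\varepsilon$ is within a factor $\kappa_0$ of $\tau\sigma^2/\varepsilon$. The paper uses a one-dimensional Gaussian two-point argument:
\begin{itemize}
\item Take $f_v(x)=\frac{\mu}{2}(x-v)^2$ with $\mu=(2\tau)^{-1}$, $v=\pm4\sqrt{\varepsilon/\mu}$, and oracle $\mu(x-v)+\xi$ with $\xi\sim\mathcal N(0,\sigma^2)$.
\item This is where $L\tau\ge1/2$ genuinely enters. It gives $\mu\le L$, so each $f_v$ is $L$-smooth and globally P\L{} with constant exactly $\tau$. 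Its initial gap is $8\varepsilon\le\Delta_0$.
\item The two oracle means differ by $2\mu\abs{v}$ at every query. So the transcript KL divergence is $32T\mu\varepsilon/\sigma^2$ regardless of adaptivity.
\item Pinsker and Le Cam then force an average expected gap of at least $2\varepsilon$ whenever $T\le\tau\sigma^2/(32\varepsilon)$. In this range that quantity is at least $L\tau^2\sigma^2/(32\kappa_0\varepsilon)$.
\end{itemize}

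On the exterior step you flagged as the main obstacle, your radial idea does go through, but only with a sharp per-coordinate estimate. For $r:=\norm{x}\ge R_m$ and $y:=U^\top\rho_m(x)$, symmetry of $J\rho_m(x)$ and $J\rho_m(x)x=h_m'(r)x$ give
\[
\ip{\nabla\widehat F_{m,U}(x)}{x}=h_m'(r)\tfrac{r}{h_m(r)}\ip{\nabla F_{2m}(y)}{y}+\eta r^2,
\qquad h_m'(r)\tfrac{r}{h_m(r)}\in(0,1].
\]
The scalar bounds $tB'(t)\ge-8/\sqrt3$ and $t\varphi'(t)\ge-1/4$ give $\ip{\nabla F_{2m}(y)}{y}\ge-(8/\sqrt3+1/4)\,2m>-4.9\cdot2m$. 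This is beaten by $\eta R_m^2=6.4\cdot 2m$. The cruder bound used in the paper only reaches $-14.5\cdot2m$, which is why the paper instead shows that the tangential gradient component is large whenever the radial one nearly cancels.
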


Because
\[
\mathcal F^{\mathrm{glob}}_{2,L,\tau,\Delta_0}(0)
\subseteq
\mathcal F_{2,L,\tau,\Delta_0}(0),
\]
the minimax complexity over the larger sublevel-P\L{} class is at least
\(T_\varepsilon^{\mathrm{BV,glob}}(L,\tau,\Delta_0,\sigma^2)\). Hence the same endpoint lower bound also holds for
the larger sublevel-P\L{} class.

\begin{proof}[Proof sketch]
\emph{What changes at \(\alpha=2\).}
For \(1\le\alpha<2\), the preceding proof chose the amplitude
\[
a_m=m^{-1/(2-\alpha)}
\]
by balancing the objective gap hidden in the chain with the sublevel-\(\alpha\)-P\L{} geometry. This expression is
undefined at \(\alpha=2\). We therefore retain the normalized zero-chain objective \(\widehat F_{m,U}\) defined in \eqref{eq:sto-clipped-objective-main}, where \(m\)
is the chain-length parameter and \(U\) is the hidden rotation, and rescale it directly:
\begin{equation}
\label{eq:endpoint-scaled-objective-main}
f_{m,U}(x):=\lambda\widehat F_{m,U}(\rho x).
\end{equation}
Here \(\lambda>0\) is a vertical scale: it multiplies every objective gap. The scalar \(\rho>0\) rescales the domain
and therefore changes the gradient and smoothness constants. This scalar should not be confused with the radial
clipping map \(\rho_m\) inside \(\widehat F_{m,U}\). The endpoint proof first chooses the chain length \(m\)
proportional to \(L\tau\), ensuring that the scaled objective can be both \(L\)-smooth and globally P\L{} with
constant at most \(\tau\). It then chooses \(\lambda\) to set the residual gap at the target scale \(\varepsilon\) and
\(\rho\) to obtain smoothness \(L\). The reveal probability \(q\) is chosen afterward to satisfy the variance budget
\(\sigma^2\).

\emph{The normalized hard pair.}
Fix a positive integer \(m\) and a reveal probability \(q\in(0,1]\). Recall the objective
\(\widehat F_{m,U}\) and stochastic oracle \(\widehat g_{m,q,U}\) defined in
\eqref{eq:sto-clipped-objective-main} and~\eqref{eq:sto-clipped-oracle-main}, respectively. This pair is built from a
zero chain that requires \(m\) useful coordinate directions to be discovered sequentially. Its oracle reveals each
next available direction with probability \(q\). The random rotation \(U\) hides unrevealed directions from algorithms
whose queries may depend on all previous oracle responses, while clipping and quadratic regularization ensure global
smoothness, uniformly bounded variance, and containment of the initial sublevel set.

There are universal constants \(L_{\rm b},C_{\mathrm{gPL}},C_\Delta,C_{\rm var},c_0>0\) such that the normalized pair has
the five properties needed here:
\begin{equation}
\label{eq:endpoint-normalized-geometry-main}
\text{smoothness}\le L_{\rm b},
\qquad
\text{global P\L{} constant}\le C_{\mathrm{gPL}}m,
\end{equation}
\begin{equation}
\label{eq:endpoint-normalized-oracle-main}
\text{initial gap}\le C_\Delta m,
\qquad
\text{oracle variance}\le \frac{C_{\rm var}}{q},
\end{equation}
and, provided $d\ge d_{m,q}$, where $d_{m,q}$ is defined in
Appendix~\ref{app:sto-clipping}, equation~\eqref{eq:dmq-threshold},
\begin{equation}
\label{eq:endpoint-normalized-barrier-main}
\E\!\left[\widehat F_{m,U}(x_t)-\widehat F_{m,U,\star}\right]\ge m
\qquad
\left(t\le c_0\frac{m}{q}\right).
\end{equation}
The initial-gap, variance, and progress estimates are proved in Appendix~\ref{app:sto-clipping}; see
Lemma~\ref{lem:clipped-rotated}. The sharper endpoint smoothness bound and the global P\L{} estimate are proved in
Appendix~\ref{app:sto-endpoint-proof}; see Lemma~\ref{lem:endpoint-global-pl}.
The time \(m/q\) reflects the need for order \(m\) successful coordinate revelations, each occurring with probability
\(q\).

Let $m_0$ denote the universal minimum chain length required by the normalized construction, and define
\begin{equation}
\label{eq:endpoint-kappa-threshold-main}
\kappa_0:=C_{\mathrm{gPL}}L_{\rm b}m_0.
\end{equation}
We now split the proof according to whether $L\tau$ is above or below this threshold.

\textbf{Case 1: $L\tau>\kappa_0$ (the large-$L\tau$ regime).}
In this case, the value of $L\tau$ permits a zero chain of at least the minimum required length.

\emph{Scaling the normalized pair.}
The scaled objective was defined in~\eqref{eq:endpoint-scaled-objective-main}. We scale its stochastic oracle
consistently by setting
\begin{equation}
\label{eq:endpoint-scaled-oracle-main}
G_{m,q,U}(x,z):=\lambda\rho\,\widehat g_{m,q,U}(\rho x,z).
\end{equation}

Let $L_f$ and $\tau_f$ denote the certified smoothness and global P\L{} constants of $f_{m,U}$, respectively.
These are not new target parameters: the scaling below will ensure $L_f\le L$ and $\tau_f\le\tau$, where $L$ and
$\tau$ are the prescribed parameters in Theorem~\ref{thm:stochastic-pl-endpoint}. Applying the objective
scaling~\eqref{eq:endpoint-scaled-objective-main} to the normalized bounds
in~\eqref{eq:endpoint-normalized-geometry-main} gives
\begin{equation}
\label{eq:endpoint-scaled-geometry-main}
L_f\le\lambda\rho^2L_{\rm b},
\qquad
\tau_f\le\frac{C_{\mathrm{gPL}}m}{\lambda\rho^2}.
\end{equation}
Multiplying the two inequalities in~\eqref{eq:endpoint-scaled-geometry-main} gives
$L_f\tau_f\le C_{\mathrm{gPL}}L_{\rm b}m$, independently of \(\lambda\) and \(\rho\). Consequently, the endpoint
construction takes \(m\) proportional to \(L\tau\). This relation replaces the amplitude balance used for
\(\alpha<2\).

Choose
\begin{equation}
\label{eq:endpoint-chain-length-main}
m:=\left\lfloor\frac{L\tau}{C_{\mathrm{gPL}}L_{\rm b}}\right\rfloor.
\end{equation}
The choice~\eqref{eq:endpoint-chain-length-main} makes \(m\asymp L\tau\) while keeping the global P\L{} constant at
most \(\tau\). Next, set
\begin{equation}
\label{eq:endpoint-objective-scale-main}
\lambda:=\frac{2\varepsilon}{m}.
\end{equation}
By~\eqref{eq:endpoint-normalized-barrier-main} and the scaled-objective definition
in~\eqref{eq:endpoint-scaled-objective-main}, the objective scale in
\eqref{eq:endpoint-objective-scale-main} makes the scaled residual gap at least
\(\lambda m=2\varepsilon\). Finally, set
\begin{equation}
\label{eq:endpoint-domain-scale-main}
\rho^2:=\frac{L}{\lambda L_{\rm b}}.
\end{equation}
Substituting~\eqref{eq:endpoint-domain-scale-main} into the first inequality in
\eqref{eq:endpoint-scaled-geometry-main} gives \(L_f\le L\). The second inequality in
\eqref{eq:endpoint-scaled-geometry-main}, together with~\eqref{eq:endpoint-chain-length-main}, gives
\[
\tau_f\le\frac{C_{\mathrm{gPL}}mL_{\rm b}}{L}\le\tau.
\]
Finally, the initial-gap bound in~\eqref{eq:endpoint-normalized-oracle-main} and the objective scale
in~\eqref{eq:endpoint-objective-scale-main} give
$C_\Delta\lambda m=10\varepsilon\le\Delta_0$ under the theorem's accuracy assumption.

\emph{Choosing the reveal probability.}
Revealing a coordinate only with probability \(q\), while preserving unbiasedness, increases the variance by a factor
proportional to \(1/q\). Applying the oracle scaling~\eqref{eq:endpoint-scaled-oracle-main} to the normalized variance
bound in~\eqref{eq:endpoint-normalized-oracle-main}, and then using~\eqref{eq:endpoint-domain-scale-main}, gives
\begin{equation}
\label{eq:endpoint-scaled-variance-main}
\text{oracle variance}
\le
\frac{C_{\rm var}\lambda L}{L_{\rm b}q}.
\end{equation}
The reveal probability that makes the right-hand side of~\eqref{eq:endpoint-scaled-variance-main} equal to the
variance budget \(\sigma^2\) is
\begin{equation}
\label{eq:endpoint-qstar-main}
q_\star:=\frac{C_{\rm var}\lambda L}{L_{\rm b}\sigma^2}.
\end{equation}

If the value in~\eqref{eq:endpoint-qstar-main} satisfies \(q_\star\le1\), set \(q=q_\star\). By
\eqref{eq:endpoint-scaled-variance-main}, the oracle then has variance at most \(\sigma^2\), and
\[
\frac{m}{q}
\asymp
\frac{m^2\sigma^2}{L\varepsilon}
\asymp
\frac{L\tau^2\sigma^2}{\varepsilon},
\]
where the first relation uses~\eqref{eq:endpoint-objective-scale-main} and the final relation uses
\eqref{eq:endpoint-chain-length-main}. Thus the time needed to reveal enough coordinates has exactly the claimed
parameter dependence.

If instead the value in~\eqref{eq:endpoint-qstar-main} satisfies \(q_\star>1\), set \(q=1\). Since \(q\) is the
reveal probability, \(q=1\) means that every available coordinate is revealed, and the oracle returns the exact
gradient with zero variance. In this case, the inequality \(q_\star>1\), together with
\eqref{eq:endpoint-objective-scale-main} and~\eqref{eq:endpoint-chain-length-main}, implies
\[
\frac{L\tau^2\sigma^2}{\varepsilon}\lesssim m.
\]
Hence the \(m\)-call exact-gradient barrier is already long enough to prove the claimed lower bound.

\emph{Arbitrary randomized algorithms.}
Without random rotation, an algorithm allowed to choose arbitrary query points could directly target hidden chain
directions. The rotation \(U\) prevents this. The residual-gap bound in
\eqref{eq:endpoint-normalized-barrier-main} is obtained after averaging over \(U\), the oracle randomness, and the
algorithm's internal randomness (see Appendix~\ref{app:sto-clipping}, Lemma~\ref{lem:clipped-rotated}(5)). For a fixed
algorithm, let \(R(U)\) denote its expected residual gap on the instance with rotation \(U\). Then
\[
\sup_U R(U)\ge\E_U R(U),
\]
so some fixed rotation is at least as hard as the average rotation. This proves the lower bound for randomized,
adaptive algorithms with arbitrary query points.

\textbf{Case 2: $1/2\le L\tau\le\kappa_0$ (the bounded-$L\tau$ regime).}
The zero-chain construction may not have the minimum required chain length in this range. Instead, set
$\mu:=(2\tau)^{-1}$ and consider the two one-dimensional pairs
\begin{equation}
\label{eq:endpoint-gaussian-pair-main}
f_v(x):=\frac{\mu}{2}(x-v)^2,
\qquad
G_v(x,\xi):=\mu(x-v)+\xi,
\qquad
v\in\left\{-4\sqrt{\frac{\varepsilon}{\mu}},\,
4\sqrt{\frac{\varepsilon}{\mu}}\right\},
\quad \xi\sim\mathcal N(0,\sigma^2).
\end{equation}
Because $L\tau\ge1/2$, we have $\mu\le L$, so each $f_v$ is $L$-smooth. Moreover,
$f_v(x)-(f_v)_\star=\tau\abs{f_v'(x)}^2$, its gap at the initial point $0$ is
$8\varepsilon\le\Delta_0$, and the oracle in
\eqref{eq:endpoint-gaussian-pair-main} is unbiased with variance $\sigma^2$.

For any adaptive algorithm making $T$ calls, let $P_+^T$ and $P_-^T$ denote the probability distributions of the
complete query--response history, including the algorithm's internal randomness, under the positive and negative
choices of $v$, respectively. At each call, conditional on the preceding history, the two Gaussian oracle responses
have means separated by $8\sqrt{\mu\varepsilon}$, independently of the current query. Hence the chain rule for
relative entropy gives
\begin{equation}
\label{eq:endpoint-gaussian-kl-main}
\mathrm{KL}(P_+^T\Vert P_-^T)
=
T\frac{(8\sqrt{\mu\varepsilon})^2}{2\sigma^2}
=
\frac{32T\mu\varepsilon}{\sigma^2}.
\end{equation}
The complete derivation of~\eqref{eq:endpoint-gaussian-kl-main}
is given in Appendix~\ref{app:sto-endpoint-proof}.
If $T\le\sigma^2/(64\mu\varepsilon)$, then the KL divergence in
\eqref{eq:endpoint-gaussian-kl-main} is at most $1/2$. Let $\widehat x_T$ be the algorithm's output, and use it to
guess which of the two instances generated the oracle responses: guess the positive choice of $v$ when
$\widehat x_T\ge0$, and the negative choice otherwise. Pinsker's inequality gives
\[
\norm{P_+^T-P_-^T}_{\rm TV}
\le
\sqrt{\frac{\mathrm{KL}(P_+^T\Vert P_-^T)}{2}}
\le \frac12.
\]
For the uniform choice of the two instances, Le Cam's inequality~\citep{Yu1997} gives an error probability of at least
$(1-\norm{P_+^T-P_-^T}_{\rm TV})/2\ge1/4$. If this guess is wrong, $\widehat x_T$ is at least
$4\sqrt{\varepsilon/\mu}$ from the true minimizer, so its objective gap is at least $8\varepsilon$. Since the
objective gap is always nonnegative, its expectation under the uniform mixture of the two instances is at least
\[
(8\varepsilon)\,\Pbb(\text{wrong guess})
\ge
(8\varepsilon)\frac14
=2\varepsilon.
\]
This mixture expectation is the average of the expected gaps on the two instances. Therefore, at least one of them has
expected gap at least $2\varepsilon$. Thus, for every integer
$T\le\sigma^2/(64\mu\varepsilon)$, no algorithm using $T$ oracle calls can guarantee expected error at most
$\varepsilon$ uniformly over the class. By the definition of
$T_\varepsilon^{\mathrm{BV,glob}}$, this implies
\[
T_\varepsilon^{\mathrm{BV,glob}}(L,\tau,\Delta_0,\sigma^2)
\ge
\frac{\sigma^2}{64\mu\varepsilon}
=
\frac{\tau\sigma^2}{32\varepsilon}.
\]
The final equality uses $\mu=(2\tau)^{-1}$.
Since \(L\tau\le\kappa_0\) in this bounded range, this implies
\[
\Omega\left(\frac{L\tau^2\sigma^2}{\varepsilon}\right)
\]
after changing only the universal constant. Thus the probabilistic zero chain is the principal endpoint construction;
the Gaussian argument only completes the bounded range of \(L\tau\).

Combining the two parameter ranges proves
\[
T_\varepsilon^{\mathrm{BV,glob}}(L,\tau,\Delta_0,\sigma^2)
=
\Omega\left(\frac{L\tau^2\sigma^2}{\varepsilon}\right).
\]
The complete proof is given in Appendix~\ref{app:sto-endpoint-proof}.
\end{proof}

\begin{remark}[Why $L\tau\ge 1/2$ is necessary]
The condition $L\tau\ge 1/2$ in Theorem~\ref{thm:stochastic-pl-endpoint} is necessary for the class to contain a
nonconstant hard instance. Every lower-bounded $L$-smooth function satisfies
\[
\norm{\nabla f(x)}^2\le 2L\bigl(f(x)-f_\star\bigr),
\]
whereas the $2$-P\L{} condition gives
\[
f(x)-f_\star\le \tau\norm{\nabla f(x)}^2.
\]
Given $f(x)>f_\star$ at some point, combining these inequalities and dividing by the positive gap yields
$1\le 2L\tau$. Thus $L\tau\ge 1/2$ is necessary.
\end{remark}

\begin{remark}[Scope of the endpoint result]
Theorem~\ref{thm:stochastic-pl-endpoint} proves the variance-dependent contribution
$\Omega(L\tau^2\sigma^2/\varepsilon)$ within the globally smooth, globally P\L{} class. It does not prove the complete
randomized endpoint rate
$L\tau\log(\Delta_0/\varepsilon)+L\tau^2\sigma^2/\varepsilon$. In particular, the deterministic lower bound of
\citet{YueFangLin2023} does not by itself establish the logarithmic term for randomized methods. The proved
variance-dependent term matches the corresponding SGD upper bound under global P\L{}
geometry~\citep{FatkhullinEtesamiHeKiyavash2022}.
\end{remark}

\begin{remark}[Higher-order regularity and the hard instances]
\label{rem:higher-order-regularity}
Remark~2.21 of \citet{RebjockBoumal2024} contains a local phenomenon that is useful to keep separate from the
present lower bounds. Under stronger local regularity, namely a locally Lipschitz Hessian, a local
\(\alpha\)-P\L{} inequality with \(\alpha\in(3/2,2]\) implies the classical local \(2\)-P\L{} condition; at the
endpoint \(\alpha=3/2\), their statement requires a \(C^3\) assumption.
This does not conflict with either hard-instance construction in this paper. Our oracle class assumes globally
Lipschitz gradients, but it does not assume a Lipschitz Hessian. The deterministic lower bound in
Appendix~\ref{app:deterministic} uses a piecewise quadratic scalar block whose derivative is Lipschitz but whose second
derivative has jumps. The stochastic lower bound in Appendix~\ref{app:stochastic} uses piecewise \(C^1\) scalar blocks;
indeed, the proof only uses the Hessian of the base chain where it exists almost everywhere. Thus the hard instances
belong to the globally smooth \(C^1\) class with Lipschitz gradients studied here, not to a Hessian-Lipschitz subclass. In a smaller
Hessian-Lipschitz class, the local \(2\)-P\L{} upgrade may change the sufficiently small-accuracy regime; our lower
bounds do not claim optimality for that stronger class.
\end{remark}

\section{Conclusion and future work}
\label{sec:conclusion}

This paper shows why the global model is the wrong starting point for \(\alpha<2\): global smoothness and a global
\(\alpha\)-P\L{} inequality force the objective to be constant. The useful replacement studied here is the
sublevel-\(\alpha\)-P\L{} model, where the growth inequality holds only on the initial sublevel set preserved by monotone
descent. This localization is still strong enough to support a sharp oracle-complexity theory. In the deterministic
setting, the lower bound matches the GD upper bound. In the bounded-variance stochastic setting, the lower
bound matches the sum of the GD-type descent term and the SGD noise term under trajectory containment. At the classical P\L{} endpoint, a separate construction proves the variance-dependent lower bound
$\Omega(L\tau^2\sigma^2/\varepsilon)$ even on the globally P\L{} class, exhibiting an additional factor $L\tau$
beyond the lower bound inherited from strongly convex objectives.

The deterministic and stochastic constructions share the same mechanism. The hard instances hide a tail of unexplored
coordinates: the tail must be large enough to keep the objective error above \(\varepsilon\), but shaped carefully enough
that the sublevel \(\alpha\)-P\L{} inequality remains valid on the region that the algorithm can reach.

Three questions remain open. First, for the gradient-only bounded-variance oracle studied in this paper, Theorem~\ref{thm:stochastic-matching}
proves the lower bound $\Omega\left(
L\tau^{2/\alpha}\varepsilon^{-(2-\alpha)/\alpha}
 + L\sigma^2\tau^{4/\alpha}\varepsilon^{-(4-\alpha)/\alpha}
\right),$ where the second term is the noise-dominated contribution, relevant when
\(\varepsilon\lesssim\tau\sigma^\alpha\). This matches the corresponding SGD upper-bound rate. Existing upper bounds
with this rate, however,
require the trajectory to remain inside the region where the \(\alpha\)-P\L{} geometry is valid. A fully
self-contained upper bound on the present sublevel class would need to prove this containment directly, without exact
function-value queries or a monotone accept/reject safeguard.
Second, Theorem~\ref{thm:stochastic-pl-endpoint} proves the
$L\tau^2\sigma^2/\varepsilon$ variance-dependent contribution at $\alpha=2$ even under a global P\L{} inequality.
Establishing the logarithmic exact-gradient contribution for randomized methods remains a separate open problem.
Third, for the stronger average-smooth stochastic oracle used in variance-reduced methods, the local lower-bound
mechanism of \citet{MasihaSalehkaleybarHeKiyavashThiran2026} suggests
\(\Omega(\sigma^2\tau^{2/\alpha}\varepsilon^{-2/\alpha})\) as a candidate noise lower bound. It is natural to ask
whether that mechanism can be adapted rigorously to the present sublevel class with the full dependence on the class
parameters. If such an adaptation is established, combining it with Theorem~\ref{thm:matching} would yield the
candidate sum
\(\Omega(L\tau^{2/\alpha}\varepsilon^{-(2-\alpha)/\alpha}+\sigma^2\tau^{2/\alpha}\varepsilon^{-2/\alpha})\).
The average-smooth upper bound of \citet{FatkhullinEtesamiHeKiyavash2022}, after translating constants to our convention, gives
\(O((\sigma^2\tau^{2/\alpha}+L^2\tau^{4/\alpha})\varepsilon^{-2/\alpha})\). The remaining gap is the
\(L^2\tau^{4/\alpha}\varepsilon^{-2/\alpha}\) term: it is unclear whether this term is minimax intrinsic or an
artifact of current variance-reduced analyses, especially because it does not vanish in the exact-gradient limit
\(\sigma=0\).

\bibliographystyle{plainnat}
\bibliography{alpha_pl_first_order_complexity_unified_revised}

\appendix

\paragraph{Appendix notation.}
For a matrix \(A\), \(\norm{A}_{\mathrm{op}}\) denotes its Euclidean operator norm. For symmetric matrices, \(A\preceq B\)
denotes the Loewner order. We write \(\one_N=(1,\ldots,1)\in\R^N\), \(I_N\) for the \(N\times N\) identity matrix,
\(\supp(x):=\{i:\ x_i\ne0\}\), and \(B(0,R):=\{x:\norm{x}<R\}\). For \(U\in\R^{d\times D}\),
\[
\Ortho(d,D):=\{U\in\R^{d\times D}:\ U^\top U=I_D\}.
\]
For \(x\in\R^D\) and \(\delta\ge0\), the progress index is
\[
\prog_\delta(x):=\max\bigl(\{i\in\{1,\ldots,D\}:\ |x_i|>\delta\}\cup\{0\}\bigr).
\]
Thus \(\prog_0(x)\) is the largest nonzero coordinate index, while \(\prog_{1/4}(x)\) counts only coordinates whose
magnitude has exceeded \(1/4\). We use \(\mathrm{Lip}(h)\) for the global Lipschitz constant of a map \(h\), and
\(J\phi(x)\) for the Jacobian matrix of a differentiable map \(\phi\) at \(x\). The symbol \(\mathcal Z\) denotes an
arbitrary sample space for stochastic oracles. All stochastic expectations and probabilities in the appendices are taken
over the oracle randomness, algorithmic randomness, and random rotations when these are present.

\section{A nonconvex example at \texorpdfstring{\(\alpha=1\)}{alpha=1}}
\label{app:nonconvex-alpha1}

This appendix shows that the endpoint class \(\alpha=1\) strictly contains bounded-sublevel smooth convex
minimization. The example is globally smooth and satisfies the sublevel \(\alpha=1\)-P\L{} inequality, but it is nonconvex
on its initial sublevel set.

\begin{example}[A nonconvex globally smooth sublevel-\(\alpha=1\)-P\L{} function]
\label{ex:nonconvex-alpha1}
Fix \(a>1/\sqrt 3\) and define
\[
f_a(x):=\frac{(x-a)^2}{1+(x-a)^2},
\qquad x\in\R,
\qquad x_0=0.
\]
Then \(f_{a,\star}=0\), attained at \(x_\star=a\), and \(f_a\) is globally smooth. Moreover,
\[
S_0(f_a)=\{x:\ |x-a|\le a\},
\]
and on \(S_0(f_a)\),
\[
f_a(x)-f_{a,\star}
\le
\frac{a(1+a^2)}{2}\,\abs{f_a'(x)} .
\]
Thus \(f_a\in\mathcal F^{(1)}_{1,L_a,a(1+a^2)/2,\Delta_0}(0)\) for some finite smoothness constant \(L_a\) and every
\(\Delta_0\ge f_a(0)-f_{a,\star}\). However, \(f_a\) is not convex on \(S_0(f_a)\).
\end{example}

\begin{proof}
Write \(r:=x-a\). Then
\[
f_a(x)=\frac{r^2}{1+r^2},
\qquad
f_a'(x)=\frac{2r}{(1+r^2)^2},
\qquad
f_a''(x)=\frac{2(1-3r^2)}{(1+r^2)^3}.
\]
The derivatives are bounded on \(\R\), so \(f_a\) has globally Lipschitz gradient with some finite constant \(L_a\).
Also \(f_{a,\star}=0\), attained at \(r=0\), i.e. at \(x=a\).

The function \(r\mapsto r^2/(1+r^2)\) is increasing in \(|r|\). Since the initial point is \(0\), for which
\(|r|=a\), the initial sublevel set is
\[
S_0(f_a)=\{x:\ |x-a|\le a\}.
\]
For \(x\in S_0(f_a)\), we have \(|r|\le a\). If \(r\ne0\), then
\[
\frac{f_a(x)-f_{a,\star}}{\abs{f_a'(x)}}
=
\frac{r^2/(1+r^2)}{2|r|/(1+r^2)^2}
=
\frac{|r|(1+r^2)}{2}
\le
\frac{a(1+a^2)}{2}.
\]
The same sublevel \(\alpha=1\)-P\L{} inequality is trivial at \(r=0\). Therefore
\[
f_a(x)-f_{a,\star}
\le
\frac{a(1+a^2)}{2}\abs{f_a'(x)}
\qquad
(x\in S_0(f_a)).
\]

Finally, because \(a>1/\sqrt3\), the sublevel set contains points with \(|r|>1/\sqrt3\). At those points,
\[
f_a''(x)=\frac{2(1-3r^2)}{(1+r^2)^3}<0,
\]
so \(f_a\) is not convex on \(S_0(f_a)\).
\end{proof}

\section{A concrete intermediate-exponent example}
\label{app:explicit-intermediate}

This appendix records an explicit globally smooth member of
\(\mathcal F_{\alpha,L,\tau,\Delta_0}(0)\) for each \(\alpha\in(1,2)\). The construction is not needed for the main
argument in Section~\ref{sec:preliminaries}; it is included only to show that the intermediate-exponent class is
nonempty even under global smoothness.

\begin{example}[A globally smooth sublevel-\(\alpha\)-P\L{} function for every \(\alpha\in(1,2)\)]
\label{ex:explicit-intermediate}
Fix \(p>2\), set \(\alpha:=p/(p-1)\in(1,2)\), and define \(\phi_p:[0,\infty)\to[0,\infty)\) by
\[
\phi_p(r):=
\begin{cases}
\dfrac{r^p}{p}, & 0\le r\le 1,\\[0.7em]
A_p+B_p\,r+\dfrac{p-1}{2}\,r^2, & r>1,
\end{cases}
\qquad
A_p:=\frac{1}{p}+\frac{p}{2}-\frac{3}{2},
\qquad
B_p:=2-p .
\]
Fix \(d\ge 1\) and \(x_\star\in\R^d\) with \(0<\norm{x_\star}\le 1\), and set
\[
f(x):=\phi_p\bigl(\norm{x-x_\star}\bigr),
\qquad x\in\R^d.
\]
Then \(f_\star=0\), \(f_\star\) is attained at \(x_\star\), \(\nabla f\) is globally \((p-1)\)-Lipschitz, and
\[
S_0(f)=\{x:\norm{x-x_\star}\le \norm{x_\star}\}\subseteq \{x:\norm{x-x_\star}\le 1\}.
\]
Moreover,
\[
f(x)-f_\star
=
\frac1p\norm{\nabla f(x)}^{p/(p-1)}
\qquad
\bigl(x\in S_0(f)\bigr).
\]
Consequently,
\[
f\in \mathcal F^{(d)}_{\alpha,p-1,1/p,\Delta_0}(0)
\qquad
\text{for every }\Delta_0\ge \frac{\norm{x_\star}^p}{p}.
\]
\end{example}

\begin{proof}
The constants \(A_p,B_p\) make the two pieces match to second order at \(r=1\). Indeed,
\[
A_p+B_p+\frac{p-1}{2}=\frac1p,\qquad
B_p+(p-1)=1,\qquad
p-1=p-1.
\]
Thus \(\phi_p\in C^2([0,\infty))\). Also \(\phi_p(0)=0\), \(\phi_p(r)>0\) for \(r>0\), and
\(\phi_p'(r)>0\) for \(r>0\). Hence \(f_\star=0\), attained at \(x_\star\), and
\[
f(0)-f_\star=\phi_p(\norm{x_\star})=\frac{\norm{x_\star}^p}{p}.
\]

For a radial function \(x\mapsto h(\norm{x-x_\star})\), the Hessian at \(x\ne x_\star\) has eigenvalues \(h''(r)\)
in the radial direction and \(h'(r)/r\) in the tangential directions, where \(r=\norm{x-x_\star}\). For \(h=\phi_p\),
\[
\frac{\phi_p'(r)}{r}=r^{p-2},\qquad \phi_p''(r)=(p-1)r^{p-2}
\qquad (0<r\le 1),
\]
and
\[
\frac{\phi_p'(r)}{r}=(p-1)-\frac{p-2}{r},\qquad \phi_p''(r)=p-1
\qquad (r>1).
\]
All these eigenvalues lie in \([0,p-1]\), and the first pair tends to \(0\) as \(r\downarrow0\). Therefore
\(\nabla^2 f\) extends continuously to all of \(\R^d\) and
\(\norm{\nabla^2 f(x)}_{\mathrm{op}}\le p-1\). Hence \(\nabla f\) is globally \((p-1)\)-Lipschitz.

Since \(\phi_p\) is strictly increasing,
\[
S_0(f)
=
\{x:\phi_p(\norm{x-x_\star})\le \phi_p(\norm{x_\star})\}
=
\{x:\norm{x-x_\star}\le \norm{x_\star}\}.
\]
Because \(\norm{x_\star}\le1\), every point in \(S_0(f)\) lies in the region where
\(\phi_p(r)=r^p/p\). Thus, for \(x\in S_0(f)\),
\[
f(x)-f_\star=\frac{\norm{x-x_\star}^p}{p},
\qquad
\norm{\nabla f(x)}=\norm{x-x_\star}^{p-1}.
\]
The displayed sublevel-\(\alpha\)-P\L{} identity follows, including at \(x=x_\star\), where both sides are zero.
\end{proof}

\section{Detailed deterministic upper bound and lower bound}
\label{app:deterministic}

This appendix contains the deterministic arguments supporting Section~\ref{sec:det}.
Subsection~\ref{app:gd-upper} records the gradient-descent upper bound used in Proposition~\ref{prop:gd-main}.
Subsections~\ref{app:det-ingredients}--\ref{app:det-zr-to-all} then prove Theorem~\ref{thm:matching}: the sharp lower
bound from the rescaled Yue--Fang--Lin zero chain, transferred from zero-respecting methods to all deterministic
first-order methods.

\subsection{Gradient-descent upper bound}
\label{app:gd-upper}

We first record the gradient-descent upper bound. For \(\alpha=2\), this is the standard P\L{} descent argument; see, for example, \citet{KarimiNutiniSchmidt2016}. The same proof works for sublevel \(\alpha\)-P\L{} functions: the \(1/L\) gradient-descent step is monotone, so all iterates remain in the initial sublevel set, exactly where the \(\alpha\)-P\L{} inequality is assumed to hold.

\begin{proposition}[Gradient-descent upper bound under sublevel $\alpha$-P\L{}]
\label{prop:gd-recursion}
Let $f:\R^d\to\R$ be differentiable and globally $L$-smooth, i.e.,
\[
\norm{\nabla f(x)-\nabla f(y)}\le L\norm{x-y}
\qquad (\forall x,y\in\R^d).
\]
Assume also that
\[
f(x)-f_\star\le \tau \norm{\nabla f(x)}^\alpha
\qquad (\forall x\in S_0(f))
\]
for some $\tau>0$ and some $\alpha\in[1,2]$.
Let gradient descent with step size $1/L$ be given by
\[
x_{k+1}=x_k-\frac1L\nabla f(x_k),
\qquad
x_0=0.
\]
Define
\[
\gap_k:=f(x_k)-f_\star.
\]
Then every iterate belongs to $S_0(f)$. Moreover, if $\alpha=2$, then
\[
\gap_k\le \left(1-\frac{1}{2L\tau}\right)^k\gap_0
\qquad (\forall k\ge 0),
\]
while if $\alpha\in[1,2)$, then
\[
\gap_k
\le
\left(
\gap_0^{-(2-\alpha)/\alpha}
+
\frac{2-\alpha}{2\alpha L\tau^{2/\alpha}}\,k
\right)^{-\alpha/(2-\alpha)}
\qquad (\forall k\ge 0).
\]
\end{proposition}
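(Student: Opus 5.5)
The argument has three steps: the descent lemma with monotonicity, a scalar recursion from the sublevel \(\alpha\)-P\L{} inequality, and solving that recursion in the two regimes. Global \(L\)-smoothness gives, for the step \(x_{k+1}=x_k-\frac1L\nabla f(x_k)\),
\[
f(x_{k+1})\le f(x_k)-\frac{1}{2L}\norm{\nabla f(x_k)}^2 .
\]
In particular \(f(x_{k+1})\le f(x_k)\le\cdots\le f(x_0)=f(0)\). Induction on \(k\) then shows every iterate lies in \(S_0(f)\); no P\L{} information is needed for this.

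Because \(x_k\in S_0(f)\), the sublevel inequality applies at \(x_k\) and gives \(\norm{\nabla f(x_k)}\ge(\gap_k/\tau)^{1/\alpha}\). Substituting into the descent lemma yields
\[
\gap_{k+1}\le\gap_k-\frac{1}{2L\tau^{2/\alpha}}\gap_k^{2/\alpha}.
\]
For \(\alpha=2\) this reads \(\gap_{k+1}\le(1-\frac{1}{2L\tau})\gap_k\). The factor is nonnegative because the remark after Theorem~\ref{thm:stochastic-pl-endpoint} shows \(2L\tau\ge1\) whenever some \(\gap_k>0\); if instead \(\gap_k=0\), every later gap is zero by monotonicity. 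Iterating gives the linear rate.

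For \(\alpha\in[1,2)\), set \(r=2/\alpha>1\) and \(c=(2L\tau^{2/\alpha})^{-1}\), so that \(\gap_{k+1}\le\gap_k-c\gap_k^r\). If some \(\gap_k=0\), all later gaps vanish and the claimed bound holds trivially, so assume all gaps are positive. The plan is to show that \(\gap_k^{1-r}\) increases by at least \((r-1)c\) per step. Since \(\gap_{k+1}\le\gap_k\) and \(s\mapsto s^{-r}\) is decreasing,
\[
\gap_{k+1}^{1-r}-\gap_k^{1-r}=(r-1)\int_{\gap_{k+1}}^{\gap_k}s^{-r}\,ds\ge(r-1)\gap_k^{-r}(\gap_k-\gap_{k+1})\ge(r-1)c .
\]
Summing over the first \(k\) steps gives \(\gap_k^{1-r}\ge\gap_0^{1-r}+(r-1)ck\). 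With \(r-1=(2-\alpha)/\alpha\) and \(1-r=-(2-\alpha)/\alpha\), the increment is \((r-1)c=\frac{2-\alpha}{2\alpha L\tau^{2/\alpha}}\). Raising both sides to the negative power \(-\alpha/(2-\alpha)\) reverses the inequality and gives exactly the stated bound.

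The only delicate point is the integral comparison. It uses the ordering \(\gap_{k+1}\le\gap_k\), and it silently assumes \(\gap_{k+1}>0\), which is why the zero case is handled separately beforehand. This makes the argument a discrete analogue of the ODE computation in the proof of Theorem~\ref{thm:global-trivial}, now run along the descent direction. Everything else is routine.
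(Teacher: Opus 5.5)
Your proof is correct and follows the paper's argument: the descent lemma gives monotonicity and hence containment of all iterates in $S_0(f)$, the sublevel inequality gives the scalar recursion, and for $\alpha<2$ you show that $\gap_k^{1-r}$ grows by at least $(r-1)c$ per step, where your integral comparison is the same tangent-line inequality for the convex function $s^{1-r}$ that the paper uses. For $\alpha=2$, you justify nonnegativity of the contraction factor by applying $2L\tau\ge 1$ at a point of $S_0(f)$ with positive gap; this rests on the same underlying fact as the paper's observation that a negative factor would force $\gap_1<0$.
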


\begin{proof}
By global $L$-smoothness and the gradient-descent update,
\[
\begin{aligned}
f(x_{k+1})
&\le
f(x_k)+\ip{\nabla f(x_k)}{x_{k+1}-x_k}+\frac{L}{2}\norm{x_{k+1}-x_k}^2\\
&=
f(x_k)-\frac1L\norm{\nabla f(x_k)}^2+\frac{1}{2L}\norm{\nabla f(x_k)}^2\\
&=
f(x_k)-\frac{1}{2L}\norm{\nabla f(x_k)}^2\\
&\le
f(x_k).
\end{aligned}
\]
Therefore $f(x_{k+1})\le f(x_k)\le f(0)$ by induction, so every iterate lies in $S_0(f)$.

Subtracting $f_\star$ from the descent inequality and using the $\alpha$-P\L{} bound on $S_0(f)$ gives
\[
\gap_{k+1}
\le
\gap_k-\frac{1}{2L}\norm{\nabla f(x_k)}^2
\le
\gap_k-\frac{1}{2L\tau^{2/\alpha}}\gap_k^{2/\alpha}.
\]

If $\alpha=2$, this becomes
\[
\gap_{k+1}\le \left(1-\frac{1}{2L\tau}\right)\gap_k.
\]
If \(\gap_0=0\), the claim is trivial. If \(\gap_0>0\), the factor above is nonnegative; otherwise the inequality
with \(k=0\) would force \(\gap_1<0\), contradicting \(f_\star\le f(x_1)\). Iterating proves the claimed linear
estimate.

Now let $\alpha\in[1,2)$ and set
\[
r:=\frac{2}{\alpha}>1,
\qquad
c:=\frac{1}{2L\tau^{2/\alpha}}.
\]
Then
\[
\gap_{k+1}\le \gap_k-c\gap_k^r.
\]
Since $\gap_{k+1}\le \gap_k$, the sequence is nonincreasing. If $\gap_k=0$ for some $k$, then the recursion yields
$\gap_j=0$ for all $j\ge k$, and the desired bound is immediate. If \(\gap_k>0\) but \(\gap_{k+1}=0\), then the
desired bound at time \(k+1\) and all later times is also immediate. It therefore suffices to consider indices with
\(\gap_k>0\) and \(\gap_{k+1}>0\).

The function $\phi(s)=s^{1-r}$ is convex on $(0,\infty)$, hence
\[
\phi(\gap_{k+1})
\ge
\phi(\gap_k)+\phi'(\gap_k)(\gap_{k+1}-\gap_k)
=
\gap_k^{1-r}+(r-1)\gap_k^{-r}(\gap_k-\gap_{k+1}).
\]
Using $\gap_k-\gap_{k+1}\ge c\gap_k^r$, we obtain
\[
\gap_{k+1}^{1-r}\ge \gap_k^{1-r}+c(r-1).
\]
Summing from $0$ to $k-1$ gives
\[
\gap_k^{1-r}\ge \gap_0^{1-r}+c(r-1)k.
\]
Since $1-r<0$, raising both sides to the power $1/(1-r)$ yields
\[
\gap_k\le \left(\gap_0^{1-r}+c(r-1)k\right)^{1/(1-r)}.
\]
Substituting $r=2/\alpha$ and $c=(2L\tau^{2/\alpha})^{-1}$ proves the stated bound.
\end{proof}

\begin{corollary}[Fixed-budget minimax upper bound]
\label{cor:upper}
For every $\alpha\in[1,2)$, every $L,\tau,\Delta_0>0$, and every $T\ge 0$,
\[
\mathcal E_T\!\left(\mathcal F_{\alpha,L,\tau,\Delta_0}(0)\right)
\le
\left(
\Delta_0^{-(2-\alpha)/\alpha}
+
\frac{2-\alpha}{2\alpha L\tau^{2/\alpha}}\,T
\right)^{-\alpha/(2-\alpha)}.
\]
\end{corollary}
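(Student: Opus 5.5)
The corollary follows by running gradient descent with step size \(1/L\) from \(x_0=0\) on an arbitrary member of the class and applying Proposition~\ref{prop:gd-recursion}. Gradient descent is a legitimate deterministic first-order method in the sense of Definition~\ref{def:minimax}: each query \(x_t\) depends only on \(x_{t-1}\) and \(\nabla f(x_{t-1})\), the same rule is used in every dimension \(d\), and we take \(x_T^A(f)\) to be the \(T\)-th iterate. Since the minimax error takes an infimum over methods, it suffices to bound
\[
\sup_{d\ge1}\sup_{f\in\mathcal F^{(d)}_{\alpha,L,\tau,\Delta_0}(0)}\bigl(f(x_T)-f_\star\bigr)
\]
for this particular method.

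Fix \(d\) and \(f\) in the class. The definition of \(\mathcal F^{(d)}_{\alpha,L,\tau,\Delta_0}(0)\) provides exactly the hypotheses of Proposition~\ref{prop:gd-recursion}: global \(L\)-smoothness, finite \(f_\star\), and the \(\alpha\)-P\L{} inequality on \(S_0(f)\). The proposition therefore gives
\[
f(x_T)-f_\star\le\bigl(\gap_0^{-(2-\alpha)/\alpha}+cT\bigr)^{-\alpha/(2-\alpha)},
\qquad
c:=\frac{2-\alpha}{2\alpha L\tau^{2/\alpha}},
\]
where \(\gap_0=f(0)-f_\star\le\Delta_0\).

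It remains to replace \(\gap_0\) by \(\Delta_0\). The map \(s\mapsto\bigl(s^{-(2-\alpha)/\alpha}+cT\bigr)^{-\alpha/(2-\alpha)}\) is nondecreasing on \((0,\infty)\). Indeed, the exponent \(-(2-\alpha)/\alpha\) is negative, so \(s^{-(2-\alpha)/\alpha}\) decreases in \(s\); the outer power \(-\alpha/(2-\alpha)\) is also negative, which reverses the monotonicity again. Hence \(\gap_0\le\Delta_0\) yields the same bound with \(\Delta_0\) in place of \(\gap_0\).

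The degenerate case \(\gap_0=0\) needs separate treatment. The recursion in the proof of Proposition~\ref{prop:gd-recursion} then gives \(\gap_T=0\), which is trivially below the stated bound. The resulting bound is uniform in \(d\) and \(f\), so taking suprema and then the infimum over methods proves the corollary.

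There is no real obstacle. The only points needing care are the monotonicity in \(\gap_0\) and the \(\gap_0=0\) case, both of which are immediate.
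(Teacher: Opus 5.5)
Your proposal is correct and follows essentially the same route as the paper. Both run gradient descent with step size $1/L$ from $0$, invoke Proposition~\ref{prop:gd-recursion} for each $f$ in the class, handle the zero-gap case $f(0)=f_\star$ separately, and replace $f(0)-f_\star$ by $\Delta_0$ before taking the suprema and the infimum over methods. Your one addition is the explicit monotonicity argument for that replacement, which the paper leaves implicit.
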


\begin{proof}
Gradient descent with step size $1/L$ is a deterministic first-order method.

Fix $f\in\mathcal F^{(d)}_{\alpha,L,\tau,\Delta_0}(0)$. If $f(0)=f_\star$, monotonicity of gradient descent and the
definition of $f_\star$ give
\[
f_\star\le f(x_k)\le f(0)=f_\star
\qquad (k\ge0),
\]
so every objective gap is zero and the claimed bound holds. Suppose henceforth that $f(0)-f_\star>0$.
Proposition~\ref{prop:gd-recursion} applies on the initial sublevel set $S_0(f)$ and yields
\[
f(x_T)-f_\star
\le
\left(
(f(0)-f_\star)^{-(2-\alpha)/\alpha}
+
\frac{2-\alpha}{2\alpha L\tau^{2/\alpha}}\,T
\right)^{-\alpha/(2-\alpha)}.
\]
Since $f(0)-f_\star\le \Delta_0$, the right-hand side is at most
\[
\left(
\Delta_0^{-(2-\alpha)/\alpha}
+
\frac{2-\alpha}{2\alpha L\tau^{2/\alpha}}\,T
\right)^{-\alpha/(2-\alpha)}.
\]
Taking the supremum over all $d$ and all $f$ in the class, then the infimum over all deterministic methods, proves the claim.
\end{proof}

\begin{corollary}[Iteration complexity upper bound]
\label{cor:upper-complexity}
There exists a constant $C_\alpha>0$ such that, for every $\varepsilon\in(0,\Delta_0]$,
\[
T_\varepsilon\!\left(\mathcal F_{\alpha,L,\tau,\Delta_0}(0)\right)
\le
C_\alpha L\tau^{2/\alpha}\varepsilon^{-(2-\alpha)/\alpha}
\]
suffices to guarantee objective error at most $\varepsilon$ on the class $\mathcal F_{\alpha,L,\tau,\Delta_0}(0)$.
\end{corollary}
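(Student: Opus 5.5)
The plan is to invert the fixed-budget bound of Corollary~\ref{cor:upper}. Write $r:=2/\alpha>1$ and $c:=(2L\tau^{2/\alpha})^{-1}$, so that $c(r-1)=\frac{2-\alpha}{2\alpha L\tau^{2/\alpha}}$. Corollary~\ref{cor:upper} gives, for every $T\ge0$,
\[
\mathcal E_T\le\bigl(\Delta_0^{1-r}+c(r-1)T\bigr)^{1/(1-r)}.
\]
Since $1-r<0$, the map $s\mapsto s^{1/(1-r)}$ is decreasing on $(0,\infty)$. Hence the right-hand side is at most $\varepsilon$ exactly when
\[
\Delta_0^{1-r}+c(r-1)T\ge\varepsilon^{1-r}.
\]

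A sufficient condition is to drop the nonnegative term $\Delta_0^{1-r}$ and require
\[
T\ge\frac{\varepsilon^{1-r}}{c(r-1)}=\frac{2\alpha}{2-\alpha}\,L\tau^{2/\alpha}\varepsilon^{-(2-\alpha)/\alpha}.
\]
So I would set $T^\ast:=\bigl\lceil \tfrac{2\alpha}{2-\alpha}L\tau^{2/\alpha}\varepsilon^{-(2-\alpha)/\alpha}\bigr\rceil$. Then $\mathcal E_{T^\ast}\le\varepsilon$, and by the definition of $T_\varepsilon$ as an infimum, $T_\varepsilon\le T^\ast$.

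The only delicate step is absorbing the ceiling into a constant multiple of $L\tau^{2/\alpha}\varepsilon^{-(2-\alpha)/\alpha}$. This requires the quantity $B:=L\tau^{2/\alpha}\varepsilon^{-(2-\alpha)/\alpha}$ to be bounded below, so that $\lceil x\rceil\le x+1\le 2x$.

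\emph{Case $\varepsilon=\Delta_0$.} If $f(0)-f_\star\le\varepsilon$, then $T=0$ already suffices. In particular, when $\varepsilon=\Delta_0$ we get $T_\varepsilon=0$ and the bound holds trivially.

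\emph{Case $\varepsilon<\Delta_0$.} I would first argue that any nontrivial member of the class forces $B$ to be bounded below. For $f$ in the class with $f(0)-f_\star>0$, $L$-smoothness and $f_\star>-\infty$ give the standard bound $\norm{\nabla f(0)}^2\le 2L(f(0)-f_\star)$. Combining this with the $\alpha$-P\L{} inequality at $0\in S_0(f)$ gives
\[
\Delta:=f(0)-f_\star\le\tau(2L\Delta)^{\alpha/2},
\qquad\text{so}\qquad
\Delta^{(2-\alpha)/2}\le\tau(2L)^{\alpha/2}.
\]
Raising to the power $2/\alpha$ yields
\[
\Delta^{(2-\alpha)/\alpha}\le 2L\tau^{2/\alpha},
\qquad\text{i.e.}\qquad
\Delta^{-(2-\alpha)/\alpha}\ge \bigl(2L\tau^{2/\alpha}\bigr)^{-1}.
\]
Two subcases follow.
\begin{itemize}
\item If every member of the class has $\Delta\le\varepsilon$, then $T_\varepsilon=0$ and there is nothing to prove.
\item Otherwise some member has $\Delta>\varepsilon$. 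Then $\varepsilon^{-(2-\alpha)/\alpha}>\Delta^{-(2-\alpha)/\alpha}\ge(2L\tau^{2/\alpha})^{-1}$, so $B\ge 1/2$.
\end{itemize}
In the second subcase, $\tfrac{2\alpha}{2-\alpha}B\ge\tfrac{\alpha}{2-\alpha}\ge 1$ for $\alpha\ge1$, so the ceiling at most doubles the quantity. We therefore obtain the claim with
\[
C_\alpha=\frac{4\alpha}{2-\alpha}.
\]
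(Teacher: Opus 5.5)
Your proof is correct and follows the same route as the paper: invert the fixed-budget bound of Corollary~\ref{cor:upper} and solve the resulting inequality for $T$. Your additional step is to combine $\norm{\nabla f(0)}^2\le 2L(f(0)-f_\star)$ with the $\alpha$-P\L{} inequality at $0$, which gives $L\tau^{2/\alpha}\varepsilon^{-(2-\alpha)/\alpha}\ge 1/2$ whenever some member of the class has initial gap above $\varepsilon$ (and $T_\varepsilon=0$ otherwise); this properly justifies absorbing the integer rounding of $T$ into $C_\alpha$, a point the paper's proof passes over since it bounds only the real-valued threshold.
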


\begin{proof}
By Corollary~\ref{cor:upper}, it suffices that
\[
\left(
\Delta_0^{-(2-\alpha)/\alpha}
+
\frac{2-\alpha}{2\alpha L\tau^{2/\alpha}}\,T
\right)^{-\alpha/(2-\alpha)}
\le
\varepsilon.
\]
Equivalently,
\[
\Delta_0^{-(2-\alpha)/\alpha}
+
\frac{2-\alpha}{2\alpha L\tau^{2/\alpha}}\,T
\ge
\varepsilon^{-(2-\alpha)/\alpha}.
\]
Since $\varepsilon\le \Delta_0$, the difference between the right-hand side and the first term is nonnegative, and therefore
\[
T
\ge
\frac{2\alpha}{2-\alpha}
L\tau^{2/\alpha}
\left(
\varepsilon^{-(2-\alpha)/\alpha}-\Delta_0^{-(2-\alpha)/\alpha}
\right)
\]
is sufficient. This is bounded above by a constant multiple of
\[
L\tau^{2/\alpha}\varepsilon^{-(2-\alpha)/\alpha},
\]
which proves the claim.
\end{proof}


\subsection{Ingredients for the deterministic lower bound}
\label{app:det-ingredients}
The lower-bound proof uses the following two known facts.

\paragraph{(A) Quadratic-P\L{} estimate for the Yue block.}
Let
\[
g_m(u):=q_m(u)+\sum_{i=1}^m v(u_i)
\]
be the one-block Yue--Fang--Lin instance defined below.
There exists a universal constant $C_{\mathrm{PL}}\ge 1$ such that
\begin{align}\label{eq_PL2_YFL}
    g_m(u)-g_{m,\star}
\le
C_{\mathrm{PL}}\,m\,\norm{\nabla g_m(u)}^2
\qquad (\forall u\in\R^m,\ \forall m\ge 1).
\end{align}
This is the $t=1$ case of \citet[Lemma~2]{YueFangLin2023}.

\paragraph{(B) Reduction to zero-respecting methods.}
On an orthogonally invariant function class, any lower bound proved for zero-respecting deterministic first-order methods transfers to all deterministic first-order methods.
We use the standard deterministic-to-zero-respecting reduction in the form stated by
\citet[Proposition~1 and the surrounding discussion]{CarmonDuchiHinderSidford2021}.

\paragraph{Roadmap for the deterministic lower bound.}
The lower bound argument has four ingredients. We first construct a normalized one-block hard instance with a hidden tail of
coordinates. We then prove a zero-chain property that implies a zero-respecting method can reveal at most one new coordinate
per iteration, so after $T$ steps a tail of order $T$ remains untouched. Next, we tune the amplitude of the construction so that
this hidden tail already forces the objective error to be of order \(T^{-\alpha/(2-\alpha)}\) while preserving a uniform sublevel
$\alpha$-P\L{} inequality. Finally, orthogonal invariance allows us to extend the lower bound from zero-respecting methods to all
deterministic first-order methods.

\subsection{A hard one-block instance and its basic properties}
\label{app:det-one-block}

Define the scalar function $v:\R\to\R$ by
\[
v(s)=
\begin{cases}
\dfrac12 s^2, & s\le \dfrac{31}{32},\\[0.8em]
\dfrac12 s^2-16\Bigl(s-\dfrac{31}{32}\Bigr)^2, & \dfrac{31}{32}<s\le 1,\\[0.8em]
\dfrac12 s^2-\dfrac{1}{32}+16\Bigl(s-\dfrac{33}{32}\Bigr)^2, & 1<s\le \dfrac{33}{32},\\[0.8em]
\dfrac12 s^2-\dfrac{1}{32}, & s>\dfrac{33}{32}.
\end{cases}
\]
For each $m\ge 1$, define
\[
q_m(u):=\frac12 u_1^2+\frac12\sum_{i=1}^{m-1}(u_{i+1}-u_i)^2,
\qquad
u\in\R^m,
\]
and
\[
g_m(u):=q_m(u)+\sum_{i=1}^m v(u_i).
\]

\begin{lemma}
\label{lem:v-basic}
The function $v$ satisfies:
\begin{enumerate}[label=(\roman*),leftmargin=1.5em]
    \item $v(s)\ge 0$ for every $s\in\R$;
    \item $v(1)=31/64$;
    \item $v'(1)=0$;
    \item $v'$ is globally $33$-Lipschitz.
\end{enumerate}
\end{lemma}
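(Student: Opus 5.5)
The plan is to verify the four claims of Lemma~\ref{lem:v-basic} by a direct piece-by-piece computation on the four intervals in the definition of \(v\), together with matching checks at the three breakpoints \(31/32\), \(1\), and \(33/32\). First I compute \(v'\) on each piece:
\[
v'(s)=s \ (s\le \tfrac{31}{32}),\quad
v'(s)=s-32\bigl(s-\tfrac{31}{32}\bigr)\ (\tfrac{31}{32}<s\le1),\quad
v'(s)=s+32\bigl(s-\tfrac{33}{32}\bigr)\ (1<s\le\tfrac{33}{32}),\quad
v'(s)=s\ (s>\tfrac{33}{32}).
\]

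\textbf{Continuity of \(v\) and \(v'\) at the breakpoints.}

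At \(31/32\), both pieces of \(v\) equal \(\tfrac12(31/32)^2\), and both pieces of \(v'\) equal \(31/32\).

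At \(1\), the second piece gives \(v(1)=\tfrac12-16/1024=\tfrac12-\tfrac1{64}=31/64\). The third piece gives \(\tfrac12-\tfrac1{32}+16/1024=31/64\), so the values agree. The derivatives agree as well: \(1-32\cdot\tfrac1{32}=0\) and \(1+32\cdot(-\tfrac1{32})=0\). This proves (ii) and (iii) at once.

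At \(33/32\), the third and fourth pieces of \(v\) agree, since the quadratic correction term vanishes there, and both pieces of \(v'\) equal \(33/32\).

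Hence \(v\in C^1\) and \(v'\) is piecewise linear.

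\textbf{Claim (iv).} On the four pieces, \(v'\) has slopes \(1,-31,33,1\). A continuous piecewise-linear function is Lipschitz with constant equal to the maximal absolute slope, here \(33\).

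\textbf{Claim (i).} This is the only step needing a small argument.

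On the first piece, \(v(s)=\tfrac12 s^2\ge0\).

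On the fourth piece, \(s>33/32\) gives \(\tfrac12 s^2>\tfrac12\) and hence \(v(s)>\tfrac12-\tfrac1{32}>0\).

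On the third piece, \(v(s)\ge\tfrac12 s^2-\tfrac1{32}\ge\tfrac12-\tfrac1{32}>0\), using \(s>1\).

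On the second piece, \(v\) has negative second derivative (\(1-32<0\)), so it is concave. A concave function on an interval attains its minimum at an endpoint, and the endpoint values are \(\tfrac12(31/32)^2>0\) and \(31/64>0\).

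The only potential pitfall is sign bookkeeping in the breakpoint checks, which I would double-check numerically.
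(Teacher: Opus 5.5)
Your proposal is correct and follows essentially the same route as the paper: piecewise differentiation with value and derivative matching at $31/32$, $1$, $33/32$; concavity on $(31/32,1]$ plus direct lower bounds on the other pieces for nonnegativity; and the slopes $1,-31,33,1$ of the piecewise-affine $v'$ for the Lipschitz constant. The only cosmetic difference is that the paper writes $v'(s)=s-b(s)$ with the tent function $b(s)=\max\{0,\,1-32|s-1|\}$, which makes $v'(1)=0$ immediate from $b(1)=1$.
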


\begin{proof}
Define
\[
b(s):=
\begin{cases}
1-32|s-1|, & \dfrac{31}{32}\le s\le \dfrac{33}{32},\\[0.4em]
0, & \text{otherwise}.
\end{cases}
\]
The pieces match in value and first derivative at \(31/32\), \(1\), and \(33/32\), so \(v\) is differentiable.
Differentiating the piecewise formula for \(v\) gives
\[
v'(s)=s-b(s)
\qquad (\forall s\in\R).
\]
Since $b(1)=1$, we obtain $v'(1)=0$.

Evaluating either middle branch at $s=1$ gives
\[
v(1)=\frac12-16\left(1-\frac{31}{32}\right)^2=\frac12-\frac{1}{64}=\frac{31}{64}.
\]

We now prove nonnegativity. For $s\le 31/32$, we have $v(s)=s^2/2\ge 0$. For $31/32<s\le 1$,
\[
v(s)=\frac12 s^2-16\Bigl(s-\frac{31}{32}\Bigr)^2
\]
is a concave quadratic, so its minimum on $[31/32,1]$ is attained at an endpoint. Both endpoint values are positive. For $1<s\le 33/32$,
\[
v(s)=\frac12 s^2-\frac{1}{32}+16\Bigl(s-\frac{33}{32}\Bigr)^2\ge \frac12-\frac{1}{32}=\frac{15}{32}>0.
\]
For $s>33/32$,
\[
v(s)=\frac12 s^2-\frac{1}{32}>\frac{15}{32}>0.
\]
Thus $v(s)\ge 0$ for all $s$.

Finally, the slopes of the piecewise-affine derivative \(v'\) are \(1\), \(-31\), \(33\), and \(1\). Hence \(v'\) is globally \(33\)-Lipschitz.
\end{proof}

\begin{lemma}
\label{lem:q-basic}
For every $m\ge 1$, the function $q_m$ satisfies:
\begin{enumerate}[label=(\roman*),leftmargin=1.5em]
    \item $q_m(u)\ge 0$ for every $u\in\R^m$;
    \item $q_m(\one)=1/2$, where $\one=(1,\ldots,1)\in\R^m$;
    \item $\nabla q_m$ is globally $4$-Lipschitz;
    \item Define $B_m:=\nabla^2 q_m$, then
    $0\preceq B_m\preceq 4I.$
\end{enumerate}
\end{lemma}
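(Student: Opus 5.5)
The four claims follow directly from the explicit quadratic form of \(q_m\). I would write \(q_m(u)=\tfrac12\norm{Du}^2\), where \(D\in\R^{m\times m}\) is the lower-bidiagonal difference matrix with \((Du)_1=u_1\) and \((Du)_{i+1}=u_{i+1}-u_i\) for \(1\le i\le m-1\). Then \(q_m\) is a quadratic with Hessian \(B_m=D^\top D\), which is constant and symmetric positive semidefinite. This representation immediately gives (i): \(q_m(u)=\tfrac12\norm{Du}^2\ge0\). It also gives the lower bound \(0\preceq B_m\) in (iv).

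For (ii), evaluate at \(u=\one\). The first entry of \(D\one\) is \(1\) and every difference \(u_{i+1}-u_i\) vanishes, so \(q_m(\one)=\tfrac12\cdot1^2=\tfrac12\).

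For the upper bound in (iv), it suffices to show \(\norm{D}_{\mathrm{op}}\le2\), since then \(B_m=D^\top D\preceq\norm{D}_{\mathrm{op}}^2I=4I\). I would split \(D=I-S\), where \(S\) is the subdiagonal shift \((Su)_{i+1}=u_i\), \((Su)_1=0\). Because \(\norm{S}_{\mathrm{op}}\le1\), the triangle inequality gives \(\norm{D}_{\mathrm{op}}\le2\). Equivalently, one can bound the quadratic form directly: \((u_{i+1}-u_i)^2\le 2u_{i+1}^2+2u_i^2\), and summing shows \(u^\top B_mu=\norm{Du}^2\le4\norm{u}^2\).

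Item (iii) then follows. Since \(\nabla q_m(u)=B_mu\) is linear, we have \(\norm{\nabla q_m(u)-\nabla q_m(w)}\le\norm{B_m}_{\mathrm{op}}\norm{u-w}\le4\norm{u-w}\).

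The only step needing any care is the operator-norm bound \(\norm{D}_{\mathrm{op}}\le2\), and it requires nothing more than the shift decomposition above. Everything else is direct computation.
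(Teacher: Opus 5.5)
Your proof is correct. It differs from the paper only in how the upper bound $B_m\preceq 4I$ is established.

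\emph{The paper's route.} It computes $B_m$ entrywise as the tridiagonal matrix with diagonal $(2,\ldots,2,1)$ and off-diagonal entries $-1$. It gets $B_m\succeq 0$ from the sum-of-squares form. It then bounds $\norm{B_m}_{\mathrm{op}}\le 4$ by Gershgorin's theorem, using absolute row sums $3$, $4$, $2$ in the first, interior, and last rows, and it treats $m=1$ as a separate case.

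\emph{Your route.} You factor $B_m=D^\top D$ with $D=I-S$ and use $\norm{D}_{\mathrm{op}}\le 1+\norm{S}_{\mathrm{op}}\le 2$. Positive semidefiniteness and the upper bound then come from one representation. No Hessian entries need to be computed, and $m=1$ is covered uniformly because there $S=0$.

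The paper's entrywise route has one modest advantage: the same explicit formulas for $\nabla q_m(u)=B_m u$ are reused in the zero-chain argument of Lemma~\ref{lem:zero-chain}. For the constant $4$ that feeds into the $37$-smoothness of $\Phi_T$, the two routes give the same bound.
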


	\begin{proof}
	Items (i) and (ii) are immediate from the sum-of-squares definition.
The case \(m=1\) is also immediate for the Hessian bound: \(q_1(u)=u_1^2/2\), so \(B_1=[1]\). Hence assume
\(m\ge2\).

	For the Hessian,
\[
(B_mu)_1=2u_1-u_2,
\]
\[
(B_mu)_i=2u_i-u_{i-1}-u_{i+1}
\qquad (2\le i\le m-1),
\]
\[
(B_mu)_m=u_m-u_{m-1}.
\]
Hence $B_m$ is the tridiagonal matrix with diagonal entries $(2,2,\ldots,2,1)$ and off-diagonal entries $-1$. Since $q_m$ is a sum of squares, $B_m\succeq 0$.
The absolute row sums are bounded by $3$ in the first row, $4$ in the interior rows, and $2$ in the last row, so Gershgorin's theorem yields
\[
\norm{B_m}_{\mathrm{op}}\le 4.
\]
Because $B_m$ is symmetric, this is equivalent to $B_m\preceq 4I$.
Therefore $\nabla q_m$ is globally $4$-Lipschitz.
\end{proof}

\begin{definition}
\label{def:zero-chain}
A differentiable function $h:\R^m\to\R$ is a \emph{first-order zero-chain} if
\[
\supp(x)\subseteq \{1,\ldots,k\}
\quad\Longrightarrow\quad
\supp(\nabla h(x))\subseteq \{1,\ldots,k+1\}
\qquad (\forall k\in\{0,\ldots,m-1\}).
\]
A deterministic first-order method $A$ is \emph{zero-respecting} if for every dimension $d$, every differentiable $f:\R^d\to\R$, and every $t\ge 1$,
\[
\supp\bigl(x_t^A(f)\bigr)
\subseteq
\bigcup_{s=0}^{t-1}\supp\bigl(\nabla f(x_s^A(f))\bigr).
\]
\end{definition}

\begin{lemma}
\label{lem:zero-chain}
For every $m\ge 1$, the function
\[
h_m(x):=g_m(\one-x)
\qquad (x\in\R^m)
\]
is a first-order zero-chain.
\end{lemma}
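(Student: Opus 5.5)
We will compute \(\nabla h_m\) explicitly and read off its support. Write \(u:=\one-x\), so that \(\nabla h_m(x)=-\nabla g_m(u)\) by the chain rule. It therefore suffices to show the following. If \(\supp(x)\subseteq\{1,\ldots,k\}\), equivalently \(u_i=1\) for every \(i>k\), then \(\partial_i g_m(u)=0\) for every \(i\ge k+2\).

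The plan is to differentiate the two pieces of \(g_m\) separately. For the separable part, \(\partial_i\sum_j v(u_j)=v'(u_i)\). By Lemma~\ref{lem:v-basic}(iii) this vanishes whenever \(u_i=1\), i.e.\ for every \(i>k\).

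For the quadratic part, we use the formula for \(B_m\) recorded in the proof of Lemma~\ref{lem:q-basic}. The entries of \(\nabla q_m(u)=B_mu\) are \((B_mu)_1=2u_1-u_2\), \((B_mu)_i=2u_i-u_{i-1}-u_{i+1}\) for interior \(i\), and \((B_mu)_m=u_m-u_{m-1}\). We check two cases.
\begin{itemize}
\item For an interior index \(i\ge k+2\), all three of \(u_{i-1},u_i,u_{i+1}\) equal \(1\), so \((B_mu)_i=2-1-1=0\).
\item For \(i=m\ge k+2\), both \(u_m=u_{m-1}=1\), so \((B_mu)_m=0\).
\end{itemize}
The first-row formula only matters when \(1\ge k+2\), which never happens since \(k\ge0\). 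The case \(m=1\) is vacuous, because then \(k=0\) and there is no index \(i\ge2\).

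Combining the two pieces gives \(\supp(\nabla h_m(x))\subseteq\{1,\ldots,k+1\}\), which is the zero-chain property of Definition~\ref{def:zero-chain}. It holds for every \(k\in\{0,\ldots,m-1\}\), with \(k=0\) corresponding to \(x=0\), \(u=\one\).

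The only delicate point is the boundary handling: the first row when \(k=0\), and the last row. The key design fact is \(v'(1)=0\); without it the separable term would leak gradient into every hidden coordinate.
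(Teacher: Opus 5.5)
Your proof is correct and follows the paper's argument essentially step for step. Like the paper, you substitute \(u=\one-x\), observe that \(u_j=1\) for every \(j>k\), kill the separable term using \(v'(1)=0\), and show that the tridiagonal entries of \(\nabla q_m(u)\) vanish for \(j\ge k+2\) because every neighboring coordinate equals \(1\). Your remarks on the first row and on the vacuous case \(m=1\) correspond to the paper's separate handling of \(m=1\).
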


	\begin{proof}
The case \(m=1\) is immediate, so assume \(m\ge2\).
	Fix $x\in\R^m$ and set $u:=\one-x$.
From the definition of \(q_m\),
	\[
	(\nabla q_m(u))_1=2u_1-u_2,
\]
\[
(\nabla q_m(u))_i=2u_i-u_{i-1}-u_{i+1}
\qquad (2\le i\le m-1),
\]
\[
(\nabla q_m(u))_m=u_m-u_{m-1}.
\]
Also, by Lemma~\ref{lem:v-basic},
\[
\nabla h_m(x)=-\nabla q_m(u)-\bigl(v'(u_1),\ldots,v'(u_m)\bigr),
\qquad
v'(1)=0.
\]

Assume $\supp(x)\subseteq \{1,\ldots,k\}$. Then $x_j=0$ for all $j\ge k+1$, so $u_j=1$ for all $j\ge k+1$.
For every $j\ge k+2$, every neighboring coordinate appearing in $(\nabla q_m(u))_j$ is also equal to $1$, hence $(\nabla q_m(u))_j=0$.
Moreover, $v'(u_j)=v'(1)=0$ for every $j\ge k+1$.
Therefore,
	\[
	(\nabla h_m(x))_j=0
	\qquad (\forall j\ge k+2).
	\]
This is exactly \(\supp(\nabla h_m(x))\subseteq\{1,\ldots,k+1\}\), so \(h_m\) is a first-order zero-chain.
	\end{proof}

\subsection{The base hard family}
\label{app:det-base-family}

For each integer $T\ge 1$, define
\[
p:=\frac{\alpha}{2-\alpha},
\qquad
m_T:=2T+1,
\qquad
a_T:=m_T^{-1/(2-\alpha)}=m_T^{-(p+1)/2},
\]
and
\[
\Phi_T(x):=a_T^2\,g_{m_T}\!\left(\one_{m_T}-\frac{x}{a_T}\right),
\qquad
x\in\R^{m_T}.
\]
We also set
\[
\Delta_T:=\Phi_T(0)-\Phi_{T,\star}.
\]

\begin{proposition}
\label{prop:base-instance}
For every $T\ge 1$, the function $\Phi_T$ satisfies:
\begin{enumerate}[label=(\roman*),leftmargin=1.5em]
    \item $\Phi_{T,\star}=0$, attained at $x_T^\star=a_T\one_{m_T}$;
    \item $\nabla \Phi_T$ is globally $37$-Lipschitz;
    \item on the initial sublevel set $S_0(\Phi_T)$,
    \[
    \Phi_T(x)-\Phi_{T,\star}\le C_{\mathrm{PL}}\norm{\nabla \Phi_T(x)}^\alpha;
    \]
    \item
    \[
    \Delta_T
    =
    a_T^2\left(\frac12+\frac{31}{64}m_T\right)
    \le
    \frac{63}{64}(2T+1)^{-p}.
    \]
\end{enumerate}
\end{proposition}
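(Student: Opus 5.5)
We check the four items directly from the normalized properties of $g_m$ (Lemmas~\ref{lem:v-basic} and~\ref{lem:q-basic} and the quadratic-P\L{} estimate~\eqref{eq_PL2_YFL}), using the change of variables $u=\one_{m_T}-x/a_T$. Write $m=m_T$ and $a=a_T$. The one identity driving everything is
\[
m\,a^{2}=m\cdot m^{-2/(2-\alpha)}=m^{-\alpha/(2-\alpha)}=a^{\alpha},
\qquad\text{equivalently}\qquad m\,a^{2-\alpha}=1 .
\]

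\emph{Items (i), (ii), (iv).} For (i), $q_m\ge0$ and $v\ge0$ give $g_m\ge0$. Since $q_m(0)=0$ and $v(0)=0$, we get $g_m(0)=0$, so $g_{m,\star}=0$. The point $u=0$ corresponds to $x=a\one_m$, hence $\Phi_{T,\star}=a^2g_{m,\star}=0$, attained at $x_T^\star=a\one_m$.

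For (ii), the chain rule gives
\[
\nabla\Phi_T(x)=-a\,\nabla g_m(u),
\]
so $\nabla\Phi_T$ is Lipschitz with the same constant as $\nabla g_m$: the factor $a$ in front cancels the factor $1/a$ in $u$. The constant of $\nabla g_m$ is at most $4+33=37$, since $\nabla q_m$ is $4$-Lipschitz and the diagonal map $u\mapsto(v'(u_i))_i$ is $33$-Lipschitz.

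For (iv), $u=\one$ at $x=0$, so by Lemma~\ref{lem:q-basic}(ii) and Lemma~\ref{lem:v-basic}(ii),
\[
\Delta_T=a^2\left(\tfrac12+\tfrac{31}{64}m\right).
\]
Because $m\ge1$, we have $\tfrac12\le\tfrac{32}{64}m$, so $\Delta_T\le\tfrac{63}{64}\,m a^2=\tfrac{63}{64}m^{-p}$.

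\emph{Item (iii).} This is the only step that uses the tuning of $a_T$. First transfer~\eqref{eq_PL2_YFL} to the rescaled function, using $\norm{\nabla\Phi_T}=a\norm{\nabla g_m}$:
\[
\Phi_T(x)=a^2g_m(u)\le C_{\mathrm{PL}}\,m\,a^2\norm{\nabla g_m(u)}^2=C_{\mathrm{PL}}\,m\norm{\nabla\Phi_T(x)}^2
\qquad(\forall x).
\]
Then split into two cases according to the size of the gradient.

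\emph{Case $\norm{\nabla\Phi_T(x)}\le a$.} Here $\norm{\nabla\Phi_T}^2\le a^{2-\alpha}\norm{\nabla\Phi_T}^\alpha$, so
\[
\Phi_T(x)\le C_{\mathrm{PL}}\,m\,a^{2-\alpha}\norm{\nabla\Phi_T(x)}^\alpha=C_{\mathrm{PL}}\norm{\nabla\Phi_T(x)}^\alpha .
\]

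\emph{Case $\norm{\nabla\Phi_T(x)}>a$ and $x\in S_0(\Phi_T)$.} Here we instead use the sublevel restriction $\Phi_T(x)\le\Phi_T(0)=\Delta_T$. Combined with the bound $\Delta_T\le m a^2$ from (iv), this gives
\[
\Phi_T(x)\le m a^2=a^\alpha<\norm{\nabla\Phi_T(x)}^\alpha\le C_{\mathrm{PL}}\norm{\nabla\Phi_T(x)}^\alpha,
\]
where the last step uses $C_{\mathrm{PL}}\ge1$.

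The main point requiring care is this second case. It is the only place the restriction to $S_0(\Phi_T)$ is used, and it relies on the initial gap being bounded by $m a^2=a^\alpha$. That bound is precisely what the choice $a_T=m_T^{-1/(2-\alpha)}$ buys.
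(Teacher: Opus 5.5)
Your proof is correct and matches the paper's argument essentially step for step. Both use the change of variables $u=\one_{m_T}-x/a_T$, transfer the quadratic-P\L{} bound via $\norm{\nabla\Phi_T}=a_T\norm{\nabla g_{m_T}}$, and split $S_0(\Phi_T)$ into small- and large-gradient cases using $m_Ta_T^{2-\alpha}=1$ and the initial-gap bound; the only cosmetic difference is in (iv), where you absorb the $\tfrac12 a_T^2$ term via $m_T\ge1$ whereas the paper uses $a_T\le1$ (so $a_T^2\le a_T^\alpha$), and both give $\tfrac{63}{64}a_T^\alpha$.
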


\begin{proof}
Let
\[
u:=\one_{m_T}-x/a_T.
\]
Then
\[
\Phi_T(x)=a_T^2 g_{m_T}(u),
\qquad
\nabla \Phi_T(x)=-a_T\nabla g_{m_T}(u).
\]

Since $q_{m_T}\ge 0$, $v\ge 0$, and $g_{m_T}(0)=0$, we have $g_{m_T,\star}=0$. Hence
\[
	\Phi_T(a_T\one_{m_T})=a_T^2g_{m_T}(0)=0,
	\]
so $\Phi_{T,\star}=0$, proving (i). We next compute
	\[
	\Delta_T
=
\Phi_T(0)
=
a_T^2 g_{m_T}(\one_{m_T})
=
a_T^2\left(q_{m_T}(\one_{m_T})+m_Tv(1)\right)
=
a_T^2\left(\frac12+\frac{31}{64}m_T\right).
\]
Since $a_T\le 1$ and
\[
m_Ta_T^2=a_T^\alpha,
\]
we obtain
\[
\Delta_T
\le
\left(\frac12+\frac{31}{64}\right)a_T^\alpha
=
\frac{63}{64}a_T^\alpha
=
\frac{63}{64}(2T+1)^{-p}.
\]
This proves (iv).

Item (ii) follows directly from Lemmas~\ref{lem:q-basic} and~\ref{lem:v-basic}, which show that \(\nabla q_{m_T}\)
and \(v'\) are \(4\)- and \(33\)-Lipschitz, respectively.
Hence $\nabla g_{m_T}$ is globally $37$-Lipschitz and
\[
\begin{aligned}
\norm{\nabla \Phi_T(x)-\nabla \Phi_T(y)}
&=
a_T\norm{\nabla g_{m_T}(u_x)-\nabla g_{m_T}(u_y)}\\
&\le
37a_T\norm{u_x-u_y}
=
37\norm{x-y},
\end{aligned}
\]
where $u_x=\one_{m_T}-x/a_T$ and $u_y=\one_{m_T}-y/a_T$.

It remains to prove item (iii). Set
\[
s:=\norm{\nabla \Phi_T(x)}.
\]
By the quadratic-P\L{} inequality \eqref{eq_PL2_YFL} for $g_{m_{T}}$ stated in Appendix~\ref{app:det-ingredients}(A),
\[
\Phi_T(x)-\Phi_{T,\star}
=
a_T^2\bigl(g_{m_T}(u)-g_{m_T,\star}\bigr)
\le
C_{\mathrm{PL}}m_Ta_T^2\norm{\nabla g_{m_T}(u)}^2
=
C_{\mathrm{PL}}m_Ts^2.
\]
If $s\le a_T$, then since $m_T=a_T^{-(2-\alpha)}$,
\[
\Phi_T(x)-\Phi_{T,\star}
\le
C_{\mathrm{PL}}a_T^{-(2-\alpha)}s^2
\le
C_{\mathrm{PL}}s^\alpha.
\]

\noindent If $s\ge a_T$ and $x\in S_0(\Phi_T)$, then
\[
\Phi_T(x)-\Phi_{T,\star}
\le
\Phi_T(0)-\Phi_{T,\star}
=
\Delta_T
\le
\frac{63}{64}a_T^\alpha
\le
C_{\mathrm{PL}}s^\alpha,
\]
because \(C_{\mathrm{PL}}\ge 1\), \(s\ge a_T\), and item (iv).

\end{proof}

\begin{proposition}[Zero-respecting hardness of the base chain]
\label{prop:base-hardness-zr}
Let \(T\ge1\), and let \(x^{(T)}\) be the \(T\)-th iterate generated from \(0\) by any zero-respecting deterministic
first-order method applied to \(\Phi_T\). Then
\[
x^{(T)}_i=0
\qquad (i>T).
\]
Consequently, the unreached tail satisfies
\[
\Phi_T(x^{(T)})-\Phi_{T,\star}
\ge
\frac{31}{64}(T+1)(2T+1)^{-(p+1)}.
\]
In particular,
\[
\Phi_T(x^{(T)})-\Phi_{T,\star}
\ge
\frac{31}{64\cdot 3^{p+1}}\,T^{-p}.
\]
Moreover,
this residual gap is a constant fraction of the initial gap:
\[
\frac{\Phi_T(x^{(T)})-\Phi_{T,\star}}{\Delta_T}\ge \frac{62}{125}.
\]
\end{proposition}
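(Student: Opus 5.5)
First I will show by induction on \(t\) that every iterate satisfies \(\supp(x^{(t)})\subseteq\{1,\ldots,t\}\). For \(t=0\) the iterate is \(0\). Suppose \(\supp(x^{(s)})\subseteq\{1,\ldots,s\}\) for all \(s<t\). The map \(x\mapsto\Phi_T(x)=a_T^2h_{m_T}(x/a_T)\) has gradient \(a_T\nabla h_{m_T}(x/a_T)\), and dividing by \(a_T\) does not change supports. Lemma~\ref{lem:zero-chain} therefore applies directly to \(\Phi_T\), giving \(\supp(\nabla\Phi_T(x^{(s)}))\subseteq\{1,\ldots,s+1\}\subseteq\{1,\ldots,t\}\). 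By the zero-respecting property, \(\supp(x^{(t)})\) lies in the union of these supports. Taking \(t=T\) yields \(x^{(T)}_i=0\) for \(i>T\).

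For the residual gap, I write \(u=\one_{m_T}-x^{(T)}/a_T\), so that \(u_i=1\) for \(i\in\{T+1,\ldots,2T+1\}\); this set has \(T+1\) indices. Since \(\Phi_{T,\star}=0\) by Proposition~\ref{prop:base-instance}(i), I have \(\Phi_T(x^{(T)})=a_T^2g_{m_T}(u)\). I drop the nonnegative terms \(q_{m_T}(u)\ge0\) and \(v(u_i)\ge0\) for \(i\le T\), using Lemmas~\ref{lem:q-basic} and~\ref{lem:v-basic}(i). Then Lemma~\ref{lem:v-basic}(ii) gives \(\Phi_T(x^{(T)})\ge a_T^2(T+1)\tfrac{31}{64}\). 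Substituting \(a_T^2=m_T^{-(p+1)}=(2T+1)^{-(p+1)}\) gives the first displayed bound.

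For the second bound, I use \(T+1\ge T\) and \(2T+1\le 3T\) for \(T\ge1\). This gives \((T+1)(2T+1)^{-(p+1)}\ge T\,(3T)^{-(p+1)}=3^{-(p+1)}T^{-p}\).

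For the ratio bound, Proposition~\ref{prop:base-instance}(iv) gives the exact value \(\Delta_T=a_T^2(\tfrac12+\tfrac{31}{64}m_T)\). The ratio is therefore at least \(\frac{31(T+1)/64}{1/2+31(2T+1)/64}=\frac{31(T+1)}{62T+63}\). This expression is increasing in \(T\), because \(31\cdot63>31\cdot62\). So its minimum is attained at \(T=1\), where it equals \(62/125\).

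I do not expect a real obstacle. The only care needed is in the support induction: the zero-chain property must be transferred through the rescaling \(x/a_T\). It must also be checked at the already-revealed prefix, where supports are contained in \(\{1,\ldots,s\}\) rather than equal to it. Lemma~\ref{lem:zero-chain} is stated in exactly that containment form, so the transfer goes through.
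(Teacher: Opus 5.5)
Your proposal is correct and follows essentially the same route as the paper: transfer the zero-chain property of \(h_{m_T}\) to \(\Phi_T\) through the support-preserving rescaling, run the support induction, then bound \(\Phi_T(x^{(T)})\) below by the \(T+1\) hidden tail terms \(a_T^2 v(1)\) after dropping the other nonnegative terms. The only cosmetic difference is in the ratio step: you show \(\frac{31(T+1)}{62T+63}\) is increasing in \(T\), whereas the paper cross-multiplies to get \(125\cdot 31(T+1)-62(62T+63)=31(T-1)\ge 0\); the two arguments are equivalent.
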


\begin{proof}
By Lemma~\ref{lem:zero-chain}, the function
\[
h_{m_T}(z):=g_{m_T}(\one_{m_T}-z)
\]
is a zero-chain.
Since
	\[
	\Phi_T(x)=a_T^2 h_{m_T}(x/a_T),
	\]
	the function $\Phi_T$ is also a zero-chain, because scaling coordinates by the positive scalar $a_T$ does not change supports.
Because the method is zero-respecting and starts from \(0\), a standard induction gives
	\[
	\supp\bigl(x^{(t)}\bigr)\subseteq \{1,\ldots,t\}
	\qquad (0\le t\le T).
	\]
Thus \(x^{(T)}_i=0\) for every \(i>T\): after \(T\) iterations, the coordinates \(T+1,\ldots,m_T\) have not been reached.

Now set
\[
\nu:=\one_{m_T}-x^{(T)}/a_T.
\]
Then \(\nu_i=1\) for every unreached tail coordinate \(i>T\).
Since $q_{m_T}\ge 0$ and $v\ge 0$,
\[
\Phi_T(x^{(T)})
=
a_T^2\left(q_{m_T}(\nu)+\sum_{i=1}^{m_T}v(\nu_i)\right)
\ge
a_T^2\sum_{i=T+1}^{m_T}v(\nu_i)
=
a_T^2(m_T-T)v(1).
\]
Because $m_T=2T+1$ and $v(1)=31/64$, this gives
\[
\Phi_T(x^{(T)})-\Phi_{T,\star}
\ge
\frac{31}{64}(T+1)a_T^2
=
\frac{31}{64}(T+1)(2T+1)^{-(p+1)}.
\]
This proves the first lower bound.

Since $T+1\ge T$ and $2T+1\le 3T$ for $T\ge 1$,
\[
\Phi_T(x^{(T)})-\Phi_{T,\star}
\ge
\frac{31}{64}\,T\,(3T)^{-(p+1)}
=
\frac{31}{64\cdot 3^{p+1}}\,T^{-p}.
\]

\noindent Finally, using Proposition~\ref{prop:base-instance},
\[
\frac{\Phi_T(x^{(T)})-\Phi_{T,\star}}{\Delta_T}
\ge
\frac{\frac{31}{64}(T+1)}{\frac12+\frac{31}{64}(2T+1)}
=
\frac{31(T+1)}{62T+63}.
\]
For $T\ge 1$,
\[
\frac{31(T+1)}{62T+63}\ge \frac{62}{125}
\iff
125\cdot 31(T+1)-62(62T+63)=31(T-1)\ge 0.
\]
This proves the last claim.
\end{proof}

\subsection{A scaling lemma}
\label{app:det-scaling}

\begin{lemma}[Objective-space and domain-space scaling]
\label{lem:scaling}
Let $f:\R^d\to\R$ be differentiable, and let $\lambda,\rho>0$.
Define
\[
(\mathcal S_{\lambda,\rho}f)(x):=\lambda f(\rho x).
\]
Then:
\begin{enumerate}[label=(\roman*),leftmargin=1.5em]
    \item $(\mathcal S_{\lambda,\rho}f)_\star=\lambda f_\star$ and
    \[
    S_0(\mathcal S_{\lambda,\rho}f)=\rho^{-1}S_0(f);
    \]
    \item if $\nabla f$ is globally $L_f$-Lipschitz on $\R^d$, then $\nabla(\mathcal S_{\lambda,\rho}f)$ is globally $(\lambda\rho^2L_f)$-Lipschitz on $\R^d$;
    \item if
    \[
    f(y)-f_\star\le \tau_f \norm{\nabla f(y)}^\alpha
    \qquad (\forall y\in S_0(f)),
    \]
    then
    \[
    (\mathcal S_{\lambda,\rho}f)(x)-(\mathcal S_{\lambda,\rho}f)_\star
    \le
    \tau_f\lambda^{1-\alpha}\rho^{-\alpha}
    \norm{\nabla(\mathcal S_{\lambda,\rho}f)(x)}^\alpha
    \qquad (\forall x\in S_0(\mathcal S_{\lambda,\rho}f));
    \]
    \item if every zero-respecting deterministic first-order method started from \(0\) has objective gap at least \(R\)
    after \(T\) steps on \(f\), then every zero-respecting deterministic first-order method started from \(0\) has
    objective gap at least \(\lambda R\) after \(T\) steps on \(\mathcal S_{\lambda,\rho}f\).
\end{enumerate}
\end{lemma}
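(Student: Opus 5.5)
Each item follows by writing $g:=\mathcal S_{\lambda,\rho}f$ and applying the chain rule, $\nabla g(x)=\lambda\rho\,\nabla f(\rho x)$, together with the substitution $y=\rho x$, which is a bijection of $\R^d$.

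\emph{Item (i).} Since $\lambda>0$, we have $\inf_x \lambda f(\rho x)=\lambda\inf_y f(y)=\lambda f_\star$. Since $g(0)=\lambda f(0)$, the condition $g(x)\le g(0)$ is equivalent to $f(\rho x)\le f(0)$. Hence $x\in S_0(g)$ if and only if $\rho x\in S_0(f)$, that is, $S_0(g)=\rho^{-1}S_0(f)$.

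\emph{Item (ii).} By the chain rule,
\[
\norm{\nabla g(x)-\nabla g(x')}=\lambda\rho\norm{\nabla f(\rho x)-\nabla f(\rho x')}\le \lambda\rho\cdot L_f\rho\norm{x-x'},
\]
which gives the constant $\lambda\rho^2L_f$.

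\emph{Item (iii).} Fix $x\in S_0(g)$. By item (i), $y=\rho x\in S_0(f)$. Then
\[
g(x)-g_\star=\lambda\bigl(f(y)-f_\star\bigr)\le \lambda\tau_f\norm{\nabla f(y)}^\alpha=\lambda\tau_f(\lambda\rho)^{-\alpha}\norm{\nabla g(x)}^\alpha,
\]
and $\lambda\cdot(\lambda\rho)^{-\alpha}=\lambda^{1-\alpha}\rho^{-\alpha}$, as claimed.

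\emph{Item (iv).} This is the only step needing care, because it is a statement about all zero-respecting methods rather than about a single function. My plan is to transfer a run on $g$ into a run on $f$. Fix a zero-respecting deterministic method $A$. Define a method $A'$ that, when run on any function $h$, simulates $A$ on the function $\tilde h(x):=\lambda h(\rho x)$. Concretely, $A'$ queries the points $\rho x_t$ and feeds $A$ the transcript $\bigl(x_t,\lambda h(\rho x_t),\lambda\rho\nabla h(\rho x_t)\bigr)$. This is exactly the oracle transcript of $\tilde h$, and it is computable from queries to $h$ at $\rho x_t$.

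I then check that $A'$ is a legitimate zero-respecting method starting from $0$. It starts at $\rho\cdot 0=0$. Multiplication by the positive scalars $\rho$ and $\lambda\rho$ does not change supports. Therefore the iterate supports and gradient supports of $A'$ on $h$ coincide with those of $A$ on $\tilde h$, so the zero-respecting property of $A$ transfers to $A'$. Running $A'$ on $f$ gives iterates $\rho x_t^A(g)$. By hypothesis,
\[
f\bigl(\rho x_T^A(g)\bigr)-f_\star\ge R.
\]
Multiplying by $\lambda$ and using item (i) yields $g\bigl(x_T^A(g)\bigr)-g_\star\ge\lambda R$.

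The expected obstacle is the formal definition of $A'$ as a method family over arbitrary inputs, as Definition~\ref{def:minimax} requires. The construction above handles this uniformly, because it defines $A'$ on every function $h$ via $\tilde h$, not only on $f$.
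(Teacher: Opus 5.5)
Your proposal is correct and follows essentially the same route as the paper. Items (i)--(iii) are the same chain-rule computations with $y=\rho x$, and for item (iv) the paper uses exactly your simulation: query $\rho x$, feed back $\lambda f(\rho x)$ and $\lambda\rho\nabla f(\rho x)$, and note that positive scalings preserve supports. The paper phrases (iv) as a contradiction rather than a direct argument, and you are slightly more explicit in defining the simulating method for every input function; both differences are cosmetic.
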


\begin{proof}
 (i) is immediate from the definition. To show (ii),
\[
\nabla(\mathcal S_{\lambda,\rho}f)(x)=\lambda\rho\,\nabla f(\rho x),
\]
so for arbitrary $x,y\in \R^d$,
\[
\norm{\nabla(\mathcal S_{\lambda,\rho}f)(x)-\nabla(\mathcal S_{\lambda,\rho}f)(y)}
=
\lambda\rho\,\norm{\nabla f(\rho x)-\nabla f(\rho y)}
\le
\lambda\rho^2L_f\norm{x-y}.
\]

To show (iii), note that if $x\in S_0(\mathcal S_{\lambda,\rho}f)$, then $\rho x\in S_0(f)$ from (i). Hence
\[
\begin{aligned}
(\mathcal S_{\lambda,\rho}f)(x)-(\mathcal S_{\lambda,\rho}f)_\star
&=
\lambda\bigl(f(\rho x)-f_\star\bigr)\\
&\le
\lambda\tau_f\norm{\nabla f(\rho x)}^\alpha\\
&=
\tau_f\lambda^{1-\alpha}\rho^{-\alpha}
\norm{\nabla(\mathcal S_{\lambda,\rho}f)(x)}^\alpha.
\end{aligned}
\]

We prove (iv) by contradiction.
Suppose there exists a zero-respecting deterministic first-order method $A$ such that, on the scaled function $g:=\mathcal S_{\lambda,\rho}f$, the $T$-th iterate satisfies
\[
g\bigl(x_T^A(g)\bigr)-g_\star<\lambda R.
\]
We build from \(A\) a zero-respecting deterministic first-order method \(B\) for \(f\) as follows.
At each simulated query \(x\) made by \(A\) on \(g\), method \(B\) queries \(f\) at \(y:=\rho x\) and feeds the synthetic oracle values
\[
g(x)=\lambda f(y),
\qquad
\nabla g(x)=\lambda\rho\,\nabla f(y)
\]
to the internal simulation of $A$.
If the simulated method $A$ outputs its next iterate $x^+$ on $g$, then $B$ outputs the iterate $y^+:=\rho x^+$ on $f$.
Since multiplication by nonzero scalars preserves supports, $B$ is zero-respecting whenever $A$ is zero-respecting.
By construction,
\[
y_t^B(f)=\rho x_t^A(g)
\qquad (\forall t),
\]
and therefore
\[
f\bigl(y_T^B(f)\bigr)-f_\star
=
\lambda^{-1}\bigl(g\bigl(x_T^A(g)\bigr)-g_\star\bigr)
<
R,
\]
which is a contradiction for $f$.
Thus every zero-respecting method on $\mathcal S_{\lambda,\rho}f$ has objective gap at least $\lambda R$ at time $T$.
\end{proof}

\subsection{\texorpdfstring{Zero-respecting lower bound at arbitrary $(L,\tau,\Delta_0)$}{Zero-respecting lower bound at arbitrary (L,tau,Delta0)}}
\label{app:det-zr}

\begin{theorem}
\label{thm:zr}
Let \(p:=\alpha/(2-\alpha)\). There exists a constant \(c_\alpha^{\mathrm{zr}}>0\) such that for every
\(L,\tau,\Delta_0>0\) and every \(T\ge 1\), there exists a function
\[
f_T\in \mathcal F^{(2T+1)}_{\alpha,L,\tau,\Delta_0}(0)
\]
with the following property: if \(x^{(T)}\) denotes the \(T\)-th iterate generated from \(0\) by any zero-respecting
deterministic first-order method applied to \(f_T\), then
\[
f_T(x^{(T)})-(f_T)_\star
\ge
c_\alpha^{\mathrm{zr}}
\min\!\left\{
\Delta_0,\,
\left(\frac{L\tau^{2/\alpha}}{T}\right)^p
\right\}.
\]
One may take
\[
c_\alpha^{\mathrm{zr}}
:=
\min\!\left\{
\frac{62}{125},\,
\frac{31}{64\cdot 3^{p+1}\cdot 37^p\cdot C_{\mathrm{PL}}^{2/(2-\alpha)}}
\right\}.
\]
\end{theorem}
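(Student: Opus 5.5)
The plan is to take $f_T:=\mathcal S_{\lambda,\rho}\Phi_T$ for suitably chosen $\lambda,\rho>0$. Proposition~\ref{prop:base-instance} gives the base instance with smoothness $37$, sublevel-$\alpha$-P\L{} constant $C_{\mathrm{PL}}$, and initial gap $\Delta_T\le\frac{63}{64}(2T+1)^{-p}$. Proposition~\ref{prop:base-hardness-zr} gives the zero-respecting residual $R_T\ge\max\{\frac{31}{64\cdot3^{p+1}}T^{-p},\ \frac{62}{125}\Delta_T\}$. Lemma~\ref{lem:scaling} then tells us how each of these quantities transforms under $\mathcal S_{\lambda,\rho}$:
\begin{itemize}
\item smoothness becomes $37\lambda\rho^2$;
\item the P\L{} constant becomes $C_{\mathrm{PL}}\lambda^{1-\alpha}\rho^{-\alpha}$;
\item the initial gap becomes $\lambda\Delta_T$;
\item the residual becomes $\lambda R_T$ for every zero-respecting method.
\end{itemize}
The dimension stays $2T+1$. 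Membership in $\mathcal F^{(2T+1)}_{\alpha,L,\tau,\Delta_0}(0)$ therefore reduces to three scalar inequalities:
\[
37\lambda\rho^2\le L,\qquad C_{\mathrm{PL}}\lambda^{1-\alpha}\rho^{-\alpha}\le\tau,\qquad \lambda\Delta_T\le\Delta_0 .
\]

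First fix $\rho^2=L/(37\lambda)$, which saturates the smoothness constraint. Substituting into the P\L{} constraint gives $C_{\mathrm{PL}}\lambda^{1-\alpha}(37\lambda/L)^{\alpha/2}\le\tau$. This is equivalent to $\lambda^{(2-\alpha)/2}\le \tau L^{\alpha/2}/(C_{\mathrm{PL}}37^{\alpha/2})$, i.e.
\[
\lambda\le\lambda_{\max}:=\Bigl(\frac{L\tau^{2/\alpha}}{37\,C_{\mathrm{PL}}^{2/\alpha}}\Bigr)^{p}.
\]
Then set $\lambda:=\min\{\lambda_{\max},\Delta_0/\Delta_T\}$. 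Since $\lambda\le\lambda_{\max}$ the P\L{} constraint holds, since $\lambda\le\Delta_0/\Delta_T$ the gap constraint holds, and the choice of $\rho$ gives smoothness exactly $L$.

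To obtain the residual bound, split into two cases according to which term attains the minimum defining $\lambda$.
\begin{itemize}
\item If $\lambda=\Delta_0/\Delta_T$, use the ratio bound from Proposition~\ref{prop:base-hardness-zr}. The residual is at least $\frac{62}{125}\lambda\Delta_T=\frac{62}{125}\Delta_0$.
\item If $\lambda=\lambda_{\max}$, the residual is at least $\lambda_{\max}\frac{31}{64\cdot3^{p+1}}T^{-p}$. Expanding, and using $C_{\mathrm{PL}}^{2p/\alpha}=C_{\mathrm{PL}}^{2/(2-\alpha)}$, this equals $\frac{31}{64\cdot3^{p+1}37^pC_{\mathrm{PL}}^{2/(2-\alpha)}}(L\tau^{2/\alpha}/T)^p$.
\end{itemize}
In either case the residual is at least $c_\alpha^{\mathrm{zr}}\min\{\Delta_0,(L\tau^{2/\alpha}/T)^p\}$, with $c_\alpha^{\mathrm{zr}}$ exactly the constant stated in the theorem.

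The only delicate points are two pieces of bookkeeping. The first is that Lemma~\ref{lem:scaling}(iii) applies on $S_0(\mathcal S_{\lambda,\rho}\Phi_T)=\rho^{-1}S_0(\Phi_T)$, so the sublevel condition transfers correctly. The second is the exponent algebra $(2/\alpha)\cdot p=2/(2-\alpha)$, which must be tracked so that the constant matches the stated $c_\alpha^{\mathrm{zr}}$. I expect the main (mild) obstacle to be ensuring the case split produces a uniform constant rather than a sum; taking the minimum of the two case constants handles this.
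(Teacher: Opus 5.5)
Your proposal is correct and follows essentially the same route as the paper. It uses the same scaling $f_T=\mathcal S_{\lambda,\rho}\Phi_T$ with $\lambda=\min\{\Delta_0/\Delta_T,\lambda_{\max}\}$ and $\rho^2=L/(37\lambda)$; your $\lambda_{\max}$ equals the paper's $(\tau/C_{\mathrm{PL}})^{2/(2-\alpha)}(L/37)^p$. It then gives the same two-case residual argument via Lemma~\ref{lem:scaling}(iv) and Proposition~\ref{prop:base-hardness-zr}, which yields exactly the stated constant $c_\alpha^{\mathrm{zr}}$.
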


\begin{proof}
Fix $L,\tau,\Delta_0>0$ and $T\ge 1$.
Start from the base instance $\Phi_T$ and define
\[
\lambda_T
:=
\min\!\left\{
\frac{\Delta_0}{\Delta_T},\,
\left(\frac{\tau}{C_{\mathrm{PL}}}\right)^{2/(2-\alpha)}
\left(\frac{L}{37}\right)^p
\right\},
\qquad
\rho_T:=\sqrt{\frac{L}{37\lambda_T}},
\]
and
\[
f_T:=\mathcal S_{\lambda_T,\rho_T}\Phi_T,
\qquad
\text{i.e.}\qquad
f_T(x)=\lambda_T\Phi_T(\rho_T x).
\]

We first verify that $f_T\in \mathcal F^{(2T+1)}_{\alpha,L,\tau,\Delta_0}(0)$.
By Lemma~\ref{lem:scaling}(ii) and Proposition~\ref{prop:base-instance}(ii),
\[
\nabla f_T
\text{ is }
\lambda_T\rho_T^2\cdot 37
=
L
\text{-Lipschitz on }\R^{2T+1}.
\]
Also,
\[
f_T(0)-(f_T)_\star=\lambda_T\Delta_T\le \Delta_0.
\]

By Lemma~\ref{lem:scaling}(iii) and Proposition~\ref{prop:base-instance}(iii), the function $f_T$ satisfies the $\alpha$-P\L{} inequality on $S_0(f_T)$ with constant
\[
C_{\mathrm{PL}}\lambda_T^{1-\alpha}\rho_T^{-\alpha}
=
C_{\mathrm{PL}}\lambda_T^{1-\alpha/2}\left(\frac{37}{L}\right)^{\alpha/2}.
\]
Since $1-\alpha/2>0$ and
\[
\lambda_T
\le
\left(\frac{\tau}{C_{\mathrm{PL}}}\right)^{2/(2-\alpha)}
\left(\frac{L}{37}\right)^p,
\]
we get
\[
\begin{aligned}
C_{\mathrm{PL}}\lambda_T^{1-\alpha/2}\left(\frac{37}{L}\right)^{\alpha/2}
&\le
C_{\mathrm{PL}}
\left[
\left(\frac{\tau}{C_{\mathrm{PL}}}\right)^{2/(2-\alpha)}
\left(\frac{L}{37}\right)^p
\right]^{1-\alpha/2}
\left(\frac{37}{L}\right)^{\alpha/2}\\
&=
C_{\mathrm{PL}}
\left(\frac{\tau}{C_{\mathrm{PL}}}\right)
\left(\frac{L}{37}\right)^{p(1-\alpha/2)-\alpha/2}.
\end{aligned}
\]
Now
\[
p\left(1-\frac{\alpha}{2}\right)-\frac{\alpha}{2}
=
\frac{\alpha}{2-\alpha}\cdot \frac{2-\alpha}{2}-\frac{\alpha}{2}
=
0,
\]
so the right-hand side is exactly $\tau$.
Thus $f_T\in \mathcal F^{(2T+1)}_{\alpha,L,\tau,\Delta_0}(0)$.

Now apply Lemma~\ref{lem:scaling}(iv) together with Proposition~\ref{prop:base-hardness-zr}. Let
\[
R_T^{\mathrm{base}}
:=
\frac{31}{64\cdot 3^{p+1}}\,T^{-p}.
\]
Proposition~\ref{prop:base-hardness-zr} states that every zero-respecting method on \(\Phi_T\) has objective gap at
least \(R_T^{\mathrm{base}}\) after \(T\) steps. Hence Lemma~\ref{lem:scaling}(iv) implies that every
zero-respecting method on \(f_T=\mathcal S_{\lambda_T,\rho_T}\Phi_T\) has objective gap at least
\(\lambda_T R_T^{\mathrm{base}}\) after \(T\) steps:
\[
f_T(x^{(T)})-(f_T)_\star
\ge
\lambda_T R_T^{\mathrm{base}}.
\]

If the second branch in the definition of $\lambda_T$ is active, then
\[
\begin{aligned}
f_T(x^{(T)})-(f_T)_\star
&\ge
\frac{31}{64\cdot 3^{p+1}}T^{-p}
\left(\frac{\tau}{C_{\mathrm{PL}}}\right)^{2/(2-\alpha)}
\left(\frac{L}{37}\right)^p\\
&=
\frac{31}{64\cdot 3^{p+1}\cdot 37^p\cdot C_{\mathrm{PL}}^{2/(2-\alpha)}}
\left(\frac{L\tau^{2/\alpha}}{T}\right)^p,
\end{aligned}
\]
because
\[
\left(\tau^{2/\alpha}\right)^p=\tau^{2/(2-\alpha)}.
\]

If instead the first branch is active, then \(\lambda_T=\Delta_0/\Delta_T\). The ratio bound in
Proposition~\ref{prop:base-hardness-zr} and Lemma~\ref{lem:scaling}(iv) give
	\[
	f_T(x^{(T)})-(f_T)_\star
	\ge
	\lambda_T\cdot \frac{62}{125}\Delta_T
	=
	\frac{62}{125}\Delta_0.
	\]

Combining the two cases proves the theorem.
\end{proof}

\subsection{From zero-respecting to all deterministic first-order methods}
\label{app:det-zr-to-all}

\begin{lemma}
\label{lem:orth-inv}
For every $\alpha\in[1,2)$ and every $L,\tau,\Delta_0>0$, the class
\[
\mathcal F_{\alpha,L,\tau,\Delta_0}(0)
=
\bigcup_{d\ge 1}\mathcal F^{(d)}_{\alpha,L,\tau,\Delta_0}(0)
\]
is orthogonally invariant in the following sense:
if $f\in\mathcal F^{(d)}_{\alpha,L,\tau,\Delta_0}(0)$ and $U\in\R^{d'\times d}$ satisfies $U^\top U=I_d$, then the lifted function
\[
f_U(z):=f(U^\top z),
\qquad z\in\R^{d'},
\]
belongs to $\mathcal F^{(d')}_{\alpha,L,\tau,\Delta_0}(0)$.
\end{lemma}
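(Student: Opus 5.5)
The plan is to verify each defining property of \(\mathcal F^{(d')}_{\alpha,L,\tau,\Delta_0}(0)\) for \(f_U\) directly from the corresponding property of \(f\). The tool is the chain rule \(\nabla f_U(z)=U\nabla f(U^\top z)\) together with two isometry facts: \(\norm{Uw}=\norm{w}\) for all \(w\in\R^d\), and \(\norm{U^\top z}\le\norm{z}\). The second holds because \(UU^\top\) is an orthogonal projection.

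The first step handles differentiability, the infimum, the starting point and the gap. Differentiability of \(f_U\) is immediate. Since \(U^\top U=I_d\), the map \(U^\top:\R^{d'}\to\R^d\) is surjective (indeed \(U^\top(Uy)=y\)), so \(f_U\) takes exactly the values of \(f\). Hence \((f_U)_\star=f_\star>-\infty\). Moreover \(f_U(0)=f(0)\), so \(f_U(0)-(f_U)_\star=f(0)-f_\star\le\Delta_0\).

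The second step is smoothness. For \(z,z'\in\R^{d'}\),
\[
\norm{\nabla f_U(z)-\nabla f_U(z')}=\norm{U(\nabla f(U^\top z)-\nabla f(U^\top z'))}=\norm{\nabla f(U^\top z)-\nabla f(U^\top z')}\le L\norm{U^\top(z-z')}\le L\norm{z-z'}.
\]

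The third step is the sublevel inequality. Since \(f_U(0)=f(0)\), a point \(z\) lies in \(S_0(f_U)\) exactly when \(U^\top z\in S_0(f)\). For such \(z\),
\[
f_U(z)-(f_U)_\star=f(U^\top z)-f_\star\le\tau\norm{\nabla f(U^\top z)}^\alpha=\tau\norm{\nabla f_U(z)}^\alpha,
\]
where the last equality uses that \(U\) is an isometry. All defining conditions are then met.

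None of these steps presents a real obstacle. The only point needing care is the identification \(S_0(f_U)=\{z: U^\top z\in S_0(f)\}\), which relies on \(f_U\) sharing both the initial value and the infimum with \(f\). That is precisely why surjectivity of \(U^\top\) is recorded in the first step.
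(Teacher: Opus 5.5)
Your proof is correct and matches the paper's argument step for step: surjectivity of \(U^\top\) gives \((f_U)_\star=f_\star\), the chain rule \(\nabla f_U(z)=U\nabla f(U^\top z)\) combined with the isometry of \(U\) and \(\norm{U^\top z}\le\norm{z}\) gives \(L\)-smoothness, and the identification \(S_0(f_U)=\{z:U^\top z\in S_0(f)\}\) transfers the sublevel inequality. One small correction to your closing remark: that identification needs only \(f_U(0)=f(0)\). The equality of infima, and hence the surjectivity, is what the initial-gap bound and the left-hand side of the \(\alpha\)-P\L{} inequality actually use.
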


\begin{proof}
Since \(U^\top U=I_d\), the map \(U^\top:\R^{d'}\to\R^d\) is onto: for every \(y\in\R^d\), choosing \(z=Uy\)
gives \(U^\top z=y\). Hence
\[
(f_U)_\star
=
\inf_{z\in\R^{d'}} f(U^\top z)
=
\inf_{y\in\R^d} f(y)
=
f_\star.
\]
Also,
\[
f_U(0)=f(0),
\]
so the initial gap is unchanged.
Moreover,
\[
S_0(f_U)=\{z:\ f(U^\top z)\le f(0)\}=\{z:\ U^\top z\in S_0(f)\}.
\]
The gradient is
\[
\nabla f_U(z)=U\nabla f(U^\top z).
\]
Because \(U\) is an isometry from \(\R^d\) into \(\R^{d'}\), \(\norm{Ua}=\norm a\) for every \(a\in\R^d\). Hence, for
arbitrary $z,w\in\R^{d'}$,
\[
\begin{aligned}
\norm{\nabla f_U(z)-\nabla f_U(w)}
&=
\norm{U\bigl(\nabla f(U^\top z)-\nabla f(U^\top w)\bigr)}\\
&=
\norm{\nabla f(U^\top z)-\nabla f(U^\top w)}\\
&\le
L\norm{U^\top z-U^\top w}\\
&\le
L\norm{z-w}.
\end{aligned}
\]
Thus $\nabla f_U$ is globally $L$-Lipschitz on $\R^{d'}$.

Finally, if $z\in S_0(f_U)$, then $U^\top z\in S_0(f)$ and therefore
\[
f_U(z)-(f_U)_\star
=
f(U^\top z)-f_\star
\le
\tau\norm{\nabla f(U^\top z)}^\alpha
=
\tau\norm{U\nabla f(U^\top z)}^\alpha
=
\tau\norm{\nabla f_U(z)}^\alpha.
\]
Hence $f_U\in \mathcal F^{(d')}_{\alpha,L,\tau,\Delta_0}(0)$.
\end{proof}

\begin{corollary}[Fixed-budget minimax lower bound]
\label{cor:fixed-budget-lower}
There exists a constant $c_\alpha>0$ such that for every $L,\tau,\Delta_0>0$ and every $T\ge 1$,
\[
\mathcal E_T\!\left(\mathcal F_{\alpha,L,\tau,\Delta_0}(0)\right)
\ge
c_\alpha
\min\!\left\{
\Delta_0,\,
\left(\frac{L\tau^{2/\alpha}}{T}\right)^p
\right\}.
\]
One may take $c_\alpha=c_\alpha^{\mathrm{zr}}$ from Theorem~\ref{thm:zr}.
\end{corollary}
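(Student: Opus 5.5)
The plan is to lift the zero-respecting lower bound of Theorem~\ref{thm:zr} to arbitrary deterministic first-order methods through the resisting-oracle reduction (B) of \citet[Proposition~1]{CarmonDuchiHinderSidford2021}. Fix a deterministic method \(A=(A_d)_{d\ge1}\) and a budget \(T\ge1\). Let \(f_T\in\mathcal F^{(2T+1)}_{\alpha,L,\tau,\Delta_0}(0)\) be the instance from Theorem~\ref{thm:zr}; it is a zero chain, since it is obtained from \(\Phi_T\) by positive scalings, which preserve supports (Lemma~\ref{lem:zero-chain} and Lemma~\ref{lem:scaling}). I will construct an embedding dimension \(d'\) (say \(d'\ge 2T+1+T\) or larger) and an orthonormal \(U\in\R^{d'\times(2T+1)}\). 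The construction is inductive: for each \(t\le T\), the columns \(u_{j}\) with \(j>\prog_0\) of the revealed chain are chosen orthogonal to all queries \(x_0,\ldots,x_{t}\) made by \(A_{d'}\) on \(f_{T,U}(z):=f_T(U^\top z)\). Since \(A\) is deterministic, its queries are determined by the transcript, so this adaptive choice is well defined.

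The key invariant is that, for every \(t\le T\), the hidden coordinates \(j>t\) satisfy \((U^\top x_t)_j=0\). Because \(f_T\) is a zero chain, this gives \(\supp\nabla f_T(U^\top x_s)\subseteq\{1,\ldots,s+1\}\). The responses \(\nabla f_{T,U}(x_s)=U\nabla f_T(U^\top x_s)\) therefore lie in \(\operatorname{span}\{u_1,\ldots,u_{s+1}\}\), and the function values depend only on the revealed coordinates. As a result, the transcript through time \(t\) depends only on \(u_1,\ldots,u_t\), and the remaining columns can be picked afterward, orthogonal to everything already queried. This is exactly the content of \citet[Proposition~1]{CarmonDuchiHinderSidford2021}, which I invoke rather than reprove. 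It produces \(U\) such that \(y_t:=U^\top x_t\) has \(\supp(y_T)\subseteq\{1,\ldots,T\}\).

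With this support property in hand, the lower-bound argument of Proposition~\ref{prop:base-hardness-zr} applies verbatim to \(y_T\): the tail coordinates \(i>T\) equal the hidden value, which yields a residual gap of at least \(\lambda_T R_T^{\mathrm{base}}\) or \(\tfrac{62}{125}\Delta_0\), as in the proof of Theorem~\ref{thm:zr}. That proof used only \(\supp\subseteq\{1,\ldots,T\}\), not zero-respecting behavior of the iterates themselves. Lemma~\ref{lem:orth-inv} places \(f_{T,U}\) in \(\mathcal F^{(d')}_{\alpha,L,\tau,\Delta_0}(0)\), and \((f_{T,U})_\star=(f_T)_\star\). Hence \(\sup_d\sup_f\) of the gap of \(A\) is at least \(c_\alpha^{\mathrm{zr}}\min\{\Delta_0,(L\tau^{2/\alpha}/T)^p\}\). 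Taking the infimum over \(A\) gives the corollary with \(c_\alpha=c_\alpha^{\mathrm{zr}}\).

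The main obstacle is matching the hypotheses of the cited proposition. Carmon et al.\ state it for a zero chain with a specific progress notion and for an output equal to the \(T\)-th query. I must check that the iterate \(x_T\) itself, and not just \(x_{T-1}\), satisfies the orthogonality. The construction can orthogonalize against \(x_T\) as well, since \(x_T\) is determined by the transcript up to time \(T-1\), which only involves \(u_1,\ldots,u_T\). I must also check that the method's access to function values does not leak hidden directions. It does not, because \(f_T(U^\top x)\) depends only on \(\langle u_j,x\rangle\) for \(j\le\) the current progress plus one. If the cited statement is phrased with \(\prog_0\) of \(U^\top x\) rather than support, these notions coincide here, because the tail coordinates are exactly zero.
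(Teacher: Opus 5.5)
Your proposal is correct and follows the same route as the paper. Lemma~\ref{lem:orth-inv} (orthogonal invariance) together with the resisting-oracle rotation of \citet[Proposition~1]{CarmonDuchiHinderSidford2021} transfers the zero-respecting bound of Theorem~\ref{thm:zr} to arbitrary deterministic methods, with the rotation chosen adversarially for each fixed method. Your explicit column-by-column construction (including orthogonalizing the tail columns against the output \(x_T\)) and your observation that the tail bound of Proposition~\ref{prop:base-hardness-zr} needs only \(\supp(U^\top x_T)\subseteq\{1,\ldots,T\}\) simply spell out what the paper takes from the cited reduction.
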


\begin{proof}
Theorem~\ref{thm:zr} gives the fixed-budget lower bound for zero-respecting deterministic first-order methods.
By Lemma~\ref{lem:orth-inv}, the class \(\mathcal F_{\alpha,L,\tau,\Delta_0}(0)\) is orthogonally invariant.
Therefore the resisting-oracle rotation of
\citet[Proposition~1 and the surrounding discussion]{CarmonDuchiHinderSidford2021} transfers the zero-respecting lower
bound to arbitrary deterministic first-order methods on the same class. This is an adversarial construction for each
fixed deterministic method, not a common distribution over rotations. Applying the reduction to
Theorem~\ref{thm:zr} yields
\[
\mathcal E_T\!\left(\mathcal F_{\alpha,L,\tau,\Delta_0}(0)\right)
\ge
c_\alpha^{\mathrm{zr}}
\min\!\left\{
\Delta_0,\,
\left(\frac{L\tau^{2/\alpha}}{T}\right)^p
\right\},
\]
which proves the claim.
\end{proof}

\subsection{Completion of the deterministic argument}
\label{app:det-completion}

\begin{proof}[Proof of Theorem~\ref{thm:matching}]
Let \(c_\alpha^0\) be the constant in Corollary~\ref{cor:fixed-budget-lower}, and choose a constant
\(c_\alpha>0\), depending only on \(\alpha\), such that \(c_\alpha\le c_\alpha^0/2\). Fix
\[
\varepsilon\in\bigl(0,c_\alpha\min\{\Delta_0,L^{\alpha/(2-\alpha)}\tau^{2/(2-\alpha)}\}\bigr],
\]
and suppose a deterministic first-order method uses \(T\) oracle calls and guarantees error at most \(\varepsilon\) on
\(\mathcal F_{\alpha,L,\tau,\Delta_0}(0)\). Then Corollary~\ref{cor:fixed-budget-lower} implies
\[
\varepsilon
\ge
c_\alpha^0
\min\!\left\{
\Delta_0,\,
\left(\frac{L\tau^{2/\alpha}}{T}\right)^p
\right\}.
\]
Since \(\varepsilon\le c_\alpha\Delta_0\), the minimum cannot be attained by
\(\Delta_0\); otherwise the right-hand side would be at least \(c_\alpha^0\Delta_0> \varepsilon\). Hence
\[
\varepsilon
\ge
c_\alpha^0
\left(\frac{L\tau^{2/\alpha}}{T}\right)^p.
\]
Rearranging gives
\[
T
\ge
(c_\alpha^0)^{1/p}L\tau^{2/\alpha}\varepsilon^{-1/p}
=
\Omega\left(L\tau^{2/\alpha}\varepsilon^{-(2-\alpha)/\alpha}\right),
\]
because $1/p=(2-\alpha)/\alpha$.
Thus the iteration-complexity lower bound holds with a constant depending only on \(\alpha\).
\end{proof}

\begin{remark}[Ambient dimension]
This lower bound is stated in the standard dimension-free oracle model, where the ambient dimension of the hard
instance may depend on the query budget. The resisting-oracle argument above does not, by itself, establish the same
bound when the dimension is fixed in advance. Such a result would require tracking how many orthogonal directions the
construction needs as a function of the query budget, and we do not analyze that dependence here.
\end{remark}

\section{Detailed proof of the stochastic lower bound}
\label{app:stochastic}

This appendix contains the full proof of Theorem~\ref{thm:stochastic-matching}.
Subsections~\ref{app:sto-lb-overview}--\ref{app:sto-normalized-lb} build a probabilistic zero-chain hard instance and
prove the lower bound for the normalized class.
Subsections~\ref{app:sto-scaling}--\ref{app:sto-completion} transfer the normalized lower bound to arbitrary
parameters and complete the proof.

For compactness in this appendix only, we use the following shorthand for hard subfamilies of objective-oracle pairs.
Given parameters \((\alpha,L,\tau,\Delta_0,\sigma^2)\), define
\begin{equation}
\label{eq:stochastic-pair-family}
\mathfrak P_{\alpha,L,\tau,\Delta_0,\sigma^2}(0)
:=
\bigcup_{d\ge 1}
\Bigl\{(f,G):\
f\in\mathcal F^{(d)}_{\alpha,L,\tau,\Delta_0}(0),\
G\in\mathfrak O_{\sigma^2}(f)
\Bigr\}.
\end{equation}
For any subfamily \(\mathfrak H\subseteq
\mathfrak P_{\alpha,L,\tau,\Delta_0,\sigma^2}(0)\) of objective-oracle pairs, define
\begin{equation}
\label{eq:pair-family-minimax}
\mathcal E_T^{\mathrm{BV}}(\mathfrak H)
:=
\inf_A\sup_{(f,G)\in\mathfrak H}\E\bigl[f(x_T^A(f,G))-f_\star\bigr],
\qquad
T_\varepsilon^{\mathrm{BV}}(\mathfrak H)
:=
\inf\bigl\{T\ge 0:\mathcal E_T^{\mathrm{BV}}(\mathfrak H)\le\varepsilon\bigr\}.
\end{equation}
This pair-family notation is only a proof device; the full stochastic minimax model is the double supremum over
objectives and admissible BV oracles in Definition~\ref{def:stochastic-model}.

\subsection{Probabilistic zero-chain overview}
\label{app:sto-lb-overview}

We next establish the matching lower bound in the bounded-variance stochastic oracle model. The proof follows the
probabilistic zero-chain construction of \citet{ArjevaniEtAl2019}, but it is stated here in the notation of the
present paper.
To extend the zero-chain lower bound from zero-respecting methods to arbitrary adaptive methods, we use the
random-rotation technique of \citet[Lemma~5 and Appendix~B.1]{ArjevaniEtAl2019}. Their result controls an algorithm's
progress through a randomly rotated zero chain. We need a slightly different formulation: for a chain with \(D\)
coordinates, it is enough to prevent the algorithm from reaching a prescribed progress level \(M\le D\), rather than
to track its progress through the entire chain. We therefore state and prove this target-progress adaptation in
Lemma~\ref{lem:random-rotation-progress}. In that lemma, \(D\) is the full chain dimension and \(M\) is the target
progress level; our application uses \(D=2m\) and \(M=m\).
At a high level, the hard instance keeps the hidden-coordinate idea from the deterministic
lower bound, but the oracle is now designed so that the next hidden coordinate is revealed only with small probability.
The resulting complexity is therefore controlled by the time required to discover new coordinates through noisy
observations, which is exactly what creates the
additional factor
\[
\sigma^2\tau^{2/\alpha}\varepsilon^{-2/\alpha}
\]
and hence the sharp exponent \((4-\alpha)/\alpha\) in the noise-dominated regime.

\begin{definition}[Probability-$q$ zero-chain]
\label{def:probability-zero-chain}
Recall the progress index \(\prog_\delta\) from the appendix notation: \(\prog_0(x)\) is the largest nonzero coordinate
index of \(x\), while \(\prog_{1/4}(x)\) counts only coordinates whose magnitude has exceeded \(1/4\).

Let $F:\R^T\to\R$ be differentiable and let $g:\R^T\times \mathcal Z\to\R^T$ be a stochastic gradient function.
We say that $g$ is a probability-$q$ zero-chain if
\[
\Pbb\!\left(\exists x:\ \prog_0\bigl(g(x,z)\bigr)=\prog_{1/4}(x)+1\right)\le q,
\]
and
\[
\Pbb\!\left(\exists x:\ \prog_0\bigl(g(x,z)\bigr)>\prog_{1/4}(x)+1\right)=0,
\]
where the probability is over a single draw of $z$.
\end{definition}

\subsection{The base chain}
\label{app:sto-base-chain}

Define
\[
\Theta(t):=
\begin{cases}
0, & t\le \frac12,\\[0.3em]
3(2t-1)^2-2(2t-1)^3, & \frac12<t<1,\\[0.3em]
1, & t\ge 1,
\end{cases}
\qquad
\varphi(t):=1-e^{-(t-1)^2/2},
\]
and
\[
B(t):=
\begin{cases}
2+\frac12\left(t+\frac12\right)^2, & t\le -\frac12,\\[0.6em]
2, & -\frac12<t\le \frac12,\\[0.3em]
2(1-\Theta(t)), & \frac12<t<1,\\[0.3em]
\frac12(t-1)^2, & t\ge 1.
\end{cases}
\]
For $N\ge 1$, define
\[
F_N(x):=\varphi(x_1)+\sum_{i=1}^{N-1}\Theta(x_i)\varphi(x_{i+1})+\sum_{i=1}^N B(x_i),
\qquad
x\in\R^N.
\]

The next lemma collects the scalar estimates needed to control the zero-chain building blocks \(F_N\).  The bounds on
\(\Theta\), \(\varphi\), and \(B\) are used later for smoothness, radius control, the zero-chain support property, and
the quadratic-P\L{} estimate on the initial sublevel set.  The lemma is mainly bookkeeping for constants.

\begin{lemma}[Scalar estimates]
\label{lem:scalar-estimates}
The functions $\Theta$, $\varphi$, and $B$ are $C^1$, and
\[
0\le \Theta\le 1,
\qquad
\sup_t |\Theta'(t)|=3,
\qquad
\mathrm{Lip}(\Theta')\le 24,
\]
\[
0\le \varphi\le 1,
\qquad
\sup_t |\varphi'(t)|=e^{-1/2}<1,
\qquad
\mathrm{Lip}(\varphi')\le 1,
\]
and
\[
\mathrm{Lip}(B')\le 48.
\]
Moreover,
\[
\Theta(7/8)=27/32,
\qquad
\sup_{t\in[7/8,1]}|\Theta'(t)|=9/4,
\]
\[
\inf_{t\in[7/8,9/8]}\varphi''(t)\ge e^{-1/128}\left(1-\frac{1}{64}\right)>\frac12,
\]
\[
\sup_{t\in[7/8,9/8]}\varphi(t)\le 1-e^{-1/128}<\frac{1}{128},
\qquad
\sup_{t\in[7/8,9/8]}|\varphi'(t)|\le \frac18,
\]
and
\[
t^2\le 4B(t)+2
\qquad
(\forall t\in\R).
\]
\end{lemma}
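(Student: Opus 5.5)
The plan is to verify each claim of Lemma~\ref{lem:scalar-estimates} by direct one-variable calculus on the explicit piecewise formulas, handling $\Theta$ first, then $\varphi$, then $B$ (since $B$ is built from $\Theta$). For $\Theta$, substitute $s=2t-1\in(0,1)$ on the middle branch, so $\Theta=3s^2-2s^3$ is the smoothstep polynomial. The derivative is $\Theta'(t)=12(s-s^2)$, which vanishes at $s=0,1$; this gives $C^1$ matching with the constant branches and shows $0\le\Theta\le1$. Since $s-s^2\le 1/4$, the maximum is $\sup|\Theta'|=3$, attained at $s=1/2$. The second derivative is $\Theta''=24(1-2s)$ on the middle branch and $0$ elsewhere, so $|\Theta''|\le 24$ a.e. Because $\Theta'$ is continuous and piecewise $C^1$, this yields $\mathrm{Lip}(\Theta')\le24$. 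At $t=7/8$ we have $s=3/4$, so $\Theta(7/8)=27/16-27/32=27/32$. On $[7/8,1]$, i.e.\ $s\in[3/4,1]$, the function $12(s-s^2)$ is decreasing, so its maximum is at $s=3/4$, namely $12\cdot 3/16=9/4$.

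For $\varphi$, set $r=t-1$. Then $\varphi'=re^{-r^2/2}$, whose maximum modulus $e^{-1/2}$ is attained at $|r|=1$. Next, $\varphi''=(1-r^2)e^{-r^2/2}$ and $\varphi'''=(r^3-3r)e^{-r^2/2}$. The latter vanishes at $r=0$ and $r=\pm\sqrt3$. Comparing $|\varphi''(0)|=1$ with $|\varphi''(\sqrt3)|=2e^{-3/2}<1$ and the limit $0$ at infinity gives $\mathrm{Lip}(\varphi')=\sup|\varphi''|\le1$. On $[7/8,9/8]$ we have $|r|\le1/8$, so $\varphi''\ge(1-1/64)e^{-1/128}>1/2$. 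In the same range $\varphi\le1-e^{-1/128}\le1/128$ by $1-e^{-x}\le x$, with strictness obtained from the strict inequality $1-e^{-x}<x$ for $x>0$. Finally $|\varphi'|\le|r|\le1/8$.

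For $B$, check value and derivative continuity at the three breakpoints $-1/2$, $1/2$, $1$:
\begin{itemize}
\item At $-1/2$: both sides equal $2$ with slope $0$.
\item At $1/2$: $2(1-\Theta)$ equals $2$ with slope $-2\Theta'(1/2)=0$.
\item At $1$: both sides equal $0$ with slope $0$.
\end{itemize}
Hence $B\in C^1$. Its second derivative is $1$, $0$, $-2\Theta''$, and $1$ on the respective pieces, so $\mathrm{Lip}(B')\le\max(1,48)=48$.

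The only step needing care is the inequality $t^2\le4B(t)+2$, which I would check piece by piece:
\begin{itemize}
\item For $t\ge1$: $4B+2=2(t-1)^2+2$, and $2(t-1)^2+2-t^2=t^2-4t+4=(t-2)^2\ge0$.
\item For $t\le-1/2$: $4B+2=10+2(t+1/2)^2$, and the difference from $t^2$ is $t^2+2t+10.5>0$.
\item For $|t|\le1/2$: $4B+2=10\ge t^2$.
\item For $t\in(1/2,1)$: $B\ge0$ because $\Theta\le1$, so $4B+2\ge2>t^2$.
\end{itemize}

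I expect no real obstacle here; the most error-prone parts are the constant bookkeeping for $\mathrm{Lip}(B')$ and the global bound $\sup|\varphi''|\le1$. Both are settled by the critical-point analysis above together with the fact that a $C^1$ function with piecewise bounded second derivative has Lipschitz derivative with the same bound.
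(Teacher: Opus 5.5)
Your proposal is correct and follows essentially the same route as the paper: piecewise calculus with the substitution $u=2t-1$ for $\Theta$, the explicit formulas for $\varphi'$ and $\varphi''$, the piecewise-affine derivative of $B$ (slopes $1$, $0$, $-2\Theta''$, $1$), and the same four-case check of $t^2\le 4B(t)+2$. The differences are cosmetic. You justify $\sup|\varphi''|\le 1$ through the critical points of $\varphi'''$ and check the $C^1$ matching of $\Theta$ and $B$ at the breakpoints explicitly, which the paper leaves implicit. You also leave the trivial bound $0\le\varphi\le 1$ unstated.
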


\begin{proof}
All claims follow from elementary calculations on the defining pieces, but we record the relevant formulas explicitly.

On the interval $1/2<t<1$, set $u:=2t-1\in(0,1)$.
Then
\[
\Theta(t)=3u^2-2u^3,
\qquad
\Theta'(t)=12u(1-u),
\qquad
\Theta''(t)=24(1-2u).
\]
Hence $0\le \Theta\le 1$, $\sup_t |\Theta'(t)|=3$, and $\mathrm{Lip}(\Theta')\le 24$.
At $t=7/8$ we have $u=3/4$, so
\[
\Theta(7/8)=3\left(\frac34\right)^2-2\left(\frac34\right)^3=\frac{27}{32},
\qquad
\Theta'(7/8)=12\cdot \frac34\cdot \frac14=\frac94.
\]
Since $\Theta'$ decreases on $[7/8,1]$, this also gives
\[
\sup_{t\in[7/8,1]}|\Theta'(t)|=\frac94.
\]

Next,
\[
\varphi'(t)=(t-1)e^{-(t-1)^2/2},
\qquad
\varphi''(t)=e^{-(t-1)^2/2}\bigl(1-(t-1)^2\bigr).
\]
Therefore $0\le \varphi\le 1$.
The function $s\mapsto |s|e^{-s^2/2}$ is maximized at $|s|=1$, so
\[
\sup_t |\varphi'(t)|=e^{-1/2}<1.
\]
Also $|\varphi''(t)|\le 1$ for every $t$, hence $\mathrm{Lip}(\varphi')\le 1$.
If $t\in[7/8,9/8]$, then $|t-1|\le 1/8$, and thus
\[
\varphi''(t)\ge e^{-1/128}\left(1-\frac{1}{64}\right)>\frac12.
\]
On the same interval,
\[
\varphi(t)\le 1-e^{-1/128}<\frac{1}{128},
\qquad
|\varphi'(t)|\le |t-1|\le \frac18.
\]

Finally, $B'$ is piecewise affine:
\[
B'(t)=
\begin{cases}
t+\frac12, & t\le -\frac12,\\[0.3em]
0, & -\frac12<t\le \frac12,\\[0.3em]
-2\Theta'(t), & \frac12<t<1,\\[0.3em]
t-1, & t\ge 1.
\end{cases}
\]
The slopes on these four pieces are respectively $1$, $0$, $-2\Theta''(t)$, and $1$, so
\[
\mathrm{Lip}(B')\le \max\{1,0,2\,\mathrm{Lip}(\Theta'),1\}\le 48.
\]

It remains to prove the crude coercivity estimate
\[
t^2\le 4B(t)+2
\qquad
(\forall t\in\R).
\]
If $t\le -1/2$, then
\[
4B(t)+2
=
10+2\left(t+\frac12\right)^2
=
t^2+2t+\frac{21}{2}+t^2
\ge t^2.
\]
If $-1/2<t\le 1/2$, then $B(t)=2$, so $4B(t)+2=10\ge t^2$.
If $1/2<t<1$, then $B(t)\ge 0$, hence $4B(t)+2\ge 2>t^2$.
If $t\ge 1$, then
\[
4B(t)+2
=
2(t-1)^2+2
=
t^2+(t-2)^2
\ge t^2.
\]
This completes the proof.
\end{proof}

When \(N=1\), the only gradient coordinate is
\[
\partial_1 F_1(x)=\varphi'(x_1)+B'(x_1).
\]
When \(N\ge2\), the gradient coordinates are
\[
\partial_1 F_N(x)=\varphi'(x_1)+\Theta'(x_1)\varphi(x_2)+B'(x_1),
\]
\[
\partial_i F_N(x)=\Theta(x_{i-1})\varphi'(x_i)+\Theta'(x_i)\varphi(x_{i+1})+B'(x_i),
\qquad
2\le i\le N-1,
\]
and
\[
\partial_N F_N(x)=\Theta(x_{N-1})\varphi'(x_N)+B'(x_N).
\]

We now lift the scalar estimates to the chain \(F_N\).  The next lemma records the basic properties needed later: the
function is nonnegative and smooth, its relevant sublevel set is contained in a controlled ball, and its gradient has
the zero-chain support structure that makes coordinates reveal sequentially.

\begin{lemma}[Smoothness, radius, and zero-chain structure]
\label{lem:FN-basic}
For every $N\ge 1$:
\begin{enumerate}[label=\arabic*.,leftmargin=1.5em]
    \item $F_N\ge 0$, and $F_N(x)=0$ if and only if $x=\one_N$.
    \item $\nabla F_N$ is globally $79$-Lipschitz.
    \item $F_N(0)=\varphi(0)+2N\le 3N$.
    \item If $F_N(x)\le F_N(0)$, then $\norm{x}^2\le 14N$.
    \item $F_N$ is a first-order zero-chain in the sense that
    \[
    \prog_0\bigl(\nabla F_N(x)\bigr)\le \prog_{1/4}(x)+1
    \qquad
    (\forall x\in\R^N).
    \]
    In fact, if $k:=\prog_{1/4}(x)$, then $\nabla_j F_N(x)=0$ for all $j\ge k+2$.
    \item If $k:=\prog_{1/4}(x)<N$, then among the hidden coordinates $j>k$ only $j=k+1$ can be nonzero, and
    \[
    |\nabla_{k+1}F_N(x)|\le e^{-1/2}<1.
    \]
    If \(k=N\), there are no hidden coordinates.
\end{enumerate}
\end{lemma}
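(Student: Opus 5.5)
We would verify the six items in order. Each one reduces to the scalar facts of Lemma~\ref{lem:scalar-estimates} together with the explicit gradient formulas displayed just before the statement. The only step that needs care is the global Lipschitz bound in item~2.

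\emph{Items 1, 3, 4 (values and radius).} Every summand of \(F_N\) is nonnegative, because \(\varphi,\Theta\ge0\) and \(B\ge0\). For the zero set, we first check from the four branches that \(B(t)=0\) iff \(t=1\): on \((1/2,1)\) we have \(B=2(1-\Theta)>0\), the middle branches equal \(2\), and the left branch is at least \(2\). Hence \(F_N(x)=0\) forces \(x_i=1\) for every \(i\), and conversely \(F_N(\one_N)=0\) since \(\varphi(1)=0\). For item~3 we evaluate at \(x=0\): \(\Theta(0)=0\) kills every coupling term, \(B(0)=2\), and \(\varphi(0)=1-e^{-1/2}\le1\), so \(F_N(0)=\varphi(0)+2N\le 3N\). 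For item~4 we sum the coercivity estimate \(t^2\le 4B(t)+2\) over coordinates. Since all other summands are nonnegative, \(\sum_i B(x_i)\le F_N(x)\), and therefore
\[
\norm{x}^2\le 4F_N(x)+2N\le 4F_N(0)+2N\le 14N .
\]

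\emph{Item 2 (smoothness), the main bookkeeping step.} The gradient of \(F_N\) is piecewise \(C^1\) and continuous. So it suffices to bound the operator norm of the Hessian wherever it exists, which is almost everywhere; integrating along segments then gives the global Lipschitz bound. The Hessian is tridiagonal, and we bound it by Gershgorin's theorem.

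\begin{itemize}
\item \emph{Diagonal entry \(i\).} It collects \(\Theta(x_{i-1})\varphi''(x_i)\) (or \(\varphi''(x_1)\) when \(i=1\)), \(\Theta''(x_i)\varphi(x_{i+1})\), and \(B''(x_i)\). By the scalar estimates these are bounded by \(1\), \(24\), and \(48\) respectively, so the diagonal entry is at most \(73\) in absolute value.
\item \emph{Off-diagonal entries \((i,i+1)\).} Each equals \(\Theta'(x_i)\varphi'(x_{i+1})\), which is bounded by \(3e^{-1/2}<2\).
\end{itemize}

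Each absolute row sum is therefore at most \(73+2\cdot 3e^{-1/2}<79\). The only point needing attention is that the bounds \(\mathrm{Lip}(\Theta')\le24\), \(\mathrm{Lip}(B')\le48\), and \(\mathrm{Lip}(\varphi')\le1\) hold in the almost-everywhere second-derivative sense.

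\emph{Items 5 and 6 (zero-chain structure).} Let \(k=\prog_{1/4}(x)\), so that \(|x_j|\le1/4\) for all \(j>k\). On \([-1/4,1/4]\) the gate satisfies \(\Theta=\Theta'=0\) and the well satisfies \(B'=0\), since that interval lies inside the flat region \((-1/2,1/2]\).

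\begin{itemize}
\item \emph{Item 5.} Take \(j\ge k+2\). Then both \(j-1>k\) and \(j>k\) hold. In the formula for \(\partial_jF_N\), the first term carries the factor \(\Theta(x_{j-1})=0\), the second carries \(\Theta'(x_j)=0\), and the third is \(B'(x_j)=0\). Hence \(\partial_jF_N(x)=0\), which gives both claims of item~5.
\item \emph{Item 6.} Suppose \(k<N\). For \(j=k+1\), the second and third terms of \(\partial_{k+1}F_N\) vanish for the same reasons as above. What remains is \(\Theta(x_k)\varphi'(x_{k+1})\), or \(\varphi'(x_1)\) when \(k=0\). Its absolute value is at most \(\sup|\varphi'|=e^{-1/2}<1\) because \(0\le\Theta\le1\).
\item \emph{Edge cases.} When \(N=1\) or \(k+1=N\), the same computation applies to the shorter gradient formula for \(\partial_NF_N\).
\end{itemize}
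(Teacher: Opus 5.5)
Your proposal is correct. Items 1 and 3--6 follow the paper's proof essentially verbatim: nonnegativity and the zero set of $B$, evaluation at the origin, summing $t^2\le 4B(t)+2$, and $\Theta=\Theta'=B'=0$ on $[-1/4,1/4]$. One minor slip in item 1: only the branch on $(-1/2,1/2]$ equals $2$. You should also mention the branch $B(t)=(t-1)^2/2$, which is positive for $t>1$.

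The genuine difference is item 2.

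\emph{The paper's route.} The paper never uses second derivatives. It expands $\partial_iF_N(x)-\partial_iF_N(y)$ by product-rule differences. The Lipschitz constants of $\Theta$, $\Theta'$, $\varphi$, $\varphi'$, $B'$ then give $|\partial_iF_N(x)-\partial_iF_N(y)|\le 3|\Delta_{i-1}|+73|\Delta_i|+3|\Delta_{i+1}|$. A weighted Cauchy--Schwarz over the tridiagonal pattern turns this into exactly $79$.

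\emph{Your route.} You bound the tridiagonal Hessian entrywise and apply Gershgorin. This is more transparent and gives the sharper constant $73+6e^{-1/2}\approx 76.6$. The cost is that you must justify that an almost-everywhere Hessian bound yields a global Lipschitz bound on $\nabla F_N$. That implication is not automatic for maps that are merely differentiable almost everywhere. It holds here because $\nabla F_N$ is continuous and smooth on each cell of a finite grid of axis-parallel breakpoint hyperplanes.

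Even so, a segment can lie entirely inside such a hyperplane, for example when $x_i=y_i=1/2$, and then the full Hessian exists nowhere on it. Your phrase ``integrating along segments'' needs one more sentence to cover this case. Either of the following works:
\begin{itemize}
\item perturb the endpoints so every varying coordinate crosses each breakpoint transversally, then pass to the limit using continuity of $\nabla F_N$;
\item note that a frozen coordinate contributes nothing to the directional derivative, so only Hessian entries in varying coordinates are needed. This is the same bookkeeping the paper does in Step~3 of Lemma~\ref{lem:box-strong}.
\end{itemize}
The paper's difference-quotient argument avoids this issue entirely, at the price of a slightly cruder constant.
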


\begin{proof}
\begin{enumerate}[label=(\arabic*),leftmargin=1.5em]
\item Since $\varphi\ge 0$, $\Theta\ge 0$, and $B\ge 0$, we have $F_N\ge 0$.
Also $B(t)=0$ if and only if $t=1$, hence $F_N(x)=0$ implies $x_i=1$ for every $i$.
Conversely $F_N(\one_N)=0$.

\item Let $\Delta_j:=x_j-y_j$.
If \(N=1\), then
\[
|\partial_1F_1(x)-\partial_1F_1(y)|
\le
\bigl(\mathrm{Lip}(\varphi')+\mathrm{Lip}(B')\bigr)|\Delta_1|
\le 49|\Delta_1|
\le 79|\Delta_1|.
\]
Hence assume \(N\ge2\). Set \(\Delta_0=\Delta_{N+1}=0\) for notational convenience.
For interior indices,
\[
\begin{aligned}
|\partial_i F_N(x)-\partial_i F_N(y)|
&\le
|\Theta(x_{i-1})-\Theta(y_{i-1})|\,|\varphi'(x_i)|
+|\Theta(y_{i-1})|\,|\varphi'(x_i)-\varphi'(y_i)|\\
&\quad
+|\Theta'(x_i)-\Theta'(y_i)|\,|\varphi(x_{i+1})|
+|\Theta'(y_i)|\,|\varphi(x_{i+1})-\varphi(y_{i+1})|\\
&\quad
+|B'(x_i)-B'(y_i)|\\
&\le
3|\Delta_{i-1}|+(1+24+48)|\Delta_i|+3|\Delta_{i+1}|\\
&=
3|\Delta_{i-1}|+73|\Delta_i|+3|\Delta_{i+1}|.
\end{aligned}
\]
The endpoint cases satisfy the same bound with the missing neighbor removed.
Hence, by weighted Cauchy--Schwarz,
\[
|\partial_i F_N(x)-\partial_i F_N(y)|^2
\le
79\bigl(3|\Delta_{i-1}|^2+73|\Delta_i|^2+3|\Delta_{i+1}|^2\bigr).
\]
Summing over $i$ gives
\[
\norm{\nabla F_N(x)-\nabla F_N(y)}^2\le 79^2\norm{x-y}^2.
\]

\item Immediate from the definition.

\item By Lemma~\ref{lem:scalar-estimates},
\[
\norm{x}^2=\sum_{i=1}^N x_i^2\le 4\sum_{i=1}^N B(x_i)+2N\le 4F_N(x)+2N\le 4F_N(0)+2N\le 14N.
\]

\item Let $k:=\prog_{1/4}(x)$.
Then $|x_j|\le 1/4<1/2$ for all $j\ge k+1$.
Hence, for all such $j$,
\[
\Theta(x_j)=\Theta'(x_j)=B'(x_j)=0.
\]
If $j\ge k+2$, then also $|x_{j-1}|\le 1/4$, so $\Theta(x_{j-1})=0$.
Therefore $\nabla_j F_N(x)=0$.

\item If \(k=N\), there are no hidden coordinates and the claim is vacuous. Assume \(k<N\).
This follows from the same observation.
The only hidden coordinate that can remain is $j=k+1$.
If $k\ge 1$, then
\[
|\nabla_{k+1}F_N(x)|
=
|\Theta(x_k)\varphi'(x_{k+1})|
\le
\sup_t |\varphi'(t)|
=
e^{-1/2}
<1.
\]
If $k=0$, then $|x_1|\le 1/4$, hence $\Theta'(x_1)=B'(x_1)=0$ and
\[
|\nabla_1F_N(x)|=|\varphi'(x_1)|\le e^{-1/2}<1.
\]
\end{enumerate}
\end{proof}

\begin{lemma}[Objective barrier at low progress]
\label{lem:objective-barrier}
For every $m\ge 1$ and every $x\in\R^{2m}$,
\[
\prog_{1/4}(x)<m
\quad\Longrightarrow\quad
F_{2m}(x)\ge 2m.
\]
\end{lemma}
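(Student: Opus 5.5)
The argument is a direct counting of the \(B\)-terms on the unrevealed coordinates, using only that every summand of \(F_{2m}\) is nonnegative. Set \(N=2m\) and \(k:=\prog_{1/4}(x)\), and assume \(k<m\), that is, \(k\le m-1\). By the definition of the progress index in the appendix notation, every coordinate \(j\ge k+1\) satisfies \(\abs{x_j}\le 1/4\).

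\textbf{Step 1: the well \(B\) on the hidden coordinates.} Since \(\abs{x_j}\le 1/4\), we have \(x_j\in(-1/2,1/2]\). The definition of \(B\) on that interval gives \(B(x_j)=2\) for every \(j\in\{k+1,\ldots,2m\}\). There are \(2m-k\ge 2m-(m-1)=m+1\) such indices. Hence
\[
\sum_{j=k+1}^{2m}B(x_j)=2(2m-k)\ge 2(m+1).
\]

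\textbf{Step 2: discard the remaining terms.} We have \(\varphi\ge0\), \(\Theta\ge0\), and \(B\ge0\) (Lemma~\ref{lem:scalar-estimates}, and the proof of Lemma~\ref{lem:FN-basic}(1)). So the term \(\varphi(x_1)\), the gated terms \(\Theta(x_i)\varphi(x_{i+1})\), and the values \(B(x_i)\) for \(i\le k\) are all nonnegative. Dropping them gives
\[
F_{2m}(x)\ge \sum_{j=k+1}^{2m}B(x_j)\ge 2m+2\ge 2m,
\]
which is the claim, with slack \(2\).

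\textbf{Main obstacle.} There is no substantive obstacle here. The only point needing care is that the threshold \(1/4\) in \(\prog_{1/4}\) lies strictly inside the flat plateau \((-1/2,1/2]\) of \(B\), so \(B\) is exactly \(2\) on every hidden coordinate. The edge case \(k=0\) (so \(x_1\) is also small) is covered by the same count.
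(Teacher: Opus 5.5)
Your proposal is correct and follows the paper's argument: both drop the nonnegative \(\varphi\), \(\Theta\varphi\), and remaining \(B\) terms and use \(B=2\) on coordinates with \(\abs{x_j}\le 1/4\). The only difference is that the paper sums just over \(j=m+1,\ldots,2m\) to get exactly \(2m\), whereas you sum over all \(j\ge k+1\) and get \(2(2m-k)\ge 2m+2\); this changes nothing of substance.
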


\begin{proof}
If $\prog_{1/4}(x)<m$, then $|x_j|\le 1/4$ for all $j\ge m+1$.
Hence $B(x_j)=2$ for each such $j$.
Therefore
\[
F_{2m}(x)\ge \sum_{j=m+1}^{2m}B(x_j)=2m.
\]
\end{proof}

\begin{remark}
The lemma says that if the algorithm has not made progress into the first half of the chain, then the last \(m\)
coordinates still contribute a fixed objective value \(2m\). This is the objective barrier that turns slow coordinate
discovery into an objective-error lower bound.
\end{remark}

\subsection{Quadratic-P\L{} on the initial sublevel of the base chain}
\label{app:sto-quadratic-pl}

Let
\[
\mathcal B_N:=[7/8,9/8]^N.
\]

To prove a quadratic-P\L{} inequality on the initial sublevel set, we split the analysis into two regions. Inside the
box \(\mathcal B_N\), the scalar curvature estimates from Lemma~\ref{lem:scalar-estimates} give local control near
\(\one_N\). Outside the box, we only need a crude guarantee that the gradient cannot be too small. The next lemma gives
this guarantee; in fact, the bound holds everywhere outside \(\mathcal B_N\), not only on the initial sublevel set.

\begin{lemma}[Gradient bounded away from zero outside the box]
\label{lem:outside-box}
For every $N\ge 1$ and every $x\in \R^N\setminus \mathcal B_N$,
\[
\norm{\nabla F_N(x)}\ge \frac{1}{10}.
\]
\end{lemma}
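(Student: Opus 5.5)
The plan is to lower-bound a single partial derivative. Fix $x\notin\mathcal B_N$ and let $j$ be the \emph{smallest} index with $x_j\notin[7/8,9/8]$. Since $\norm{\nabla F_N(x)}\ge|\partial_jF_N(x)|$, it suffices to show $|\partial_jF_N(x)|\ge 1/10$. Choosing $j$ minimal gives control of the incoming gate. If $j\ge2$, then $x_{j-1}\in[7/8,9/8]$, so by monotonicity of $\Theta$ and Lemma~\ref{lem:scalar-estimates} we have $\Theta(x_{j-1})\ge\Theta(7/8)=27/32$. If $j=1$, the coefficient of $\varphi'(x_1)$ in $\partial_1F_N$ is exactly $1$. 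In both cases write $\gamma\ge 27/32$ for this coefficient. Then
\[
\partial_jF_N(x)=\gamma\,\varphi'(x_j)+\Theta'(x_j)\varphi(x_{j+1})+B'(x_j),
\]
where the middle term is absent when $j=N$. I then split on where $x_j$ lies, using $\varphi'(t)=(t-1)e^{-(t-1)^2/2}$ and the piecewise formula for $B'$.

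\emph{Case $x_j>9/8$.} Here $\Theta'(x_j)=0$, $\varphi'(x_j)>0$ and $B'(x_j)=x_j-1>1/8$. All surviving terms are nonnegative, so $\partial_jF_N(x)>1/8$.

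\emph{Case $x_j\in(-1/2,1/2]$.} Here $\Theta'(x_j)=B'(x_j)=0$, so $|\partial_jF_N(x)|=\gamma|\varphi'(x_j)|$. With $s:=|x_j-1|\in[1/2,3/2)$, the function $s\mapsto se^{-s^2/2}$ is unimodal with peak at $s=1$. Its minimum on this interval is therefore at an endpoint, which gives the bound $\min\{\tfrac12e^{-1/8},\tfrac32e^{-9/8}\}>0.4$. Hence $|\partial_jF_N(x)|\ge\tfrac{27}{32}\cdot0.4>1/10$.

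\emph{Case $x_j\le-1/2$.} Here $\Theta'(x_j)=0$, while $\varphi'(x_j)<0$ and $B'(x_j)=x_j+\tfrac12\le0$ have the same sign. So $|\partial_jF_N(x)|\ge\max\{\gamma|\varphi'(x_j)|,\,|x_j+\tfrac12|\}$. If $x_j\ge-1$, then $s\in[3/2,2]$ and $\gamma se^{-s^2/2}\ge\tfrac{27}{32}\cdot2e^{-2}>1/10$. If $x_j<-1$, then $|B'(x_j)|>1/2$.

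\emph{Case $x_j\in(1/2,7/8)$.} This is the delicate case, because the forward term $\Theta'(x_j)\varphi(x_{j+1})\ge0$ competes with the two negative terms. Using $B'(x_j)=-2\Theta'(x_j)$ and $\varphi\le1$, I group
\[
\Theta'(x_j)\varphi(x_{j+1})+B'(x_j)=\Theta'(x_j)\bigl(\varphi(x_{j+1})-2\bigr)\le0 .
\]
This grouping is also valid when $j=N$, since then the forward term is simply absent. Consequently $\partial_jF_N(x)\le\gamma\varphi'(x_j)<0$, and $|\partial_jF_N(x)|\ge\gamma|\varphi'(x_j)|$. Now $s=1-x_j\in(1/8,1/2)$, where $se^{-s^2/2}$ is increasing. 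This gives $|\varphi'(x_j)|\ge\tfrac18e^{-1/128}$, and therefore $|\partial_jF_N(x)|\ge\tfrac{27}{32}\cdot\tfrac18e^{-1/128}\approx0.1046>1/10$.

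\textbf{Main obstacle.} I expect the last case to be the main obstacle. The constant $1/10$ is nearly tight there, and the argument relies on two choices: taking $j$ minimal to secure $\gamma\ge27/32$, and the sign grouping above to neutralize the forward term. The remaining cases are routine one-variable estimates.
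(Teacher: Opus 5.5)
Your proof is correct and is essentially the paper's argument: the same first-bad-coordinate choice giving incoming gate $\gamma\ge 27/32$, the same case split on $x_j$, and the same grouping $\Theta'(x_j)\bigl(\varphi(x_{j+1})-2\bigr)\le 0$ in the delicate interval $(1/2,7/8)$. The only deviation is the region $x_j\le -1/2$. There you use that $\gamma\varphi'(x_j)$ and $B'(x_j)$ have the same sign and split at $x_j=-1$, whereas the paper shows that $h(t)=q\varphi'(t)+t+\tfrac12$ is increasing via a lower bound on $\varphi''$. Your version is slightly simpler and equally valid.
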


\begin{proof}
Choose the first index $j$ such that $x_j\notin [7/8,9/8]$.
If $j>1$, then $x_{j-1}\in [7/8,9/8]$, hence
\[
q:=\Theta(x_{j-1})\in [27/32,1].
\]
If $j=1$, set $q:=1$.
If $j<N$, set $s:=\varphi(x_{j+1})\in [0,1]$; if $j=N$, set $s:=0$.
Then
\[
\partial_j F_N(x)=q\varphi'(x_j)+s\Theta'(x_j)+B'(x_j).
\]

We treat four regions.

(i) If $x_j\ge 9/8$, then $\Theta'(x_j)=0$ and $B'(x_j)=x_j-1\ge 1/8$, while $q\varphi'(x_j)\ge 0$.
Thus $\partial_j F_N(x)\ge 1/8$.

(ii) If $1/2<x_j\le 7/8$, then $B'(x_j)=-2\Theta'(x_j)$, so
\[
\partial_j F_N(x)=q\varphi'(x_j)+(s-2)\Theta'(x_j)\le q\varphi'(x_j)<0.
\]
Since
\[
-\varphi'(t)=(1-t)e^{-(1-t)^2/2}
\]
is decreasing on $[1/2,7/8]$,
\[
|\partial_j F_N(x)|
\ge
q\bigl(-\varphi'(7/8)\bigr)
\ge
\frac{27}{32}\cdot \frac18 e^{-1/128}
>
\frac{1}{10}.
\]

(iii) If $-1/2\le x_j\le 1/2$, then $\Theta'(x_j)=B'(x_j)=0$, so
\[
\partial_j F_N(x)=q\varphi'(x_j)<0.
\]
Now
\[
-\varphi'(t)=(1-t)e^{-(1-t)^2/2}\ge \frac12 e^{-1/8}
\qquad
\text{on }[-1/2,1/2],
\]
hence
\[
|\partial_j F_N(x)|\ge \frac{27}{32}\cdot \frac12 e^{-1/8}>\frac{1}{10}.
\]

(iv) If $x_j< -1/2$, then again $\Theta'(x_j)=0$ and $B'(x_j)=x_j+1/2$, so
\[
\partial_j F_N(x)=h(x_j),
\qquad
h(t):=q\varphi'(t)+t+\frac12.
\]
For $t\le -1/2$, writing \(r:=1-t\ge 3/2\), the function
\((r^2-1)e^{-r^2/2}\) is maximized on \([3/2,\infty)\) at \(r=\sqrt 3\). Hence
\[
\varphi''(t)=e^{-(t-1)^2/2}\bigl(1-(t-1)^2\bigr)\ge -2e^{-3/2}>-\frac12,
\]
therefore
\[
h'(t)=q\varphi''(t)+1\ge 1-2e^{-3/2}>\frac12.
\]
Thus $h$ is increasing on $(-\infty,-1/2]$, and hence
\[
\partial_j F_N(x)=h(x_j)\le h(-1/2)=q\varphi'(-1/2)\le -\frac{27}{32}\cdot \frac32 e^{-9/8}<-\frac{1}{10}.
\]

In all cases $|\partial_j F_N(x)|\ge 1/10$.
\end{proof}

\begin{lemma}[Local strong convexity on the box]
\label{lem:box-strong}
For almost every $x\in \mathcal B_N$, the Hessian of $F_N$ exists and satisfies
\[
\nabla^2 F_N(x)\succeq \frac12 I_N.
\]
Consequently, $F_N$ is $1/2$-strongly convex on $\mathcal B_N$.
\end{lemma}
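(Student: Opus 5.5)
The plan is to write out the Hessian of \(F_N\) at points of \(\mathcal B_N\) where every coordinate differs from \(1\), and show it is diagonally dominant with margin \(1/2\) by Gershgorin. At such points all of \(\Theta,\varphi,B\) are \(C^2\) near each coordinate, so the Hessian exists. The excluded set \(\bigcup_i\{x_i=1\}\) has measure zero, since the piecewise definitions only switch at \(t=1\) inside \([7/8,9/8]\). The Hessian is tridiagonal, with entries
\[
H_{ii}=\Theta(x_{i-1})\varphi''(x_i)+\Theta''(x_i)\varphi(x_{i+1})+B''(x_i),
\qquad
H_{i,i+1}=\Theta'(x_i)\varphi'(x_{i+1}).
\]
Here the conventions are \(\Theta(x_0):=1\) and \(\varphi(x_{N+1}):=0\).

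\textbf{Diagonal entries.} For \(x_i\in(1,9/8]\) we have \(\Theta\equiv1\) near \(x_i\), so \(\Theta'(x_i)=\Theta''(x_i)=0\). Also \(B''(x_i)=1\), and \(\varphi''(x_i)>1/2\) by Lemma~\ref{lem:scalar-estimates}. Hence \(H_{ii}\ge1\).

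For \(x_i\in[7/8,1)\), put \(u=2x_i-1\in[3/4,1)\). Then \(B''(x_i)=-2\Theta''(x_i)=48(2u-1)\ge24\). The only possibly negative term is \(\Theta''(x_i)\varphi(x_{i+1})\), and it is at least \(-24\cdot\tfrac1{128}\) by the bound \(\varphi<1/128\) on \([7/8,9/8]\). The \(\varphi''\) term is nonnegative. Hence \(H_{ii}\ge 23\).

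\textbf{Off-diagonal entries.} Lemma~\ref{lem:scalar-estimates} gives \(|\Theta'|\le 9/4\) on \([7/8,1]\) and \(|\varphi'|\le1/8\) on the box. So every off-diagonal entry is at most \(9/32\) in absolute value, and \(H_{i,i+1}=0\) whenever \(x_i>1\).

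\textbf{Gershgorin bound.} Row \(i\) contains \(H_{i-1,i}\) and \(H_{i,i+1}\).
\begin{itemize}
\item If \(x_i>1\), then \(H_{i,i+1}=0\), so the off-diagonal sum is at most \(9/32\). This gives \(H_{ii}-9/32\ge 23/32>1/2\).
\item If \(x_i<1\), the off-diagonal sum is at most \(9/16\). This gives \(H_{ii}-9/16\ge 23-9/16>1/2\).
\end{itemize}
Since \(H\) is symmetric, Gershgorin yields \(\nabla^2F_N(x)\succeq\tfrac12 I_N\) at a.e.\ \(x\in\mathcal B_N\).

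\textbf{Strong convexity.} The main subtlety is passing from an a.e.\ Hessian bound to strong convexity, because a segment in \(\mathcal B_N\) may lie inside the null set, for instance within a hyperplane \(\{x_i=1\}\).

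Fix \(x,y\) in the interior of the box and let \(v=y-x\). The aim is the monotonicity inequality
\[
\langle\nabla F_N(y)-\nabla F_N(x),v\rangle\ge\tfrac12\|v\|^2 .
\]
\begin{itemize}
\item By Fubini, for a.e.\ small shift \(w\) the segment \([x+w,y+w]\) stays in the box and meets the null set in a set of one-dimensional measure zero.
\item \(\nabla F_N\) is Lipschitz by Lemma~\ref{lem:FN-basic}(2). So along such a shifted segment the map \(t\mapsto\langle\nabla F_N(x+w+tv),v\rangle\) is absolutely continuous. Its derivative is \(v^\top\nabla^2F_N\,v\ge\tfrac12\|v\|^2\) for a.e.\ \(t\).
\item Integrating from \(0\) to \(1\) gives the monotonicity inequality for the shifted pair.
\item Letting \(w\to0\) and using continuity of \(\nabla F_N\) gives it for \(x,y\); continuity again extends it to the closed box.
\end{itemize}
Gradient monotonicity with modulus \(1/2\) on the convex set \(\mathcal B_N\) is equivalent to \(\tfrac12\)-strong convexity there, which completes the argument.
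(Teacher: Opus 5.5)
Your proposal is correct and follows essentially the paper's route: entrywise bounds on the tridiagonal Hessian away from the hyperplanes $\{x_i=1\}$, a diagonal-dominance estimate giving $\nabla^2F_N\succeq\frac12 I_N$, and integration of this a.e.\ bound along segments, using absolute continuity of $t\mapsto\langle\nabla F_N(\gamma(t)),v\rangle$. The differences are minor. Your row-wise Gershgorin split uses $H_{i,i+1}=0$ when $x_i>1$ and $B''\ge 24$ when $x_i<1$, and yields the margin $23/32$ without the $\varphi''$ contribution; the paper uses uniform entry bounds to get $43/64$. You handle segments lying inside a hyperplane by a Fubini shift plus continuity of $\nabla F_N$, whereas the paper observes that the restricted second derivative involves only the coordinates that actually vary along the segment.
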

\begin{proof}
\emph{Step 1: scalar bounds on the box.}
The only scalar breakpoint contained in \([7/8,9/8]\) is \(1\). For \(t\in[7/8,1)\), set
\(u:=2t-1\in[3/4,1)\). The formulas in the proof of Lemma~\ref{lem:scalar-estimates} give
\[
\Theta''(t)=24(1-2u),
\qquad
B''(t)=-2\Theta''(t)=48(2u-1)\ge24.
\]
For \(t\in(1,9/8]\), we instead have
\(\Theta(t)=1\), \(\Theta'(t)=\Theta''(t)=0\), and \(B''(t)=1\). Consequently, at every point of twice
differentiability in \([7/8,9/8]\),
\[
\Theta(t)\ge\frac{27}{32},
\qquad
|\Theta'(t)|\le\frac94,
\qquad
|\Theta''(t)|\le24,
\qquad
B''(t)\ge1.
\]
Lemma~\ref{lem:scalar-estimates} also gives, throughout this interval,
\[
\varphi''(t)\ge\frac12,
\qquad
0\le\varphi(t)\le\frac{1}{128},
\qquad
|\varphi'(t)|\le\frac18.
\]

\emph{Step 2: lower bound the Hessian.}
The function \(F_N\) is twice differentiable on \(\mathcal B_N\) except on the measure-zero union of the hyperplanes
\(\{x_i=1\}\). If \(N=1\), then, wherever the Hessian exists,
\[
F_1''(x_1)=\varphi''(x_1)+B''(x_1)\ge\frac32>\frac12.
\]
Suppose now that \(N\ge2\).

For \(2\le i\le N-1\), the diagonal Hessian entries are
\[
\partial_{ii}^2F_N(x)
=
\Theta(x_{i-1})\varphi''(x_i)
+\Theta''(x_i)\varphi(x_{i+1})
+B''(x_i).
\]
At the two endpoints,
\[
\partial_{11}^2F_N(x)
=
\varphi''(x_1)+\Theta''(x_1)\varphi(x_2)+B''(x_1),
\qquad
\partial_{NN}^2F_N(x)
=
\Theta(x_{N-1})\varphi''(x_N)+B''(x_N).
\]
The only nonzero off-diagonal entries are between adjacent coordinates:
\[
\partial_{i,i+1}^2F_N(x)
=
\Theta'(x_i)\varphi'(x_{i+1}),
\qquad 1\le i\le N-1.
\]

The bounds from Step~1 therefore imply, for every \(i\),
\[
\partial_{ii}^2F_N(x)
\ge
\frac{27}{32}\cdot\frac12
-24\cdot\frac{1}{128}
+1
=\frac{79}{64},
\]
where the endpoint formulas only improve this lower bound, and
\[
\bigl|\partial_{i,i+1}^2F_N(x)\bigr|
\le
\frac94\cdot\frac18
=\frac{9}{32}.
\]
Thus, for every \(z\in\R^N\),
\begin{align*}
z^\top\nabla^2F_N(x)z
&\ge
\frac{79}{64}\sum_{i=1}^N z_i^2
-\frac{9}{16}\sum_{i=1}^{N-1}|z_i z_{i+1}|\\
&\ge
\left(\frac{79}{64}-\frac{9}{16}\right)\norm{z}^2
=
\frac{43}{64}\norm{z}^2,
\end{align*}
where the second inequality uses
\(\sum_{i=1}^{N-1}|z_i z_{i+1}|\le\norm{z}^2\).
Hence
\[
\nabla^2F_N(x)\succeq\frac{43}{64}I_N\succeq\frac12 I_N
\]
at almost every \(x\in\mathcal B_N\).

\emph{Step 3: pass from the almost-everywhere bound to strong convexity.}
Fix \(x,y\in\mathcal B_N\), set \(d:=x-y\), and define
\(\gamma(t):=y+td\) for \(t\in[0,1]\). Let
\[
\mathcal T
:=
\bigl\{t\in(0,1):\gamma_i(t)=1\text{ for some }i\text{ with }d_i\ne0\bigr\}.
\]
Because every nonconstant affine coordinate crosses \(1\) at most once, \(\mathcal T\) is finite. On each component
of \([0,1]\setminus\mathcal T\), every coordinate that varies along the segment remains on one side of the breakpoint.
Coordinates that are identically equal to \(1\) have \(d_i=0\); after those coordinates are held fixed, the restricted
function \(t\mapsto F_N(\gamma(t))\) is twice differentiable on the interior of each component. Its second derivative
contains only Hessian entries associated with the varying coordinates, so the quadratic-form estimate from Step~2
gives
\[
\frac{d}{dt}\ip{\nabla F_N(\gamma(t))}{d}
\ge
\frac12\norm{d}^2
\]
there.

By Lemma~\ref{lem:FN-basic}(2), \(\nabla F_N\) is globally Lipschitz. Hence
\(t\mapsto\ip{\nabla F_N(\gamma(t))}{d}\) is absolutely continuous on \([0,1]\). We may therefore integrate over the
finitely many subintervals and sum the resulting inequalities to obtain
\[
\ip{\nabla F_N(x)-\nabla F_N(y)}{x-y}
\ge
\frac12\norm{x-y}^2.
\]
Finally, apply this strong-monotonicity inequality to the pair \(y+t(x-y),y\), divide by \(t>0\), and integrate over
\(t\in[0,1]\). This yields
\[
F_N(x)
\ge
F_N(y)+\ip{\nabla F_N(y)}{x-y}
+\frac14\norm{x-y}^2.
\]
This is precisely \(1/2\)-strong convexity of \(F_N\) on the convex set \(\mathcal B_N\).
\end{proof}

\subsection{A regularized version of the base chain}
\label{app:sto-regularized}

Fix
\[
\eta:=\frac{1}{20},
\qquad
\Gamma_N(x):=F_N(x)+\frac{\eta}{2}\norm{x}^2.
\]

\begin{lemma}[Basic properties of $\Gamma_N$]
\label{lem:Gamma-basic}
For every $N\ge 1$:
\begin{enumerate}[label=\arabic*.,leftmargin=1.5em]
    \item $\Gamma_N$ is coercive, hence attains its minimum.
    \item $\Gamma_N(0)=F_N(0)\le 3N$.
    \item $S_0(\Gamma_N)\subseteq S_0(F_N)$.
\end{enumerate}
\end{lemma}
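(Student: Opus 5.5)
For item~1, I will use Lemma~\ref{lem:FN-basic}(1), which gives \(F_N\ge 0\). From this,
\[
\Gamma_N(x)\ \ge\ \frac{\eta}{2}\norm{x}^2\ \to\ \infty \qquad \text{as } \norm{x}\to\infty.
\]
So \(\Gamma_N\) is coercive. It is also continuous, since \(F_N\) is \(C^1\) by Lemma~\ref{lem:scalar-estimates} and the quadratic term is smooth. Hence every nonempty sublevel set \(\{\Gamma_N\le c\}\) is closed and bounded, and therefore compact. By Weierstrass, \(\Gamma_N\) attains its minimum on such a set, and that minimum is the global one.

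For item~2, the quadratic term vanishes at the origin, so \(\Gamma_N(0)=F_N(0)\). Lemma~\ref{lem:FN-basic}(3) then gives
\[
F_N(0)=\varphi(0)+2N\le 1+2N\le 3N,
\]
where the last step uses \(N\ge1\).

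For item~3, take \(x\in S_0(\Gamma_N)\), so that \(\Gamma_N(x)\le\Gamma_N(0)\). Item~2 lets us rewrite this as
\[
F_N(x)+\frac{\eta}{2}\norm{x}^2\ \le\ F_N(0).
\]
Since \(\frac{\eta}{2}\norm{x}^2\ge0\), it follows that \(F_N(x)\le F_N(0)\), which says \(x\in S_0(F_N)\). As a byproduct, Lemma~\ref{lem:FN-basic}(4) then bounds these points by \(\norm{x}^2\le 14N\).

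None of the three steps is a real obstacle; each is immediate. The only point to watch is in item~1, where nonnegativity of \(F_N\) is what makes the quadratic term alone enough for coercivity.
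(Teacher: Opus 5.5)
Your proposal is correct and follows essentially the same route as the paper. You use $F_N\ge0$ to get the lower bound $\Gamma_N(x)\ge\frac{\eta}{2}\norm{x}^2$ for coercivity, evaluate at the origin for item 2, and drop the nonnegative quadratic term for item 3, exactly as the paper does; your added details (the explicit Weierstrass argument on a compact sublevel set and the bound $\varphi(0)\le1$) are harmless elaborations.
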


\begin{proof}
For item (1), Lemma~\ref{lem:FN-basic}(1) gives \(F_N\ge 0\). Therefore
\[
\Gamma_N(x)=F_N(x)+\frac{\eta}{2}\norm{x}^2\ge \frac{\eta}{2}\norm{x}^2
\qquad
(\forall x\in\R^N).
\]
Hence $\Gamma_N(x)\to +\infty$ as $\norm{x}\to\infty$, so $\Gamma_N$ is coercive. Since \(\Gamma_N\) is continuous,
it attains its minimum.

For item (2), evaluating the definition at the origin gives
\[
\Gamma_N(0)=F_N(0)+\frac{\eta}{2}\norm{0}^2=F_N(0)\le 3N,
\]
where the last inequality is Lemma~\ref{lem:FN-basic}(3).

For item (3), if $x\in S_0(\Gamma_N)$, then
\[
F_N(x)\le \Gamma_N(x)\le \Gamma_N(0)=F_N(0).
\]
Thus $x\in S_0(F_N)$.
\end{proof}

\begin{lemma}[Gradient bounded away from zero outside the box for $\Gamma_N$]
\label{lem:Gamma-outside-box}
For every $N\ge 1$ and every $x\in \R^N\setminus \mathcal B_N$,
\[
\norm{\nabla \Gamma_N(x)}\ge \frac{1}{20}.
\]
\end{lemma}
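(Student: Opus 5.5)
The plan is to transfer the bound of Lemma~\ref{lem:outside-box} to \(\Gamma_N\) by controlling the perturbation \(\eta x\), and to handle separately the points where \(\norm{x}\) is large. We have \(\nabla\Gamma_N(x)=\nabla F_N(x)+\eta x\), but a direct triangle inequality fails when \(\norm{x}\ge 1\): then \(\eta\norm{x}\ge 1/20\), which can cancel the guaranteed \(1/10\). So the argument must look at the single coordinate \(j\) used in the proof of Lemma~\ref{lem:outside-box}, namely the first index with \(x_j\notin[7/8,9/8]\), and compare the signs of \(\partial_jF_N(x)\) and \(\eta x_j\).

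Recall that in that proof \(\partial_jF_N(x)\) has a definite sign and magnitude at least \(1/10\) in each of four regions. We then show \(\abs{\partial_j\Gamma_N(x)}=\abs{\partial_jF_N(x)+\eta x_j}\ge 1/20\) region by region.

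\emph{(i) \(x_j\ge 9/8\).} Here \(\partial_jF_N\ge 1/8>0\) and \(\eta x_j>0\). Both terms have the same sign, so \(\partial_j\Gamma_N\ge 1/8\).

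\emph{(ii) \(1/2<x_j\le 7/8\).} Here \(\partial_jF_N\le-1/10\) and \(0<\eta x_j\le \eta\cdot 7/8=7/160\). Hence \(\partial_j\Gamma_N\le -1/10+7/160<-1/20\).

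\emph{(iii) \(\abs{x_j}\le 1/2\).} Here \(\partial_jF_N\le-\frac{27}{32}\cdot\frac12e^{-1/8}\approx-0.372\) and \(\abs{\eta x_j}\le 1/40\). The perturbation cannot close the gap, so \(\abs{\partial_j\Gamma_N}\ge 1/20\).

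\emph{(iv) \(x_j<-1/2\).} Both \(\partial_jF_N<-1/10\) and \(\eta x_j<0\) are negative, so they add constructively and \(\partial_j\Gamma_N<-1/10\).

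In every region \(\norm{\nabla\Gamma_N(x)}\ge\abs{\partial_j\Gamma_N(x)}\ge 1/20\).

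The only step needing care is region (ii), where the two contributions have opposite signs. There we use the explicit upper bound \(x_j\le 7/8\) together with \(\eta=1/20\): the bound \(1/10-7/160=9/160\) exceeds \(1/20\). No bound on \(\norm{x}\) is needed, because we work coordinatewise and the other coordinates of \(\eta x\) only add to the norm.
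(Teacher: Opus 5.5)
Your proof is correct and follows the paper's argument: reuse the first coordinate $j$ with $x_j\notin[7/8,9/8]$ from Lemma~\ref{lem:outside-box}, and in each of the four regions check that $\eta x_j$ either has the same sign as $\partial_jF_N(x)$ or has magnitude at most $7/160$ (respectively $1/40$), which gives $\abs{\partial_j\Gamma_N(x)}\ge 1/20$. The opening remark about handling large $\norm{x}$ separately is unnecessary, as you note yourself at the end: the coordinatewise argument never uses a bound on $\norm{x}$.
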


\begin{proof}
We repeat the first-bad-coordinate choice in the proof of Lemma~\ref{lem:outside-box}.
Let $j$ be the first coordinate outside $[7/8,9/8]$.
Then
\[
\partial_j \Gamma_N(x)=\partial_j F_N(x)+\eta x_j.
\]
The proof of Lemma~\ref{lem:outside-box} gives the following sign information:
\begin{itemize}[leftmargin=1.5em]
    \item if $x_j\ge 9/8$, then $\partial_j F_N(x)\ge 1/8$;
    \item if $1/2<x_j\le 7/8$, then $\partial_j F_N(x)\le -1/10$;
    \item if $-1/2\le x_j\le 1/2$, then $\partial_j F_N(x)\le -1/10$;
    \item if $x_j< -1/2$, then $\partial_j F_N(x)\le -1/10$.
\end{itemize}

If $x_j\ge 9/8$, then $\eta x_j\ge 0$, so $\partial_j \Gamma_N(x)\ge 1/8>1/20$.

If $1/2<x_j\le 7/8$, then $\eta x_j\le 7\eta/8=7/160$, so
\[
\partial_j \Gamma_N(x)\le -\frac{1}{10}+\frac{7}{160}=-\frac{9}{160}<-\frac{1}{20}.
\]

If $-1/2\le x_j\le 1/2$, then $\eta x_j\le \eta/2=1/40$, so
\[
\partial_j \Gamma_N(x)\le -\frac{1}{10}+\frac{1}{40}=-\frac{3}{40}<-\frac{1}{20}.
\]

If $x_j< -1/2$, then $\eta x_j< -\eta/2=-1/40$, so
\[
\partial_j \Gamma_N(x)\le -\frac{1}{10}-\frac{1}{40}=-\frac18<-\frac{1}{20}.
\]

In all cases $|\partial_j \Gamma_N(x)|\ge 1/20$, hence $\norm{\nabla \Gamma_N(x)}\ge 1/20$.
\end{proof}

\begin{lemma}[Local strong convexity on the box for $\Gamma_N$]
\label{lem:Gamma-box-strong}
$\Gamma_N$ is $(1/2+\eta)$-strongly convex on $\mathcal B_N$.
\end{lemma}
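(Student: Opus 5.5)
The statement is that $\Gamma_N=F_N+\frac{\eta}{2}\norm{\cdot}^2$ is $(1/2+\eta)$-strongly convex on the box $\mathcal B_N$. The plan is to reduce it directly to Lemma~\ref{lem:box-strong}, which asserts that $F_N$ is $1/2$-strongly convex on the convex set $\mathcal B_N$, in the first-order form
\[
F_N(x)\ge F_N(y)+\ip{\nabla F_N(y)}{x-y}+\frac14\norm{x-y}^2
\qquad (x,y\in\mathcal B_N),
\]
or equivalently via the strong-monotonicity inequality $\ip{\nabla F_N(x)-\nabla F_N(y)}{x-y}\ge\frac12\norm{x-y}^2$ obtained in its Step~3.

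Next, observe that the quadratic $Q(x):=\frac{\eta}{2}\norm{x}^2$ satisfies the exact identity
\[
Q(x)=Q(y)+\ip{\eta y}{x-y}+\frac{\eta}{2}\norm{x-y}^2
\qquad (x,y\in\R^N),
\]
and likewise $\ip{\nabla Q(x)-\nabla Q(y)}{x-y}=\eta\norm{x-y}^2$. Adding this identity to the strong-monotonicity inequality for $F_N$ gives
\[
\ip{\nabla\Gamma_N(x)-\nabla\Gamma_N(y)}{x-y}\ge\Bigl(\frac12+\eta\Bigr)\norm{x-y}^2
\qquad (x,y\in\mathcal B_N).
\]
Integrating along the segment $y+t(x-y)$, which stays in the convex set $\mathcal B_N$, then yields the first-order $(1/2+\eta)$-strong-convexity inequality, exactly as in the last step of Lemma~\ref{lem:box-strong}.

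An alternative route gives the same conclusion. At almost every $x\in\mathcal B_N$ one has $\nabla^2\Gamma_N(x)=\nabla^2F_N(x)+\eta I_N\succeq(1/2+\eta)I_N$, and the segment-integration argument of Step~3 of Lemma~\ref{lem:box-strong} can be reused verbatim, since adding a smooth quadratic introduces no new breakpoints.

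There is essentially no obstacle. The only point needing care is to state strong convexity in the same normalization as Lemma~\ref{lem:box-strong}: $\mu$-strong convexity means the quadratic term $\frac{\mu}{2}\norm{x-y}^2$, so the constants $\frac14$ and $\frac{\eta}{2}$ add to $\frac12\bigl(\frac12+\eta\bigr)$.
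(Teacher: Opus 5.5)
Your proposal is correct and is essentially the paper's proof: the paper also cites Lemma~\ref{lem:box-strong} for the $1/2$-strong convexity of $F_N$ on $\mathcal B_N$ and observes that adding $(\eta/2)\norm{x}^2$ raises the parameter by $\eta$. Your check of the normalization ($\tfrac14+\tfrac{\eta}{2}=\tfrac12(\tfrac12+\eta)$) spells out what the paper leaves implicit.
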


\begin{proof}
By Lemma~\ref{lem:box-strong}, $F_N$ is $1/2$-strongly convex on $\mathcal B_N$.
Adding the quadratic term $(\eta/2)\norm{x}^2$ increases the strong-convexity parameter by $\eta$.
\end{proof}

\begin{proposition}[Quadratic-P\L{} on the initial sublevel of $\Gamma_N$]
\label{prop:Gamma-quadratic-PL}
For every $N\ge 1$ and every $x\in S_0(\Gamma_N)$,
\[
\Gamma_N(x)-\Gamma_{N,\star}\le 1200N\norm{\nabla \Gamma_N(x)}^2.
\]
\end{proposition}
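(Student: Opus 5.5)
We split $S_0(\Gamma_N)$ into the part outside the box $\mathcal B_N$ and the part inside it, and handle each with one of the two preceding lemmas.

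\emph{Outside the box.} Let $x\in S_0(\Gamma_N)\setminus\mathcal B_N$. Since $\Gamma_{N,\star}\ge 0$ and $x$ lies in the initial sublevel set, Lemma~\ref{lem:Gamma-basic}(2) gives
\[
\Gamma_N(x)-\Gamma_{N,\star}\le \Gamma_N(0)\le 3N .
\]
Lemma~\ref{lem:Gamma-outside-box} gives $\norm{\nabla\Gamma_N(x)}^2\ge 1/400$. Hence
\[
\Gamma_N(x)-\Gamma_{N,\star}\le 3N\le 1200N\norm{\nabla\Gamma_N(x)}^2 .
\]
The constant $1200=3\cdot 400$ comes exactly from this case.

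\emph{Inside the box.} Let $x\in S_0(\Gamma_N)\cap\mathcal B_N$. By Lemma~\ref{lem:Gamma-box-strong}, $\Gamma_N$ is $\mu$-strongly convex on the convex set $\mathcal B_N$ with $\mu=1/2+\eta$. The strong-convexity inequality from Lemma~\ref{lem:box-strong} holds for all pairs in the box, so minimizing the lower model over $y\in\R^N$ gives
\[
\Gamma_N(x)-\inf_{\mathcal B_N}\Gamma_N\le \frac{1}{2\mu}\norm{\nabla\Gamma_N(x)}^2 .
\]
It remains to relate $\inf_{\mathcal B_N}\Gamma_N$ to the global minimum $\Gamma_{N,\star}$, and this is the main obstacle.

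\emph{Locating the global minimizer.} By Lemma~\ref{lem:Gamma-basic}(1) a global minimizer $x^\star$ exists. It lies in $S_0(\Gamma_N)$ and satisfies $\nabla\Gamma_N(x^\star)=0$. Lemma~\ref{lem:Gamma-outside-box} then forces $x^\star\in\mathcal B_N$, because outside the box the gradient has norm at least $1/20$. Hence $\inf_{\mathcal B_N}\Gamma_N=\Gamma_{N,\star}$.

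\emph{Conclusion inside the box.} Combining the two displays gives
\[
\Gamma_N(x)-\Gamma_{N,\star}\le \frac{1}{2\mu}\norm{\nabla\Gamma_N(x)}^2\le \norm{\nabla\Gamma_N(x)}^2\le 1200N\norm{\nabla\Gamma_N(x)}^2 .
\]
The second inequality uses $\mu\ge 1/2$, and the third uses $N\ge 1$. Together with the outside-box case, this proves the proposition.
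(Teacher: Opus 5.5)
Your proposal is correct and follows the paper's proof essentially step for step. You place the global minimizer in $\mathcal B_N$ via Lemma~\ref{lem:Gamma-outside-box} and use $(1/2+\eta)$-strong convexity on the box for points inside it. Outside the box you combine $\Gamma_N(x)-\Gamma_{N,\star}\le\Gamma_N(0)\le 3N$ with $\norm{\nabla\Gamma_N(x)}\ge 1/20$, which is exactly where the constant $1200=3\cdot 400$ comes from.
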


\begin{proof}
Let $x^\star$ be a global minimizer of $\Gamma_N$, which exists by Lemma~\ref{lem:Gamma-basic}(1).
Since $\Gamma_N(x^\star)\le \Gamma_N(0)$, we have $x^\star\in S_0(\Gamma_N)$.
Also $\nabla \Gamma_N(x^\star)=0$, so Lemma~\ref{lem:Gamma-outside-box} implies $x^\star\in \mathcal B_N$.

We distinguish whether $x$ lies inside or outside the box $\mathcal B_N$.

\emph{Case 1: $x\in \mathcal B_N$.}
Then $x$ and $x^\star$ both belong to the convex set $\mathcal B_N$, and Lemma~\ref{lem:Gamma-box-strong} shows that $\Gamma_N$ is $(1/2+\eta)$-strongly convex on that set.
The standard gradient lower bound for strongly convex functions, applied on the convex set \(\mathcal B_N\), therefore gives
\[
\Gamma_N(x)-\Gamma_{N,\star}\le \frac{1}{2(1/2+\eta)}\norm{\nabla \Gamma_N(x)}^2\le \norm{\nabla \Gamma_N(x)}^2.
\]

\emph{Case 2: $x\notin \mathcal B_N$.}
Then Lemma~\ref{lem:Gamma-outside-box} yields
\[
\norm{\nabla \Gamma_N(x)}\ge \frac{1}{20},
\]
and Lemma~\ref{lem:Gamma-basic}(2) gives $\Gamma_N(0)\le 3N$.
Since \(x\in S_0(\Gamma_N)\), we have \(\Gamma_N(x)\le \Gamma_N(0)\). Also \(\Gamma_N\ge0\), so
\(\Gamma_{N,\star}\ge 0\). Therefore
\[
\Gamma_N(x)-\Gamma_{N,\star}\le \Gamma_N(0)\le 3N\le 1200N\norm{\nabla \Gamma_N(x)}^2.
\]

Combining the two cases proves the claim.
\end{proof}

Thus \(\Gamma_N\) is a globally smooth zero-chain whose initial sublevel set satisfies a classical quadratic-P\L{}
inequality. This is the base geometric estimate used below: after choosing the chain length and amplitude, this
quadratic-P\L{} control is converted into the required sublevel \(\alpha\)-P\L{} inequality for the stochastic hard
instance.

\subsection{A probabilistic zero-chain oracle}
\label{app:sto-oracle}

For $m\ge 1$ and $q\in(0,1]$, let $z\sim \mathrm{Bernoulli}(q)$ and define, on $\R^{2m}$,
\[
[g_{m,q}(x,z)]_i
:=
\nabla_i F_{2m}(x)\left(1+\mathbf 1\{i>\prog_{1/4}(x)\}\left(\frac{z}{q}-1\right)\right).
\]

\begin{lemma}[Oracle properties]
\label{lem:oracle-properties}
For every $m\ge 1$ and $q\in(0,1]$:
\begin{enumerate}[label=\arabic*.,leftmargin=1.5em]
    \item $g_{m,q}$ is an unbiased gradient estimator for $F_{2m}$.
    \item $g_{m,q}$ is a probability-$q$ zero-chain.
    \item
    \[
    \E\bigl[\norm{g_{m,q}(x,z)-\nabla F_{2m}(x)}^2\mid x\bigr]\le q^{-1}
    \qquad
    \text{for all }x.
    \]
\end{enumerate}
\end{lemma}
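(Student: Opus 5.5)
All three items follow from a coordinatewise inspection of the oracle. Fix \(x\in\R^{2m}\) and write \(k:=\prog_{1/4}(x)\). By definition, \([g_{m,q}(x,z)]_i=\nabla_iF_{2m}(x)\) for \(i\le k\), and \([g_{m,q}(x,z)]_i=\nabla_iF_{2m}(x)\,z/q\) for \(i>k\). The only randomness is the scalar \(z\sim\mathrm{Bernoulli}(q)\), which does not depend on \(x\).

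\emph{Unbiasedness.} Since \(\E[z/q]=1\), every coordinate has conditional expectation \(\nabla_iF_{2m}(x)\). Hence \(\E[g_{m,q}(x,z)\mid x]=\nabla F_{2m}(x)\).

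\emph{Probability-\(q\) zero-chain.} I will use Lemma~\ref{lem:FN-basic}(5). It gives \(\nabla_jF_{2m}(x)=0\) for every \(j\ge k+2\), and the oracle only multiplies coordinates by scalars, so \([g_{m,q}(x,z)]_j=0\) for \(j\ge k+2\). Thus \(\prog_0(g_{m,q}(x,z))\le k+1\) for every \(x\) and every \(z\), and the event \(\{\exists x:\prog_0(g(x,z))>\prog_{1/4}(x)+1\}\) is empty. Next suppose \(z=0\). Then every coordinate \(i>k\) of \(g_{m,q}(x,0)\) vanishes, so \(\prog_0(g_{m,q}(x,0))\le k\) for every \(x\). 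Consequently,
\[
\{\exists x:\ \prog_0(g_{m,q}(x,z))=\prog_{1/4}(x)+1\}\subseteq\{z=1\},
\]
and this event has probability at most \(q\). The one point needing care is that the definition quantifies ``\(\exists x\)'' inside the probability. This causes no difficulty, because the bad event is contained in \(\{z=1\}\) uniformly in \(x\), so no union bound over \(x\) is needed.

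\emph{Variance.} The error \(g_{m,q}(x,z)-\nabla F_{2m}(x)\) is supported on coordinates \(i>k\), where it equals \(\nabla_iF_{2m}(x)(z/q-1)\). By the zero-chain argument above, only \(i=k+1\) can contribute, and only when \(k<2m\). If \(k=2m\), the error is zero. Otherwise,
\[
\E\bigl[\norm{g_{m,q}(x,z)-\nabla F_{2m}(x)}^2\mid x\bigr]=|\nabla_{k+1}F_{2m}(x)|^2\,\E\bigl[(z/q-1)^2\bigr]=|\nabla_{k+1}F_{2m}(x)|^2\,\frac{1-q}{q}.
\]
Lemma~\ref{lem:FN-basic}(6) gives \(|\nabla_{k+1}F_{2m}(x)|\le e^{-1/2}<1\), so the right-hand side is at most \((1-q)/q\le q^{-1}\).

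I do not expect a real obstacle here. The only places to be careful are the uniform-in-\(x\) event inclusion in the second item and the boundary case \(k=2m\).
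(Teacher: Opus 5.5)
Your proposal is correct and follows the paper's proof essentially step for step. Both arguments split coordinates at $k=\prog_{1/4}(x)$ and get unbiasedness from $\E[z/q]=1$. Both derive the zero-chain property from Lemma~\ref{lem:FN-basic}(5), together with the observation that $z=0$ suppresses every coordinate $i>k$, so the bad event lies in $\{z=1\}$ uniformly in $x$. Both bound the variance by noting that only coordinate $k+1$ fluctuates, with $|\nabla_{k+1}F_{2m}(x)|<1$ and $\E[(z/q-1)^2]=(1-q)/q$.
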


\begin{proof}
We prove the three claims in order.

For item (1), fix $x$ and set $k:=\prog_{1/4}(x)$.
For coordinates \(i\le k\), the multiplier in the definition of \(g_{m,q}\) is equal to one, so
\[
[g_{m,q}(x,z)]_i=\nabla_i F_{2m}(x)
\qquad
\text{for }i\le k,
\]
whereas for coordinates \(i>k\),
\[
[g_{m,q}(x,z)]_i
=
\nabla_i F_{2m}(x)\left(1+\frac{z}{q}-1\right)
=
\nabla_i F_{2m}(x)\frac{z}{q}.
\]
Since $\E[z]=q$, this yields $\E[g_{m,q}(x,z)\mid x]=\nabla F_{2m}(x)$.

For item (2), Lemma~\ref{lem:FN-basic}(5) implies that if $k=\prog_{1/4}(x)$, then
\[
\nabla_j F_{2m}(x)=0
\qquad
\text{for all }j\ge k+2.
\]
Therefore, for every realization of $z$,
\[
\prog_0\bigl(g_{m,q}(x,z)\bigr)\le \prog_{1/4}(x)+1
\qquad
(\forall x\in\R^{2m}),
\]
which is the second requirement in Definition~\ref{def:probability-zero-chain}.
Moreover, if $z=0$, then every coordinate \(i>k\) is suppressed by the factor \(z/q\), so the support cannot extend
beyond $\prog_{1/4}(x)$. If $z=1$, then the support may extend by coordinate \(k+1\), but the preceding display shows
that it can never extend farther.
Hence
\[
\Pbb\!\left(\exists x:\ \prog_0\bigl(g_{m,q}(x,z)\bigr)=\prog_{1/4}(x)+1\right)\le \Pbb(z=1)=q,
\]
which is the first requirement.

For item (3), the same zero-chain support property shows that only coordinate $k+1$ can fluctuate.
If \(k=2m\), no hidden coordinate remains, so \(g_{m,q}(x,z)=\nabla F_{2m}(x)\) and the variance is zero.
Assume \(k<2m\).
Writing $e_{k+1}$ for the $(k+1)$st coordinate vector, we have
\[
g_{m,q}(x,z)-\nabla F_{2m}(x)
=
\nabla_{k+1}F_{2m}(x)\left(\frac{z}{q}-1\right)e_{k+1}.
\]
By Lemma~\ref{lem:FN-basic}(6), $|\nabla_{k+1}F_{2m}(x)|\le 1$.
Therefore
\[
\E\bigl[\norm{g_{m,q}(x,z)-\nabla F_{2m}(x)}^2\mid x\bigr]
\le
\E\left[\left(\frac{z}{q}-1\right)^2\right]
=
q\left(\frac{1}{q}-1\right)^2+(1-q)
=
\frac{1-q}{q}
\le
\frac{1}{q}.
\]
\end{proof}

\subsection{Rotation, clipping, and quadratic regularization}
\label{app:sto-clipping}

We now turn the base probabilistic zero-chain into a hard instance for arbitrary stochastic first-order methods. This
requires three technical modifications. First, a random rotation hides the chain coordinates from the algorithm; the
progress lemma below then says that bounded-norm queries cannot discover the hidden coordinates much faster than the
probability-\(q\) reveal process allows. Second, a radial clipping map forces the point passed to the hidden chain to
remain in a bounded ball, so the bounded-query hypothesis of the rotation lemma is available without restricting the
algorithm. Third, a small quadratic regularization makes the final objective coercive and preserves the sublevel
quadratic-P\L{} estimate proved above.

The next lemma extends the probabilistic zero-chain barrier from zero-respecting methods to arbitrary adaptive
methods. A random rotation hides the coordinate system of the chain, so an algorithm can make progress in only two
ways: the oracle can reveal the next coordinate through its Bernoulli event of probability at most \(q\), or a query
can have an unusually large projection onto a rotated direction that has not yet been revealed. The proof controls
both events simultaneously.

The use of random rotation for this reduction is not new. In particular, Lemma~5 of
\citet{ArjevaniEtAl2019} establishes such a progress barrier for a probability-\(p\) zero chain of dimension \(T\)
and allows \(K\) queries per round. To express their result in our notation, set
\[
K=1,\qquad T=D,\qquad p=q.
\]
Under this identification, their lemma prevents the algorithm from reaching the final coordinate \(D\) for
approximately \(D/q\) rounds. Our objective barrier, however, is triggered before the end of the chain: the hard
instance has \(D=2m\) coordinates, but it is enough to prevent the algorithm from reaching coordinate \(m\). We
therefore introduce a separate target level \(M\le D\) and prove
\(\prog_{1/4}(U^\top x_t)<M\) for approximately \(M/q\) rounds; our application takes \(M=m\). Although the target
is \(M\), the ambient-dimension requirement still depends on the full chain dimension \(D\), because an adaptive
transcript may contain information about any of the \(D\) rotated directions. This target-progress formulation is not
stated in the cited lemma, so we prove it below by adapting the filtration and random-projection argument from their
Appendix~B.1.

\begin{lemma}[Random-rotation progress barrier]
\label{lem:random-rotation-progress}
Let \(1\le M\le D\) and \(d\ge1\) be integers, \(q\in(0,1]\), \(R\ge1\), and \(\delta\in(0,1)\). Let
\(F:\R^D\to\R\), and let \(g:\R^D\times\mathcal Z\to\R^D\) be a probability-\(q\) zero chain for \(F\).
Assume
\[
d\ge
\max\left\{
D,\,
18R^2Dq^{-1}\log\left(\frac{2D^2}{q\delta}\right)
\right\}.
\]
Draw \(U\) uniformly from
\[
\Ortho(d,D):=\{U\in\R^{d\times D}:U^\top U=I_D\}.
\]
Consider any adaptive, possibly randomized algorithm that makes one query \(x_t\in\R^d\) per round and receives the
value-and-gradient response
\[
O_U(x_t,z_t):=\bigl(F(U^\top x_t),\,Ug(U^\top x_t,z_t)\bigr).
\]
Assume that \(U\), the oracle seeds, and the algorithm's internal randomness are mutually independent, that the
\(z_t\) are independent samples from the seed distribution defining \(g\), and that \(\norm{x_t}\le R\) almost surely
for every \(t\). Define
\[
t_\star
:=
\left\lfloor\frac{M-\log(2/\delta)}{2q}\right\rfloor.
\]
Then
\[
\Pbb\left(
\prog_{1/4}(U^\top x_t)<M
\ \text{for every integer }1\le t\le t_\star
\right)
\ge1-\delta,
\]
where the probability is over \(U\), the oracle seeds, and the algorithm's internal randomness.
\end{lemma}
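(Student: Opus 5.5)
I will adapt the filtration argument of \citet[Appendix~B.1]{ArjevaniEtAl2019}. Write \(u_1,\ldots,u_D\) for the columns of \(U\). For each round \(t\), let \(k_t:=\max_{s<t}\prog_0(g(U^\top x_s,z_s))\) denote the largest chain index whose gradient coordinate has appeared in the transcript before round \(t\). In particular, the set \(\{u_1,\ldots,u_{k_t}\}\) is the portion of \(U\) that the transcript can have exposed directly.

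I then define a good event \(G_t\): for every \(s\le t\) and every \(j>k_s\), \(\abs{\ip{u_j}{x_s}}\le 1/4\). On \(G_t\), an induction gives \(\prog_{1/4}(U^\top x_s)\le k_s\). By the probability-\(q\) zero-chain property, \(k_{s+1}\le k_s+1\), and the increment can happen only through the Bernoulli-type event of probability at most \(q\).

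\textbf{Step 1 (counting reveals).} On the intersection of the good events, \(k_t\) is dominated by a sum of \(t-1\) indicators. Each indicator has conditional probability at most \(q\) given the past, because the fresh seed \(z_s\) is independent of the past and of the query. A Chernoff/Freedman bound for such martingale-dominated Bernoulli sums then shows that the sum stays below \(M\) up to time \(t_\star=\lfloor (M-\log(2/\delta))/(2q)\rfloor\), except with probability at most \(\delta/2\). I would check this with the multiplicative bound \(\Pbb(\mathrm{Bin}(t,q)\ge M)\le e^{-(M-2tq)}\)-type estimate. Since \(2qt_\star\le M-\log(2/\delta)\), this gives failure probability at most \(\delta/2\).

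\textbf{Step 2 (no accidental projections).} I must show that \(\Pbb(G_{t_\star}^c)\le\delta/2\). Condition on the transcript up to round \(s\) together with the revealed columns \(u_1,\ldots,u_{k_s}\). This transcript is a function of \(F(U^\top x_r)\), of \(Ug(\cdot)\), and of the seeds. The subtle point is that \(F(U^\top x_r)\) depends on all coordinates of \(U^\top x_r\). However, on \(G_s\) the hidden coordinates \(\abs{\ip{u_j}{x_r}}\le 1/4\) enter \(F\) only through regions where the chain is flat, so the value carries no information about them. Making this precise is exactly the Arjevani et al.\ device: the responses are measurable functions of \(\{u_j:j\le k_r\}\) together with the projections \(P_r^\perp\), and the event \(G\) is what certifies this.

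With that in place, the conditional law of the unrevealed columns is uniform on the orthonormal complement of the span of the revealed columns and the previous queries, a subspace of dimension at least \(d-2D\). Hence for a fixed \(x_s\) with \(\norm{x_s}\le R\), each \(\ip{u_j}{x_s}\) is distributed like \(R\) times one coordinate of a uniform vector on a sphere of dimension \(\gtrsim d\). Concentration gives \(\Pbb(\abs{\ip{u_j}{x_s}}>1/4)\le 2\exp(-c\,d/R^2)\). A union bound over the at most \(D\) columns \(j\) and the \(t_\star\le D/q\) rounds then gives total failure probability \(2(D^2/q)e^{-cd/R^2}\). This is at most \(\delta/2\) under the stated lower bound \(d\ge18R^2Dq^{-1}\log(2D^2/(q\delta))\), which is generous compared with what the bound needs.

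\textbf{Combining and the main obstacle.} Combining Steps 1 and 2 by a union bound yields the claim. The main obstacle is Step 2's conditional-independence claim, which requires a careful inductive construction of the filtration. The issue is that the function-value channel and the Bernoulli-thinned gradient must both be shown to leak nothing about unrevealed columns while \(G\) holds. The approach is to define a surrogate oracle that uses only revealed columns, show that it coincides with the true one on \(G\), and run the entire argument for the surrogate, as in their Appendix~B.1.
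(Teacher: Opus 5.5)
Your Step~1 (a stopped Bernoulli count with the \(e^{2qt-M}\) bound) matches the paper. Step~2, however, has a genuine gap. It rests on the claim that, on the good event, the transcript carries no information about unrevealed columns. You justify this by flatness of \(F\) in the hidden coordinates and by a surrogate oracle built from revealed columns. The lemma assumes nothing of the kind: the probability-\(q\) zero-chain condition restricts only the \emph{support} of \(g\). It says nothing about which coordinates of \(U^\top x\) the entries of \(g\), or the value \(F\), depend on.

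The claim fails even for the paper's own chain. Write \(y=U^\top x\) and \(k=\prog_{1/4}(y)\). The value term \(\Theta(y_k)\varphi(y_{k+1})\), and the always-exact gradient entry at coordinate \(k\) (which contains \(\Theta'(y_k)\varphi(y_{k+1})\)), both depend on \(y_{k+1}=\ip{u_{k+1}}{x}\) whenever \(y_k>1/2\). This happens before any \(z=1\) reveals column \(k+1\). So your surrogate does not coincide with the true oracle on the good event. Consequently \(u_j\) is not conditionally uniform in a way that makes \(\ip{u_j}{x_s}\) a fresh spherical coordinate. The complement you describe is also off: \(u_j\) is not orthogonal to earlier queries, and their span can have dimension \(t-1\sim D/q\), not \(D\).

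This matters quantitatively, not just formally. Once the algorithm knows \(\ip{u_j}{x_r}\) for \(r<s\), it can place \(x_s\) along the projection of \(u_j\) onto \(\operatorname{span}\{x_1,\dots,x_{s-1}\}\). That gives \(\ip{u_j}{x_s}\approx R\sqrt{s/d}\) rather than \(R/\sqrt d\). Your per-query tail \(2e^{-cd/R^2}\) is therefore false for adaptive queries, and the factor \(D/q\) in the dimension hypothesis is not ``generous'' but necessary.

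The missing idea is the paper's cumulative-increment argument:
\begin{enumerate}
\item Condition on all of \(U^\top x_1,\dots,U^\top x_{i-1}\), so that every leak is measurable whatever \(F\) and \(g\) are.
\item Use invariance under rotations fixing \(S=\operatorname{span}\{u_1,\dots,u_{j-1},x_1,\dots,x_{i-1}\}\) pointwise. This shows that only the normalized residual of \(u_j\) on \(S^\perp\) is uniform.
\item Require each new increment \(\norm{P_{j-1,i}P_{j-1,i-1}^\perp u_j}\) to be below \(1/(4R\sqrt t)\).
\item Sum the orthogonal increments by Cauchy--Schwarz to get \(\abs{\ip{u_j}{x_i}}\le\norm{x_i}\sqrt{i/t}/(4R)\le 1/4\).
\end{enumerate}
The union bound then gives \(2tD\exp\bigl(-(d-t-D)/(32R^2t)\bigr)\). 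With \(t=t_\star\le D/(2q)\), this is exactly where the requirement \(d\gtrsim R^2Dq^{-1}\log\bigl(2D^2/(q\delta)\bigr)\) comes from.
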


\begin{proof}
The normalization $R\ge1$ is harmless: if queries are bounded by a smaller radius, one may use the valid upper bound
$R=1$. It also makes the numerical dimension estimate below uniform over all parameter values.

\emph{Step 1: control progress caused by oracle reveals.}
Represent all of the algorithm's internal randomness by a seed \(r\) sampled before the first query, and condition on
\(r\). For almost every such seed, the assumed query bound continues to hold almost surely. At the end of the proof
we average over \(r\). Write \(U=(u_1,\ldots,u_D)\), and for the base-coordinate gradient response
\(g_t:=g(U^\top x_t,z_t)\), define
\[
\pi_t:=\max_{i\le t}\prog_{1/4}(U^\top x_i),
\qquad
\gamma_t:=\max_{i\le t}\prog_0(g_i),
\qquad
\gamma_0:=0,
\]
and introduce the no-leakage event
\[
V_i:=\{\prog_{1/4}(U^\top x_i)\le\gamma_{i-1}\}.
\]
Let \(\mathcal F_{i-1}:=\sigma(U,r,z_1,\ldots,z_{i-1})\). The query \(x_i\), the previous progress
\(\gamma_{i-1}\), and the event \(V_i\) are all \(\mathcal F_{i-1}\)-measurable. On \(V_i\), the zero-chain property
gives
\[
\prog_0(g_i)\le\prog_{1/4}(U^\top x_i)+1\le\gamma_{i-1}+1
\qquad\text{almost surely}.
\]
Hence \(\gamma_i-\gamma_{i-1}\in\{0,1\}\) on \(V_i\). Moreover, if this increment equals one, then necessarily
\[
\prog_{1/4}(U^\top x_i)=\gamma_{i-1},
\qquad
\prog_0(g_i)=\prog_{1/4}(U^\top x_i)+1.
\]
The defining probability-\(q\) event is uniform over the query point, and \(z_i\) is independent of
\(\mathcal F_{i-1}\). Therefore
\[
\Pbb(\gamma_i-\gamma_{i-1}=1,\,V_i\mid\mathcal F_{i-1})\le q.
\]
To stop the reveal count if geometric leakage occurs, set
\[
E_i:=\bigcap_{s=1}^iV_s,\qquad E_0:=\Omega,
\qquad
\zeta_i:=(\gamma_i-\gamma_{i-1})\mathbf 1_{E_i}.
\]
If \(E_{i-1}\) fails, then \(\zeta_i=0\). On
\(E_{i-1}\), the preceding conditional reveal bound implies
\(\zeta_i\in\{0,1\}\) and
\(\Pbb(\zeta_i=1\mid\mathcal F_{i-1})\le q\). Consequently,
\[
\E[e^{\zeta_i}\mid\mathcal F_{i-1}]
\le 1+(e-1)q\le e^{2q}.
\]
Iterating conditional expectations gives
\(\E[\exp(\sum_{i=1}^t\zeta_i)]\le e^{2qt}\). On \(E_t\), the stopped sum telescopes to
\(\sum_{i=1}^t\zeta_i=\gamma_t\), so Markov's inequality gives
\begin{equation}
\label{eq:target-reveal-bound}
\Pbb\left(\gamma_t\ge M,\ \bigcap_{i=1}^tV_i\right)
\le e^{2qt-M}.
\end{equation}

\emph{Step 2: convert small subspace increments into small hidden-coordinate projections.}
Fix an integer horizon \(t\ge1\). For \(i\le t\) and \(j\le D\), let
$P_{j-1,i}$ be the orthogonal projector onto
\[
\operatorname{span}\{u_1,\ldots,u_{j-1},x_1,\ldots,x_i\},
\]
and set $P_{j-1,i}^{\perp}:=I-P_{j-1,i}$. Define
\[
G_{i,j}:=
\left\{
\norm{P_{j-1,i}P_{j-1,i-1}^{\perp}u_j}<\frac{1}{4R\sqrt t}
\right\}.
\]
To verify the consequence of these events, define
\[
Q_{s,j}
:=
P_{j-1,s}-P_{j-1,s-1}
=
P_{j-1,s}P_{j-1,s-1}^{\perp},
\qquad 1\le s\le i.
\]
Because the underlying subspaces are nested, the \(Q_{s,j}\) are mutually orthogonal projections. Moreover,
\(P_{j-1,0}u_j=0\), since \(u_j\) is orthogonal to \(u_1,\ldots,u_{j-1}\), and
\[
P_{j-1,i}=P_{j-1,0}+\sum_{s=1}^iQ_{s,j}.
\]
Since \(x_i\) belongs to the range of \(P_{j-1,i}\), it follows that
\begin{align*}
|\ip{u_j}{x_i}|
&=
\left|\sum_{s=1}^i\ip{Q_{s,j}u_j}{Q_{s,j}x_i}\right|\\
&\le
\left(\sum_{s=1}^i\norm{Q_{s,j}u_j}^2\right)^{1/2}
\left(\sum_{s=1}^i\norm{Q_{s,j}x_i}^2\right)^{1/2}.
\end{align*}
On \(\cap_{s=1}^iG_{s,j}\), the first factor is smaller than
\(\sqrt{i/(16R^2t)}\). Orthogonality of the \(Q_{s,j}\) bounds the second factor by \(\norm{x_i}\).
Therefore
\begin{equation}
\label{eq:target-projector-bound}
\bigcap_{s=1}^iG_{s,j}
\quad\Longrightarrow\quad
|\ip{u_j}{x_i}|
\le \frac{\norm{x_i}}{4R}\sqrt{\frac{i}{t}}
\le\frac14.
\end{equation}

\emph{Step 3: bound accidental discovery of an unrevealed rotated direction.}
We now bound the failures of \(G_{i,j}\) while \(u_j\) is unrevealed. Introduce the geometric filtration
\[
\mathcal H_{i-1,j}
:=
\sigma\bigl(r,u_1,\ldots,u_{j-1},
U^\top x_1,z_1,\ldots,U^\top x_{i-1},z_{i-1}\bigr).
\]
Set \(T_{i,j}:=\{j>\gamma_{i-1}\}\). This event is \(\mathcal H_{i-1,j}\)-measurable: from each pair
\((U^\top x_s,z_s)\), one can compute \(g_s=g(U^\top x_s,z_s)\) and hence \(\gamma_{i-1}\). On \(T_{i,j}\), every
previous base response \(g_s\), \(s<i\), is supported on coordinates \(1,\ldots,j-1\). The information in
\(\mathcal H_{i-1,j}\) therefore determines the full previous oracle transcript, because
\[
F(U^\top x_s)
\quad\text{is known, and}\quad
Ug_s=\sum_{\ell=1}^{j-1}(g_s)_\ell u_\ell.
\]
Consequently, on \(T_{i,j}\), the next adaptive query \(x_i\), and hence the projectors
\(P_{j-1,i-1}\) and \(P_{j-1,i}\), are determined by \(\mathcal H_{i-1,j}\).

We claim that, conditional on \(\mathcal H_{i-1,j}\) and \(T_{i,j}\), the normalized residual
\[
\widehat u_j:=
\frac{P_{j-1,i-1}^{\perp}u_j}
{\norm{P_{j-1,i-1}^{\perp}u_j}}
\]
is uniform on the unit sphere in
\[
S^\perp,
\qquad
S:=\operatorname{span}\{u_1,\ldots,u_{j-1},x_1,\ldots,x_{i-1}\}.
\]
The denominator is nonzero almost surely; on the null event where it vanishes, define \(\widehat u_j\) arbitrarily.
To prove the claim, let \(W\) be any orthogonal transformation that fixes \(S\) pointwise, replace \(U\) by \(WU\),
and keep \(r,z_1,\ldots,z_{i-1}\) unchanged. We show inductively that the first \(i-1\) queries and oracle responses
are unchanged. The first query depends only on \(r\). If the queries agree through round \(s<i\), then \(W x_s=x_s\),
so
\[
(WU)^\top x_s=U^\top x_s.
\]
The base response and the function value are therefore unchanged. On \(T_{i,j}\), the base response is supported on
coordinates below \(j\), and \(W\) fixes \(u_1,\ldots,u_{j-1}\); hence \(WUg_s=Ug_s\). Thus the full response in
round \(s\) is unchanged, and the next query is unchanged as well. This completes the induction.

The transformation \(U\mapsto WU\) therefore preserves the conditioned information and the event \(T_{i,j}\).
Because the uniform distribution on \(\Ortho(d,D)\) is invariant under left multiplication by \(W\), the conditional
law of \(\widehat u_j\) is invariant under every rotation of \(S^\perp\). Hence it is uniform on the unit sphere in
\(S^\perp\), as claimed. This is the filtration argument of
\citet[Appendix~B.1]{ArjevaniEtAl2019}, written here for one query per round and with the function values included in
the transcript.

Let \(n:=\dim(S^\perp)\). Since \(S\) is spanned by at most \(j-1\) rotation columns and \(i-1\) previous queries,
\[
n\ge d-i-j+2.
\]
On \(T_{i,j}\), the operator
\[
Q_{i,j}:=P_{j-1,i}P_{j-1,i-1}^\perp
\]
is determined by \(\mathcal H_{i-1,j}\) and is an orthogonal projector of rank at most one. Moreover,
\[
Q_{i,j}u_j
=
\norm{P_{j-1,i-1}^{\perp}u_j}\,Q_{i,j}\widehat u_j,
\qquad
\norm{P_{j-1,i-1}^{\perp}u_j}\le1.
\]
Thus \(G_{i,j}^c\) implies
\[
\norm{Q_{i,j}\widehat u_j}\ge \frac{1}{4R\sqrt t}.
\]
For a vector \(v\) uniformly distributed on the unit sphere in an \(n\)-dimensional Euclidean space and any
rank-one orthogonal projector \(Q\), the standard spherical-coordinate bound gives
\[
\Pbb\left(\norm{Qv}\ge a\right)
\le
2\exp\left(-\frac{(n-1)a^2}{2}\right)
\qquad (0\le a\le1).
\]
Apply this bound conditionally on \(\mathcal H_{i-1,j}\) and \(T_{i,j}\), with
\(a=(4R\sqrt t)^{-1}\), and then average over the conditioning. Since \(R,t\ge1\), this value of \(a\) lies in
\([0,1]\), and the lower bound on \(n\) yields
\[
\Pbb\left(G_{i,j}^c\cap T_{i,j}\right)
\le
2\exp\left(-\frac{d-i-j}{32R^2t}\right)
\le
2\exp\left(-\frac{d-t-D}{32R^2t}\right).
\]

We next translate these bounds into the no-leakage events \(V_i\). Suppose that
\(G_{s,j}\) holds whenever \(s\le t\) and \(j>\gamma_{s-1}\). Fix \(i\le t\) and
\(j>\gamma_{i-1}\). Since \(\gamma_s\) is nondecreasing, \(j>\gamma_{s-1}\) for every \(s\le i\), so
\(G_{s,j}\) holds for all \(s\le i\). Equation~\eqref{eq:target-projector-bound} then gives
\(|\langle u_j,x_i\rangle|<1/4\). This holds for every \(j>\gamma_{i-1}\), and hence \(V_i\) holds. Therefore
\[
\left(\bigcap_{i=1}^tV_i\right)^c
\subseteq
\bigcup_{i=1}^t\bigcup_{j=1}^D
\left(G_{i,j}^c\cap T_{i,j}\right).
\]
Combining this containment with the preceding tail estimate and taking a union bound gives
\begin{equation}
\label{eq:target-leakage-bound}
\Pbb\left(\left(\bigcap_{i=1}^tV_i\right)^c\right)
\le
2tD\exp\left(-\frac{d-t-D}{32R^2t}\right).
\end{equation}
The union is over all $D$ columns, not only the first $M$, because the conditional Stiefel geometry and the information
available in an adaptive transcript involve the full rotated chain. This is why the dimension requirement depends on
$D$.

\emph{Step 4: choose the horizon and combine the two failure probabilities.}
Let $H:=\log(2D^2/(q\delta))$.
If $t_\star<1$, there are no positive integers in the claimed time range, so the conclusion is immediate. Assume
that $t_\star\ge1$. Since $M\le D$,
\[
t_\star
\le \frac{M-\log(2/\delta)}{2q}
\le \frac{D}{2q},
\qquad
\log\left(\frac{4t_\star D}{\delta}\right)\le H.
\]
The leakage bound~\eqref{eq:target-leakage-bound} is at most \(\delta/2\) whenever
\begin{equation}
\label{eq:target-dimension-sufficient}
d
\ge
t_\star+D
+32R^2t_\star\log\left(\frac{4t_\star D}{\delta}\right).
\end{equation}
The logarithmic term on the right satisfies
\[
32R^2t_\star\log\left(\frac{4t_\star D}{\delta}\right)
\le16R^2Dq^{-1}H.
\]
It remains to bound the first two terms. We claim that
\[
t_\star+D\le 2R^2Dq^{-1}H.
\]
If \(q\le1/2\), then
\(t_\star+D\le D/q\le2R^2Dq^{-1}H\) because \(H\ge\log2>1/2\). If \(q>1/2\), the assumption
\(t_\star\ge1\) forces \(D\ge2\), so \(H\ge\log8\) and
\(t_\star+D\le(1/2+q)D/q\le(3/2)D/q\le2R^2Dq^{-1}H\). Therefore
\[
 t_\star+D
 +32R^2t_\star\log\left(\frac{4t_\star D}{\delta}\right)
 \le18R^2Dq^{-1}H
 \le d.
\]
Thus~\eqref{eq:target-dimension-sufficient} holds, and~\eqref{eq:target-leakage-bound} gives
\[
\Pbb(E_{t_\star}^c)\le\frac\delta2.
\]
This is the numerical specialization to \(K=1\) of the dimension calculation in the proof of
\citet[Lemma~5]{ArjevaniEtAl2019}.

By the definition of \(t_\star\), Equation~\eqref{eq:target-reveal-bound} also gives
\[
\Pbb(\gamma_{t_\star}\ge M,\ E_{t_\star})
\le e^{2qt_\star-M}
\le\frac\delta2.
\]
Finally,
on \(E_{t_\star}=\cap_{i=1}^{t_\star}V_i\), we have
\(\pi_{t_\star}\le\gamma_{t_\star-1}\le\gamma_{t_\star}\), and therefore
\[
\Pbb(\pi_{t_\star}\ge M)
\le
\Pbb\left(\gamma_{t_\star}\ge M,\ \bigcap_{i=1}^{t_\star}V_i\right)
+
\Pbb\left(\left(\bigcap_{i=1}^{t_\star}V_i\right)^c\right)
\le\delta.
\]
Since $\pi_t$ is nondecreasing, this proves the simultaneous claim for every $t\le t_\star$. Finally, averaging the
conditional bound over $r$ incorporates the algorithm's internal randomness. The proof uses one query in each round,
which is the specialization $K=1$ of the batched model in the cited result.
\end{proof}

\begin{remark}
Lemma~\ref{lem:random-rotation-progress} is stated for a stronger oracle than the bounded-variance stochastic-gradient
oracle of Definition~\ref{def:stochastic-model}. A lower bound in this stronger model immediately implies the same
lower bound for the gradient-only model.
\end{remark}

Fix $m\ge 1$ and set
\[
R_m:=16\sqrt m.
\]
Define a $C^1$ profile $h_m:[0,\infty)\to [0,2R_m)$ by
\[
h_m(r):=
\begin{cases}
r, & 0\le r\le R_m,\\[0.3em]
R_m+R_m\bigl(1-e^{-(r-R_m)/R_m}\bigr), & r>R_m.
\end{cases}
\]
Then $h_m(r)=r$ on $[0,R_m]$, $0<h_m'(r)\le 1$ for $r>R_m$, and
\[
h_m(r)\le r,
\qquad
h_m(r)<2R_m
\qquad
(\forall r\ge 0),
\]
because $1-e^{-s}\le s$ for $s\ge 0$.
Define the radial clipping map
\[
\rho_m(x):=
\begin{cases}
\dfrac{h_m(\norm{x})}{\norm{x}}\,x, & x\neq 0,\\[0.8em]
0, & x=0.
\end{cases}
\]
Then $\rho_m(x)=x$ for $\norm{x}\le R_m$, $\norm{\rho_m(x)}\le 2R_m$, $\norm{\rho_m(x)}\le \norm{x}$ for all $x$, and
\[
\sup_x \norm{J\rho_m(x)}_{\mathrm{op}}\le 1.
\]
Indeed, for $x\neq 0$ with $r=\norm{x}$, the eigenvalues of $J\rho_m(x)$ are $h_m'(r)$ in the radial direction and $h_m(r)/r$ on tangential directions; both lie in $[0,1]$.

\begin{lemma}[Global regularity of the radial clipping map]
\label{lem:radial-clipping}
For every $m\ge 1$ and every Euclidean dimension \(d\), the Jacobian $J\rho_m$ is globally
$(11/R_m)$-Lipschitz on \(\R^d\):
\[
\norm{J\rho_m(x)-J\rho_m(y)}_{\mathrm{op}}\le \frac{11}{R_m}\norm{x-y}
\qquad
(\forall x,y\in\R^d).
\]
\end{lemma}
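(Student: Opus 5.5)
The plan is to exploit the radial structure $\rho_m(x)=\psi(\norm{x})\,x$ with $\psi(r):=h_m(r)/r$. Equivalently, write $\rho_m(x)=\nabla \Psi(x)$ with $\Psi(x)=H(\norm{x})$ and $H'=h_m$. Then $J\rho_m(x)$ is the Hessian of a radial function. Its eigenvalues are $h_m'(r)$ along $\hat x:=x/\norm{x}$ and $h_m(r)/r$ on $\hat x^\perp$, so
\[
J\rho_m(x)=\frac{h_m(r)}{r}I+\Bigl(h_m'(r)-\frac{h_m(r)}{r}\Bigr)\hat x\hat x^\top,\qquad r=\norm{x}.
\]
On the ball $\norm{x}\le R_m$ the Jacobian equals $I$, so it is constant there. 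The profile $h_m$ is $C^1$ at $R_m$, since $h_m'(R_m^+)=1$. The Jacobian is therefore continuous, and piecewise $C^1$ away from the sphere $\norm{x}=R_m$. It then suffices to bound the operator norm of its derivative on the region $\norm{x}>R_m$ by $11/R_m$. Continuity across the sphere, plus the fact that a segment meets it in at most two points, lets us integrate along segments to obtain the global Lipschitz bound.

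For $r>R_m$, I would differentiate the displayed formula in a direction $v$ with $\norm{v}=1$. This gives three contributions.
\begin{itemize}
\item The scalar $\psi(r)=h_m(r)/r$ contributes $\psi'(r)\,\ip{\hat x}{v}$. Here $\psi'(r)=(h_m'(r)-\psi(r))/r$, and both $h_m'$ and $\psi$ lie in $[0,1]$, so $|\psi'(r)|\le 1/r\le 1/R_m$.
\item The coefficient $\beta(r):=h_m'(r)-\psi(r)$ has derivative $h_m''(r)-\psi'(r)$, with $|h_m''(r)|=e^{-(r-R_m)/R_m}/R_m\le 1/R_m$. Hence $|\beta'(r)|\le 2/R_m$.
\item The rank-one projector $\hat x\hat x^\top$ has derivative $(P^\perp v\,\hat x^\top+\hat x\,(P^\perp v)^\top)/r$, where $P^\perp:=I-\hat x\hat x^\top$. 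Its operator norm is at most $2/r$, and $|\beta|\le 1$, so this term contributes at most $2/R_m$.
\end{itemize}
Summing gives a derivative norm of at most $5/R_m\le 11/R_m$. The stated constant has slack, so crude triangle-inequality bounds are enough and I need not optimize.

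The remaining step is to pass from the derivative bound to the Lipschitz bound. Given $x,y$, I would parametrize the segment $\gamma(t)=y+t(x-y)$ and split it at its at most two crossings of the sphere of radius $R_m$. If the segment passes through the origin, that point lies inside the ball, where $J\rho_m\equiv I$, so the origin singularity of $\hat x$ causes no trouble. On each piece, $t\mapsto J\rho_m(\gamma(t))$ is $C^1$ with derivative norm at most $5\norm{x-y}/R_m$. Continuity at the crossings lets the pieces telescope, which gives the result.

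The main obstacle is bookkeeping around the non-smooth points: the sphere $\norm{x}=R_m$, where $h_m''$ jumps, and the origin, where the formula is singular. Both are handled by the observation that $J\rho_m=I$ on the closed ball and that $h_m$ is $C^1$ across $R_m$. The bound is dimension-free because every estimate involves only radial quantities and projectors.
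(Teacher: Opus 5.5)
Your proof is correct and uses the same decomposition $J\rho_m(x)=\frac{h_m(r)}{r}I+\bigl(h_m'(r)-\frac{h_m(r)}{r}\bigr)\hat x\hat x^\top$ and the same case split at the sphere $\norm{x}=R_m$ as the paper. The difference is in execution: you bound the directional derivative on the exterior and integrate along the segment, splitting at its at most two sphere crossings, whereas the paper bounds finite differences of the two scalar coefficients and of $\hat x\hat x^\top$ directly for two exterior points (using that $x\mapsto x/\norm{x}$ is $(2/R_m)$-Lipschitz there, so no path needs to stay outside the ball) and splits a segment only in the mixed case; your sharper estimate $|\psi'(r)|\le 1/r$ also gives the constant $5$ rather than $11$.
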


\begin{proof}
For $x\neq 0$, write
\[
r:=\norm{x},
\qquad
e:=\frac{x}{\norm{x}},
\qquad
a(r):=\frac{h_m(r)}{r},
\qquad
b(r):=h_m'(r)-\frac{h_m(r)}{r}.
\]
Then
\[
J\rho_m(x)=a(r)I+b(r)ee^\top.
\]
If $r\le R_m$, then $h_m(r)=r$ and $h_m'(r)=1$, so $a(r)=1$ and $b(r)=0$.

Assume now that $r\ge R_m$.
From the definition of $h_m$,
\[
h_m(r)=R_m\bigl(2-e^{-(r-R_m)/R_m}\bigr),
\qquad
h_m'(r)=e^{-(r-R_m)/R_m},
\qquad
h_m''(r)=-\frac{1}{R_m}e^{-(r-R_m)/R_m}.
\]
Since $r\ge R_m$, this implies
\[
0\le a(r)\le 1,
\qquad
|a'(r)|
=
\left|\frac{h_m'(r)r-h_m(r)}{r^2}\right|
\le
\frac{r+2R_m}{r^2}
\le
\frac{3}{R_m},
\]
and
\[
|b(r)|\le 1,
\qquad
|b'(r)|\le |h_m''(r)|+|a'(r)|\le \frac{1}{R_m}+\frac{3}{R_m}=\frac{4}{R_m}.
\]

Suppose first that $\norm{x}\ge R_m$ and $\norm{y}\ge R_m$, and define
\[
s:=\norm{y},
\qquad
f:=\frac{y}{\norm{y}}.
\]
Then
\[
J\rho_m(x)-J\rho_m(y)
=
[a(r)-a(s)]I+[b(r)-b(s)]ee^\top+b(s)(ee^\top-ff^\top).
\]
Hence
\[
\norm{J\rho_m(x)-J\rho_m(y)}_{\mathrm{op}}
\le
|a(r)-a(s)|+|b(r)-b(s)|+|b(s)|\,\norm{ee^\top-ff^\top}_{\mathrm{op}}.
\]
By the bounds above,
\[
|a(r)-a(s)|\le \frac{3}{R_m}|r-s|,
\qquad
|b(r)-b(s)|\le \frac{4}{R_m}|r-s|.
\]
Also
\[
\norm{ee^\top-ff^\top}_{\mathrm{op}}
\le
2\norm{e-f}
\le
\frac{4}{R_m}\norm{x-y},
\]
because $\min\{r,s\}\ge R_m$ and the map $x\mapsto x/\norm{x}$ is $(2/R_m)$-Lipschitz on $\{x:\ \norm{x}\ge R_m\}$.
Therefore
\[
\norm{J\rho_m(x)-J\rho_m(y)}_{\mathrm{op}}\le \frac{11}{R_m}\norm{x-y}.
\]

If $\norm{x}\le R_m$ and $\norm{y}\le R_m$, then \(\rho_m\) is the identity map in a neighborhood of both points and
$J\rho_m(x)=J\rho_m(y)=I$, so the claim is trivial.
In the remaining case, the line segment joining $x$ and $y$ crosses the sphere $\{\norm{z}=R_m\}$ at some point $z$.
Since $J\rho_m(z)=I$, the inside-ball estimate applies to the segment from the inside point to \(z\), and the
outside-ball estimate applies to the segment from \(z\) to the outside point. Therefore
\[
\begin{aligned}
\norm{J\rho_m(x)-J\rho_m(y)}_{\mathrm{op}}
&\le
\norm{J\rho_m(x)-J\rho_m(z)}_{\mathrm{op}}+\norm{J\rho_m(z)-J\rho_m(y)}_{\mathrm{op}}\\
&\le
\frac{11}{R_m}\bigl(\norm{x-z}+\norm{y-z}\bigr)
=
\frac{11}{R_m}\norm{x-y}.
\end{aligned}
\]
This proves the lemma.
\end{proof}

Let $m_0:=5$ and $\delta:=1/4$.

For each pair $(m,q)$ with $m\ge m_0$ and $q\in(0,1]$, let $d_{m,q}$ be any integer satisfying
\begin{equation}
\label{eq:dmq-threshold}
d_{m,q}\ge
\max\left\{
2m,\,
18(2R_m)^2(2m)q^{-1}\log\left(\frac{2(2m)^2}{q\delta}\right)
\right\}.
\end{equation}

Fix any $d\ge d_{m,q}$ and any $U\in \Ortho(d,2m)$, and define
\begin{equation}
\label{eq:hatF-def}
\widehat F_{m,U}(x):=F_{2m}(U^\top \rho_m(x))+\frac{\eta}{2}\norm{x}^2,
\end{equation}
and
\begin{equation}
\label{eq:hatg-def}
\widehat g_{m,q,U}(x,z):=J\rho_m(x)^\top U g_{m,q}(U^\top \rho_m(x),z)+\eta x.
\end{equation}

The dimension threshold in~\eqref{eq:dmq-threshold} is exactly the condition in
Lemma~\ref{lem:random-rotation-progress} with
$D=2m$, $M=m$, $R=2R_m$, and $\delta=1/4$. In particular, it accounts for all $2m$ rotated chain directions rather
than only the first $m$ coordinates needed by the objective barrier. The next lemma packages the properties needed for
the lower bound: the oracle is unbiased with bounded variance, the objective is globally smooth and has
quadratic-P\L{} geometry on its initial sublevel set, and random rotation plus clipping turns slow coordinate discovery
into an expected objective-error lower bound.

\begin{lemma}[Properties of the clipped, rotated, regularized instance]
\label{lem:clipped-rotated}
There exists a numerical constant $c_0>0$ such that the following hold for every $m\ge m_0$, every $q\in(0,1]$, every $d\ge d_{m,q}$, and every $U\in \Ortho(d,2m)$:
\begin{enumerate}[label=\arabic*.,leftmargin=1.5em]
    \item $\widehat g_{m,q,U}$ is an unbiased gradient estimator for $\widehat F_{m,U}$.
    \item
    \[
    \E\bigl[\norm{\widehat g_{m,q,U}(x,z)-\nabla \widehat F_{m,U}(x)}^2\mid x\bigr]\le \frac{4}{q}.
    \]
    \item $\nabla \widehat F_{m,U}$ is globally $L_{\mathrm{clip}}$-Lipschitz for a universal numerical constant $L_{\mathrm{clip}}$.
    \item $S_0(\widehat F_{m,U})\subseteq B(0,R_m)$.
    Hence, by~\eqref{eq:hatF-def}, on $S_0(\widehat F_{m,U})$,
    \[
    \widehat F_{m,U}(x)=F_{2m}(U^\top x)+\frac{\eta}{2}\norm{x}^2,
    \]
    and
    \[
    \widehat F_{m,U}(x)-\widehat F_{m,U,\star}\le 2400m\norm{\nabla \widehat F_{m,U}(x)}^2
    \qquad
    (x\in S_0(\widehat F_{m,U})).
    \]
    \item If $U$ is drawn uniformly from $\Ortho(d,2m)$, $A$ is any randomized algorithm in the gradient-only stochastic
    first-order model of this paper, and
    \[
    t\le c_0\frac{m}{q},
    \]
    then
    \[
    \E_{U,\xi}\bigl[\widehat F_{m,U}(x_t^A)-\widehat F_{m,U,\star}\bigr]\ge m,
    \]
    where \(x_t^A\) is the point generated when \(A\) is run with stochastic-gradient oracle
    \(\widehat g_{m,q,U}\), and the expectation is over $U$, the oracle randomness, and the internal randomness of $A$.
\end{enumerate}
\end{lemma}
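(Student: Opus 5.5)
I will verify the five items in order, using the chain rule for the composition \(x\mapsto F_{2m}(U^\top\rho_m(x))\) and the properties of the unscaled chain already established in Lemmas~\ref{lem:FN-basic}, \ref{lem:oracle-properties} and Proposition~\ref{prop:Gamma-quadratic-PL}.

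\emph{Items 1--3.} For unbiasedness, condition on \(x\). The only random factor in~\eqref{eq:hatg-def} is \(g_{m,q}(U^\top\rho_m(x),z)\), whose mean is \(\nabla F_{2m}(U^\top\rho_m(x))\) by Lemma~\ref{lem:oracle-properties}(1). The chain rule then gives \(\nabla\widehat F_{m,U}(x)=J\rho_m(x)^\top U\nabla F_{2m}(U^\top\rho_m(x))+\eta x\).

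For the variance, note that \(\norm{J\rho_m}_{\mathrm{op}}\le1\) and \(U\) is an isometry. Hence the deviation has squared norm at most that of the base deviation, which is bounded by \(1/q\le4/q\).

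For smoothness, write \(\nabla\widehat F_{m,U}(x)-\nabla\widehat F_{m,U}(y)\) by adding and subtracting \(J\rho_m(x)^\top U\nabla F_{2m}(U^\top\rho_m(y))\). The first difference is bounded by \(79\,\mathrm{Lip}(\rho_m)\norm{x-y}\le79\norm{x-y}\). The second is bounded by \(\norm{J\rho_m(x)-J\rho_m(y)}_{\mathrm{op}}\norm{\nabla F_{2m}(U^\top\rho_m(y))}\). Here I need a bound \(\norm{\nabla F_{2m}(w)}\lesssim\sqrt m\) on \(\norm{w}\le2R_m\). This follows from \(\nabla F_{2m}(\one)=0\), the \(79\)-Lipschitz gradient, and \(\norm{w-\one}\le 2R_m+\sqrt{2m}=O(\sqrt m)\). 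Combined with Lemma~\ref{lem:radial-clipping}, the term is bounded by \((11/R_m)\cdot O(\sqrt m)=O(1)\) because \(R_m=16\sqrt m\). Adding \(\eta\) gives a universal \(L_{\mathrm{clip}}\).

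\emph{Item 4.} First, \(\widehat F_{m,U}(0)=F_{2m}(0)\le6m\) and \(F_{2m}\ge0\). So any \(x\in S_0\) has \((\eta/2)\norm{x}^2\le6m\), i.e.\ \(\norm{x}^2\le240m<R_m^2=256m\). Thus the clipping is inactive on the sublevel set.

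The sublevel set is an open-in-ball region where \(\rho_m=\mathrm{id}\), so there \(\widehat F_{m,U}(x)=\Gamma_{2m}\)-type expression \(F_{2m}(U^\top x)+\frac\eta2\norm{x}^2\). Decompose \(x=Uy+w\) with \(w\perp\operatorname{range}U\). Then \(\widehat F=\Gamma_{2m}(y)+\frac\eta2\norm{w}^2\), and the gradient is \(U\nabla\Gamma_{2m}(y)+\eta w\).

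The global minimum lies at \(w=0\), \(y=y^\star\). This uses that \(\widehat F\ge\frac\eta2\norm x^2\)-coercivity places every minimizer inside the ball, where the decomposition holds. Also, \(x\in S_0(\widehat F)\) implies \(y\in S_0(\Gamma_{2m})\). Proposition~\ref{prop:Gamma-quadratic-PL} then handles the \(y\)-part, with constant \(1200\cdot2m=2400m\). The \(w\)-part satisfies \(\frac\eta2\norm w^2\le\frac1{2\eta}\norm{\eta w}^2=10\norm{\eta w}^2\le2400m\norm{\eta w}^2\). Summing the two parts gives the claim.

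\emph{Item 5, the main obstacle.} I will apply Lemma~\ref{lem:random-rotation-progress} with \(D=2m\), \(M=m\), \(R=2R_m\), \(\delta=1/4\), and chain \(F=F_{2m}\), \(g=g_{m,q}\). The algorithm's queries \(x_t\) are unbounded, so I consider the simulated queries \(\rho_m(x_t)\), which have norm at most \(2R_m\).

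The key point is that the response \(\widehat g_{m,q,U}(x_t,z_t)\) is a measurable function of \(x_t\) and the rotated-chain response \(Ug_{m,q}(U^\top\rho_m(x_t),z_t)\). So any algorithm in our model is simulated by one in the lemma's stronger oracle model that queries \(\rho_m(x_t)\); this uses the Remark after Lemma~\ref{lem:random-rotation-progress}. The lemma then gives, with probability at least \(3/4\), \(\prog_{1/4}(U^\top\rho_m(x_t))<m\) for all \(t\le t_\star=\lfloor(m-\log8)/(2q)\rfloor\). Since \(m\ge5>2\log 8\), we get \(t_\star\ge c_0m/q\) for a numerical \(c_0\); this choice of \(c_0\) is where I expect some care with floors when \(q\) is near \(1\).

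On that event, Lemma~\ref{lem:objective-barrier} gives \(F_{2m}(U^\top\rho_m(x_t))\ge2m\), so \(\widehat F_{m,U}(x_t)\ge2m\). Moreover \(\widehat F_{m,U,\star}\le\widehat F_{m,U}(U\one)=\frac\eta2\cdot2m=m/20\), using \(\norm{U\one}=\sqrt{2m}\le R_m\). The gap is therefore at least \(2m-m/20\). Taking expectations gives at least \(\frac34(2m-\frac m{20})\ge m\), which completes item 5.
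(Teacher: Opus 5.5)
Your proposal is correct and follows the paper's proof essentially step for step: chain-rule unbiasedness and variance, a clipping-Jacobian Lipschitz estimate for smoothness, the $\eta$-coercivity radius bound together with the orthogonal decomposition $x=Uy+w$ for the sublevel P\L{} inequality, and a simulation in the stronger oracle model of Lemma~\ref{lem:random-rotation-progress} with $D=2m$, $M=m$, $R=2R_m$, $\delta=1/4$. The only bookkeeping point is the off-by-one you anticipated, which the paper handles by appending the output $x_t^A$ as one extra query whose response is discarded; that costs one round and is absorbed by taking $c_0=1/16$.
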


\begin{proof}
We prove the five claims in order.

\emph{Step 1: unbiasedness.}
Since $U$ is linear and $\rho_m$ is $C^1$, the chain rule applied to~\eqref{eq:hatF-def} yields
\[
\nabla \widehat F_{m,U}(x)=J\rho_m(x)^\top U\nabla F_{2m}(U^\top \rho_m(x))+\eta x.
\]
Taking conditional expectation in~\eqref{eq:hatg-def} and using Lemma~\ref{lem:oracle-properties}(1) therefore gives
\[
\E[\widehat g_{m,q,U}(x,z)\mid x]=\nabla \widehat F_{m,U}(x).
\]

\emph{Step 2: variance.}
By Lemma~\ref{lem:oracle-properties}(3) and the bound $\norm{J\rho_m(x)}_{\mathrm{op}}\le 1$,
\[
\begin{aligned}
\E\bigl[\norm{\widehat g_{m,q,U}(x,z)-\nabla \widehat F_{m,U}(x)}^2\mid x\bigr]
&=
\E\Bigl[\norm{J\rho_m(x)^\top U\bigl(g_{m,q}(U^\top \rho_m(x),z)-\nabla F_{2m}(U^\top \rho_m(x))\bigr)}^2\mid x\Bigr]\\
&\le
\E\bigl[\norm{g_{m,q}(U^\top \rho_m(x),z)-\nabla F_{2m}(U^\top \rho_m(x))}^2\mid x\bigr]\\
&\le
q^{-1}.
\end{aligned}
\]
In particular, the variance is at most $4/q$.

\emph{Step 3: global smoothness.}
Lemma~\ref{lem:FN-basic}(2) shows that $\nabla F_{2m}$ is globally $79$-Lipschitz and, by the explicit gradient formula at the origin, $\norm{\nabla F_{2m}(0)}=e^{-1/2}<1$.
Since $\norm{\rho_m(x)}\le 2R_m$ for all $x$, we obtain
\[
\sup_{\norm{u}\le 2R_m}\norm{\nabla F_{2m}(u)}\le 1+79\cdot 2R_m\le 159R_m.
\]
For arbitrary $x,x'\in\R^d$, set
\[
y:=U^\top \rho_m(x),
\qquad
y':=U^\top \rho_m(x').
\]
Then
\[
\nabla \widehat F_{m,U}(x)=J\rho_m(x)^\top U\nabla F_{2m}(y)+\eta x,
\]
and therefore
\[
\begin{aligned}
\norm{\nabla \widehat F_{m,U}(x)-\nabla \widehat F_{m,U}(x')}
&\le
\norm{J\rho_m(x)^\top U\bigl(\nabla F_{2m}(y)-\nabla F_{2m}(y')\bigr)}\\
&\quad
+\norm{\bigl(J\rho_m(x)-J\rho_m(x')\bigr)^\top U\nabla F_{2m}(y')}
+\eta \norm{x-x'}.
\end{aligned}
\]
Using $\norm{J\rho_m}_{\mathrm{op}}\le 1$, the global $79$-smoothness of $F_{2m}$, and the fact that $\rho_m$ is $1$-Lipschitz,
\[
\norm{J\rho_m(x)^\top U\bigl(\nabla F_{2m}(y)-\nabla F_{2m}(y')\bigr)}
\le
79\norm{x-x'}.
\]
Using Lemma~\ref{lem:radial-clipping} and the bound on $\sup_{\norm{u}\le 2R_m}\norm{\nabla F_{2m}(u)}$,
\[
\norm{\bigl(J\rho_m(x)-J\rho_m(x')\bigr)^\top U\nabla F_{2m}(y')}
\le
\frac{11}{R_m}\cdot 159R_m\norm{x-x'}
=
1749\norm{x-x'}.
\]
Hence
\[
\norm{\nabla \widehat F_{m,U}(x)-\nabla \widehat F_{m,U}(x')}
\le
(79+1749+\eta)\norm{x-x'}.
\]
We may therefore take
\[
L_{\mathrm{clip}}:=1829.
\]

\emph{Step 4: geometry on the initial sublevel set.}
We first show that every point in the initial sublevel set lies inside the unclipped region.
From~\eqref{eq:hatF-def},
\[
\widehat F_{m,U}(0)=F_{2m}(0)=\varphi(0)+4m<5m,
\]
every $x\in S_0(\widehat F_{m,U})$ satisfies
\[
\frac{\eta}{2}\norm{x}^2\le \widehat F_{m,U}(x)\le \widehat F_{m,U}(0)<5m,
\]
hence
\[
\norm{x}^2<\frac{10m}{\eta}=200m<256m=R_m^2.
\]
Therefore
\[
S_0(\widehat F_{m,U})\subseteq B(0,R_m),
\]
and so $\rho_m(x)=x$ throughout $S_0(\widehat F_{m,U})$.

Fix now $x\in S_0(\widehat F_{m,U})$ and decompose it orthogonally as
\[
y:=U^\top x\in\R^{2m},
\qquad
w:=x-Uy\in \mathrm{range}(U)^\perp.
\]
Using~\eqref{eq:hatF-def}, $\rho_m(x)=x$, and $U^\top U=I_{2m}$, we have
$\norm{x}^2=\norm{y}^2+\norm{w}^2$, and therefore
\[
\widehat F_{m,U}(x)=F_{2m}(y)+\frac{\eta}{2}\norm{y}^2+\frac{\eta}{2}\norm{w}^2=\Gamma_{2m}(y)+\frac{\eta}{2}\norm{w}^2.
\]
In particular, $y\in S_0(\Gamma_{2m})$.

Let $y^\star$ be a minimizer of $\Gamma_{2m}$.
Since $\Gamma_{2m}(y^\star)\le \Gamma_{2m}(0)<5m$, the same argument as above gives
\[
\norm{y^\star}^2<\frac{10m}{\eta}=200m<R_m^2,
\]
so $\rho_m(Uy^\star)=Uy^\star$.
For arbitrary $x$,
\[
\norm{U^\top \rho_m(x)}\le \norm{\rho_m(x)}\le \norm{x},
\]
and hence, by~\eqref{eq:hatF-def},
\[
\widehat F_{m,U}(x)=F_{2m}(U^\top \rho_m(x))+\frac{\eta}{2}\norm{x}^2\ge \Gamma_{2m}(U^\top \rho_m(x))\ge \Gamma_{2m}(y^\star).
\]
On the other hand, using~\eqref{eq:hatF-def} again and $\rho_m(Uy^\star)=Uy^\star$,
\[
\widehat F_{m,U}(Uy^\star)=\Gamma_{2m}(y^\star),
\]
so
\[
\widehat F_{m,U,\star}=\Gamma_{2m,\star}.
\]

We next compare objective gaps and gradients.
Differentiating~\eqref{eq:hatF-def} on $S_0(\widehat F_{m,U})$, where $\rho_m(x)=x$, gives
\[
\nabla \widehat F_{m,U}(x)=U\nabla F_{2m}(y)+\eta x=U\nabla \Gamma_{2m}(y)+\eta w.
\]
The first term lies in $\mathrm{range}(U)$, while the second lies in its orthogonal complement; therefore
\[
\norm{\nabla \widehat F_{m,U}(x)}^2=\norm{\nabla \Gamma_{2m}(y)}^2+\eta^2\norm{w}^2.
\]
Using Proposition~\ref{prop:Gamma-quadratic-PL},
\[
\Gamma_{2m}(y)-\Gamma_{2m,\star}\le 2400m\norm{\nabla \Gamma_{2m}(y)}^2,
\]
and hence
\[
\begin{aligned}
\widehat F_{m,U}(x)-\widehat F_{m,U,\star}
&=
\bigl(\Gamma_{2m}(y)-\Gamma_{2m,\star}\bigr)+\frac{\eta}{2}\norm{w}^2\\
&\le
2400m\norm{\nabla \Gamma_{2m}(y)}^2+\frac{1}{2\eta}\eta^2\norm{w}^2\\
&\le
2400m\norm{\nabla \Gamma_{2m}(y)}^2+2400m\,\eta^2\norm{w}^2\\
&=
2400m\norm{\nabla \widehat F_{m,U}(x)}^2,
\end{aligned}
\]
where we used $(2\eta)^{-1}=10\le 2400m$ because $m\ge 1$.

\emph{Step 5: progress barrier after random rotation.}
Fix a randomized algorithm $A$ in the gradient-only stochastic first-order model of this paper.
We simulate it by an algorithm $\widetilde A$ that operates in the stronger oracle model of Lemma~\ref{lem:random-rotation-progress} for the rotated base oracle
\[
\widetilde O_{m,q,U}(y,z):=\bigl(F_{2m}(U^\top y),\, Ug_{m,q}(U^\top y,z)\bigr).
\]
Whenever the simulated algorithm $A$ queries a point $x$, the algorithm $\widetilde A$ queries
\[
y:=\rho_m(x).
\]
Because $\norm{\rho_m(x)}\le 2R_m$, every query of $\widetilde A$ is bounded by $2R_m$.
From the returned pair
\[
\bigl(F_{2m}(U^\top y),\, Ug_{m,q}(U^\top y,z)\bigr)
\]
and the known point $x$, the simulator can reconstruct, using~\eqref{eq:hatF-def}, the corresponding objective value
\[
\widehat F_{m,U}(x)=F_{2m}(U^\top y)+\frac{\eta}{2}\norm{x}^2
\]
and, more importantly for the gradient-only algorithm \(A\), the stochastic-gradient response from~\eqref{eq:hatg-def},
\[
\widehat g_{m,q,U}(x,z)=J\rho_m(x)^\top Ug_{m,q}(U^\top y,z)+\eta x.
\]
The value component is available only because Lemma~\ref{lem:random-rotation-progress} is stated for a stronger oracle;
the simulated algorithm \(A\) uses only the stochastic-gradient response. Hence the trajectory produced by the simulation
is exactly the trajectory of $A$ on the clipped oracle, and the sequence
\[
y_t:=\rho_m(x_t^A)
\]
is a bounded-query trajectory in the stronger model.

The point \(x_t^A\) returned after \(t\) oracle calls is measurable with respect to the first \(t\) responses. We may
therefore append \(y_t:=\rho_m(x_t^A)\) as the next query of the simulating algorithm and ignore the response; this does
not give \(A\) any additional information. Apply Lemma~\ref{lem:random-rotation-progress} to that appended query with
$D=2m$, $M=m$, $R=2R_m$, and $\delta=1/4$.
The definition~\eqref{eq:dmq-threshold} verifies both required inequalities, namely $d\ge2m$ and
\[
d\ge18(2R_m)^2(2m)q^{-1}
\log\left(\frac{2(2m)^2}{q(1/4)}\right).
\]
Consequently, with probability at least $3/4$ over the random rotation $U$, the oracle randomness, and the internal
randomness of $A$,
\[
\prog_{1/4}(U^\top y_t)<m
\qquad
\text{whenever }t+1\le \frac{m-\log 8}{2q}.
\]
Since $m\ge m_0=5$ and $q\le1$, the bound
\[
\frac{m}{16q}+1\le \frac{m-\log 8}{2q}
\]
holds, so the same event holds for every
\[
t\le c_0\frac{m}{q},
\qquad
c_0:=\frac1{16}.
\]
On this event, Lemma~\ref{lem:objective-barrier} implies
\[
F_{2m}(U^\top y_t)\ge 2m.
\]
Therefore, by~\eqref{eq:hatF-def} and nonnegativity of the quadratic regularizer,
\[
\widehat F_{m,U}(x_t^A)-\widehat F_{m,U,\star}\ge 2m-\widehat F_{m,U,\star}.
\]
Since $\widehat F_{m,U,\star}=\Gamma_{2m,\star}$ and
\[
\Gamma_{2m,\star}\le \Gamma_{2m}(\one_{2m})=\eta m,
\]
we conclude that on the good event,
\[
\widehat F_{m,U}(x_t^A)-\widehat F_{m,U,\star}\ge (2-\eta)m.
\]
Taking expectations gives
\[
\E_{U,\xi}\bigl[\widehat F_{m,U}(x_t^A)-\widehat F_{m,U,\star}\bigr]
\ge
\frac34(2-\eta)m>m.
\]
This completes the proof.
\end{proof}

\subsection{The normalized hard family}
\label{app:sto-normalized-family}

For $m\ge 1$, define
\[
a_m:=m^{-1/(2-\alpha)}.
\]
We use the scaled version of the clipped instance defined in~\eqref{eq:hatF-def}--\eqref{eq:hatg-def} as follows:
\begin{equation}
\label{eq:scaled-family-def}
\Psi_{m,q,U}(x):=a_m^2\widehat F_{m,U}(x/a_m),
\qquad
G_{m,q,U}(x,z):=a_m\,\widehat g_{m,q,U}(x/a_m,z).
\end{equation}

\begin{lemma}[Scaled properties]
\label{lem:scaled-properties}
Use the notation fixed before Lemma~\ref{lem:clipped-rotated}, and let \(c_0\) be the numerical constant from that
lemma. There exist numerical constants \(L_0,\tau_0,\Delta_0^\ast>0\) such that, for every \(m\ge m_0\),
\(q\in(0,1]\), \(d\ge d_{m,q}\), and fixed \(U\in\Ortho(d,2m)\), the scaled pair
in~\eqref{eq:scaled-family-def} satisfies:
\begin{enumerate}[label=\arabic*.,leftmargin=1.5em]
    \item $\nabla \Psi_{m,q,U}$ is globally $L_0$-Lipschitz on $\R^d$.
    \item
    \[
    \Psi_{m,q,U}(0)-\Psi_{m,q,U,\star}\le \Delta_0^\ast a_m^\alpha.
    \]
    \item $\Psi_{m,q,U}$ satisfies the sublevel $\alpha$-P\L{} inequality
    \[
    \Psi_{m,q,U}(x)-\Psi_{m,q,U,\star}\le \tau_0\norm{\nabla \Psi_{m,q,U}(x)}^\alpha
    \qquad
    (x\in S_0(\Psi_{m,q,U})).
    \]
    \item \(G_{m,q,U}\) is an unbiased gradient estimator for \(\Psi_{m,q,U}\), and its conditional variance satisfies
    \[
    \E\bigl[\norm{G_{m,q,U}(x,z)-\nabla \Psi_{m,q,U}(x)}^2\mid x\bigr]\le
    \frac{4a_m^2}{q}.
    \]
    \item Moreover, if \(U\) is drawn uniformly from \(\Ortho(d,2m)\), \(A\) is any randomized first-order algorithm run with the
    oracle \(G_{m,q,U}\), and
    \[
    t\le c_0\frac{m}{q},
    \]
    then
    \[
    \E_{U,\xi}\bigl[\Psi_{m,q,U}(x_t^A)-\Psi_{m,q,U,\star}\bigr]\ge a_m^\alpha.
    \]
    Here \(x_t^A\) is the point generated by \(A\), and the expectation is over \(U\), the oracle randomness, and the
    internal randomness of \(A\).
\end{enumerate}
\end{lemma}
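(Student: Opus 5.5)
The plan is to read off each item from Lemma~\ref{lem:clipped-rotated} by the elementary scaling rules for \(x\mapsto a^2 f(x/a)\). These rules are the case \(\lambda=a_m^2\), \(\rho=a_m^{-1}\) of Lemma~\ref{lem:scaling}. The only genuinely new work is item 3, which uses the balance \(m a_m^2=a_m^\alpha\), i.e.\ \(m=a_m^{-(2-\alpha)}\). Throughout, write \(y=x/a_m\). Then
\[
\Psi_{m,q,U}(x)=a_m^2\widehat F_{m,U}(y),
\qquad
\nabla\Psi_{m,q,U}(x)=a_m\nabla\widehat F_{m,U}(y),
\qquad
\Psi_{m,q,U,\star}=a_m^2\widehat F_{m,U,\star}.
\]
We also have \(S_0(\Psi_{m,q,U})=a_mS_0(\widehat F_{m,U})\).

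For item 1, Lemma~\ref{lem:scaling}(ii) gives smoothness \(\lambda\rho^2L_{\mathrm{clip}}=L_{\mathrm{clip}}\), so we take \(L_0:=L_{\mathrm{clip}}\). For item 2, recall \(\widehat F_{m,U}(0)=F_{2m}(0)<5m\) and \(\widehat F_{m,U,\star}\ge0\). Hence the initial gap is at most \(5ma_m^2=5a_m^\alpha\), and we take \(\Delta_0^\ast:=5\). For item 4, unbiasedness passes through the linear rescaling. The variance picks up the factor \(a_m^2\) from the prefactor in \(G_{m,q,U}\), so Lemma~\ref{lem:clipped-rotated}(2) gives the bound \(4a_m^2/q\). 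For item 5, the map \(x\mapsto x/a_m\) is a bijection that sends algorithms run on \((\Psi,G)\) to algorithms run on \((\widehat F,\widehat g)\) with the same number of calls, exactly as in the simulation in Lemma~\ref{lem:scaling}(iv). The randomized version is identical, since queries are simply rescaled. Lemma~\ref{lem:clipped-rotated}(5) then gives an expected gap of at least \(a_m^2\cdot m=a_m^\alpha\) for \(t\le c_0m/q\).

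Item 3 is the main step, and I would split it by gradient size, mirroring Proposition~\ref{prop:base-instance}(iii). Fix \(x\in S_0(\Psi_{m,q,U})\), so that \(y\in S_0(\widehat F_{m,U})\), and put \(s:=\norm{\nabla\Psi_{m,q,U}(x)}\). Lemma~\ref{lem:clipped-rotated}(4) gives
\[
\Psi(x)-\Psi_\star\le a_m^2\cdot 2400m\norm{\nabla\widehat F_{m,U}(y)}^2=2400\,m\,s^2 .
\]
If \(s\le a_m\), then \(m s^2=a_m^{-(2-\alpha)}s^{2-\alpha}s^\alpha\le s^\alpha\), so the gap is at most \(2400s^\alpha\). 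If \(s>a_m\), the sublevel condition gives
\[
\Psi(x)-\Psi_\star\le\Psi(0)-\Psi_\star\le 5a_m^\alpha<5s^\alpha .
\]
Thus \(\tau_0:=2400\) works in both cases.

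The delicate point is the small-gradient case. There the exponent identity \(m a_m^{2-\alpha}=1\) must be applied in the right direction, which is exactly where \(\alpha<2\) and the choice \(a_m=m^{-1/(2-\alpha)}\) enter. I expect no other obstacle, since every constant is numerical and independent of \(m\), \(q\), \(d\) and \(U\).
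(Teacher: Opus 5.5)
Your proposal is correct and follows the paper's proof essentially line by line. The same scaling identities give items 1, 2, 4 and 5 with $L_0=L_{\mathrm{clip}}=1829$ and $\Delta_0^\ast=5$, and the same query-rescaling simulation transfers Lemma~\ref{lem:clipped-rotated}(5). Item 3 is handled by the same split at $s=a_m$ using $m a_m^{2-\alpha}=1$, which yields $\tau_0=2400$.
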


\begin{proof}
Write \(a=a_m\). From~\eqref{eq:scaled-family-def},
\[
\Psi_{m,q,U,\star}=a^2\widehat F_{m,U,\star},
\qquad
S_0(\Psi_{m,q,U})=a\,S_0(\widehat F_{m,U}),
\qquad
\nabla\Psi_{m,q,U}(x)=a\,\nabla\widehat F_{m,U}(x/a).
\]
Thus the Lipschitz constant of the gradient is unchanged by this scaling, while objective gaps are multiplied by
\(a^2\). Lemma~\ref{lem:clipped-rotated} gives
\[
\mathrm{Lip}(\nabla \widehat F_{m,U})\le 1829,
\qquad
\widehat F_{m,U}(0)-\widehat F_{m,U,\star}\le 5m.
\]
Since \(ma^2=a^\alpha\), items (1) and (2) follow.
In particular, we may take
\[
L_0:=1829,
\qquad
\Delta_0^\ast:=5.
\]

For item (4), the same scaling gives
\[
\E[G_{m,q,U}(x,z)\mid x]
=
a\,\E[\widehat g_{m,q,U}(x/a,z)\mid x]
=
a\,\nabla\widehat F_{m,U}(x/a)
=
\nabla\Psi_{m,q,U}(x),
\]
and Lemma~\ref{lem:clipped-rotated}(2) gives
\[
\E\bigl[\norm{G_{m,q,U}(x,z)-\nabla\Psi_{m,q,U}(x)}^2\mid x\bigr]
=
a^2\,\E\bigl[\norm{\widehat g_{m,q,U}(x/a,z)-\nabla\widehat F_{m,U}(x/a)}^2\mid x\bigr]
\le
\frac{4a^2}{q}.
\]

It remains to prove items (3) and (5).

\emph{Proof of item (3).}
The scaling identities above and Lemma~\ref{lem:clipped-rotated}(4) yield the quadratic-P\L{} bound
\[
\Psi_{m,q,U}(x)-\Psi_{m,q,U,\star}\le 2400m\norm{\nabla \Psi_{m,q,U}(x)}^2
\qquad
(x\in S_0(\Psi_{m,q,U})).
\]
Fix $x\in S_0(\Psi_{m,q,U})$ and write
\[
s:=\norm{\nabla \Psi_{m,q,U}(x)}.
\]
If $s\le a_m$, then, since $ma_m^{2-\alpha}=1$,
\[
2400ms^2
=
2400(ma_m^{2-\alpha})s^\alpha\left(\frac{s}{a_m}\right)^{2-\alpha}
\le
2400s^\alpha.
\]
If \(s\ge a_m\), then \(x\in S_0(\Psi_{m,q,U})\) and item (2) give
\[
\Psi_{m,q,U}(x)-\Psi_{m,q,U,\star}\le 5a_m^\alpha\le 5s^\alpha.
\]
Thus item (3) holds with
\[
\tau_0:=2400.
\]

\emph{Proof of item (5).}
Fix a randomized first-order algorithm $A$ for $(\Psi_{m,q,U},G_{m,q,U})$.
We simulate it by an algorithm $\widetilde A$ for $(\widehat F_{m,U},\widehat g_{m,q,U})$ as follows.
Whenever $A$ queries a point $x_t$, the simulator queries
\[
y_t:=x_t/a_m.
\]
When the base oracle returns $\widehat g_{m,q,U}(y_t,z_t)$, the simulator feeds back the scaled response
\[
a_m\widehat g_{m,q,U}(y_t,z_t)=G_{m,q,U}(x_t,z_t)
\]
to the internal copy of $A$.
Consequently the two trajectories are coupled by $x_t=a_my_t$, and
\[
\Psi_{m,q,U}(x_t)-\Psi_{m,q,U,\star}
=
a_m^2\bigl(\widehat F_{m,U}(y_t)-\widehat F_{m,U,\star}\bigr).
\]
Applying Lemma~\ref{lem:clipped-rotated}(5) to $\widetilde A$ yields
\[
\E_{U,\xi}\bigl[\Psi_{m,q,U}(x_t)-\Psi_{m,q,U,\star}\bigr]
\ge
a_m^2 m
=
a_m^\alpha.
\]
\end{proof}

\subsection{The normalized stochastic lower bound}
\label{app:sto-normalized-lb}

For notational simplicity, specialize the pair-family definition~\eqref{eq:stochastic-pair-family} to
\((L,\tau,\Delta_0,\sigma^2)=(L_0,\tau_0,1,\bar\sigma^2)\) and write the resulting normalized pair class as
\begin{equation}
\label{eq:normalized-pair-class}
\mathfrak P_{\mathrm{norm}}(\bar \sigma^2):=\mathfrak P_{\alpha,L_0,\tau_0,1,\bar \sigma^2}(0).
\end{equation}
Set
\[
C_\alpha^{\mathrm{amp}}:=2^{1+p}=2^{2/(2-\alpha)},
\qquad
c_\alpha^{\mathrm{noise}}:=2^{-\alpha}(C_\alpha^{\mathrm{amp}})^{-1}=2^{-\alpha-2/(2-\alpha)}.
\]

Define the explicit normalized accuracy threshold
\begin{equation}
\label{eq:normalized-accuracy-threshold}
\varepsilon_\alpha^0
:=
\min\left\{
1,\,
2^{-(p+1)},\,
\frac{1}{2(m_0+1)^p},\,
\frac{1}{5C_\alpha^{\mathrm{amp}}}
\right\}.
\end{equation}

\begin{proposition}[Normalized stochastic lower bound]
\label{prop:normalized-stochastic-lb}
There exists $c_\alpha^{\mathrm{st}}>0$ such that, for every $\bar \sigma^2>0$ and every
$\varepsilon\in(0,\varepsilon_\alpha^0]$ satisfying
\[
\varepsilon\le c_\alpha^{\mathrm{noise}}\bar \sigma^\alpha,
\]
there exists a subfamily of the normalized pair class \eqref{eq:normalized-pair-class},
\[
\mathfrak H_{\varepsilon,\bar \sigma^2}\subseteq \mathfrak P_{\mathrm{norm}}(\bar \sigma^2)
\]
such that
\[
T_\varepsilon^{\mathrm{BV}}(\mathfrak H_{\varepsilon,\bar \sigma^2})
\ge
c_\alpha^{\mathrm{st}}\bar \sigma^2\varepsilon^{-(4-\alpha)/\alpha},
\]
where \(T_\varepsilon^{\mathrm{BV}}(\cdot)\) for pair subfamilies is defined in
\eqref{eq:pair-family-minimax},
and every $(f,G)\in \mathfrak H_{\varepsilon,\bar \sigma^2}$ satisfies
\[
f(0)-f_\star\le 5C_\alpha^{\mathrm{amp}} \varepsilon.
\]
Consequently,
\[
T_\varepsilon^{\mathrm{BV}}\bigl(\mathfrak P_{\mathrm{norm}}(\bar \sigma^2)\bigr)
\ge
c_\alpha^{\mathrm{st}}\bar \sigma^2\varepsilon^{-(4-\alpha)/\alpha}.
\]
\end{proposition}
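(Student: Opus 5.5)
The plan is to take as the hard subfamily the scaled rotated pairs of Lemma~\ref{lem:scaled-properties},
\[
\mathfrak H_{\varepsilon,\bar\sigma^2}:=\bigl\{(\Psi_{m,q,U},G_{m,q,U}):\ U\in\Ortho(d,2m)\bigr\},
\]
with a single chain length \(m\), reveal probability \(q\), and dimension \(d=d_{m,q}\), all tuned to \(\varepsilon\) and \(\bar\sigma^2\). For the chain length I would choose \(m\) as the largest integer with \(a_m^\alpha=m^{-p}\ge 2\varepsilon\), that is \(m:=\lfloor(2\varepsilon)^{-1/p}\rfloor\). The condition \(\varepsilon\le 1/(2(m_0+1)^p)\) forces \(m\ge m_0+1\ge m_0\). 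Maximality gives \((m+1)^{-p}<2\varepsilon\), and since \(m+1\le 2m\) this yields \(a_m^\alpha\le 2^p(m+1)^{-p}<2^{p+1}\varepsilon=C_\alpha^{\mathrm{amp}}\varepsilon\). So the amplitude satisfies \(2\varepsilon\le a_m^\alpha\le C_\alpha^{\mathrm{amp}}\varepsilon\), and \(m\asymp\varepsilon^{-(2-\alpha)/\alpha}\).

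For the reveal probability I would set \(q:=4a_m^2/\bar\sigma^2\), so that item (4) of Lemma~\ref{lem:scaled-properties} gives variance exactly at most \(\bar\sigma^2\). I must check that \(q\le1\). Using \(a_m^2=(a_m^\alpha)^{2/\alpha}\le (C_\alpha^{\mathrm{amp}}\varepsilon)^{2/\alpha}\) and the hypothesis \(\varepsilon\le c_\alpha^{\mathrm{noise}}\bar\sigma^\alpha=2^{-\alpha}(C_\alpha^{\mathrm{amp}})^{-1}\bar\sigma^\alpha\), I get \(a_m^2\le \bar\sigma^2/4\), hence \(q\le1\). This is exactly why \(c_\alpha^{\mathrm{noise}}\) has the stated form.

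Membership in the normalized class then follows from Lemma~\ref{lem:scaled-properties}. Item (1) gives smoothness \(L_0\), item (3) gives sublevel \(\alpha\)-P\L{} with \(\tau_0\), and item (4) gives unbiasedness and the variance bound. Item (2) gives the initial gap \(\le 5a_m^\alpha\le 5C_\alpha^{\mathrm{amp}}\varepsilon\le 1\), using \(\varepsilon\le 1/(5C_\alpha^{\mathrm{amp}})\). This also delivers the claimed initial-gap bound for the subfamily.

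For the lower bound, fix any algorithm \(A\) and any \(t\le c_0 m/q\). Item (5) gives \(\E_U\E[\Psi_{m,q,U}(x_t^A)-\Psi_\star]\ge a_m^\alpha\ge2\varepsilon>\varepsilon\). Hence some fixed \(U\) has expected gap exceeding \(\varepsilon\), so \(\mathcal E_t^{\mathrm{BV}}(\mathfrak H)>\varepsilon\) for all \(t\le c_0m/q\). Therefore
\[
T_\varepsilon^{\mathrm{BV}}(\mathfrak H)\ge \lfloor c_0 m/q\rfloor.
\]
It remains to evaluate this quantity. Since \(m/q=m\bar\sigma^2/(4a_m^2)\), I would use \(m=a_m^{-(2-\alpha)}\) to write \(m/a_m^2=a_m^{-(4-\alpha)}\). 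With \(a_m^\alpha\le C_\alpha^{\mathrm{amp}}\varepsilon\), this gives \(m/q\ge \tfrac14\bar\sigma^2 (C_\alpha^{\mathrm{amp}}\varepsilon)^{-(4-\alpha)/\alpha}\).

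The floor needs handling. Because \(q\le1\) and \(m\ge m_0=5\), we have \(c_0m/q\ge 5/16\). This may fall below one, so I would bound \(\lfloor y\rfloor\ge y/2\) only when \(y\ge1\). When \(y<1\), I would instead note that \(T\ge 0\) is trivial and argue that \(T=0\) already fails, since the initial gap is at least \(a_m^\alpha\cdot\)(a constant fraction) \(>\varepsilon\). Alternatively, and I think more simply, I would require \(c_0 m/q\ge1\) by increasing \(m\) via \(\varepsilon^0_\alpha\); I expect this bookkeeping, rather than any conceptual issue, to be the main nuisance.

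The \(t=0\) case can also be covered by noting \(\Psi(0)-\Psi_\star\ge a_m^2(4m-\eta m)\ge 2\varepsilon\). Combining these estimates yields \(c_\alpha^{\mathrm{st}}\) depending only on \(\alpha\). The final claim for \(\mathfrak P_{\mathrm{norm}}\) follows by monotonicity of the minimax error under enlarging the pair family.
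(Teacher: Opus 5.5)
Your proposal is correct and follows the paper's proof essentially step for step: the same chain length $m=\lfloor(2\varepsilon)^{-1/p}\rfloor$, the same variance-saturating reveal probability $q=4a_m^2/\bar\sigma^2$, the same rotated subfamily, and the same use of Lemma~\ref{lem:scaled-properties}(1)--(5), followed by the identity $m/q=\tfrac14\bar\sigma^2a_m^{-(4-\alpha)}$. The floor bookkeeping you flag is not a real issue. Every integer $0\le t\le c_0m/q$ fails, including $t=0$ as you observe, so $T_\varepsilon^{\mathrm{BV}}(\mathfrak H_{\varepsilon,\bar\sigma^2})\ge\lfloor c_0m/q\rfloor+1>c_0m/q$; this is how the paper obtains $c_\alpha^{\mathrm{st}}=\tfrac{c_0}{4}(C_\alpha^{\mathrm{amp}})^{-(4-\alpha)/\alpha}$ without losing a factor of $2$, and you should drop the alternative of enlarging $m$ through $\varepsilon_\alpha^0$, since that threshold is fixed in the statement.
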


\begin{proof}
We choose the chain length so that the normalized objective gap is of order $\varepsilon$.
Set
\[
m:=\left\lfloor (2\varepsilon)^{-1/p}\right\rfloor.
\]
Since
\[
m\le (2\varepsilon)^{-1/p}
\qquad\text{and}\qquad
m\ge \frac12(2\varepsilon)^{-1/p},
\]
it follows that
\[
a_m^\alpha=m^{-p}\ge 2\varepsilon,
\qquad
a_m^\alpha\le C_\alpha^{\mathrm{amp}} \varepsilon.
\]
Thus the natural gap scale of the hard instance is comparable to $\varepsilon$.

Next we choose the reveal probability so that the variance budget equals $\bar \sigma^2$.
From the upper bound on $a_m^\alpha$ we obtain
\[
a_m^2
=
(a_m^\alpha)^{2/\alpha}
\le
(C_\alpha^{\mathrm{amp}})^{2/\alpha}\varepsilon^{2/\alpha}
\le
\frac{\bar \sigma^2}{4},
\]
where the last step follows from
\[
(C_\alpha^{\mathrm{amp}})^{2/\alpha}\bigl(c_\alpha^{\mathrm{noise}}\bar\sigma^\alpha\bigr)^{2/\alpha}
=
(C_\alpha^{\mathrm{amp}})^{2/\alpha}
\bigl(2^{-\alpha}(C_\alpha^{\mathrm{amp}})^{-1}\bar\sigma^\alpha\bigr)^{2/\alpha}
=
\frac{\bar\sigma^2}{4}.
\]
Set
\[
q:=\frac{4a_m^2}{\bar \sigma^2}\in (0,1].
\]

Now fix any $d\ge d_{m,q}$ and define the hard subfamily
\begin{equation}
\label{eq:normalized-hard-subfamily}
\mathfrak H_{\varepsilon,\bar \sigma^2}
:=
\{(\Psi_{m,q,U},G_{m,q,U}):\ U\in \Ortho(d,2m)\}.
\end{equation}
By Lemma~\ref{lem:scaled-properties}(4), every oracle in this family has variance at most $\bar \sigma^2$.
By the choice of \(\varepsilon_\alpha^0\), we also have
\[
5a_m^\alpha\le 5C_\alpha^{\mathrm{amp}} \varepsilon\le 1,
\]
and therefore Lemma~\ref{lem:scaled-properties}(1)--(3) show that
\[
\mathfrak H_{\varepsilon,\bar \sigma^2}\subseteq \mathfrak P_{\mathrm{norm}}(\bar \sigma^2).
\]

Now let $A$ be any randomized first-order method.
Lemma~\ref{lem:scaled-properties}(5) gives
\[
\E_{U,\xi}\bigl[\Psi_{m,q,U}(x_t^A)-\Psi_{m,q,U,\star}\bigr]\ge a_m^\alpha\ge 2\varepsilon
\qquad
\text{for all }t\le c_0 m/q.
\]
Consequently,
\[
\sup_{(f,G)\in \mathfrak H_{\varepsilon,\bar \sigma^2}}\E_\xi[f(x_t^A)-f_\star]
\ge
\E_{U,\xi}[\Psi_{m,q,U}(x_t^A)-\Psi_{m,q,U,\star}]
\ge
2\varepsilon.
\]
Since this holds for every algorithm \(A\), the fixed-budget risk
\(\mathcal E_t^{\mathrm{BV}}(\mathfrak H_{\varepsilon,\bar\sigma^2})\) is larger than \(\varepsilon\) for every
\(t\le c_0m/q\). Therefore, by~\eqref{eq:pair-family-minimax},
\[
T_\varepsilon^{\mathrm{BV}}(\mathfrak H_{\varepsilon,\bar \sigma^2})\ge c_0 m/q.
\]

Finally, since $m=a_m^{-(2-\alpha)}$,
\[
\frac{m}{q}
=
\frac{\bar \sigma^2}{4a_m^2}\,m
=
\frac{\bar \sigma^2}{4}a_m^{-(4-\alpha)}
\ge
\frac{\bar \sigma^2}{4}(C_\alpha^{\mathrm{amp}})^{-(4-\alpha)/\alpha}
\varepsilon^{-(4-\alpha)/\alpha}.
\]
This proves the lower bound with
\[
c_\alpha^{\mathrm{st}}:=\frac{c_0}{4}(C_\alpha^{\mathrm{amp}})^{-(4-\alpha)/\alpha}.
\]
The initial-gap claim follows from
\[
\Psi_{m,q,U}(0)-\Psi_{m,q,U,\star}\le 5a_m^\alpha\le 5C_\alpha^{\mathrm{amp}} \varepsilon.
\]
Since \(\mathfrak H_{\varepsilon,\bar \sigma^2}\subseteq \mathfrak P_{\mathrm{norm}}(\bar\sigma^2)\), enlarging the
pair class can only increase the worst-case risk. Hence the same lower bound holds for
\(\mathfrak P_{\mathrm{norm}}(\bar\sigma^2)\).
\end{proof}

\subsection{\texorpdfstring{Scaling to arbitrary $(L,\tau,\Delta_0,\sigma^2)$}{Scaling to arbitrary (L,tau,Delta0,sigma2)}}
\label{app:sto-scaling}

The normalized lower bound above has fixed smoothness and P\L{} constants \(L_0,\tau_0\). The next lemma records how
objective scaling and domain scaling change smoothness, the sublevel \(\alpha\)-P\L{} constant, the stochastic-gradient
variance, and the lower bound itself. It is the tool used to transfer Proposition~\ref{prop:normalized-stochastic-lb}
to arbitrary \(L,\tau,\sigma^2\).

\begin{lemma}[Stochastic scaling]
\label{lem:stochastic-scaling}
Let $(f,G)$ be a differentiable function-oracle pair on $\R^d$, and let $\lambda,\rho>0$.
Define
\[
(S_{\lambda,\rho}f)(x):=\lambda f(\rho x),
\qquad
(S_{\lambda,\rho}G)(x,\xi):=\lambda \rho\, G(\rho x,\xi).
\]
Then:
\begin{enumerate}[label=\arabic*.,leftmargin=1.5em]
    \item if $\nabla f$ is globally $L_f$-Lipschitz on $\R^d$, then $\nabla (S_{\lambda,\rho}f)$ is globally at most $(\lambda \rho^2 L_f)$-Lipschitz on $\R^d$;
    \item if
    \[
    f(y)-f_\star\le \tau_f\norm{\nabla f(y)}^\alpha
    \qquad
    (\forall y\in S_0(f)),
    \]
    then
    \[
    (S_{\lambda,\rho}f)(x)-(S_{\lambda,\rho}f)_\star
    \le
    \tau_f \lambda^{1-\alpha}\rho^{-\alpha}\norm{\nabla (S_{\lambda,\rho}f)(x)}^\alpha
    \qquad
    (\forall x\in S_0(S_{\lambda,\rho}f));
    \]
    \item if $G$ is unbiased with variance at most $\bar \sigma^2$, then $S_{\lambda,\rho}G$ is unbiased with variance at most $(\lambda \rho)^2\bar \sigma^2$;
    \item for any pair subfamily \(\mathfrak H\), define
    \[
    S_{\lambda,\rho}\mathfrak H
    :=
    \{(S_{\lambda,\rho}f,S_{\lambda,\rho}G):\ (f,G)\in\mathfrak H\}.
    \]
    Then, for every \(T\ge0\) and \(\varepsilon>0\),
    \[
    \mathcal E_T^{\mathrm{BV}}(S_{\lambda,\rho}\mathfrak H)
    \ge
    \lambda\,\mathcal E_T^{\mathrm{BV}}(\mathfrak H),
    \qquad
    T_\varepsilon^{\mathrm{BV}}(S_{\lambda,\rho}\mathfrak H)
    \ge
    T_{\varepsilon/\lambda}^{\mathrm{BV}}(\mathfrak H).
    \]
\end{enumerate}
\end{lemma}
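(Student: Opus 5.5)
Items (1) and (2) go exactly as in Lemma~\ref{lem:scaling}(ii)--(iii), which I would simply repeat. The key identity is
\[
\nabla(S_{\lambda,\rho}f)(x)=\lambda\rho\,\nabla f(\rho x).
\]
Differencing it at two points gives the Lipschitz constant $\lambda\rho^2L_f$. For the P\L{} constant, use $S_0(S_{\lambda,\rho}f)=\rho^{-1}S_0(f)$ and $(S_{\lambda,\rho}f)_\star=\lambda f_\star$. Applying the hypothesis at $y=\rho x$ and rewriting $\norm{\nabla f(\rho x)}^\alpha=(\lambda\rho)^{-\alpha}\norm{\nabla(S_{\lambda,\rho}f)(x)}^\alpha$ yields the constant $\tau_f\lambda^{1-\alpha}\rho^{-\alpha}$.

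For item (3), fix a query $x$ and note that a fresh sample of $S_{\lambda,\rho}G$ at $x$ is $\lambda\rho$ times a fresh sample of $G$ at $\rho x$. Conditional expectation gives
\[
\E[(S_{\lambda,\rho}G)(x,\xi)\mid x]=\lambda\rho\nabla f(\rho x)=\nabla(S_{\lambda,\rho}f)(x).
\]
The centered deviation is $\lambda\rho\bigl(G(\rho x,\xi)-\nabla f(\rho x)\bigr)$, so its conditional second moment is at most $(\lambda\rho)^2\bar\sigma^2$.

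Item (4) is the main step, and I would prove it by a simulation argument in the spirit of Lemma~\ref{lem:scaling}(iv), now with randomization. Fix any randomized method $A$ for the scaled family. Build a method $B$ for $\mathfrak H$ as follows. $B$ runs an internal copy of $A$ with the same internal randomness. When $A$ issues a query $x$, $B$ queries $\rho x$. When the oracle returns $G(\rho x,\xi)$, $B$ passes $\lambda\rho\,G(\rho x,\xi)$ to $A$. Whenever $A$ outputs a point $x$, $B$ outputs $\rho x$.

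Because $\xi$ is fresh at each call, the transcript $A$ sees on $(S_{\lambda,\rho}f,S_{\lambda,\rho}G)$ has exactly the law of the simulated transcript. By induction, the trajectories are coupled through $y_t=\rho x_t$. Each query of $B$ is measurable with respect to its own past responses and internal randomness, since rescaling responses is deterministic and invertible, so $B$ is a legitimate method in the model of Definition~\ref{def:stochastic-model}. This coupling is the point to check carefully, mainly the measurability and the fact that the same seed $\xi$ drives both runs.

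The coupling then gives, for each pair $(f,G)\in\mathfrak H$,
\[
\E[S_{\lambda,\rho}f(x_T^A)-\lambda f_\star]=\lambda\,\E[f(y_T^B)-f_\star].
\]
Take the supremum over pairs, which correspond bijectively to pairs in $S_{\lambda,\rho}\mathfrak H$. Then the worst-case risk of $A$ on $S_{\lambda,\rho}\mathfrak H$ equals $\lambda$ times that of $B$ on $\mathfrak H$, and that is at least $\lambda\,\mathcal E_T^{\mathrm{BV}}(\mathfrak H)$. Taking the infimum over $A$ gives the first inequality.

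For the second inequality, suppose $\mathcal E_T^{\mathrm{BV}}(S_{\lambda,\rho}\mathfrak H)\le\varepsilon$. The first inequality then gives $\mathcal E_T^{\mathrm{BV}}(\mathfrak H)\le\varepsilon/\lambda$. So the set of budgets defining $T_\varepsilon^{\mathrm{BV}}(S_{\lambda,\rho}\mathfrak H)$ is contained in the set defining $T_{\varepsilon/\lambda}^{\mathrm{BV}}(\mathfrak H)$, and comparing the infima proves the claim.
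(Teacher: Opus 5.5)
Your proposal is correct and follows the same route as the paper. Items (1)--(3) come from the identity $\nabla(S_{\lambda,\rho}f)(x)=\lambda\rho\,\nabla f(\rho x)$ together with the correspondence $y=\rho x$ between sublevel sets, and item (4) comes from simulating the scaled-family algorithm on the base family, querying $\rho x_t$, multiplying responses by $\lambda\rho$, and outputting $\rho x_T$. Your budget-set containment argument for the $T_\varepsilon^{\mathrm{BV}}$ inequality simply spells out what the paper says follows directly from the definition.
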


\begin{proof}
We prove the four claims in order.

For item (1), let $x,y\in\R^d$.
Since
\[
\nabla (S_{\lambda,\rho}f)(x)=\lambda \rho \nabla f(\rho x),
\]
we obtain
\[
\norm{\nabla (S_{\lambda,\rho}f)(x)-\nabla (S_{\lambda,\rho}f)(y)}
\le
\lambda \rho L_f \norm{\rho x-\rho y}
=
\lambda \rho^2 L_f \norm{x-y}.
\]

For item (2), fix $x\in S_0(S_{\lambda,\rho}f)$ and set $y:=\rho x$.
Then $y\in S_0(f)$ and
\[
(S_{\lambda,\rho}f)(x)-(S_{\lambda,\rho}f)_\star
=
\lambda \bigl(f(y)-f_\star\bigr)
\le
\lambda \tau_f \norm{\nabla f(y)}^\alpha.
\]
Using $\nabla (S_{\lambda,\rho}f)(x)=\lambda \rho \nabla f(y)$ gives
\[
(S_{\lambda,\rho}f)(x)-(S_{\lambda,\rho}f)_\star
\le
\tau_f \lambda^{1-\alpha}\rho^{-\alpha}\norm{\nabla (S_{\lambda,\rho}f)(x)}^\alpha.
\]

For item (3), unbiasedness follows from
\[
\E[(S_{\lambda,\rho}G)(x,\xi)\mid x]
=
\lambda \rho \E[G(\rho x,\xi)\mid x]
=
\lambda \rho \nabla f(\rho x)
=
\nabla (S_{\lambda,\rho}f)(x).
\]
Moreover,
\[
\E\bigl[\norm{(S_{\lambda,\rho}G)(x,\xi)-\nabla (S_{\lambda,\rho}f)(x)}^2\mid x\bigr]
=
(\lambda \rho)^2 \E\bigl[\norm{G(\rho x,\xi)-\nabla f(\rho x)}^2\mid x\bigr]
\le
(\lambda \rho)^2 \bar \sigma^2.
\]

For item (4), fix an algorithm \(A\) for the scaled family \(S_{\lambda,\rho}\mathfrak H\). We construct an algorithm
\(\widetilde A\) for the base family \(\mathfrak H\) as follows. Whenever \(A\) queries \(x_t\), the simulator queries
\[
y_t:=\rho x_t
\]
to the base stochastic-gradient oracle and feeds back the synthetic scaled stochastic gradient
\[
(S_{\lambda,\rho}G)(x_t,\xi)=\lambda \rho\, G(y_t,\xi).
\]
If \(A\) outputs \(x_T\), the base algorithm outputs \(y_T=\rho x_T\). For every \((f,G)\in\mathfrak H\), the simulated
trajectory is exactly the trajectory of \(A\) on \((S_{\lambda,\rho}f,S_{\lambda,\rho}G)\), and
\[
(S_{\lambda,\rho}f)(x_T)-(S_{\lambda,\rho}f)_\star
=
\lambda\bigl(f(y_T)-f_\star\bigr).
\]
Taking the supremum over \((f,G)\in\mathfrak H\) and then the infimum over \(A\) gives
\[
\mathcal E_T^{\mathrm{BV}}(S_{\lambda,\rho}\mathfrak H)
\ge
\lambda\,\mathcal E_T^{\mathrm{BV}}(\mathfrak H).
\]
The displayed lower bound on \(T_\varepsilon^{\mathrm{BV}}\) follows directly from the definition
\eqref{eq:pair-family-minimax}.
\end{proof}

\begin{lemma}[Exact-gradient lower bound for randomized BV methods]
\label{lem:bv-deterministic-term}
There are constants $c_{\alpha,\mathrm{acc}}^{\mathrm{det}}>0$ and
$c_{\alpha,\mathrm{lb}}^{\mathrm{det}}>0$, depending only on $\alpha$, such that, for every
$L,\tau,\Delta_0,\sigma^2>0$ and every
\[
\varepsilon\le c_{\alpha,\mathrm{acc}}^{\mathrm{det}}
\min\{\Delta_0,L^{\alpha/(2-\alpha)}\tau^{2/(2-\alpha)}\},
\]
the bounded-variance stochastic minimax complexity satisfies
\begin{equation}
\label{eq:bv-deterministic-term}
T_\varepsilon^{\mathrm{BV}}(\alpha,L,\tau,\Delta_0,\sigma^2)
\ge
c_{\alpha,\mathrm{lb}}^{\mathrm{det}}
L\tau^{2/\alpha}\varepsilon^{-(2-\alpha)/\alpha}.
\end{equation}
This lower bound holds for randomized, adaptive, arbitrary-query, gradient-only methods.
\end{lemma}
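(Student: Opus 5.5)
We reuse the randomly rotated hard pair of Lemma~\ref{lem:scaled-properties}, but with reveal probability \(q=1\). With \(z=1\) almost surely, the oracle \(G_{m,1,U}\) returns the exact gradient \(\nabla\Psi_{m,1,U}\). It therefore has zero conditional variance and is admissible in \(\mathfrak O_{\sigma^2}(\Psi_{m,1,U})\) for every \(\sigma^2>0\). This matters because the deterministic resisting-oracle argument of Corollary~\ref{cor:fixed-budget-lower} does not apply to randomized methods. Lemma~\ref{lem:scaled-properties}(5) instead gives, for every randomized adaptive arbitrary-query algorithm and every \(t\le c_0 m\),
\[
\E_{U}\bigl[\Psi_{m,1,U}(x_t^A)-\Psi_{m,1,U,\star}\bigr]\ge a_m^\alpha=m^{-p},
\qquad p=\alpha/(2-\alpha).
\]
Since \(\sup_U\ge\E_U\), some fixed rotation is at least this hard. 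So the normalized family \(\{(\Psi_{m,1,U},G_{m,1,U})\}\) lies in \(\mathfrak P_{\alpha,L_0,\tau_0,\Delta,\sigma^2}(0)\) with \(\Delta=5a_m^\alpha\), and it forces \(T^{\mathrm{BV}}_{\varepsilon'}\ge c_0m\) at any normalized accuracy \(\varepsilon'<a_m^\alpha\).

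Next I would fix the normalized accuracy and then rescale. Given a normalized target \(\varepsilon'\le\varepsilon_\alpha^0\), set \(m:=\lfloor(2\varepsilon')^{-1/p}\rfloor\ge m_0\), exactly as in Proposition~\ref{prop:normalized-stochastic-lb}. This gives \(2\varepsilon'\le a_m^\alpha\le C_\alpha^{\mathrm{amp}}\varepsilon'\), and hence a lower bound of \(c_0m\gtrsim(\varepsilon')^{-1/p}\). I would then apply Lemma~\ref{lem:stochastic-scaling} with \(\lambda,\rho\) chosen as in Theorem~\ref{thm:zr}. Take \(\lambda\rho^2L_0=L\). Take \(\tau_0\lambda^{1-\alpha}\rho^{-\alpha}=\tau\), which after substituting \(\rho^2=L/(L_0\lambda)\) gives \(\lambda=(\tau/\tau_0)^{2/(2-\alpha)}(L/L_0)^{p}\). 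The variance stays zero, so item (3) imposes no constraint. Item (4) gives \(T^{\mathrm{BV}}_\varepsilon(S_{\lambda,\rho}\mathfrak H)\ge T^{\mathrm{BV}}_{\varepsilon/\lambda}(\mathfrak H)\), so I take \(\varepsilon'=\varepsilon/\lambda\). The resulting bound is
\[
c_0m\gtrsim (\varepsilon/\lambda)^{-1/p}\asymp L\tau^{2/\alpha}\varepsilon^{-(2-\alpha)/\alpha},
\]
using \(\lambda^{1/p}=(\tau/\tau_0)^{2/\alpha}L/L_0\).

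The remaining step is accuracy bookkeeping. The requirement \(\varepsilon'\le\varepsilon_\alpha^0\) becomes \(\varepsilon\le\varepsilon_\alpha^0\lambda\asymp L^{\alpha/(2-\alpha)}\tau^{2/(2-\alpha)}\), which is the second term in the stated accuracy range. For the initial gap we need \(5\lambda a_m^\alpha\le5C_\alpha^{\mathrm{amp}}\varepsilon\le\Delta_0\), which holds once \(\varepsilon\le\Delta_0/(5C_\alpha^{\mathrm{amp}})\). Both conditions are secured by taking \(c^{\mathrm{det}}_{\alpha,\mathrm{acc}}\) small enough. Membership in \(\mathcal F^{(d)}_{\alpha,L,\tau,\Delta_0}(0)\) then follows from Lemma~\ref{lem:stochastic-scaling}(1)--(2) together with Lemma~\ref{lem:scaled-properties}(1)--(3).

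I expect the main subtlety to be confirming that \(q=1\) is really covered by Lemmas~\ref{lem:clipped-rotated} and~\ref{lem:random-rotation-progress}. When \(q=1\) the reveal event has probability one, but Step~1 of Lemma~\ref{lem:random-rotation-progress} only uses \(\Pbb(\cdot)\le q\), so it still yields \(t_\star\asymp m\). The bound \(m/(16q)+1\le(m-\log8)/(2q)\) must also hold at \(q=1\); it does for \(m\ge5\). The dimension threshold \(d_{1,m}\) is finite, and the dimension-free class allows such \(d\).
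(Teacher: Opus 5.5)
Your proposal is correct and matches the paper's proof essentially step for step. Both use the $q=1$ exact-gradient version of the randomly rotated family, apply Lemma~\ref{lem:scaled-properties}(5) together with $\sup_U\ge\E_U$, choose $m=\lfloor(2\varepsilon')^{-1/p}\rfloor$, and rescale with the same $(\lambda,\rho)$ via Lemma~\ref{lem:stochastic-scaling}, with the same accuracy and initial-gap bookkeeping (the paper additionally shrinks the accuracy constant so that $m\ge\lceil 2/c_0\rceil$, a margin your derivation of $T^{\mathrm{BV}}_{\varepsilon'}\ge c_0m$ does not require).
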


\begin{proof}
Let $p:=\alpha/(2-\alpha)$, and let $L_0,\tau_0$ be the normalized constants in
Lemma~\ref{lem:scaled-properties}. Set
\[
\lambda:=
\left(\frac{\tau}{\tau_0}\right)^{2/(2-\alpha)}
\left(\frac{L}{L_0}\right)^{\alpha/(2-\alpha)},
\qquad
\rho:=\sqrt{\frac{L}{\lambda L_0}},
\qquad
\varepsilon':=\frac{\varepsilon}{\lambda}.
\]
Choose $c_{\alpha,\mathrm{acc}}^{\mathrm{det}}$ small enough that the displayed accuracy assumption implies
\[
\varepsilon'\le\varepsilon_\alpha^0,
\qquad
\left\lfloor(2\varepsilon')^{-1/p}\right\rfloor\ge
\max\{m_0,\lceil2/c_0\rceil\},
\qquad
5C_\alpha^{\mathrm{amp}}\varepsilon\le\Delta_0.
\]
This is possible because $\lambda$ is an $\alpha$-dependent constant multiple of
$L^{\alpha/(2-\alpha)}\tau^{2/(2-\alpha)}$.

Define
\[
m:=\left\lfloor(2\varepsilon')^{-1/p}\right\rfloor
\]
and use the normalized family~\eqref{eq:scaled-family-def} with $q=1$, any $d\ge d_{m,1}$, and
$U$ uniform on $\Ortho(d,2m)$. Since $z\sim\mathrm{Bernoulli}(1)$, we have $z=1$ almost surely. The definition of
$g_{m,q}$ then gives, for every $x$,
\[
g_{m,1}(x,z)=\nabla F_{2m}(x).
\]
Substituting this identity into~\eqref{eq:hatg-def} and then~\eqref{eq:scaled-family-def} yields
\[
G_{m,1,U}(x,z)=\nabla\Psi_{m,1,U}(x)
\qquad\text{almost surely}.
\]
Thus this is an exact-gradient oracle with variance zero, and hence it belongs to every variance class with budget
$\sigma^2>0$ after scaling.

The floor choice and $\varepsilon'\le\varepsilon_\alpha^0$ give
\[
m\le(2\varepsilon')^{-1/p},
\qquad
m\ge\frac12(2\varepsilon')^{-1/p},
\]
and therefore
\begin{equation}
\label{eq:q-one-amplitude-bounds}
a_m^\alpha=m^{-p}\ge2\varepsilon',
\qquad
m\ge2^{-1-1/p}(\varepsilon')^{-1/p}
=\Omega_\alpha\bigl((\varepsilon')^{-(2-\alpha)/\alpha}\bigr).
\end{equation}
Moreover, Lemma~\ref{lem:scaled-properties}(2) gives
\[
\Psi_{m,1,U}(0)-\Psi_{m,1,U,\star}
\le5a_m^\alpha
\le5C_\alpha^{\mathrm{amp}}\varepsilon'\le1.
\]
Hence every unscaled pair lies in the normalized pair class.

Now fix any randomized, adaptive, arbitrary-query gradient-only method $A$. Lemma~\ref{lem:scaled-properties}(5),
whose proof uses the corrected random-rotation Lemma~\ref{lem:random-rotation-progress}, gives
\[
\E_{U,A}\bigl[
\Psi_{m,1,U}(x_t^A)-\Psi_{m,1,U,\star}
\bigr]
\ge a_m^\alpha
\ge2\varepsilon'
\qquad
(t\le c_0m).
\]
There is no oracle randomness here because the $q=1$ oracle is exact; the expectation is over $U$ and the internal
randomness of $A$. Consequently,
\[
\sup_{U\in\Ortho(d,2m)}
\E_A\bigl[\Psi_{m,1,U}(x_t^A)-\Psi_{m,1,U,\star}\bigr]
\ge
\E_{U,A}\bigl[\Psi_{m,1,U}(x_t^A)-\Psi_{m,1,U,\star}\bigr]
\ge2\varepsilon'.
\]
Thus a genuine common distribution over $U$ is hard for every randomized method; no deterministic
resisting-oracle distribution and no interchange of an infimum with an expectation is used. The proof of
Lemma~\ref{lem:clipped-rotated}(5) already handles the output convention by appending the point returned after \(t\)
calls as the next query without using the response; its choice \(c_0=1/16\) absorbs that one-round shift. It follows from
\eqref{eq:q-one-amplitude-bounds} that the normalized exact-gradient family requires
\[
\Omega_\alpha\bigl((\varepsilon')^{-(2-\alpha)/\alpha}\bigr)
\]
calls.

Apply the stochastic scaling Lemma~\ref{lem:stochastic-scaling} to every pair in this family. The selected $\lambda$
and $\rho$ give smoothness $L$ and sublevel $\alpha$-P\L{} constant $\tau$. The scaled initial gap satisfies
\[
\lambda\bigl(\Psi_{m,1,U}(0)-\Psi_{m,1,U,\star}\bigr)
\le5C_\alpha^{\mathrm{amp}}\lambda\varepsilon'
=5C_\alpha^{\mathrm{amp}}\varepsilon
\le\Delta_0,
\]
and the scaled exact-gradient oracle still has variance zero. Finally,
\[
\lambda^{(2-\alpha)/\alpha}
=
\frac{L}{L_0}
\left(\frac{\tau}{\tau_0}\right)^{2/\alpha}.
\]
Therefore scaling the normalized lower bound gives
\[
T_\varepsilon^{\mathrm{BV}}(\alpha,L,\tau,\Delta_0,\sigma^2)
\ge
\Omega_\alpha\left(
\lambda^{(2-\alpha)/\alpha}\varepsilon^{-(2-\alpha)/\alpha}
\right)
=
\Omega_\alpha\left(
L\tau^{2/\alpha}\varepsilon^{-(2-\alpha)/\alpha}
\right),
\]
where the fixed factors $L_0^{-1}\tau_0^{-2/\alpha}$ are absorbed into a constant depending only on $\alpha$.
This proves~\eqref{eq:bv-deterministic-term}.
\end{proof}

\begin{theorem}[Matching bounded-variance lower bound]
\label{thm:stochastic-lb}
For every $\alpha\in[1,2)$ there exists $c_\alpha>0$ such that, for every $L,\tau,\Delta_0,\sigma^2>0$ and every
target accuracy
\[
\varepsilon\in\bigl(0,c_\alpha\min\{\Delta_0,L^{\alpha/(2-\alpha)}\tau^{2/(2-\alpha)}\}\bigr],
\]
\[
T_\varepsilon^{\mathrm{BV}}(\alpha,L,\tau,\Delta_0,\sigma^2)
\ge
c_\alpha\left(
L\tau^{2/\alpha}\varepsilon^{-(2-\alpha)/\alpha}
 + L\sigma^2\tau^{4/\alpha}\varepsilon^{-(4-\alpha)/\alpha}
\right)
\]
holds.
\end{theorem}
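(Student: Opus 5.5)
The plan is to combine two lower bounds already prepared above. The first is the exact-gradient term from Lemma~\ref{lem:bv-deterministic-term}. The second is the noise term, obtained by scaling Proposition~\ref{prop:normalized-stochastic-lb} to arbitrary parameters with Lemma~\ref{lem:stochastic-scaling}. I then merge them with the elementary max-versus-sum inequality, choosing a single constant $c_\alpha$ small enough for every accuracy restriction. Write $D_\varepsilon:=L\tau^{2/\alpha}\varepsilon^{-(2-\alpha)/\alpha}$ and $S_\varepsilon:=L\sigma^2\tau^{4/\alpha}\varepsilon^{-(4-\alpha)/\alpha}$. Recall that $S_\varepsilon/D_\varepsilon=(\tau\sigma^\alpha/\varepsilon)^{2/\alpha}$.

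\emph{Step 1 (noise term).} Take the same scaling as in the proof of Lemma~\ref{lem:bv-deterministic-term}:
\[
\lambda:=(\tau/\tau_0)^{2/(2-\alpha)}(L/L_0)^{\alpha/(2-\alpha)},\qquad \rho:=\sqrt{L/(\lambda L_0)}.
\]
Item (1) of Lemma~\ref{lem:stochastic-scaling} then gives smoothness $L$, and item (2) gives P\L{} constant $\tau_0\lambda^{1-\alpha}\rho^{-\alpha}=\tau$. Set $\varepsilon':=\varepsilon/\lambda$ and choose the normalized variance $\bar\sigma^2:=\sigma^2/(\lambda\rho)^2=\sigma^2L_0/(\lambda L)$, so that item (3) returns variance exactly $\sigma^2$. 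Apply Proposition~\ref{prop:normalized-stochastic-lb} at accuracy $\varepsilon'$. This requires $\varepsilon'\le\varepsilon^0_\alpha$, which follows from $\varepsilon\le c_\alpha L^{\alpha/(2-\alpha)}\tau^{2/(2-\alpha)}$ once $c_\alpha$ is small. It also requires $\varepsilon'\le c^{\mathrm{noise}}_\alpha\bar\sigma^\alpha$. The scaled initial gap is at most $5C^{\mathrm{amp}}_\alpha\varepsilon\le\Delta_0$ provided $c_\alpha\le 1/(5C^{\mathrm{amp}}_\alpha)$. Item (4) of Lemma~\ref{lem:stochastic-scaling} then yields $T^{\mathrm{BV}}_\varepsilon\ge c^{\mathrm{st}}_\alpha\bar\sigma^2(\varepsilon')^{-(4-\alpha)/\alpha}$. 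Substituting $\bar\sigma^2$ and $\varepsilon'$, the $\lambda$-power is
\[
\lambda^{-1}\lambda^{(4-\alpha)/\alpha}=\lambda^{2(2-\alpha)/\alpha}\propto L^2\tau^{4/\alpha}.
\]
Multiplied by $\sigma^2/L$, this gives a constant multiple of $S_\varepsilon$. The same computation shows that the condition $\varepsilon'\le c^{\mathrm{noise}}_\alpha\bar\sigma^\alpha$ is equivalent, up to $\alpha$-dependent constants, to $\varepsilon\le c'_\alpha\tau\sigma^\alpha$. Indeed, $\lambda\bar\sigma^\alpha\propto\lambda^{1-\alpha/2}L^{-\alpha/2}\sigma^\alpha\propto\tau\sigma^\alpha$. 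Checking this exponent bookkeeping is the only real computation, and I expect it to be the main (if routine) obstacle.

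\emph{Step 2 (case split and combination).} Suppose first that $\varepsilon\le c'_\alpha\tau\sigma^\alpha$. Step 1 gives $T^{\mathrm{BV}}_\varepsilon\ge c_SS_\varepsilon$, and Lemma~\ref{lem:bv-deterministic-term} gives $T^{\mathrm{BV}}_\varepsilon\ge c_DD_\varepsilon$. Hence
\[
T^{\mathrm{BV}}_\varepsilon\ge\max\{c_DD_\varepsilon,c_SS_\varepsilon\}\ge\tfrac12\min\{c_D,c_S\}(D_\varepsilon+S_\varepsilon).
\]
Otherwise $\varepsilon>c'_\alpha\tau\sigma^\alpha$, so $S_\varepsilon\le(c'_\alpha)^{-2/\alpha}D_\varepsilon$. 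In that case Lemma~\ref{lem:bv-deterministic-term} alone gives $T^{\mathrm{BV}}_\varepsilon\ge c_DD_\varepsilon\ge c''_\alpha(D_\varepsilon+S_\varepsilon)$. Finally, take $c_\alpha$ to be the minimum of all constants used: the accuracy constants of Lemma~\ref{lem:bv-deterministic-term} and of Step 1, and the combination constants. Every accuracy restriction in the theorem's range is then met, and the stated bound holds.
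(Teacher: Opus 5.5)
Your proposal is correct and follows essentially the same route as the paper's proof. Both use the same $\lambda,\rho,\bar\sigma^2,\varepsilon'$ scaling to transfer Proposition~\ref{prop:normalized-stochastic-lb}, both verify the identity $\lambda\bar\sigma^\alpha=(\tau/\tau_0)\sigma^\alpha$, and both perform the same case split at $\varepsilon\asymp\tau\sigma^\alpha$, combining with Lemma~\ref{lem:bv-deterministic-term} via the max-versus-sum inequality.
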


\begin{proof}
Let $(L_0,\tau_0)$ be the normalized constants from Lemma~\ref{lem:scaled-properties}.
We begin by choosing a scaling that transforms the normalized class into the target class.
Set
\[
\lambda:=\left(\frac{\tau}{\tau_0}\right)^{2/(2-\alpha)}\left(\frac{L}{L_0}\right)^{\alpha/(2-\alpha)},
\qquad
\rho:=\sqrt{\frac{L}{\lambda L_0}}.
\]
Then
\[
L_0\lambda \rho^2=L,
\qquad
\tau_0 \lambda^{1-\alpha}\rho^{-\alpha}=\tau.
\]
Thus Lemma~\ref{lem:stochastic-scaling}(1)--(2) sends normalized instances to functions with the desired smoothness and sublevel $\alpha$-P\L{} constants.

Next define the normalized variance level
\[
\bar \sigma^2:=\frac{\sigma^2}{(\lambda \rho)^2}=\frac{\sigma^2 L_0}{\lambda L},
\]
and the normalized accuracy target
\[
\varepsilon':=\frac{\varepsilon}{\lambda}.
\]
The objective scaling factor is a constant multiple of the natural gap scale:
\[
\lambda
=
\tau_0^{-2/(2-\alpha)}L_0^{-\alpha/(2-\alpha)}
L^{\alpha/(2-\alpha)}\tau^{2/(2-\alpha)}.
\]
Set $c_\alpha^{\mathrm{norm}}:=\varepsilon_\alpha^0$, where
$\varepsilon_\alpha^0$ is the explicit threshold in~\eqref{eq:normalized-accuracy-threshold}. Then
Proposition~\ref{prop:normalized-stochastic-lb} applies whenever
$\varepsilon'\le c_\alpha^{\mathrm{norm}}$ and the normalized noise condition below holds.

We claim that the condition
\[
\varepsilon'\le c_\alpha^{\mathrm{noise}}\bar \sigma^\alpha
\]
is equivalent to
\[
\varepsilon\le c_\alpha^{\mathrm{noise}}\frac{\tau}{\tau_0}\sigma^\alpha.
\]
Indeed,
\[
\lambda \bar \sigma^\alpha
=
\lambda \left(\frac{\sigma^2 L_0}{\lambda L}\right)^{\alpha/2}
=
\sigma^\alpha \lambda^{1-\alpha/2}\left(\frac{L_0}{L}\right)^{\alpha/2}.
\]
Using
\[
\tau=\tau_0\lambda^{1-\alpha/2}\left(\frac{L_0}{L}\right)^{\alpha/2},
\]
we obtain
\[
\lambda \bar \sigma^\alpha=\frac{\tau}{\tau_0}\sigma^\alpha,
\]
which proves the claim.

Therefore, whenever
\[
\varepsilon\le c_\alpha^{\mathrm{noise}}\frac{\tau}{\tau_0}\sigma^\alpha
\qquad\text{and}\qquad
\varepsilon\le
c_\alpha^{\mathrm{norm}}\tau_0^{-2/(2-\alpha)}L_0^{-\alpha/(2-\alpha)}
L^{\alpha/(2-\alpha)}\tau^{2/(2-\alpha)},
\]
Proposition~\ref{prop:normalized-stochastic-lb} yields, inside the normalized pair class
\eqref{eq:normalized-pair-class} and with the hard-subfamily definition
\eqref{eq:normalized-hard-subfamily} applied at accuracy \(\varepsilon'\), a subfamily
\[
\mathfrak H_{\varepsilon',\bar \sigma^2}\subseteq \mathfrak P_{\mathrm{norm}}(\bar \sigma^2)
\]
such that
\[
T_{\varepsilon'}^{\mathrm{BV}}(\mathfrak H_{\varepsilon',\bar \sigma^2})
\ge
c_\alpha^{\mathrm{st}}\bar \sigma^2(\varepsilon')^{-(4-\alpha)/\alpha},
\]
and every $(f,G)\in \mathfrak H_{\varepsilon',\bar \sigma^2}$ satisfies
\[
f(0)-f_\star\le 5C_\alpha^{\mathrm{amp}} \varepsilon'.
\]
If, in addition,
\[
\varepsilon\le \frac{\Delta_0}{5C_\alpha^{\mathrm{amp}}},
\]
then for every $(f,G)\in \mathfrak H_{\varepsilon',\bar \sigma^2}$,
\[
\lambda \bigl(f(0)-f_\star\bigr)\le \Delta_0.
\]
Hence the scaled family
\[
S_{\lambda,\rho}\mathfrak H_{\varepsilon',\bar \sigma^2}
:=
\{(S_{\lambda,\rho}f,S_{\lambda,\rho}G):\ (f,G)\in \mathfrak H_{\varepsilon',\bar \sigma^2}\}
\]
is contained in $\mathfrak P_{\alpha,L,\tau,\Delta_0,\sigma^2}(0)$ by Lemma~\ref{lem:stochastic-scaling}(1)--(3).

We now transfer the normalized lower bound to the scaled family.
Using the pair-family complexity notation from \eqref{eq:pair-family-minimax}, Lemma~\ref{lem:stochastic-scaling}(4)
gives
\[
T_\varepsilon^{\mathrm{BV}}(S_{\lambda,\rho}\mathfrak H_{\varepsilon',\bar \sigma^2})
\ge
T_{\varepsilon/\lambda}^{\mathrm{BV}}(\mathfrak H_{\varepsilon',\bar \sigma^2})
=
T_{\varepsilon'}^{\mathrm{BV}}(\mathfrak H_{\varepsilon',\bar \sigma^2}).
\]
Since this scaled family is a subclass of $\mathfrak P_{\alpha,L,\tau,\Delta_0,\sigma^2}(0)$, we conclude that
\[
T_\varepsilon^{\mathrm{BV}}(\alpha,L,\tau,\Delta_0,\sigma^2)
\ge
c_\alpha^{\mathrm{st}}\lambda^{(4-\alpha)/\alpha}\bar \sigma^2 \varepsilon^{-(4-\alpha)/\alpha}.
\]

It remains to rewrite the prefactor in terms of $(L,\tau,\sigma^2)$.
Using the identities above,
\[
L\sigma^2\tau^{4/\alpha}
=
(L_0\lambda \rho^2)\bigl(\bar \sigma^2 \lambda^2 \rho^2\bigr)\bigl(\tau_0^{4/\alpha}\lambda^{4(1-\alpha)/\alpha}\rho^{-4}\bigr)
=
L_0\tau_0^{4/\alpha}\bar \sigma^2 \lambda^{(4-\alpha)/\alpha}.
\]
Therefore
\[
T_\varepsilon^{\mathrm{BV}}(\alpha,L,\tau,\Delta_0,\sigma^2)
\ge
c_\alpha^{\mathrm{lb}} L\sigma^2\tau^{4/\alpha}\varepsilon^{-(4-\alpha)/\alpha},
\]
where
\[
c_\alpha^{\mathrm{lb}}:=c_\alpha^{\mathrm{st}}(L_0\tau_0^{4/\alpha})^{-1}.
\]

Define
\[
D_\varepsilon:=L\tau^{2/\alpha}\varepsilon^{-(2-\alpha)/\alpha},
\qquad
S_\varepsilon:=L\sigma^2\tau^{4/\alpha}\varepsilon^{-(4-\alpha)/\alpha},
\qquad
\kappa_\alpha:=\frac{c_\alpha^{\mathrm{noise}}}{\tau_0}.
\]
Their ratio is
\begin{equation}
\label{eq:stochastic-deterministic-ratio}
\frac{S_\varepsilon}{D_\varepsilon}
=
\sigma^2\tau^{2/\alpha}\varepsilon^{-2/\alpha}
=
\left(\frac{\tau\sigma^\alpha}{\varepsilon}\right)^{2/\alpha}.
\end{equation}
The stochastic construction above applies when
$\varepsilon\le\kappa_\alpha\tau\sigma^\alpha$. In this regime, both the stochastic lower bound just proved and
Lemma~\ref{lem:bv-deterministic-term} are available. Writing
$c_D:=c_{\alpha,\mathrm{lb}}^{\mathrm{det}}$ and $c_S:=c_\alpha^{\mathrm{lb}}$, we obtain
\[
T_\varepsilon^{\mathrm{BV}}
\ge
\max\{c_DD_\varepsilon,c_SS_\varepsilon\}
\ge
\frac{\min\{c_D,c_S\}}{2}
\bigl(D_\varepsilon+S_\varepsilon\bigr).
\]
The last inequality follows from
\[
\max\{c_DD,c_SS\}
\ge
\min\{c_D,c_S\}\max\{D,S\}
\ge
\frac12\min\{c_D,c_S\}(D+S),
\]
so the sum is obtained only up to a universal constant factor.

In the complementary regime $\varepsilon>\kappa_\alpha\tau\sigma^\alpha$,
\eqref{eq:stochastic-deterministic-ratio} gives
\[
S_\varepsilon<\kappa_\alpha^{-2/\alpha}D_\varepsilon.
\]
The exact-gradient randomized lower bound in Lemma~\ref{lem:bv-deterministic-term} therefore gives
\[
T_\varepsilon^{\mathrm{BV}}
\ge
c_DD_\varepsilon
\ge
\frac{c_D}{1+\kappa_\alpha^{-2/\alpha}}
\bigl(D_\varepsilon+S_\varepsilon\bigr).
\]

It remains only to collect the accuracy conditions. Choose the constant $c_\alpha$ in the theorem no larger than
the accuracy constant $c_{\alpha,\mathrm{acc}}^{\mathrm{det}}$, $(5C_\alpha^{\mathrm{amp}})^{-1}$, and
\[
c_\alpha^{\mathrm{norm}}
\tau_0^{-2/(2-\alpha)}L_0^{-\alpha/(2-\alpha)},
\]
and no larger than the two lower-bound prefactors after the fixed normalized constants have been absorbed. Then
\[
\varepsilon\le
c_\alpha\min\{\Delta_0,L^{\alpha/(2-\alpha)}\tau^{2/(2-\alpha)}\}
\]
ensures every initial-gap and normalized-accuracy requirement in both regimes. The crossover
$\tau\sigma^\alpha$ is used only to choose the active proof and is not part of the theorem's final accuracy range.
This proves the stated sum lower bound.
\end{proof}

\begin{remark}
The final accuracy range does not include the crossover \(\tau\sigma^\alpha\):
\[
\varepsilon\le c_\alpha' \min\{\Delta_0,L^{\alpha/(2-\alpha)}\tau^{2/(2-\alpha)}\}.
\]
The scale \(\tau\sigma^\alpha\) only determines which part of the proof is active: below it, the probabilistic
zero-chain gives the stochastic-noise term; above it, the deterministic lower bound already controls the full sum.
\end{remark}

\subsection{Completion of the stochastic argument}
\label{app:sto-completion}

\begin{proof}[Proof of Theorem~\ref{thm:stochastic-matching}]
Apply Theorem~\ref{thm:stochastic-lb}. Its proof uses the probabilistic construction below the crossover
$\varepsilon\asymp\tau\sigma^\alpha$ and the $q=1$ exact-gradient construction above it; the crossover does not
restrict the theorem's overall accuracy range. Relabeling the final $\alpha$-dependent constant gives the statement.
\end{proof}

\subsection{\texorpdfstring{Proof of the variance-dependent lower bound under global P\L{} geometry}{Proof of the variance-dependent lower bound under global PL geometry}}
\label{app:sto-endpoint-proof}

\begingroup
The proof combines two arguments. When $L\tau$ is larger than a universal threshold, we rescale the clipped,
randomly rotated probabilistic zero chain from Lemma~\ref{lem:clipped-rotated}. A one-dimensional Gaussian two-point
construction supplies the same lower bound over the remaining bounded range of $L\tau$. To place the zero-chain
instances in the global-P\L{} subclass, we first prove a quantitative P\L{} inequality on the whole ambient space.
We then summarize the normalized oracle properties and treat the two ranges of $L\tau$ in order.
\endgroup

\begingroup
The next lemma is the additional geometric ingredient needed for the strengthened endpoint theorem. Lemma~\ref{lem:clipped-rotated}
already gives a P\L{} estimate on the initial sublevel set. Here we show that its objective actually satisfies a global
P\L{} inequality, with a constant proportional to the chain length. The proportional dependence on $m$ is essential:
after scaling, it is what produces the factor $L\tau$ in the lower bound.

\begin{lemma}[Global regularity and P\L{} geometry of the endpoint instance]
\label{lem:endpoint-global-pl}
For every $m\ge m_0$, every $d\ge2m$, and every $U\in\Ortho(d,2m)$, the gradient of
$\widehat F_{m,U}$ is globally $113$-Lipschitz and
\[
\widehat F_{m,U}(x)-\widehat F_{m,U,\star}
\le
28{,}778m\norm{\nabla\widehat F_{m,U}(x)}^2
\qquad
(\forall x\in\R^d).
\]
\end{lemma}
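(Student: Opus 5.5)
The plan is to treat the two claims separately. Smoothness comes from a sharper version of Step~3 in the proof of Lemma~\ref{lem:clipped-rotated}. The global P\L{} inequality comes from splitting $\R^d$ into the unclipped ball $\{\norm{x}\le R_m\}$ and its complement.

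\emph{Smoothness.} I would reuse the three-term decomposition from Step~3 of Lemma~\ref{lem:clipped-rotated}. The first term contributes $79$ and the third contributes $\eta$. Only the middle term changes, where the crude bound $159R_m$ on $\sup_{\norm{u}\le 2R_m}\norm{\nabla F_{2m}(u)}$ is replaced by a coordinatewise estimate. From the gradient formulas after Lemma~\ref{lem:scalar-estimates}, each coordinate satisfies
\[
|\partial_iF_{2m}(u)|\le e^{-1/2}+3+\tfrac12+|u_i|,
\]
using $|B'(t)|\le |t|+\tfrac12$ outside $(1/2,1)$ and $|B'|\le 6$ inside. Hence
\[
\norm{\nabla F_{2m}(u)}^2\le 2(4.2)^2(2m)+2\norm{u}^2 .
\]
With $\norm{u}\le 2R_m=32\sqrt m$ this gives $\norm{\nabla F_{2m}(u)}\le c R_m$ with $c\approx 3$. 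Multiplying by the constant $11/R_m$ from Lemma~\ref{lem:radial-clipping} gives about $33$, and the total is at most $79+33+\eta\le 113$ once the constants are tracked.

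\emph{P\L{} on the ball $\norm{x}\le R_m$.} Here $\rho_m(x)=x$. As in Step~4 of Lemma~\ref{lem:clipped-rotated}, write $x=Uy+w$, so that
\[
\widehat F_{m,U}(x)-\widehat F_{m,U,\star}=\bigl(\Gamma_{2m}(y)-\Gamma_{2m,\star}\bigr)+\tfrac{\eta}{2}\norm{w}^2,
\qquad
\norm{\nabla\widehat F_{m,U}(x)}^2=\norm{\nabla\Gamma_{2m}(y)}^2+\eta^2\norm{w}^2 .
\]
The $w$-part is handled exactly as before. For the $\Gamma$-part I would redo Proposition~\ref{prop:Gamma-quadratic-PL} without the sublevel restriction:
\begin{itemize}
\item Inside the box $\mathcal B_{2m}$, Lemma~\ref{lem:Gamma-box-strong} gives the gap bounded by $\norm{\nabla\Gamma_{2m}}^2$, since the minimizer also lies in the box.
\item Outside the box, Lemma~\ref{lem:Gamma-outside-box} holds on all of $\R^{2m}$ and gives $\norm{\nabla\Gamma_{2m}}\ge 1/20$. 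The gap is at most $\Gamma_{2m}(y)\le F_{2m}(y)+\tfrac{\eta}{2}\norm{y}^2$. Using $B(t)\le 2+\tfrac12 t^2$, this is $O(m+\norm{y}^2)=O(m)$ because $\norm{y}\le R_m$. So the gap is at most $400\,C m\norm{\nabla}^2$.
\end{itemize}

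\emph{Outside the ball, $r=\norm{x}>R_m$ (main obstacle).} The gap is at most
\[
F_{2m}(U^\top\rho_m(x))+\tfrac{\eta}{2}r^2\le C m+\tfrac{\eta}{2}r^2\le C' r^2,
\]
since $m\le r^2/256$. It therefore suffices to show $\norm{\nabla\widehat F_{m,U}(x)}\ge c\,r/\sqrt m$ up to constants, or more simply $\ge c\,\eta r$.

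I would look at the radial component with $e=x/r$. With $v:=U^\top\rho_m(x)$, it equals
\[
\eta r+\frac{h_m'(r)}{h_m(r)}\,\bigl\langle \nabla F_{2m}(v),v\bigr\rangle ,
\]
because $\langle J\rho_m(x)^\top U\nabla F_{2m}(v),e\rangle=h_m'(r)\langle U\nabla F_{2m}(v),\rho_m(x)\rangle/h_m(r)$ and $\langle U\nabla F_{2m}(v),\rho_m(x)\rangle=\langle\nabla F_{2m}(v),v\rangle$.

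The key scalar estimate is a one-sided bound $\langle\nabla F_{2m}(v),v\rangle\ge -c_1 m$. Coordinatewise:
\begin{itemize}
\item $B'(t)t\ge 0$ except on $(1/2,1)$, where it is $-2\Theta'(t)t$.
\item On $(1/2,1)$ this combines with $\Theta'(v_i)\varphi(v_{i+1})v_i$ into $\Theta'(v_i)v_i(\varphi(v_{i+1})-2)$, which is $\ge -6$.
\item $\varphi'(v_i)v_i$ is bounded below by an absolute constant.
\end{itemize}
The radial component is then at least $\eta r-c_1m/R_m$, using $h_m'\le 1$ and $h_m\ge R_m$.

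With $\eta R_m=0.8\sqrt m$ and $c_1m/R_m=c_1\sqrt m/16$, this needs $c_1<12.8$. The crude per-coordinate count gives $c_1\approx 20$ over $2m$ coordinates, which is too large. So the hardest step is sharpening this one-sided estimate.

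What I would try first is to exploit that most coordinates of $v$ cannot simultaneously sit in the bad region $(1/2,1)$ while $\norm{v}$ is small. Alternatively, I would use the exponential decay of $h_m'$ beyond $R_m$: for $r\ge 2R_m$ the penalty term is negligible, and on the annulus $R_m<r<2R_m$ I would bound the gradient norm from below directly.

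If that still fails, the fallback is to combine the radial and tangential components, and to absorb the deficit using the inequality $\norm{v}^2\le 4F_{2m}(v)+2N$ from Lemma~\ref{lem:scalar-estimates}. Collecting the constants from the three regions then yields the stated $28{,}778\,m$.
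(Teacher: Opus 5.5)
Your smoothness estimate and your treatment of the clipping ball follow the paper's Steps~2--4. The exterior region $\norm{x}>R_m$, however, is left unproved, and that is the one region where the clipping map enters the gradient and the lemma says something genuinely new. You correctly reduce it to a one-sided bound $\ip{\nabla F_{2m}(v)}{v}\ge-c_1m$ with $c_1<12.8$. You then report only $c_1\approx20$ and list alternatives (an annulus split, mixing radial and tangential components) without carrying any of them out, so the closing sentence about collecting constants rests on a missing step.

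In fact your radial route closes with a sharper count. With $u=2t-1$ one has $t\Theta'(t)=6u(1-u^2)\le4/\sqrt3$ on $(1/2,1)$, so there $v_i\Theta'(v_i)\bigl(\varphi(v_{i+1})-2\bigr)\ge-8/\sqrt3$. Off $(1/2,1)$ the $\Theta'$ term vanishes and $tB'(t)\ge0$, and $t\varphi'(t)=t(t-1)e^{-(t-1)^2/2}\ge-\tfrac14$ everywhere. Summing over the $2m$ coordinates gives $\ip{\nabla F_{2m}(v)}{v}\ge-(16/\sqrt3+\tfrac12)m>-9.8m$. Hence the radial component is at least $\eta r-0.61\sqrt m\ge0.23\,\eta r$ for $r>R_m$. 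Together with the gap bound $F_{2m}(v)+\tfrac{\eta}{2}r^2\le6m+\tfrac{21}{40}r^2\le0.55r^2$, this yields a P\L{} constant below $5000$, well within $28{,}778m$.

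That would be shorter than the paper's exterior argument, which splits on the sign of $\beta:=\ip{\nabla F_{2m}(v)}{U^\top e}$ with $e=x/\norm{x}$. If $\beta\ge-\eta r/2$, the radial component is at least $\eta r/2$. Otherwise the coercivity bound $\ip{\nabla F_{2m}(v)}{v}\ge\tfrac12\norm{v}^2-4\sqrt{2m}\norm{v}-13m$ forces $\norm{U^\top e}^2<7/10$, which keeps the tangential component bounded below on the annulus $R_m<r\le16R_m$. For $r>16R_m$, the bound $\norm{J\rho_m(x)}_{\mathrm{op}}\le2R_m/r$ lets the quadratic regularizer dominate.

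There are two smaller points. First, the specific constant $28{,}778m$ does not follow from your in-ball estimates even once the exterior is repaired. Outside $\mathcal B_{2m}$ you bound the gap by $6m+\tfrac{21}{40}\norm{y}^2$ with $\norm{y}\le R_m$, i.e.\ about $140m$, and divide by $(1/20)^2$. That gives roughly $5.6\times10^4m$, and still about $4.4\times10^4m$ with the sharper component bound $9/160$. The paper reaches $28{,}778m=2\cdot14{,}389m$ by using the $9/160$ bound only for $\norm{y}\le9\sqrt N$, $N=2m$. Beyond that radius it uses the coercivity estimate $\ip{\nabla\Gamma_N(y)}{y}\ge\tfrac{11}{20}\norm{y}^2-4\sqrt N\norm{y}-\tfrac{13}{2}N$, which gives $\norm{\nabla\Gamma_N(y)}\ge\norm{y}/40$ and a constant of $900$ there.

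Second, in the smoothness step your per-coordinate bound fails on $(1/2,1)$, where $\abs{\Theta'(u_i)\varphi(u_{i+1})+B'(u_i)}=\Theta'(u_i)\bigl(2-\varphi(u_{i+1})\bigr)$ can reach $6$. Replacing $e^{-1/2}+3+\tfrac12$ by $e^{-1/2}+6$ still gives $\norm{\nabla F_{2m}(u)}\le3R_m$ for $\norm{u}\le2R_m$, so $79+33+\eta\le113$ survives.
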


\begin{proof}
Set $N:=2m$, recall that $\eta=1/20$, and note that
$R_m=16\sqrt m=8\sqrt2\,\sqrt N$. We first sharpen the scalar estimates used by the endpoint construction. We then
prove the smoothness and global P\L{} bounds separately.

\emph{Step 1: global scalar and chain estimates.}
Write
\[
F_N(x)=H_N(x)+\sum_{i=1}^N B(x_i),
\qquad
H_N(x):=\varphi(x_1)+\sum_{i=1}^{N-1}\Theta(x_i)\varphi(x_{i+1}).
\]
Because $0\le\Theta,\varphi\le1$, the $N$ summands in $H_N$ give $0\le H_N\le N$. Each coordinate of
$\nabla H_N$ is the sum of at most one term bounded by $\sup|\varphi'|<1$ and one term bounded by
$\sup|\Theta'|\sup\varphi\le3$. Hence
\begin{equation}
\label{eq:endpoint-H-bounds}
0\le H_N(x)\le N,
\qquad
\norm{\nabla H_N(x)}\le4\sqrt N.
\end{equation}

We also need three estimates for $B$. We verify them on every piece of its definition. If $t\le-1/2$, then
$B(t)=2+(t+1/2)^2/2$ and $B'(t)=t+1/2$, so
\[
B(t)-\left(\frac12t^2+2\right)
=\frac12t+\frac18\le0,
\qquad
\abs{B'(t)-t}=\frac12,
\]
and
\[
tB'(t)-\left(\frac12t^2-\frac{13}{2}\right)
=\frac12\left(t+\frac12\right)^2+\frac{51}{8}\ge0.
\]
If $-1/2<t\le1/2$, then $B(t)=2\le t^2/2+2$, $\abs{B'(t)-t}=\abs{t}\le1/2$, and
$tB'(t)=0\ge t^2/2-13/2$. If
$1/2<t<1$, then $0\le B(t)\le2$ and $B'(t)=-2\Theta'(t)$. Since $0<t<1$ and
$0\le\Theta'(t)\le3$,
\[
B(t)\le2\le\frac12t^2+2,
\qquad
\abs{B'(t)-t}\le7,
\qquad
tB'(t)\ge-6\ge\frac12t^2-\frac{13}{2}.
\]
Finally, if $t\ge1$, then $B(t)=(t-1)^2/2$ and $B'(t)=t-1$, whence
\[
B(t)\le\frac12t^2+2,
\qquad
\abs{B'(t)-t}=1,
\]
and
\[
tB'(t)-\left(\frac12t^2-\frac{13}{2}\right)
=\frac12(t-1)^2+6\ge0.
\]
Thus, globally,
\begin{equation}
\label{eq:endpoint-B-bounds}
B(t)\le\frac12t^2+2,
\qquad
\abs{B'(t)-t}\le7,
\qquad
tB'(t)\ge\frac12t^2-\frac{13}{2}.
\end{equation}

For $r:=\norm{x}$, equations~\eqref{eq:endpoint-H-bounds}--\eqref{eq:endpoint-B-bounds} imply
\begin{equation}
\label{eq:endpoint-Gamma-global-growth}
\Gamma_N(x)
\le
\left(\frac12+\frac{\eta}{2}\right)r^2+3N
=\frac{21}{40}r^2+3N
\end{equation}
and
\begin{equation}
\label{eq:endpoint-Gamma-radial}
\ip{\nabla\Gamma_N(x)}{x}
\ge
\left(\frac12+\eta\right)r^2-4\sqrt N\,r-\frac{13}{2}N
=\frac{11}{20}r^2-4\sqrt N\,r-\frac{13}{2}N.
\end{equation}

\emph{Step 2: the sharper smoothness bound.}
For any $y\in\R^N$ with $\norm{y}\le2R_m$, equations~\eqref{eq:endpoint-H-bounds}--\eqref{eq:endpoint-B-bounds}
give
\[
\norm{\nabla F_N(y)}
\le
\norm{\nabla H_N(y)}+\norm{B'(y)}
\le
\norm{y}+11\sqrt N
\le
3R_m,
\]
where the last inequality uses $\norm{y}\le2R_m$ and $11\sqrt N\le R_m$. For arbitrary $x,x'\in\R^d$, set
$y:=U^\top\rho_m(x)$ and $y':=U^\top\rho_m(x')$. The clipping map is $1$-Lipschitz,
$\sup_x\norm{J\rho_m(x)}_{\mathrm{op}}\le1$, and Lemma~\ref{lem:radial-clipping} gives
$\mathrm{Lip}(J\rho_m)\le11/R_m$. Therefore
\begin{align*}
&\norm{J\rho_m(x)^\top U\nabla F_N(y)-J\rho_m(x')^\top U\nabla F_N(y')}\\
&\qquad\le
79\norm{x-x'}+\frac{11}{R_m}(3R_m)\norm{x-x'}
=112\norm{x-x'}.
\end{align*}
Adding the gradient $\eta x$ of the quadratic regularizer shows that $\nabla\widehat F_{m,U}$ is globally
$(112+\eta)$-Lipschitz, and hence globally $113$-Lipschitz.

\emph{Step 3: a global P\L{} inequality for $\Gamma_N$.}
Let $x^\star$ be a global minimizer of $\Gamma_N$. It exists by Lemma~\ref{lem:Gamma-basic}(1), and
Lemma~\ref{lem:Gamma-outside-box} implies $x^\star\in\mathcal B_N$ because
$\nabla\Gamma_N(x^\star)=0$. If $x\in\mathcal B_N$, Lemma~\ref{lem:Gamma-box-strong} and the standard gradient
bound for strongly convex functions give
\[
\Gamma_N(x)-\Gamma_{N,\star}\le\norm{\nabla\Gamma_N(x)}^2.
\]

Suppose next that $x\notin\mathcal B_N$ and $r\le9\sqrt N$. The four cases in the proof of
Lemma~\ref{lem:Gamma-outside-box} actually give the sharper component bounds
$1/8$, $9/160$, $3/40$, and $1/8$, respectively. Hence
$\norm{\nabla\Gamma_N(x)}\ge9/160$. Meanwhile,
\eqref{eq:endpoint-Gamma-global-growth} and $\Gamma_{N,\star}\ge0$ give
\[
\Gamma_N(x)-\Gamma_{N,\star}
\le\frac{1821}{40}N
\le14{,}389N\norm{\nabla\Gamma_N(x)}^2.
\]

It remains to consider $x\notin\mathcal B_N$ and $r>9\sqrt N$. The difference between the right-hand side of
\eqref{eq:endpoint-Gamma-radial} and $r^2/40$ is
\[
\frac{21}{40}r^2-4\sqrt N\,r-\frac{13}{2}N.
\]
This expression is increasing for $r\ge9\sqrt N$ and equals $N/40>0$ at $r=9\sqrt N$. Therefore
$\ip{\nabla\Gamma_N(x)}{x}\ge r^2/40$, and Cauchy--Schwarz yields
$\norm{\nabla\Gamma_N(x)}\ge r/40$. Using~\eqref{eq:endpoint-Gamma-global-growth} and
$3N\le r^2/27$ once more,
\[
\Gamma_N(x)-\Gamma_{N,\star}
\le
\left(\frac{21}{40}+\frac{1}{27}\right)r^2
\le900\norm{\nabla\Gamma_N(x)}^2.
\]
Combining the three regions gives the global estimate
\begin{equation}
\label{eq:endpoint-Gamma-global-PL}
\Gamma_N(x)-\Gamma_{N,\star}
\le
C_\Gamma N\norm{\nabla\Gamma_N(x)}^2
\qquad(\forall x\in\R^N),
\quad
C_\Gamma:=14{,}389.
\end{equation}

\emph{Step 4: points inside the clipping ball.}
Assume $\norm{x}\le R_m$, so $\rho_m(x)=x$, and define
\[
y:=U^\top x,
\qquad
w:=x-Uy.
\]
The vectors $Uy$ and $w$ are orthogonal. Step~4 in the proof of Lemma~\ref{lem:clipped-rotated} establishes
$\widehat F_{m,U,\star}=\Gamma_{N,\star}$; it follows by minimizing $\Gamma_N$ and observing that its minimizer lies
strictly inside the clipping ball. Consequently,
\begin{equation}
\label{eq:endpoint-inside-decomposition}
\widehat F_{m,U}(x)
=\Gamma_N(y)+\frac{\eta}{2}\norm{w}^2,
\qquad
\norm{\nabla\widehat F_{m,U}(x)}^2
=\norm{\nabla\Gamma_N(y)}^2+\eta^2\norm{w}^2.
\end{equation}
Since $N=2m$, equations~\eqref{eq:endpoint-Gamma-global-PL}--\eqref{eq:endpoint-inside-decomposition} give
\[
\widehat F_{m,U}(x)-\widehat F_{m,U,\star}
\le
28{,}778m\norm{\nabla\Gamma_N(y)}^2+10\eta^2\norm{w}^2
\le
28{,}778m\norm{\nabla\widehat F_{m,U}(x)}^2.
\]

\emph{Step 5: points outside the clipping ball.}
Let $r:=\norm{x}>R_m$ and introduce
\begin{align*}
e&:=\frac{x}{r},
&
h&:=h_m(r),
&
a&:=\frac{h}{r},
&
z&:=U^\top e,\\
y&:=hz,
&
g&:=\nabla F_N(y),
&
\beta&:=\ip{g}{z}.
\end{align*}
The radial and tangential eigenvalues of $J\rho_m(x)$ are $h_m'(r)$ and $a$, respectively. Since
$\ip{Ug}{e}=\beta$, the two orthogonal gradient components are
\begin{equation}
\label{eq:endpoint-exterior-gradient}
\nabla\widehat F_{m,U}(x)
=
\bigl(h_m'(r)\beta+\eta r\bigr)e
+a\bigl(Ug-\beta e\bigr),
\end{equation}
and hence
\begin{equation}
\label{eq:endpoint-exterior-gradient-norm}
\norm{\nabla\widehat F_{m,U}(x)}^2
=
\bigl(h_m'(r)\beta+\eta r\bigr)^2
+a^2\bigl(\norm{g}^2-\beta^2\bigr).
\end{equation}
Applying~\eqref{eq:endpoint-H-bounds}--\eqref{eq:endpoint-B-bounds} to $F_N$ at $y$ also gives
\begin{equation}
\label{eq:endpoint-F-radial}
\ip{g}{y}
\ge
\frac12\norm{y}^2-4\sqrt N\norm{y}-\frac{13}{2}N.
\end{equation}

We now lower-bound the gradient in two cases. If $\beta\ge-\eta r/2$, then $0<h_m'(r)\le1$ implies
\[
h_m'(r)\beta+\eta r\ge\frac{\eta r}{2}.
\]
If instead $\beta<-\eta r/2$, then $\ip{g}{y}=h\beta<0$. Equation~\eqref{eq:endpoint-F-radial} therefore forces
\[
\norm{y}<(4+\sqrt{29})\sqrt N.
\]
Because $h\ge R_m=8\sqrt2\sqrt N$, we obtain
\[
\norm{z}\le s_0:=\frac{4+\sqrt{29}}{8\sqrt2}<1,
\qquad
s_0^2=\frac{45+8\sqrt{29}}{128}<\frac{7}{10}.
\]
Here $z\ne0$, since otherwise $\beta=0$. Cauchy--Schwarz then yields
\[
\norm{g}^2-\beta^2
\ge
\beta^2\left(\frac{1}{\norm{z}^2}-1\right)
\ge
\frac{3}{7}\beta^2.
\]
Moreover, $a|\beta|>\eta h/2\ge\eta R_m/2$. Thus the tangential component in
\eqref{eq:endpoint-exterior-gradient-norm} is bounded away from zero even if the radial component nearly cancels.

We finish by separating moderate and large radii. Set $K:=16$. If $R_m<r\le KR_m$, the two cases above, together
with $R_m^2=128N$, give
\begin{equation}
\label{eq:endpoint-annulus-gradient}
\norm{\nabla\widehat F_{m,U}(x)}^2
\ge
\frac{3}{7}\frac{\eta^2R_m^2}{4}
=\frac{6}{175}N.
\end{equation}
On the other hand, $\norm{y}\le h<2R_m$, $F_N(y)\le\norm{y}^2/2+3N$, and
$\widehat F_{m,U,\star}\ge0$, so
\[
\widehat F_{m,U}(x)-\widehat F_{m,U,\star}
\le
2R_m^2+3N+\frac{\eta}{2}K^2R_m^2
=\frac{5391}{5}N.
\]
Together with~\eqref{eq:endpoint-annulus-gradient}, this gives
\[
\widehat F_{m,U}(x)-\widehat F_{m,U,\star}
\le
31{,}448\norm{\nabla\widehat F_{m,U}(x)}^2.
\]

Finally, suppose $r>KR_m$ and write $s:=r/R_m\ge16$. The estimate proved in Step~2 gives
$\norm{g}\le3R_m$. The radial and tangential eigenvalues of
$J\rho_m(x)$ also give
\[
\norm{J\rho_m(x)}_{\mathrm{op}}
\le
\max\left\{e^{-(s-1)},\frac{2}{s}\right\}
\le\frac{2}{s},
\]
where $e^{s-1}\ge s$ for $s\ge1$. Since $s^2\ge16^2>240$,
\[
\norm{J\rho_m(x)^\top Ug}
\le
\frac{6R_m}{s}
\le
\frac{r}{40}
=\frac{\eta r}{2}.
\]
The quadratic regularizer therefore dominates:
\[
\norm{\nabla\widehat F_{m,U}(x)}\ge\frac{\eta r}{2}=\frac{r}{40}.
\]
Furthermore,
\[
\widehat F_{m,U}(x)-\widehat F_{m,U,\star}
\le259N+\frac{r^2}{40}
\le\frac{259}{16^2\cdot128}r^2+\frac{r^2}{40}
\le53\norm{\nabla\widehat F_{m,U}(x)}^2,
\]
where $r^2\ge16^2R_m^2=16^2\cdot128N$ was used in the second inequality.

The clipping ball, the annulus, and the large-radius region cover $\R^d$. Their bounds are all at most
$28{,}778m\norm{\nabla\widehat F_{m,U}(x)}^2$ because $m\ge m_0=5$. This proves the claimed global P\L{} bound.
\end{proof}
\endgroup

Recall that $m_0=5$ and that $d_{m,q}$ is the dimension threshold in~\eqref{eq:dmq-threshold}. Fix a chain length
$m\ge m_0$, a reveal probability $q\in(0,1]$, and a dimension $d\ge d_{m,q}$. For each fixed
$U\in\Ortho(d,2m)$, Lemma~\ref{lem:clipped-rotated} constructs the objective-oracle pair
$(\widehat F_{m,U},\widehat g_{m,q,U})$. Using the sharper endpoint smoothness and global P\L{}
bounds from Lemma~\ref{lem:endpoint-global-pl}, define the constants needed below by
\begingroup
\begin{equation}
\label{eq:endpoint-base-constants}
L_{\rm b}:=113,
\qquad
C_{\mathrm{gPL}}:=28{,}778,
\qquad
C_\Delta:=5,
\qquad
C_{\rm var}:=4,
\qquad
c_0:=\frac1{16}.
\end{equation}
Here $L_{\rm b}$ is the sharper endpoint smoothness bound from Lemma~\ref{lem:endpoint-global-pl},
$C_{\mathrm{gPL}}m$ bounds the global P\L{} constant,
$C_\Delta m$ bounds the initial gap, $C_{\rm var}/q$ bounds the oracle variance, and $c_0m/q$ is the number of
calls for which slow coordinate revelation produces an objective barrier. More precisely, for every randomized
algorithm $A$, if $U$ is uniform on $\Ortho(d,2m)$, then
\[
\E_{U,\xi,A}\bigl[
\widehat F_{m,U}(x_t^A)-\widehat F_{m,U,\star}
\bigr]
\ge m
\qquad
\left(t\le c_0\frac{m}{q}\right).
\]
The expectation is over the random rotation $U$, the oracle randomness $\xi$, and the internal randomness of $A$.
In particular, this is an average-over-$U$ guarantee, not a pointwise claim for every fixed rotation.
\endgroup

\begingroup
We first handle the range in which a chain of length at least $m_0$ can be fitted to the prescribed value of
$L\tau$. Define the numerical threshold
\begin{equation}
\label{eq:endpoint-kappa-threshold}
\kappa_0:=C_{\mathrm{gPL}}L_{\rm b}m_0=16{,}259{,}570.
\end{equation}
\endgroup

\begingroup
\begin{proposition}[Endpoint scaling of the probabilistic zero chain]
\label{prop:endpoint-large-condition}
Suppose $L\tau>\kappa_0$ and $0<\varepsilon\le\Delta_0/10$. Then
\[
T_\varepsilon^{\mathrm{BV,glob}}(L,\tau,\Delta_0,\sigma^2)
\ge
\frac{25c_0}{72C_{\rm var}C_{\mathrm{gPL}}^2L_{\rm b}}
\frac{L\tau^2\sigma^2}{\varepsilon}.
\]
\end{proposition}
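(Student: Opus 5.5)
We follow the Case~1 scaling from the proof sketch of Theorem~\ref{thm:stochastic-pl-endpoint}, now with the explicit constants in \eqref{eq:endpoint-base-constants}. Set $m:=\lfloor L\tau/(C_{\mathrm{gPL}}L_{\rm b})\rfloor$. Because $L\tau>\kappa_0=C_{\mathrm{gPL}}L_{\rm b}m_0$, we get $m\ge m_0$. We also get $m\ge \tfrac{m_0}{m_0+1}\cdot\tfrac{L\tau}{C_{\mathrm{gPL}}L_{\rm b}}=\tfrac56\tfrac{L\tau}{C_{\mathrm{gPL}}L_{\rm b}}$, since $x\ge m_0$ implies $\lfloor x\rfloor\ge \tfrac{m_0}{m_0+1}x$. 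This factor $5/6$ will produce the $25/36$ in the constant.

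Take $\lambda:=2\varepsilon/m$ and $\rho^2:=L/(\lambda L_{\rm b})$, and define $f_{m,U}(x)=\lambda\widehat F_{m,U}(\rho x)$ with oracle $G_{m,q,U}(x,z)=\lambda\rho\,\widehat g_{m,q,U}(\rho x,z)$. The scaling laws are checked as follows.
\begin{itemize}
\item By Lemma~\ref{lem:stochastic-scaling}(1) and Lemma~\ref{lem:endpoint-global-pl}, the smoothness constant is at most $\lambda\rho^2L_{\rm b}=L$.
\item Lemma~\ref{lem:stochastic-scaling}(2) with $\alpha=2$ applies pointwise on all of $\R^d$, not only on sublevel sets. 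It gives a global P\L{} constant of at most $C_{\mathrm{gPL}}m/(\lambda\rho^2)=C_{\mathrm{gPL}}mL_{\rm b}/L\le\tau$.
\item By Lemma~\ref{lem:clipped-rotated}(4), the initial gap is at most $\lambda\cdot 5m=10\varepsilon\le\Delta_0$. The $5m$ bound comes from $\widehat F_{m,U}(0)<5m$ and $\widehat F_{m,U,\star}\ge0$.
\item By Lemma~\ref{lem:stochastic-scaling}(3) and Lemma~\ref{lem:clipped-rotated}(2), the variance is at most $(\lambda\rho)^2C_{\rm var}/q=C_{\rm var}\lambda L/(L_{\rm b}q)$.
\end{itemize}
Hence every pair lies in $\mathcal F^{\mathrm{glob}}_{2,L,\tau,\Delta_0}(0)$ equipped with an admissible oracle, provided $q\ge q_\star:=C_{\rm var}\lambda L/(L_{\rm b}\sigma^2)$.

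Next comes the case split on $q_\star$. If $q_\star\le1$, take $q=q_\star$ and any $d\ge d_{m,q}$. Lemma~\ref{lem:clipped-rotated}(5), averaged over uniform $U$, gives an expected scaled gap of at least $\lambda m=2\varepsilon>\varepsilon$ for all $t\le c_0m/q$. Since $\sup_U\ge\E_U$, some rotation forces every algorithm to have error above $\varepsilon$. Therefore
\[
T^{\mathrm{BV,glob}}_\varepsilon\ge \frac{c_0m}{q}=\frac{c_0 mL_{\rm b}\sigma^2}{C_{\rm var}\lambda L}=\frac{c_0L_{\rm b}\sigma^2m^2}{2C_{\rm var}L\varepsilon}.
\]
Substituting $m\ge\tfrac56 L\tau/(C_{\mathrm{gPL}}L_{\rm b})$ gives $\tfrac{25c_0}{72C_{\rm var}C_{\mathrm{gPL}}^2L_{\rm b}}\,L\tau^2\sigma^2/\varepsilon$, which is exactly the claimed constant.

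If $q_\star>1$, take $q=1$. The oracle is then exact, so it has zero variance and is admissible, and the same barrier gives $T\ge c_0m$. The inequality $q_\star>1$ reads $2C_{\rm var}\varepsilon L/(mL_{\rm b})>\sigma^2$. Multiplying the right-hand side of the claim by this, the target quantity is at most
\[
\frac{25c_0}{72C_{\rm var}C^2_{\mathrm{gPL}}L_{\rm b}}\cdot\frac{L\tau^2}{\varepsilon}\cdot\frac{2C_{\rm var}\varepsilon L}{mL_{\rm b}}=\frac{25c_0L^2\tau^2}{36C_{\mathrm{gPL}}^2L_{\rm b}^2m}.
\]
Using $L\tau\le (m+1)C_{\mathrm{gPL}}L_{\rm b}\le\tfrac65 mC_{\mathrm{gPL}}L_{\rm b}$, this is at most $c_0m$. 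So the exact-gradient barrier already suffices.

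The main obstacle is bookkeeping rather than ideas. First, the floor in $m$ has to lose at most the factor $5/6$, which the hypothesis $m\ge m_0$ guarantees. Second, the minimax definition uses expected error at most $\varepsilon$ while the barrier gives $2\varepsilon$, so the strict gap must be handled with care, including the integrality of $c_0m/q$. I expect the floor/ceiling handling on $t\le c_0m/q$ to be the fiddliest part. I would state the bound as "no $T\le c_0m/q$ suffices", which gives $T_\varepsilon\ge c_0m/q$ directly from the infimum definition.
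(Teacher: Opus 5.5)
Your proposal is correct and follows the paper's proof essentially step for step: the same choices of $m$, $\lambda$, $\rho$ and $q_\star$, the same two-case split (with the $5/6$ floor loss giving the constant $25/72$), the same global-P\L{}, initial-gap and variance bookkeeping, and the same $\sup_U\ge\E_U$ conversion of the averaged barrier. One wording fix: that conversion gives, for each algorithm, an algorithm-dependent hard rotation, not a single rotation that defeats every algorithm; the former is exactly what the $\inf_A\sup_f$ definition of $T_\varepsilon^{\mathrm{BV,glob}}$ requires.
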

\endgroup

\begin{proof}
\begingroup
Write $\kappa:=L\tau$. The chain length must be small enough that, after imposing smoothness $L$, the scaled global
P\L{} constant does not exceed $\tau$. We therefore choose
\begin{equation}
\label{eq:endpoint-scaling-choice}
m:=\left\lfloor\frac{\kappa}{C_{\mathrm{gPL}}L_{\rm b}}\right\rfloor.
\end{equation}
Because $\kappa>\kappa_0$, we have $m\ge m_0$ and
\begin{equation}
\label{eq:endpoint-m-bounds}
\frac{5\kappa}{6C_{\mathrm{gPL}}L_{\rm b}}
<m
\le
\frac{\kappa}{C_{\mathrm{gPL}}L_{\rm b}},
\qquad
\kappa<\frac65C_{\mathrm{gPL}}L_{\rm b}m.
\end{equation}
Indeed, if $s:=\kappa/(C_{\mathrm{gPL}}L_{\rm b})$, then $s>m_0=5$, so $m=\lfloor s\rfloor\ge5$ and
$s<m+1\le6m/5$. These inequalities give all three bounds above.
\endgroup

Next choose the objective scale
\[
\lambda:=\frac{2\varepsilon}{m}.
\]
The normalized expected residual lower bound $m$ will therefore become $\lambda m=2\varepsilon$. Finally, choose the
domain scale
\[
\rho^2:=\frac{L}{\lambda L_{\rm b}}.
\]
This choice makes the smoothness bound of the scaled objective equal to $L$.

For a value of $q\in(0,1]$ chosen below, take any dimension $d\ge d_{m,q}$ from
\eqref{eq:dmq-threshold}, draw $U$ uniformly from $\Ortho(d,2m)$, and define the scaled function-oracle pair
\begin{equation}
\label{eq:endpoint-scaled-pair}
f_{m,U}(x):=\lambda\widehat F_{m,U}(\rho x),
\qquad
G_{m,q,U}(x,z):=\lambda\rho\,\widehat g_{m,q,U}(\rho x,z).
\end{equation}

\begingroup
We now verify membership in the global-P\L{} class for every fixed $U$. Differentiability and lower boundedness
follow from the corresponding properties of $\widehat F_{m,U}$ and from $\lambda,\rho>0$; moreover,
$(f_{m,U})_\star=\lambda\widehat F_{m,U,\star}$. The gradient is globally Lipschitz with constant at most
\[
\mathrm{Lip}(\nabla f_{m,U})
\le
\lambda\rho^2L_{\rm b}=L.
\]
Thus $f_{m,U}$ is globally $L$-smooth. For every $x\in\R^d$, Lemma~\ref{lem:endpoint-global-pl} and
$\nabla f_{m,U}(x)=\lambda\rho\nabla\widehat F_{m,U}(\rho x)$ give
\begin{align*}
f_{m,U}(x)-(f_{m,U})_\star
&\le
\frac{C_{\mathrm{gPL}}m}{\lambda\rho^2}\norm{\nabla f_{m,U}(x)}^2\\
&=
\frac{C_{\mathrm{gPL}}mL_{\rm b}}{L}\norm{\nabla f_{m,U}(x)}^2
\le
\tau\norm{\nabla f_{m,U}(x)}^2.
\end{align*}
The inequality holds on the whole space, so the global P\L{} constant is at most $\tau$. Finally, the initial
gap satisfies
\begin{equation}
\label{eq:endpoint-scaled-initial-gap}
f_{m,U}(0)-(f_{m,U})_\star
\le
C_\Delta\lambda m
=
10\varepsilon
\le
\Delta_0.
\end{equation}
This verifies all function-class requirements.
\endgroup

It remains to choose the reveal probability so that the scaled oracle respects the variance budget. Its variance is
at most $C_{\rm var}\lambda L/(L_{\rm b}q)$, so the value that exactly saturates the budget is
\begin{equation}
\label{eq:endpoint-qstar}
q_\star:=\frac{C_{\rm var}\lambda L}{L_{\rm b}\sigma^2}.
\end{equation}

\emph{Case 1: $q_\star\le1$.}
Set $q=q_\star$, so the probabilistic oracle uses the entire variance budget. Lemma~\ref{lem:clipped-rotated}(1)--(2)
and the scaling in
\eqref{eq:endpoint-scaled-pair} show that $G_{m,q,U}$ is unbiased and has conditional variance at most
\[
(\lambda\rho)^2\frac{C_{\rm var}}q
=
\frac{C_{\rm var}\lambda L}{L_{\rm b}q}
=
\sigma^2.
\]
Thus the scaled pair is admissible. Revealing order $m$ chain coordinates requires order $m/q$ calls; using
$\lambda=2\varepsilon/m$ and the lower bound on $m$ in
\eqref{eq:endpoint-m-bounds},
\begingroup
\begin{align}
\frac{m}{q}
&=
\frac{mL_{\rm b}\sigma^2}{C_{\rm var}\lambda L}
=
\frac{m^2L_{\rm b}\sigma^2}{2C_{\rm var}\varepsilon L}
\notag\\
&\ge
\frac{25}{72C_{\rm var}C_{\mathrm{gPL}}^2L_{\rm b}}
\frac{L\tau^2\sigma^2}{\varepsilon}.
\label{eq:endpoint-low-q-time}
\end{align}
\endgroup
The last expression has exactly the claimed dependence on $(L,\tau,\sigma^2,\varepsilon)$.

\emph{Case 2: $q_\star>1$.}
The variance-saturating value is now larger than a probability, so set $q=1$. Since $q$ is the reveal probability,
the variable $z\sim\mathrm{Bernoulli}(1)$ equals one almost surely. From the definition of the base oracle,
\[
g_{m,1}(x,z)=\nabla F_{2m}(x)
\qquad\text{almost surely}.
\]
Equations~\eqref{eq:hatF-def}--\eqref{eq:hatg-def} and~\eqref{eq:endpoint-scaled-pair} therefore imply
\[
G_{m,1,U}(x,z)=\nabla f_{m,U}(x)
\qquad\text{almost surely}.
\]
This oracle is exact, so its variance is zero and it is admissible for every positive variance budget. The condition
$q_\star>1$, followed by the last bound in~\eqref{eq:endpoint-m-bounds}, gives
\begingroup
\begin{align*}
\frac{L\tau^2\sigma^2}{\varepsilon}
&<
\frac{2C_{\rm var}(L\tau)^2}{mL_{\rm b}}
\le
\frac{72}{25}C_{\rm var}C_{\mathrm{gPL}}^2L_{\rm b}m.
\end{align*}
Consequently,
\begin{equation}
\label{eq:endpoint-q-one-time}
m
\ge
\frac{25}{72C_{\rm var}C_{\mathrm{gPL}}^2L_{\rm b}}
\frac{L\tau^2\sigma^2}{\varepsilon}.
\end{equation}
\endgroup
Hence the order-$m$ exact-gradient barrier is already at least a constant multiple of the target lower bound.

We now convert the average zero-chain barrier into a minimax lower bound; this step is the same in both cases. Fix an
arbitrary randomized, adaptive, arbitrary-query, gradient-only method $A$ in dimension $d$. To simulate its run on
the scaled pair using the normalized pair, map every query $x_s$ to $\rho x_s$ and multiply the returned normalized
stochastic gradient by $\lambda\rho$. The simulated transcript has exactly the same law as the transcript generated
by $G_{m,q,U}$. Lemma~\ref{lem:clipped-rotated}(5) therefore gives
\[
\E_{U,\xi,A}\bigl[f_{m,U}(x_t^A)-(f_{m,U})_\star\bigr]
\ge
\lambda m
=
2\varepsilon
\qquad
\left(t\le c_0\frac{m}{q}\right).
\]
Here the expectation is over the random rotation $U$, the oracle randomness $\xi$, and the internal randomness of
$A$; when $q=1$, the oracle itself is deterministic. The output-after-$t$-calls convention causes no additional
issue: as verified in the proof of Lemma~\ref{lem:clipped-rotated}(5), the output can be appended as the next adaptive
query without using the resulting oracle response.

For fixed $A$, $t$, and $U$, define its expected risk by
\[
R_A(U):=
\E_{\xi,A}\bigl[f_{m,U}(x_t^A)-(f_{m,U})_\star\bigr].
\]
Then
\[
\sup_{U\in\Ortho(d,2m)}R_A(U)
\ge
\E_U R_A(U)
\ge2\varepsilon.
\]
Thus, for each algorithm $A$, at least one fixed rotation $U$ has expected error at least $2\varepsilon$. This uses
only $\sup_U R_A(U)\ge\E_U R_A(U)$: it neither invokes Yao's principle nor interchanges the infimum over algorithms
with the expectation over $U$. Since every resulting pair belongs to the global-P\L{} class and the dimension-free minimax class contains
the chosen dimension $d\ge d_{m,q}$, every integer $t\le c_0m/q$ fails to guarantee error $\varepsilon$. Therefore
\begingroup
\[
T_\varepsilon^{\mathrm{BV,glob}}(L,\tau,\Delta_0,\sigma^2)
\ge
c_0\frac{m}{q}.
\]
\endgroup
Combining this with~\eqref{eq:endpoint-low-q-time} in Case~1 and
\eqref{eq:endpoint-q-one-time} in Case~2 proves the proposition.
\end{proof}

The zero-chain argument requires $m\ge m_0$, so Proposition~\ref{prop:endpoint-large-condition} applies only when
$L\tau>\kappa_0$. The next one-dimensional argument covers the remaining bounded range of $L\tau$.

\begingroup
\begin{lemma}[Gaussian two-point patch for bounded $L\tau$]
\label{lem:endpoint-gaussian-two-point}
Let $L,\tau,\Delta_0,\sigma^2>0$ satisfy $L\tau\ge1/2$. If
$0<\varepsilon\le\Delta_0/8$, then
\[
T_\varepsilon^{\mathrm{BV,glob}}(L,\tau,\Delta_0,\sigma^2)
\ge
\frac{\tau\sigma^2}{32\varepsilon}.
\]
\end{lemma}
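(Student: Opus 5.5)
The plan is the two-point Le Cam argument already sketched in Case~2 of the proof sketch of Theorem~\ref{thm:stochastic-pl-endpoint}; the work is to make each step rigorous in the oracle model of Definition~\ref{def:stochastic-model}. Set $\mu:=(2\tau)^{-1}$ and $b:=4\sqrt{\varepsilon/\mu}$. Consider the one-dimensional pairs $f_{\pm}(x)=\frac{\mu}{2}(x\mp b)^2$ with oracles $G_\pm(x,\xi)=\mu(x\mp b)+\xi$, where $\xi\sim\mathcal N(0,\sigma^2)$ is drawn freshly at each call.

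\textbf{Step 1: membership.} Each $f_\pm$ is $\mu$-smooth, and $\mu\le L$ because $L\tau\ge1/2$. The infimum is $0$. The identity $f_\pm(x)=\frac{1}{2\mu}|f_\pm'(x)|^2=\tau|f_\pm'(x)|^2$ gives global P\L{} with constant exactly $\tau$. The initial gap is $\frac{\mu}{2}b^2=8\varepsilon\le\Delta_0$. Each oracle is unbiased with variance $\sigma^2$. Hence both pairs lie in the global class with admissible oracles.

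\textbf{Step 2: information bound.} Fix any randomized adaptive method making $T$ calls, and let $P_\pm^T$ be the joint law of its internal seed and responses. Conditional on the history, the query $x_t$ is the same measurable function under both hypotheses. The response is Gaussian with variance $\sigma^2$, and its means $\mu x_t\mp\mu b$ differ by $2\mu b=8\sqrt{\mu\varepsilon}$, independently of $x_t$. The chain rule for KL, applied step by step (the seed contributes zero), gives
\[
\mathrm{KL}(P_+^T\Vert P_-^T)=T\,\frac{64\mu\varepsilon}{2\sigma^2}=\frac{32T\mu\varepsilon}{\sigma^2}.
\]
For $T\le\sigma^2/(64\mu\varepsilon)$ this is at most $1/2$, so Pinsker's inequality yields $\mathrm{TV}\le1/2$.

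\textbf{Step 3: reduction to testing.} The output $\widehat x_T$ is a measurable function of the transcript. Guess ``$+$'' iff $\widehat x_T\ge0$. Under $+$, a wrong guess means $\widehat x_T<0$, so $|\widehat x_T-b|>b$ and the gap exceeds $\frac{\mu}{2}b^2=8\varepsilon$; the $-$ case is symmetric. Under the uniform prior, Le Cam's inequality gives error probability at least $(1-\mathrm{TV})/2\ge1/4$. Since gaps are nonnegative, the average of the two expected gaps is at least $2\varepsilon$, so one instance has expected gap at least $2\varepsilon>\varepsilon$.

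Thus every $T\le\sigma^2/(64\mu\varepsilon)=\tau\sigma^2/(32\varepsilon)$ fails. Since the class is dimension-free, including $d=1$, we conclude $T^{\mathrm{BV,glob}}_\varepsilon\ge\tau\sigma^2/(32\varepsilon)$; if the floor of this quantity is $0$ the bound is trivial. The only delicate point is Step~2: the KL chain rule must be justified for adaptive randomized queries. I would handle this by conditioning on the internal seed $r$ and noting that each per-step conditional KL equals $(2\mu b)^2/(2\sigma^2)$ regardless of the query, so the query-independence makes the bound exact.
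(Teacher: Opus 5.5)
Your proposal is correct and follows the paper's proof essentially step for step: the same one-dimensional pair with $\mu=(2\tau)^{-1}$ and separation $4\sqrt{\varepsilon/\mu}$, the same query-independent per-step KL giving $32T\mu\varepsilon/\sigma^2$, and the same sign test with Le Cam and Pinsker yielding average gap at least $2\varepsilon$. One small correction: the remark that the bound is trivial when the floor is zero is inaccurate but harmless. Your argument already covers $T=0$, since $0\le\tau\sigma^2/(32\varepsilon)$, and this gives $T_\varepsilon^{\mathrm{BV,glob}}\ge 1$ in that case.
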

\endgroup

\begin{proof}
Set the curvature and separation parameters to
\[
\mu:=\frac1{2\tau},
\qquad
r:=4\sqrt{\frac{\varepsilon}{\mu}},
\]
and, for $v\in\{-r,r\}$, define on $\R$
\begin{equation}
\label{eq:endpoint-two-point-pair}
f_v(x):=\frac{\mu}{2}(x-v)^2,
\qquad
G_v(x,\xi):=\mu(x-v)+\xi,
\qquad
\xi\sim\mathcal N(0,\sigma^2),
\end{equation}
with a fresh independent Gaussian sample at each oracle call. We first verify that both pairs are admissible. The
assumption $L\tau\ge1/2$ is equivalent to $\mu=(2\tau)^{-1}\le L$. Therefore the quadratic $f_v$ is differentiable,
lower bounded, and globally $L$-smooth; it attains its minimum value zero at $x=v$. Moreover,
\[
f_v(x)-(f_v)_\star
=
\frac{\mu}{2}(x-v)^2
=
\tau\abs{f_v'(x)}^2.
\]
Thus $f_v$ satisfies the global P\L{} inequality with constant $\tau$ and belongs to the endpoint
global-P\L{} class. At the
initial point $0$, its gap is
\[
f_v(0)-(f_v)_\star=\frac{\mu r^2}{2}=8\varepsilon\le\Delta_0.
\]
Finally, conditional on any query $x$,
\[
\E[G_v(x,\xi)\mid x]=\mu(x-v)=f_v'(x),
\qquad
\E\bigl[\abs{G_v(x,\xi)-f_v'(x)}^2\mid x\bigr]=\sigma^2.
\]
Hence both function-oracle pairs belong to the required global-P\L{}, gradient-only bounded-variance
class.

We now compare the information supplied by these two oracles. Fix any randomized adaptive algorithm making $T$
oracle calls, and include its internal random seed in the transcript. Let $P_+^T$ and $P_-^T$ denote the full
transcript laws under $v=r$ and $v=-r$, respectively. Conditional on the same past transcript, the seed and previous
responses determine the algorithm's next query $x_t$. The next oracle response is Gaussian with variance $\sigma^2$;
its two possible means are $\mu(x_t-r)$ and $\mu(x_t+r)$. Their difference has magnitude $2\mu r$, which does not
depend on $x_t$. Thus adaptivity does not change the conditional KL divergence contributed by a query. The chain rule
for relative entropy gives
\begin{equation}
\label{eq:endpoint-transcript-kl}
\mathrm{KL}(P_+^T\Vert P_-^T)
=
T\frac{(2\mu r)^2}{2\sigma^2}
=
\frac{32T\mu\varepsilon}{\sigma^2}.
\end{equation}
The random seed contributes zero relative entropy because it has the same distribution under the two instances.

To convert indistinguishability into objective error, let $V$ be uniform on $\{-1,1\}$ and set $v=Vr$. Let
$\widehat x_T$ be the algorithm's output, and use its sign as a test: set $\widehat V=1$ when
$\widehat x_T\ge0$ and $\widehat V=-1$ otherwise. If $\widehat V\ne V$, then
$\abs{\widehat x_T-v}\ge r$, so
\[
f_v(\widehat x_T)-(f_v)_\star
\ge
\frac{\mu r^2}{2}
=8\varepsilon.
\]
For any such test, Le Cam's inequality~\citep{Yu1997} followed by Pinsker's inequality gives
\[
\Pbb(\widehat V\ne V)
\ge
\frac{1-\norm{P_+^T-P_-^T}_{\rm TV}}{2}
\ge
\frac{1-\sqrt{\mathrm{KL}(P_+^T\Vert P_-^T)/2}}{2}.
\]
If $T\le\sigma^2/(64\mu\varepsilon)$, then~\eqref{eq:endpoint-transcript-kl} is at most $1/2$, and hence the testing
error is at least $1/4$. Multiplying the objective loss on the testing-error event by this probability yields
\[
\frac12\sum_{v\in\{-r,r\}}
\E_v\bigl[f_v(\widehat x_T)-(f_v)_\star\bigr]
\ge
\frac{\mu r^2}{2}\cdot\frac14
=
2\varepsilon.
\]
The worst-case expected error over the two admissible pairs is at least their average. Therefore every integer
$T\le\sigma^2/(64\mu\varepsilon)$ fails to guarantee error at most $\varepsilon$, and
\begingroup
\[
T_\varepsilon^{\mathrm{BV,glob}}(L,\tau,\Delta_0,\sigma^2)
\ge
\frac{\sigma^2}{64\mu\varepsilon}
=
\frac{\tau\sigma^2}{32\varepsilon}.
\]
\endgroup
Because the calculation used the full transcript, including the internal seed and adaptively selected queries, it
applies to every randomized adaptive algorithm in the model.
\end{proof}

\begingroup
\begin{proof}[Proof of Theorem~\ref{thm:stochastic-pl-endpoint}]
Set
\[
c_{\mathrm{acc}}^{\mathrm{end}}:=\frac1{10}
\]
and suppose $0<\varepsilon\le\Delta_0/10$. Let $\kappa:=L\tau\ge1/2$.

If $\kappa>\kappa_0$, Proposition~\ref{prop:endpoint-large-condition} gives
\[
T_\varepsilon^{\mathrm{BV,glob}}(L,\tau,\Delta_0,\sigma^2)
\ge
\frac{25c_0}{72C_{\rm var}C_{\mathrm{gPL}}^2L_{\rm b}}
\frac{L\tau^2\sigma^2}{\varepsilon}.
\]
If $1/2\le\kappa\le\kappa_0$, then $\varepsilon\le\Delta_0/10<\Delta_0/8$, so
Lemma~\ref{lem:endpoint-gaussian-two-point} applies. In this bounded range,
\[
\frac{\tau\sigma^2}{\varepsilon}
\ge
\frac{1}{\kappa_0}
\frac{L\tau^2\sigma^2}{\varepsilon},
\]
and the lemma therefore yields
\[
T_\varepsilon^{\mathrm{BV,glob}}(L,\tau,\Delta_0,\sigma^2)
\ge
\frac{\tau\sigma^2}{32\varepsilon}
\ge
\frac{1}{32\kappa_0}
\frac{L\tau^2\sigma^2}{\varepsilon}.
\]
The two cases cover every $L\tau\ge1/2$. Taking
\[
c_{\mathrm{end}}
:=
\min\left\{
\frac{1}{32\kappa_0},
\frac{25c_0}{72C_{\rm var}C_{\mathrm{gPL}}^2L_{\rm b}}
\right\},
\]
with the constants from~\eqref{eq:endpoint-base-constants} and~\eqref{eq:endpoint-kappa-threshold}, proves the
theorem.
\end{proof}
\endgroup

\end{document}